 \newcommand{\thickhline}{%
 	\noalign {\ifnum 0=`}\fi \hrule height 1pt
 	\futurelet \reserved@a \@xhline
 }
\NewDocumentCommand{\eqmathbox}{o O{c} m}{%
  \IfValueTF{#1}
    {\def\eqmathbox@##1##2{\eqmakebox[#1][#2]{$##1##2$}}}
    {\def\eqmathbox@##1##2{\eqmakebox{$##1##2$}}}
  \mathpalette\eqmathbox@{#3}
}
\newcommand{\eg}{e.\,g.,\xspace}
\newcommand{\ie}{i.\,e.,\xspace}
\newcommand{\Wlog}{W.\,l.\,o.\,g.\xspace}
\newcommand{\wrt}{w.\,r.\,t.\xspace}
\newcommand{\rewrite}[2]{\;\smash{\overset{\scriptscriptstyle\smash{#1\;}}{\underset{\raisebox{2pt}{$\scriptscriptstyle\smash{#2\;}$}}{\Longrightarrow}}}\;}
\newcommand{\IGamma}{I}
\def\u{u}
\newcommand{\PSPACE}{\ensuremath{\mathsf{PSPACE}}\xspace} %%%
\newcommand{\NP}{\ensuremath{\mathsf{NP}}\xspace} %
\newcommand{\coNP}{\ensuremath{\mathsf{coNP}}\xspace}
\newcommand{\LOGCFL}{\ensuremath{\mathsf{LOGCFL}}\xspace} %
\newcommand{\DLOGTIME}{\ensuremath{\mathsf{DLOGTIME}}\xspace} %
\renewcommand{\L}{\ensuremath{\mathsf{LOGSPACE}}\xspace} %
\newcommand{\LOGSPACE}{\L} %
\renewcommand{\L}{\ensuremath{\mathsf{L}}\xspace} %
\newcommand{\NL}{\ensuremath{\mathsf{NL}}\xspace} %
\newcommand{\Tc}[1]{\ensuremath{\mathsf{uTC}^{#1}}\xspace}
\newcommand{\Ac}[1]{\ensuremath{\mathsf{AC}^{#1}}\xspace}
\newcommand{\uAc}[1]{\ensuremath{\mathsf{uAC}^{#1}}\xspace}
\newcommand{\Nc}[1]{\ensuremath{\mathsf{uNC}^{#1}}\xspace}
\newcommand{\AC}{\ensuremath{\mathsf{AC}}\xspace}
\newcommand{\TC}{\ensuremath{\mathsf{uTC}}\xspace}
\newcommand{\uACz}{\ensuremath{\mathsf{uAC}^0}\xspace}
\newcommand{\uTCz}{\ensuremath{\mathsf{uTC}^0}\xspace}
\newcommand{\uAC}{\ensuremath{\mathsf{uAC}}\xspace}
\newcommand{\uTC}{\ensuremath{\mathsf{uTC}}\xspace}
\newcommand{\uNC}{\ensuremath{\mathsf{uNC}}\xspace}
\renewcommand{\P}{\ensuremath{\mathsf{P}}\xspace}
\newcommand{\GapL}{\ensuremath{\mathsf{GapL}}\xspace}
\newcommand{\CeqL}{\ensuremath{\mathsf{C}_=\mathsf{L}}\xspace}
\DeclareMathOperator{\sgn}{sgn}
\DeclareMathOperator{\link}{link}
\DeclareMathOperator{\nf}{nf}
\DeclareMathOperator{\dist}{dist}
\DeclareMathOperator{\alphabet}{alph}
\DeclareMathOperator{\IRR}{IRR}
\DeclareMathOperator{\GP}{GP}
\DeclareMathOperator{\UWP}{UWP}
\DeclareMathOperator{\PowWP}{PowWP}
\DeclareMathOperator{\UPowWP}{UPowWP}
\DeclareMathOperator{\SPowWP}{SPowWP}
\DeclareMathOperator{\USPowWP}{USPowWP}
\DeclareMathOperator{\GSPowWP}{GSPowWP}
\newcommand{\hookdoubleheadrightarrow}{%
  \hookrightarrow\mathrel{\mspace{-15mu}}\rightarrow
}
\newcommand{\sse}{\subseteq}
\newcommand{\Sig}{\Sigma}
\newcommand{\compproblem}[3][]{%
	\par\vspace{0.125cm plus 0.1cm minus 0.05cm}\adjustbox{valign=t}{\begin{tabularx}{\linewidth-2\parindent}{@{}lX}%
			\if\relax\detokenize{#1}\relax%
			\else%
			\textnormal{\textsf{Constant:}}&#1\\%
			\fi%
			\textnormal{\textsf{Input:}}&#2\\%
			\textnormal{$\mathrlap{\textsf{Output:}}\hphantom{\textsf{Question:}}$\!\!}&#3\\%
	\end{tabularx}}\vspace{0.125cm plus 0.1cm minus 0.05cm}\par%
}
\newcommand{\ynproblem}[3][]{%
	\par\vspace{0.125cm plus 0.1cm minus 0.05cm}\adjustbox{valign=t}{\begin{tabularx}{\linewidth-2\parindent}{@{}lX}%
			\if\relax\detokenize{#1}\relax%
			\else%
			\textnormal{\textsf{Constant:}}&#1\\%
			\fi%
			\textnormal{\textsf{Input:}}&#2\\%
			\textnormal{\textsf{Question:}}&#3\\%
	\end{tabularx}}\vspace{0.125cm plus 0.1cm minus 0.05cm}\par%
}
\renewcommand{\set}[2]{\left\{\, \mathinner{#1}\vphantom{#2}\: \left|\: \vphantom{#1}\mathinner{#2} \right.\,\right\}}
\newcommand{\abs}[1]{\left|\mathinner{#1}\right|}
\newcommand{\genr}[2]{\left< \, \mathinner{#1}\; \middle|\;\mathinner{#2} \, \right>}
\newcommand{\cA}{\mathcal{A}}
\newcommand{\cL}{\mathcal{L}}
\newcommand{\cC}{\mathcal{C}}
\newcommand{\N}{\ensuremath{\mathbb{N}}}
\renewcommand{\Z}{\ensuremath{\mathbb{Z}}}
\newcolumntype{"}{@{\hskip\tabcolsep\vrule width 1pt\hskip\tabcolsep}}
\theoremstyle{thmstyleone}%
\newtheorem{theorem}{Theorem}%  meant for continuous numbers
\newtheorem{lemma}[theorem]{Lemma}%  meant for continuous numbers
\newtheorem{proposition}[theorem]{Proposition}%
 \newtheorem{corollary}[theorem]{Corollary}
\theoremstyle{thmstyletwo}%
\newtheorem{example}[theorem]{Example}%
\newtheorem{remark}[theorem]{Remark}%
\theoremstyle{thmstylethree}%
\newtheorem{definition}[theorem]{Definition}%
\begin{document}

  \title[The Power Word Problem]{The Power Word Problem in Graph Products}

%%=============================================================%%
%% Prefix	-> \pfx{Dr}
%% GivenName	-> \fnm{Joergen W.}
%% Particle	-> \spfx{van der} -> surname prefix
%% FamilyName	-> \sur{Ploeg}
%% Suffix	-> \sfx{IV}
%% NatureName	-> \tanm{Poet Laureate} -> Title after name
%% Degrees	-> \dgr{MSc, PhD}
%% \author*[1,2]{\pfx{Dr} \fnm{Joergen W.} \spfx{van der} \sur{Ploeg} \sfx{IV} \tanm{Poet Laureate}
%%                 \dgr{MSc, PhD}}\email{iauthor@gmail.com}
%%=============================================================%%

  \author*[1]{\fnm{Markus} \sur{Lohrey}}\email{\normalsize lohrey@eti.uni-siegen.de}

  \author*[2]{\fnm{Florian} \sur{Stober}}\email{\normalsize florian.stober@fmi.uni-stuttgart.de}
%  \equalcont{These authors contributed equally to this work.}

  \author*[2]{\fnm{Armin} \sur{Wei\ss}}\email{\normalsize armin.weiss@fmi.uni-stuttgart.de}
%  \equalcont{These authors contributed equally to this work.}

  \affil[1]{
  	%\orgdiv{Department},
  	 \orgname{\normalsize Universit{\"a}t Siegen},
  	  %\orgaddress{\street{Street}, \city{City}, \postcode{100190}, \state{State},
  \country{\normalsize Germany}}

  \affil[2]{
  	%\orgdiv{Department},
  	\orgname{\normalsize Universit{\"a}t Stuttgart},
  	%\orgaddress{\street{Street}, \city{City}, \postcode{10587}, \state{State},
  		\country{\normalsize Germany}}

  \abstract{\footnotesize The power word problem for a group $G$ asks whether an expression $u_1^{x_1} \cdots u_n^{x_n}$, where the $u_i$ are words over a finite set of generators of $G$ and the $x_i$ binary encoded integers, is equal to the identity of $G$.
  	 It is a restriction of the compressed word problem, where the input word is represented by
  	a straight-line program (i.e., an algebraic circuit over $G$).
    We start by showing some easy results concerning the power word problem. In particular, the power word problem for a group $G$ is $\Nc1$-many-one reducible to the
    power word problem for a finite-index subgroup of $G$.

    For our main result, we consider graph products of groups that do not have elements of order two.
  	We show that the power word problem in a fixed such graph product is $\AC^0$-Turing-reducible to the word problem for the free group $F_2$ and the power word problems of the base groups.
  	Furthermore, we look into the uniform power word problem in a graph product, where the dependence graph and the base groups are part of the input.
  	Given a class of finitely generated groups $\mathcal{C}$ without order two elements, the uniform power word problem in a graph product can be solved in $\AC^0[\CeqL^{\UPowWP(\mathcal{C})}]$, where $\UPowWP(\mathcal{C})$ denotes the uniform
   power word problem for groups from the class $\mathcal{C}$.
  	As a consequence of our results, the uniform knapsack problem in right-angled Artin groups is \NP-complete.
  	The present paper is a combination of the two conference papers \cite{LohreyW19,StoberW22}.

   In \cite{StoberW22} and previous iterations of this paper our results on graph products were wrongly stated without the additional assumption that the base groups do not have elements of order two.  In the present work we correct this mistake. While we strongly conjecture that the result as stated in \cite{StoberW22} is true, our proof relies on this additional assumption.
  }

  \keywords{word problem, power word problem, compressed word problem, right-angled Artin groups, nilpotent groups, Grigorchuk group, finite index subgroups}

%%\pacs[JEL Classification]{D8, H51}

%%\pacs[MSC Classification]{35A01, 65L10, 65L12, 65L20, 65L70}

  \maketitle

  {\small\bmhead{\small Acknowledgments}

\small  Markus Lohrey has been funded by DFG (Deutsche Forschunggemeinschaft) project LO 748/12-2.
 Armin Weiß has been funded by DFG project DI 435/7-1 and DFG project WE 6835/1-2.
}
\newpage
\tableofcontents

\section{Introduction}

Algorithmic problems in group theory have a long tradition, going back to the work of Dehn from 1911 \cite{dehn11}.
One of the fundamental group theoretic decision problems introduced by Dehn is the {\em word problem}
for a finitely generated group $G$ (with a fixed finite generating set $\Sigma$): does a given word $w \in \Sigma^*$
evaluate to the group identity? Novikov \cite{nov55} and Boone \cite{boone59} independently proved in the 1950's the existence
of finitely presented groups with undecidable word problem. On the positive side, in many important classes
of groups the word problem is decidable, and in many cases also the computational complexity is quite low.
Famous examples are finitely generated linear groups, where the word problem belongs to deterministic logarithmic
space ($\L$ for short) \cite{lz77} and hyperbolic groups where the word problem can be solved in linear time \cite{Ho} as well as in \LOGCFL
\cite{Lo05ijfcs}.

In recent years, also compressed versions of group theoretical decision problems,
where input words are represented in a succinct form, have attracted attention. One such succinct representation
are so-called straight-line programs, which are context-free grammars that produce exactly one word. The size
of such a grammar can be much smaller than the word it produces.
For instance, the word $a^n$ can be produced by a straight-line program of size $\mathcal{O}(\log n)$.
For the {\em compressed word problem} for the group $G$ the input consists of a straight-line program that produces a word $w$
over the generators of $G$ and it is asked whether $w$ evaluates to the identity element of $G$.
This problem is a reformulation of the circuit evaluation problem for $G$. The compressed word problem
naturally appears when one tries to solve the word problem in automorphism groups or semidirect
products \cite[Section~4.2]{LohreySpringerbook2014}.
For the following classes of groups, the compressed word problem is known
to be solvable in polynomial time: finite groups (where the compressed word problem is either \P-complete or in \Nc{2}
\cite{BeMcPeTh97}), finitely generated nilpotent groups \cite{KoenigL17} (where the complexity is even in \Nc{2}),
hyperbolic groups \cite{HoLoSchl19} (in particular, free groups), and  virtually special groups
(i.e, finite extensions of subgroups of right-angled Artin groups) \cite{LohreySpringerbook2014}.
The latter class covers for instance Coxeter groups, one-relator groups with torsion, fully residually free groups and
fundamental groups of hyperbolic 3-manifolds. For finitely generated linear groups there is still a randomized polynomial
time algorithm for the compressed word problem \cite{LohreyS07,LohreySpringerbook2014}.
Simple examples of groups where the compressed word problem is intractable are wreath products $G \wr \mathbb{Z}$ with $G$ finite non-solvable: for every such group the
compressed word problem is \PSPACE-complete \cite{BartholdiFLW20}, whereas as the (ordinary) word problem for $G \wr \mathbb{Z}$ is in $\Nc{1}$ \cite{Waack90}.

In this paper, we study a natural restriction of the compressed word problem called the {\em power word problem}.
An input for the power word problem for the group $G$ is a tuple $(u_1, x_1, u_2, x_2, \ldots, u_n, x_n)$ where every $u_i$
is a word over the group generators and every $x_i$ is a binary encoded integer (such a tuple is called a {\em power word});
the question is whether $u_1^{x_1} u_2^{x_2}\cdots u_n^{x_n}$ evaluates to the group identity of $G$. This problem naturally
arises in the context of the so-called knapsack problem; we will explain more about this later.

From a power word $(u_1, x_1, u_2, x_2, \ldots, u_n, x_n)$
one can easily (\eg\ by an $\uAc{0}$-reduction) compute a straight-line program for the word
$u_1^{x_1} u_2^{x_2}\cdots u_n^{x_n}$. In this sense, the power word problem is at most as difficult as the compressed
word problem. On the other hand, both power words and straight-line programs achieve exponential compression in the best case; so the additional difficulty of the the compressed word problem does not come from a higher compression rate but rather because straight-line programs can generate more ``complex'' words.

Our main results for the power word problem are the following; in each case we compare our results with
the corresponding results for the compressed word problem:\footnote{All circuit complexity classes are assumed to be uniform in this paper, see
Section~\ref{sec-complexity} for more details.}
\begin{itemize}
	\item The power word problem for every finitely generated nilpotent group is in $\Tc{0}$
	and hence has the same complexity as the word problem (or the problem of multiplying binary encoded integers).
	The proof is a straightforward adaption
	of a proof from \cite{MyasnikovW17}. There, the special case, where all words $u_i$ in the input power word
	are single generators, was shown to be in $\Tc{0}$.
	The compressed word problem for every finitely generated nilpotent group belongs to the
	class $\mathsf{DET} \subseteq \Nc{2}$ and is hard for the counting class $\mathsf{C}_{=}\mathsf{L}$ in case
	of a torsion-free nilpotent group \cite{KoenigL17}.

	\medskip

	\item The power word problem for the Grigorchuk group is $\uAC^0$-many-one-reducible to its word problem.
	Since the word problem for the Grigorchuk group is in $\LOGSPACE$ \cite{GZ,BartholdiFLW20}, also the power word problem is in
	$\LOGSPACE$. Moreover, in \cite{BartholdiFLW20}, it is shown that the compressed word problem for the Grigorchuk
	group is $\PSPACE$-complete. Hence, the Grigorchuk group is an example of a group for which the compressed word problem is provably
	more difficult than the power word problem.

	\medskip

	\item  The power word problem for a finitely generated group $G$ is \Nc{1}-many-one-reducible to the power word problem for any finite index subgroup of $G$.
	An analogous result holds for the compressed word problem as well \cite{KoenigL17}.

	\medskip

	\item If $G$ is a graph product of finitely generated groups $G_1, \ldots, G_n$ (the so-called base groups) not containing any elements of order two, then the power word problem in $G$ can be decided in $\uAC^0$ with oracle gates for (i) the word problem for the free group $F_2$ and (ii)
	 the power word problems for the base groups $G_i$.
	 In order to define a graph product of groups $G_1, \ldots, G_n$, one needs a graph with vertices $1, \ldots, n$. The corresponding graph product is
	 obtained as the quotient of the
	 free product of $G_1, \ldots, G_n$ modulo the commutation relation that allows elements of $G_i$ to commute with elements of $G_j$ iff $i$ and $j$ are adjacent in the graph.
	 Graph products were introduced by Green in 1990~\cite{green1990graph}.  The compressed word problem for a graph product is polynomial time Turing-reducible to the compressed word problems for the the base groups~\cite{haubold2012compressed}.

	 \medskip

	 \item A right-angled Artin group (RAAG) can be defined as a graph product of copies of $\mathbb{Z}$.
	 As a corollary of our transfer theorem for graph products, it follows that the power word problem for a RAAG can be decided
	 in $\uAC^0$ with oracle gates for the word problem for the free group $F_2$. The same upper complexity bound was shown before by Kausch \cite{kausch2017parallel}
	 for the ordinary word problem for a RAAG and in \cite{LohreyW19} for the power word problem for a finitely generated free group.
	 As a consequence of our new result, the power word problem for a RAAG is in \LOGSPACE (for the ordinary word problem this follows
	 from the well-known fact that RAAGs are linear groups together with the above mentioned result of Lipton and Zalcstein \cite{lz77}).
	  The compressed word problem for every RAAG is in \P (polynomial time) and \P-complete if the RAAG is non-abelian~\cite{LohreySpringerbook2014}.

	\end{itemize}
In all the above mentioned results, the group is fixed, i.e., not part of the input. In general, it makes no sense to input an arbitrary finitely generated
group, since there are uncountably many such groups. On the other hand,
if we restrict to finitely generated groups with a finitary description, one may also consider a uniform version of the word problem/power word problem/compressed word problem,
where the group is part of the input. We will consider the uniform power word problem for graph products for a fixed countable class $\mathcal{C}$ of finitely generated groups. We assume
that the groups in $\mathcal{C}$ have a finitary description.\footnote{We assume that the description of a group $G \in \mathcal{C}$ contains a finite generating set.
A typical example might be the class $\mathcal{C}$ of finitely generated matrix groups over the field $\mathbb{Q}$. In this case, the description
of a group $G$ would consist of an integer $d \ge 1$ (the dimension) and a list of matrices $A_1, \ldots, A_n \in \mathsf{GL}_d(\mathbb{Q})$ (the generators
of the matrix group). Other examples are classes of finitely presented groups given by particular finite presentations, \eg{} hyperbolic groups given as a Dehn presentation.  The precise detail of the description of groups will be not important for us.}
 Then a graph product is given by a list $G_1, \ldots G_n$ of base groups from $\mathcal{C}$ together with
an undirected graph on the indices $1, \ldots, n$. For this setting Kausch \cite{kausch2017parallel} proved that the uniform word problem for graph products
belongs to $\CeqL^{\UWP(\mathcal{C})}$, i.e., the counting logspace class $\CeqL$ with an oracle for the uniform word problem for the class $\mathcal{C}$
(we write $\UWP(\mathcal{C})$ for the latter). We extend this result to the power word problem under the additional assumption that no group in $\cC$ contains an element of order two.
More precisely, we show that the uniform power word problem for graph products over that class $\mathcal{C}$ of base groups belongs to the closure of
$\CeqL^{\UPowWP(\mathcal{C})}$ under $\uAC^0$-Turing-reductions, where $\UPowWP(\mathcal{C})$ denotes the uniform power word problem for
the class $\mathcal{C}$. Analogous results for the uniform compressed word problem are not known. Indeed,
whether the uniform compressed word problem for RAAGs is solvable in polynomial time is posed as an open problem in~\cite{lohrey2007efficient}.

Our result for the uniform power word problem for graph products implies that the uniform power word problem for RAAGs can be solved
in polynomial time. We can apply this result to the knapsack problem for RAAGs.
 The knapsack problem is a classical optimization problem that originally has been formulated for the integers.
 Myasnikov et al.\ introduced the decision variant of the knapsack problem for an arbitrary finitely generated group $G$:
  Given $g_1, \dots, g_n, g \in G$, decide whether there are $x_1, \dots, x_n \in \mathbb{N}$ such that $g_1^{x_1} \cdots g_n^{x_n} = g$ holds in the group $G$~\cite{myasnikov2015knapsack}, see also \cite{FigeliusGLZ20,ganardiklz18,KoenigLohreyZetzsche2015a,LohreyZ18} for further work.
 For many groups $G$ one can show that,
if such $x_1, \ldots, x_n \in \mathbb{N}$ exist, then there exist such numbers of size $2^{\text{poly}(N)}$, where
$N$ it the total length of all words representing the group elements $g_1, \dots, g_n, g$.
This holds for instance for RAAGs. In this case, one nondeterministically guesses the binary encodings of numbers $x_1, \ldots, x_n$
and then verifies, using an algorithm for the power word problem, whether $g_1^{x_1} \cdots g_n^{x_n} g^{-1} = 1$
holds. In this way, it was shown in \cite{LohreyZ18} that for every RAAG the knapsack problem belongs to \NP
(using the fact that the compressed word problem and hence the power word problem for a fixed RAAG
belongs to \P). Moreover, if the commutation graph of the RAAG $G$ contains an induced subgraph $C_4$ (cycle on 4 nodes) or $P_4$ (path on 4 nodes),
then the knapsack problem for $G$ is $\NP$-complete~\cite{LohreyZ18}.
However, membership of the uniform version of the knapsack problem for RAAGs in $\NP$ remained open.
Our polynomial time algorithm for the uniform power word problem for RAAGs yields the missing piece:
the uniform knapsack problem for RAAGs is indeed $\NP$-complete.

\paragraph*{Related work}

Implicitly, (variants of) the power word problem have been studied long before.
In the commutative setting, Ge \cite{Ge93} has shown that one can verify in polynomial time an identity
$\alpha_1^{x_1} \alpha_2^{x_2}\cdots \alpha_n^{x_n} = 1$, where the $\alpha_i$ are elements of an algebraic
number field and the $x_i$ are binary encoded integers.

In \cite{GurevichS07}, Gurevich and Schupp present a polynomial time
algorithm for a compressed form of the subgroup membership problem for a free group $F$
where group elements are represented in the form $a_1^{x_1} a_2^{x_2}\cdots a_n^{x_n}$ with binary encoded integers $x_i$.
The $a_i$ must be, however, standard generators of the free group $F$. This is the same input representation as in
\cite{MyasnikovW17} (for nilpotent groups) and is more restrictive then our setting, where
we allow powers of the form $w^x$ for $w$ an arbitrary word over the group generators (on the other hand,
Gurevich and Schupp consider the subgroup membership problem, which is more general than the word problem).

Recently, the power word problem has been investigated in \cite{FigeliusGLZ20}.
In \cite{FigeliusGLZ20} it is shown that the power word problem for a wreath product of the form $G \wr \mathbb{Z}$ with $G$ finitely
generated nilpotent belongs to $\TC^0$. Moreover, the power word problem for iterated wreath products of the form
$\Z^r \wr (\Z^r \wr (\Z^r \cdots ))$ belongs to $\TC^0$. By a famous embedding theorem of Magnus \cite{Mag39}, it follows
that the power word problem for a free solvable groups is in $\TC^0$. Finally, in \cite{LohreyZ20} Zetzsche and the first author of this work showed that the power word
problem for a solvable Baumslag-Solitar group $\mathsf{BS}(1,q)$ belongs to $\TC^0$.

The present paper is a combination of the two conference papers \cite{LohreyW19} (by the first and third author) and \cite{StoberW22} (by the second and third author).
Here we also correct a mistake that occurred in \cite{StoberW22} and version 2 of this paper (see \cite{StoberW22arxivOld}): there, our results on graph products were stated without the additional assumption that the base groups do not have elements of order two. While we strongly conjecture this result to be true, our proof only works with this additional assumption. The key lies in the proof of \cref{lem:gp-rewrite_bounds_exponents} (which corresponds to Lemma 15 in \cite{StoberW22,StoberW22arxivOld})~-- indeed, the only place where we need this additional assumption. We give more technical details in \cref{rem:mistake_preprocessing} and \cref{rem:mistake}.

%Unfortunately, we only realized this during the preparation of the present manuscript.

\section{Preliminaries}\label{sec:prelims} %\section{Notation}\label{sec:notation}

For integers $a \leq b$ we write $[a,b]$ for the interval
$\set{x \in \Z}{a \leq x \leq b}$.
For an integer $z \in \mathbb{Z}$ let
us define $\llbracket x \rrbracket = [0,x]$ if $x \geq 0$
and $\llbracket x \rrbracket = [x,0]$ if $x < 0$.

\subsection{Words}

An \emph{alphabet} is a (finite or infinite) set $\Sig$; an element $a \in \Sig$ is called a \emph{letter}.
The free monoid over $\Sig$ is denoted by $\Sig^*$; its elements
are called {\em words}.
The multiplication of the free monoid is concatenation of words. The identity element is the empty word $1$.

Consider a word $w = a_1 \cdots a_n$ with $a_i \in \Sigma$.
For $A \subseteq \Sigma$ we write $\left\lvert w\right\rvert_A$ for the number
of $i \in [1,n]$ with $a_i \in A$ and we set $|w| = |w|_\Sigma$ (the length of $w$) and $|w|_a = |w|_{\{a\}}$ for $a \in \Sigma$.
A word $w$ has \emph{period} $k$ if $a_i= a_{i+k}$ for all $i$ with $i,i+k \in [1,n]$.

\subsection{Monoids}
  Let $M$ be an arbitrary monoid.
  Later, we will consider finitely generated monoids $M$, where elements of $M$ are described by words over an alphabet of monoid generators.
  To distinguish equality as words from equality as elements of $M$, we also write $x=_My$ (or $x=y$ in $M$) to indicate equality in $M$ (as opposed to equality as words). Let $x =_M uvw$ for some $x, u, v, w \in M$.
  We say $u$ is a \textit{prefix} of $x$, $v$ is a \textit{factor} of $x$, and $w$ is a \textit{suffix} of $x$.
  We call $u$ a \textit{proper prefix} if $u \neq x$.
  Similarly, $v$ is a \textit{proper factor} if $v \neq x$ and $w$ is a \textit{proper suffix} if $w \neq x$.

  An element $u\in M$ is \emph{primitive} if $u\neq_M v^k$ for any $v \in M$ and $k >1$.
  Two elements $u, v \in M$ are \emph{transposed} if there are $x, y \in M$ such that $u =_M xy$ and $v =_M yx$.
  We call $u$ and $v$ \emph{conjugate} if there is an element $t \in M$ such that $ut = tv$ (note that this is also sometimes called left-conjugate in the literature).
  For a free monoid $\Sigma^*$, two words $u,v$ are transposed if and only if they are conjugate. In this case, we also say that the word $u$ is a \emph{cyclic permutation} of the word $v$.

\newcommand\SGr{\Sig^*}

\subsection{Rewriting systems over monoids} \label{sec-rewriting}

A \emph{rewriting system} over the monoid $M$ is a subset $S \subseteq M \times M$. We write $\ell \to r$ if $(\ell, r) \in S$. The corresponding \emph{rewriting relation} $\rewrite{}{S}$ over $M$ is
defined by: $u \rewrite{}{S} v$ if and only if there exist $\ell\to r\in S$ and $s,t \in M$ such that
$u =_M s\ell t$ and $v =_M sr t$. We also say that $u$ can be rewritten to $v$ in one step.
Let $\rewrite{+}{S}$ be the transitive closure of $\rewrite{}{S}$ and $\rewrite{*}{S}$ the reflexive and transitive closure of $\rewrite{}{S}$.
 We write $u \rewrite{\le k}{S} v$ to denote that $u$ can be rewritten to $v$ using at most $k$ steps.
We say that $w \in M$ is \textit{irreducible} with respect to $S$ if there is no $v \in M$ with $w \rewrite{}{S} v$.
  The set of irreducible monoid elements is denoted as
  $
  \IRR(S) = \left\{ w \in M \mid w \text{ is irreducible} \right\}
  $.
  A rewriting system $S$ is called \emph{confluent} if, whenever $x\rewrite{*}{S}y$ and $x\rewrite{*}{S}z$, then there is some $w$ with $y\rewrite{*}{S}w$ and $z\rewrite{*}{S}w$. Note that if $S$ is confluent, then for each $v$ there is at most one $w \in \IRR(S)$ with $v\rewrite{*}{S}w$.
   A rewriting system $S$ is called \emph{terminating} if there is no infinite chain
\[x_0 \rewrite{}{S} x_1 \rewrite{}{S} \cdots x_{i-1} \rewrite{}{S} x_i \rewrite{}{S} \cdots.\]
  We write $M/S$ for
  the quotient monoid $M/\equiv_S$, where $\equiv_S$ is the smallest congruence
  relation on $M$ that contains $S$.

  The above notion of a rewriting system over a monoid $M$ is a generalization
  of the notion of a {\em string rewriting system}, which is a rewriting system over a free monoid $\Sigma^*$.  For further details on rewriting systems we refer to \cite{bo93springer,jan88eatcs}.

\subsection{Partially commutative monoids} \label{sec-traces}

In this subsection, we introduce a few basic notations concerning
partially commutative monoids. More information can be found in \cite{DieRoz95}.

  Let $\Sigma$ be an alphabet of symbols.
  We do not require $\Sigma$ to be finite.
  Let $I \subseteq \Sigma \times \Sigma$ be a symmetric and irreflexive relation.
  The {\em partially commutative monoid} defined by $(\Sigma, I)$ is the quotient monoid
  \[ M(\Sigma, I) = \Sigma^*/\{(ab,ba)\mid (a,b)\in I\}.\]
  Thus, the relation $I$ describes which generators commute; it is called the \emph{commutation relation} or \emph{independence relation}.
  The relation $D = (\Sigma \times \Sigma) \setminus I$ is called \emph{dependence relation} and $(\Sigma, D)$  is called a \emph{dependence graph}.
The monoid $M(\Sigma, I)$ is also called a {\em trace monoid} and its elements are called \emph{traces} or \emph{partially commutative words}.
Note that for words $u,v \in \Sigma^*$ with $u =_{M(\Sigma, I)} v$ we have $\abs{u}_a=\abs{v}_a$ for every $a \in \Sigma$.
Hence, the length $\abs{w}$ and $\abs{w}_a$ for a trace $w \in M(\Sigma, I)$  is well-defined and we use this notation henceforth.

A letter $a$ is called a {\em minimal letter} of $w \in M(\Sigma,I)$ if
$w =_{M(\Sigma, I)} au$ for some $u\in M(\Sigma,I)$.
Likewise a letter $a$ is called a {\em maximal letter} of $w$ if $w =_{M(\Sigma, I)} ua$ for some $u\in M(\Sigma,I)$.
When we say that $a$ is minimal (maximal)
in $w \in \Sigma^*$, we mean that $a$ is minimal (maximal) in the trace represented by $w$. Note that if both $a$ and $b \neq a$ are minimal (maximal) letters of $w$, then
$(a,b) \in I$.
A \emph{trace rewriting system} is simply a rewriting system over a trace monoid
$M(\Sigma, I)$ in the sense of \cref{sec-rewriting}. If $\Delta \sse \Sigma$ is a subset, we write $M(\Delta, I)$ for the submonoid of $M(\Sigma, I)$ generated by $\Delta$.

  Elements of a partially commutative monoid can represented by directed acyclic graphs: Let $w = a_1 \cdots a_n$ with $a_i \in \Sigma$. We define the \emph{dependence graph} of $w$ as follows:
  The node set is $[1,n]$ and there is an edge $i \to j$ if and only if $i < j$ and $(a_i, a_j) \in D$. Then, for two words $u,v \in \Sigma^*$ we have
  $u =_{M(\Sigma,I)} v$  if and only if the dependence graphs of $u$ and $v$ are isomorphic (as labeled directed graphs).
  The dependence graph of a trace $v \in M(\Sigma, I)$ is the dependence graph of (any) word representing $v$.
  The trace $v$ is said to be \emph{connected} if its dependence graph is weakly connected, or, equivalently, if the induced subgraph of
   $(\Sigma,D)$ consisting only of the letters occurring in $v$ is connected.
   The \emph{connected components} of the trace $v$ are the weakly connected components of the dependence graph of $v$.

\subsubsection{Levi's lemma}

 As a consequence of the representation of traces
by dependence graphs,
one obtains Levi's lemma for traces (see e.g.
\cite[p.~74]{DieRoz95}), which is one of the fundamental
facts in trace theory. The formal statement is as follows.

\begin{lemma}[Levi's lemma] \label{lemma-levi}
Let $M = M(\Sigma,I)$ be a trace monoid and
$u_1, \ldots, u_m, v_1, \ldots, v_n \in M $. Then
\[ u_1u_2 \cdots u_m =_M v_1 v_2 \cdots  v_n\] if and only if
there exist $w_{i,j} \in M(\Sigma,I)$ (for $i \in [1,m]$, $j \in [1,n]$) such that
\begin{itemize}
\item $u_i =_M w_{i,1}w_{i,2}\cdots w_{i,n}$ for every $i \in [1,m]$,
\item $v_j =_M w_{1,j}w_{2,j}\cdots w_{m,j}$ for every $j \in [1,n]$, and
\item $(w_{i,j}, w_{k,\ell}) \in I$ if $1 \leq i < k \leq m$ and $n \geq j > \ell \geq 1$.
\end{itemize}
\end{lemma}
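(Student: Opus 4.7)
The plan is to leverage the dependence-graph representation of traces introduced just above. The key ingredient, which I would record as a preliminary observation, is the standard dictionary from trace theory: a factorization $t =_M x_1 \cdots x_k$ in $M(\Sigma, I)$ is in bijective correspondence with an ordered partition $X_1, \ldots, X_k$ of the node set of the dependence graph $G$ of $t$ such that $G$ contains no ``backward'' edge from $X_r$ to $X_s$ with $r > s$; the subtrace induced by $X_s$ (as a labeled subgraph of $G$) is then precisely $x_s$. Once this correspondence is in hand, both directions of Levi's lemma reduce to combinatorial checks on $G$.

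For the $(\Leftarrow)$ direction I would argue
\[
  u_1 \cdots u_m \;=_M\; \prod_{i=1}^{m}\prod_{j=1}^{n} w_{i,j} \;=_M\; \prod_{j=1}^{n}\prod_{i=1}^{m} w_{i,j} \;=_M\; v_1 \cdots v_n,
\]
where the outer equalities are immediate from the first two hypotheses, and the middle equality is obtained by transforming the row-major product into the column-major product by a sequence of adjacent swaps. An adjacent swap of $w_{i,j}$ with $w_{k,\ell}$ is needed exactly when the two pairs appear in opposite relative order on the two sides, which happens precisely when $i < k$ and $j > \ell$; the third hypothesis then guarantees that every letter of $w_{i,j}$ commutes with every letter of $w_{k,\ell}$, so the swap is valid in $M$.

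For the $(\Rightarrow)$ direction set $t := u_1 \cdots u_m =_M v_1 \cdots v_n$ and let $G$ be the dependence graph of $t$. The two given factorizations correspond to two ordered partitions $U_1, \ldots, U_m$ and $V_1, \ldots, V_n$ of the node set of $G$, each free of backward edges. I would then define $W_{i,j} := U_i \cap V_j$ and let $w_{i,j}$ be the subtrace of $t$ induced by $W_{i,j}$. The identity $u_i =_M w_{i,1} w_{i,2} \cdots w_{i,n}$ holds because the family $(W_{i,j})_{j=1}^{n}$ is an ordered partition of $U_i$ with no backward edges: a backward edge from $W_{i,\ell}$ to $W_{i,j}$ with $\ell > j$ would also be a backward edge for the partition $(V_j)_j$, contradicting the corresponding property for $v_1 \cdots v_n$. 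Symmetrically $v_j =_M w_{1,j} \cdots w_{m,j}$. For the commutation condition, take $a \in W_{i,j}$ and $b \in W_{k,\ell}$ with $i < k$ and $j > \ell$: no edge $b \to a$ exists in $G$ (else it would be backward for $(U_i)_i$) and no edge $a \to b$ exists either (else it would be backward for $(V_j)_j$), so the labels of $a$ and $b$ are independent. Since this holds pointwise, $(w_{i,j}, w_{k,\ell}) \in I$ in the sense that each letter of one commutes with each letter of the other.

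The only real obstacle is cleanly stating the correspondence between trace factorizations of $t$ and backward-edge-free ordered partitions of the nodes of the dependence graph of $t$; this is well known in trace theory but deserves an explicit lemma before invoking it. Once that correspondence is fixed, both directions reduce to bookkeeping on $G$, and I expect no additional technical difficulty.
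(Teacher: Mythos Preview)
Your argument is correct and follows the standard approach. The paper itself does not give a proof of Levi's lemma: it merely states that the lemma is a consequence of the dependence-graph representation of traces and refers to \cite[p.~74]{DieRoz95}, adding only the remark that the infinite-alphabet case reduces to the finite one by restricting to the letters that actually occur. Your write-up fills in precisely the details that the paper omits, and does so along the same line the paper alludes to (dependence graphs), so there is no divergence in approach---you simply supply the proof where the paper cites it.
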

\noindent
The situation in the lemma will be visualized by a
diagram of the following kind. The $i$--th column
corresponds to $u_i$, the $j$--th row (read from bottom to top)
corresponds to $v_j$, and the intersection of the $i$--th
column and the $j$--th row represents $w_{i,j}$.
Furthermore, $w_{i,j}$ and $w_{k,\ell}$ are independent
if one of them is left-above the other one. So, for instance,
all $w_{i,j}$ in the red part are independent from all $w_{k,\ell}$ in the blue part.

\medskip

\begin{center}
  \begin{tabular}{c"c|c|c|c|c|}\hline
  $v_n$  & \textcolor{red}{$w_{1,n}$} & \textcolor{red}{$w_{2,n}$} & $w_{3,n}$ & \dots  & $w_{m,n}$ \\ \hline
  \vdots & \vdots    & \vdots    & \vdots    & \vdots & \vdots    \\ \hline
  $v_3$  & \textcolor{red}{$w_{1,3}$} & \textcolor{red}{$w_{2,3}$} & $w_{3,3}$ & \dots  & $w_{m,3}$ \\ \hline
  $v_2$  & $w_{1,2}$ & $w_{2,2}$ & \textcolor{blue}{$w_{3,2}$} & \dots  & \textcolor{blue}{$w_{m,2}$} \\ \hline
  $v_1$  & $w_{1,1}$ & $w_{2,1}$ & \textcolor{blue}{$w_{3,1}$} & \dots  & \textcolor{blue}{$w_{m,1}$} \\ \thickhline
         & $u_1$     & $u_2$     & $u_3$     & \dots  & $u_m$
  \end{tabular}
  \end{center}

\medskip\noindent
Usually, Levi's lemma is formulated for the case that the alphabet
$\Sigma$ is finite. But the case that $\Sigma$ is finite already implies
the general case with $\Sigma$ possibly infinite: simply replace the trace
monoid $M(\Sigma,I)$ by $M(\Sigma',I')$, where $\Sigma'$ contains all symbols
occurring in one of the traces $u_i, v_j$ and $I'$ is the restriction of $I$
to $\Sigma'$.

A consequence of Levi's Lemma is that
trace monoids are cancellative, i.e., $usv=utv$ implies $s=t$ for all
traces $s,t,u,v\in M$.

 \subsubsection{Projections to free monoids}
  \label{sec:pojection-to-free-monoids}

  It is a well-known result
\cite{DUBOC19851,duboc1986some,wrathall1988word} that every trace monoid can be embedded into a direct product of free monoids. In this section we recall the corresponding results.

 Consider a trace monoid $M = M(\Sigma, I)$ with the property that there exist
 finitely many sets
$A_i \subseteq \Sigma$ ($i \in [1,k]$ for some $k\in \mathbb{N}$) fulfilling the following property:
  \begin{equation*}
      (a, b) \in D \text{ if and only if } \exists i \in [1,k] : a,b \in A_i.
  \end{equation*}
 Since $D$ is reflexive this implies that for every $a\in\Sigma$ there is an $i$ such that $a\in A_i$.
  %Note that we omit the requirement $a \in A_i \implies a^{-1} \in A_i$ given in %\cite{wrathall1988word}, because the embedding of a trace monoid into a RAAG does %not produce any $a^{-1}$.
  All trace monoids $M(\Sigma,I)$ that will appear in this paper have the above
 property if one takes for the $A_i$ the maximal cliques in the dependence graph $(\Sigma,D)$~\cite{duboc1986some}. If $\Sigma$ is finite, one can take for the $A_i$
 also all sets $\{a, b\}$ with $(a, b) \in D$ together with all singletons $\{a\}$ with $a$ an isolated vertex in $(\Sigma,D)$~\cite{DUBOC19851}.

  Let $\pi_i: \Sigma^* \to A_i^*$ be the projection to the free monoid $A_i^*$ defined by $\pi_i(a) = a$ for $a \in A_i$ and $\pi_i(a) = 1$ otherwise.
We define a projection $\Pi: \Sigma^* \to A_1^*\times\cdots\times A_k^*$
to a direct product of free monoids by $\Pi(w) = (\pi_1(w), \dots, \pi_k(w))$.
It is straightforward to see that, if $u =_M v$, then also  $\Pi(u) = \Pi(v)$. Hence, we can consider $\Pi$ also as a monoid morphism $\Pi : M \to A_1^*\times\cdots\times A_k^*$ (which from now on we denote by the same letter $\Pi$).
  We will make use of the following two lemmata presented in~\cite{duboc1986some}.

    \begin{lemma}
    [\mbox{\cite[Lemma~1]{wrathall1988word}}, \mbox{\cite[Proposition~1.2]{duboc1986some}}]
    \label{lem:projection_eq_gp}
    Let $M = M(\Sigma, I)$.
    For $u, v \in \Sigma^*$ we have $u =_M v$ if and only if $\Pi(u) = \Pi(v)$.
  \end{lemma}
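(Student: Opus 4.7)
My plan is to prove the two directions separately. The forward direction is routine: if $u =_M v$, then $u$ and $v$ are related by a finite sequence of elementary commutations $xaby \leftrightarrow xbay$ with $(a,b) \in I$. For such a single commutation and any $i \in [1,k]$, I check three cases for the projection $\pi_i$: if neither $a$ nor $b$ lies in $A_i$ they both get erased; if exactly one lies in $A_i$ the other is erased and the surviving letter sits in the same position; and the case where both $a, b \in A_i$ cannot occur, because the defining property of the $A_i$ would then force $(a,b) \in D$, contradicting $(a,b) \in I$. Therefore $\pi_i(u) = \pi_i(v)$ for every $i$, and so $\Pi(u) = \Pi(v)$.

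For the backward direction I would argue by induction on $|u|$. The base case $|u| = 0$ follows because the $A_i$ cover $\Sigma$ (a consequence of $D$ being reflexive): if all $\pi_i(v) = 1$, then $v$ can contain no letter at all. For the induction step write $u = au'$ and let $i$ be any index with $a \in A_i$. From $\pi_i(u) = a\,\pi_i(u')$ and $\pi_i(u) = \pi_i(v)$ I know that the first letter of $v$ lying in $A_i$ is an occurrence of $a$, so I may factor $v = v_1 a v_2$ with $v_1$ containing no letter of $A_i$.

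The crux is then to show that $a$ commutes (in $I$) with every letter $b$ occurring in $v_1$, so that $v =_M a v_1 v_2$. Suppose for contradiction that $(a,b) \in D$ for some letter $b$ in $v_1$. By the defining covering property of the $A_j$ there exists some $j$ with $\{a,b\} \subseteq A_j$. Then $\pi_j(u)$ starts with $a$, whereas $\pi_j(v) = \pi_j(v_1)\,a\,\pi_j(v_2)$ and $\pi_j(v_1)$ contains $b$, so its first letter is not $a$ — contradicting $\pi_j(u) = \pi_j(v)$. Hence all letters of $v_1$ are independent of $a$, giving $v =_M a\,v_1 v_2$.

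It remains to set $v' = v_1 v_2$ and verify $\Pi(u') = \Pi(v')$ componentwise so that the induction hypothesis yields $u' =_M v'$ and hence $u =_M v$. For indices $j$ with $a \in A_j$, the same argument as above forces $v_1$ to contain no letter of $A_j$ (otherwise such a letter would be dependent on $a$ but commute with $a$), so stripping the leading $a$ from $\pi_j(u) = \pi_j(v)$ yields $\pi_j(u') = \pi_j(v')$; for $j$ with $a \notin A_j$ the projection $\pi_j$ simply ignores the $a$ on both sides, and $v_1$ can be merged with $v_2$ trivially. The main obstacle is precisely the commutation argument in the previous paragraph — it is the one place that actually uses the covering property of the $A_i$ beyond the ``forward'' check.
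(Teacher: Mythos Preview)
Your proof is correct. The paper does not give its own proof of this lemma --- it is stated with citations to \cite{wrathall1988word} and \cite{duboc1986some} and used as a black box --- so there is nothing to compare against. Your argument is the standard one: the forward direction is immediate since each $A_i$ contains no $I$-related pair, and the backward direction proceeds by peeling off the first letter and using the covering property to show it can be commuted to the front of $v$. One small point worth making explicit in the final verification step: when $a \in A_j$ and $c$ is a hypothetical letter of $v_1$ lying in $A_j$, you need $c \neq a$ to conclude $(a,c) \in D$ contradicts the already-established independence; this holds because $v_1$ avoids $A_i$ and $a \in A_i$.
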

Thus,  $\Pi$ is an \emph{injective} monoid morphism $\Pi : M \to A_1^*\times\cdots\times A_k^*$.
  \begin{lemma}
    [\mbox{\cite[Proposition~1.7]{duboc1986some}}]
    \label{lem:power_gp}
    Let $M = M(\Sigma, I)$, $w \in \Sigma^*$ and $t > 1$.
    Then, there is $u \in \Sigma^*$ with $w =_M u^t$ if and only if there is a tuple
    $\vec{v} \in \prod_{i=1}^k A_i^*$ with $\Pi(w) = \vec{v}^t$.
  \end{lemma}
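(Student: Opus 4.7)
The forward direction is immediate: applying $\Pi$ to $w =_M u^t$ gives $\Pi(w) = \Pi(u)^t$, so $\vec{v} := \Pi(u)$ works. For the backward direction, suppose $\Pi(w) = \vec{v}^{\,t}$. The plan is to recover $u$ by partitioning the letter occurrences in the dependence graph of $w$ into $t$ ordered blocks $B_1, \ldots, B_t$, each yielding one copy of $u$. Concretely, for each letter occurrence $x$ of some letter $a$ in $w$ and each $i$ with $a \in A_i$ (so that $v_i \ne 1$), let $p_i(x) \in [1, t\lvert v_i\rvert]$ denote the position of $x$ in the totally ordered $A_i$-chain of the dependence graph of $w$ (this chain exists since $A_i$ is a clique in the dependence graph), and tentatively set $b_i(x) := \lceil p_i(x) / \lvert v_i\rvert \rceil \in [1, t]$.

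The crux is to show that $b_i(x) = b_j(x)$ whenever $a \in A_i \cap A_j$, so that $b(x) := b_i(x)$ is well defined. Writing $\pi_i^j : A_i^* \to (A_i \cap A_j)^*$ for the canonical projection, the hypothesis $\Pi(w) = \vec{v}^{\,t}$ together with the fact that $\pi_i^j \circ \pi_i = \pi_j^i \circ \pi_j$ (both compose to the projection of $M$ onto $(A_i \cap A_j)^*$) yields $(\pi_i^j(v_i))^t = (\pi_j^i(v_j))^t$ in the free monoid $(A_i \cap A_j)^*$. Since $t$-th roots in a free monoid are unique, $\pi_i^j(v_i) = \pi_j^i(v_j)$; a short counting argument along the $A_i \cap A_j$-chain (the $m$-th block of length $\lvert v_i\rvert$ inside the $A_i$-chain projects exactly to the $m$-th block of length $\lvert \pi_i^j(v_i)\rvert$ inside the $A_i \cap A_j$-chain, and likewise from the $j$ side) then forces $b_i(x) = b_j(x)$.

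With $b$ well defined, set $B_m := \{x : b(x) = m\}$. I verify that the sub-dependence-graph of $w$ induced by $B_m$ is the dependence graph of a trace $u_m$ with $\pi_i(u_m) = v_i$ for every $i$, and that every dependence edge between $B_m$ and $B_{m'}$ with $m < m'$ is directed from $B_m$ to $B_{m'}$; hence $w =_M u_1 u_2 \cdots u_t$. Since $\Pi(u_m) = \vec{v}$ for all $m$, the injectivity of $\Pi$ (\cref{lem:projection_eq_gp}) gives $u_1 =_M \cdots =_M u_t =: u$, so $w =_M u^t$. The main obstacle is the well-definedness of $b(x)$; once that is secured, the remaining verifications are routine bookkeeping with the dependence graph.
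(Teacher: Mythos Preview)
The paper does not give its own proof of this lemma; it is quoted verbatim from Duboc~\cite{duboc1986some} and only a one-line remark is added to pass from finite to infinite $\Sigma$. So there is no paper proof to compare against directly, only Duboc's original.

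Your argument is correct. The key identity $\pi_i^j(v_i)=\pi_j^i(v_j)$ (obtained from uniqueness of $t$-th roots in a free monoid) is exactly the right ingredient, and the dependence-graph partition into blocks $B_1,\ldots,B_t$ is a valid way to reconstruct $u$. The verifications you label ``routine bookkeeping'' really are routine: any dependence edge between $B_m$ and $B_{m'}$ lives in some $A_i$-chain, and there the block index is monotone in the chain position, so all such edges go forward; and the $A_i$-chain restricted to $B_m$ is precisely the $m$-th copy of $v_i$, giving $\pi_i(u_m)=v_i$.

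Compared with Duboc's original, you take a slightly different route. Duboc first proves a \emph{reconstruction lemma} characterising the image of $\Pi$: a tuple $(w_1,\ldots,w_k)\in\prod_i A_i^*$ equals $\Pi(w)$ for some trace $w$ if and only if $\pi_i^j(w_i)=\pi_j^i(w_j)$ for all $i,j$. Proposition~1.7 then follows in two lines: from $\Pi(w)=\vec v^{\,t}$ one gets $\pi_i^j(v_i)=\pi_j^i(v_j)$, hence $\vec v=\Pi(u)$ for some $u$ by reconstruction, and injectivity of $\Pi$ gives $w=_M u^t$. Your proof unfolds the reconstruction step by hand via the dependence graph instead of invoking it as a black box. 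The trade-off: Duboc's version is shorter once the reconstruction lemma is in place and isolates a reusable tool; yours is self-contained and makes the combinatorics explicit.
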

  In \cite{duboc1986some} these lemmata are only proved for the case that
  $\Sigma$ is finite, but as for \hyperref[lemma-levi]{Levi's Lemma} one obtains the general case
  by restricting $(\Sigma,I)$ to those letters that appear in the traces involved.

Projections onto free monoids were used in~\cite{duboc1986some}  in order
  to show the following lemmata.

\begin{lemma}[\mbox{\cite[Corollary~3.13]{duboc1986some}}]\label{lemm:conjugacy_equivalent}
   Let $M = M(\Sigma, I)$
  and $u,v \in M$. Then there is some $x \in M$ with $xu =_M vx$ if and only if $u$ and $v$ are related by a sequence of transpositions, \ie{} there are $y_1, \dots, y_k$ such that $u = y_1$, $v = y_k$ and $y_{i+1}$ is a transposition of $y_i$.
\end{lemma}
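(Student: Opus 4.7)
The plan is to prove the two directions of the equivalence separately.

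For the easy direction, suppose $u = y_1, y_2, \dots, y_k = v$ is a chain in which each $y_{i+1}$ is a transposition of $y_i$. I would first observe that if $y =_M pq$ and $y' =_M qp$, then the element $z = q$ satisfies $zy =_M qpq =_M (qp)q = y'z$. Hence for each consecutive pair in the chain there is $x_i \in M$ with $x_i y_i =_M y_{i+1} x_i$, and then $x := x_{k-1}\cdots x_1$ satisfies $xu =_M vx$ by iterated substitution.

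For the converse, I would induct on $|x|$. The base case $|x| = 0$ forces $u =_M v$, and the trivial chain of length one suffices. For the inductive step, pick a minimal letter $a$ of $x$ and write $x =_M a x'$, so that $ax'u =_M vax'$; note that $a$ is then a minimal letter of $vax'$ as well. I would split on whether $a$ is minimal in $v$. In Case~1, $v =_M a v'$ for some $v' \in M$; left cancelling $a$ gives $x'u =_M v'ax'$. Setting $v'' := v'a$, this becomes $x'u =_M v''x'$, and $v$ and $v''$ are transpositions of each other. The induction hypothesis applied to $x'$ yields a chain of transpositions from $u$ to $v''$, which is then extended by the single step $v'' \to v$. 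In Case~2, $a$ is not a minimal letter of $v$. The crux is to show that $a$ nonetheless commutes with every letter appearing in $v$, so that $av =_M va$ in $M$. For this I would apply Levi's lemma to the identity $v \cdot ax' =_M a \cdot y$, where $y$ is chosen so that $vax' =_M ay$ (such a $y$ exists because $a$ is minimal in $vax'$). Since $a$ is a single letter and, by the case assumption, cannot be split off on the left of $v$, the resulting $2\times 2$ diagram forces every letter of $v$ to be independent of $a$. Once $av =_M va$, the equation becomes $ax'u =_M avx'$; cancelling $a$ gives $x'u =_M vx'$, to which the induction hypothesis applies directly.

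The main obstacle is the Levi argument in Case~2: one must deduce from the $2\times 2$ decomposition that a minimal letter of $vax'$ which fails to be minimal in $v$ must be in the commutation relation with every letter of $v$. Once this is established, the induction closes and, combined with the easy direction, yields the lemma.
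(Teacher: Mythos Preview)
Your proof is correct. The paper does not give its own proof of this lemma; it simply cites \cite[Corollary~3.13]{duboc1986some} and remarks that Duboc's argument proceeds via the projections $\Pi : M(\Sigma,I) \to \prod_i A_i^*$ onto free monoids. Your argument is a genuinely different, more elementary route: you work intrinsically in the trace monoid, using only Levi's lemma and left cancellation, and you induct on $|x|$. The Levi step in Case~2 is exactly right --- from $v\cdot(ax') =_M a\cdot y$ with $|a|=1$, the $2\times 2$ decomposition forces either $w_{1,1}=a$ (so $a$ is minimal in $v$, excluded by the case assumption) or $w_{1,1}=1$ and $w_{2,1}=a$, whence $v = w_{1,2}$ and $(v,a)\in I$. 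Duboc's projection method has the advantage of yielding the companion results (e.g.\ \cref{lemm:conjugacy_primitive}) in the same framework, but your direct argument is shorter and avoids the machinery of clique coverings entirely.
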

 \cref{lemm:conjugacy_equivalent} gives us a tool for checking conjugacy in  $ M(\Sigma, I)$; indeed, from now on, we will  most of the time use that conjugate elements are related by a sequence of transpositions.

 \begin{lemma}[\mbox{\cite[Proposition~3.5]{duboc1986some}}]\label{lemm:conjugacy_primitive}
   Let $M = M(\Sigma, I)$
  and $u,v,p,q \in M$ such that $u = p^k$ and $v = q^\ell$ with
$p$ and $q$ primitive and $k,\ell \geq 1$. Then $u$ and $v$ are conjugate
if and only if $k = \ell$ and $p$ and $q$ are conjugate.
\end{lemma}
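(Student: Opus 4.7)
The backward direction is immediate: if $pt = tq$ for some $t \in M$, then a straightforward induction on $k$ gives $p^k t = tq^k$, so $u = p^k$ and $v = q^k$ are conjugate.

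For the forward direction, \cref{lemm:conjugacy_equivalent} lets me assume that $v$ is obtained from $u$ by a single transposition step $y = ab \mapsto y' = ba$; the general case then follows by induction along the chain of transpositions. So the core claim to establish is: if $y = p^k$ with $p$ primitive and $y = ab$ in $M$, then $y' = ba = (p')^k$ for some primitive $p'$ transposed (hence conjugate) to $p$. Iterating this along the transposition chain that joins $u$ to $v$ yields $v = (p')^k$ with $p'$ conjugate to $p$. Combined with the hypothesis $v = q^\ell$ and $q$ primitive, uniqueness of the primitive root (which drops out of \cref{lem:power_gp} applied to $M$) then forces $k = \ell$ and $p' =_M q$, giving $p$ conjugate to $q$.

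To handle the single-transposition step, I apply \hyperref[lemma-levi]{Levi's Lemma} to the identity $p \cdot p \cdots p = a \cdot b$ (with $k$ copies of $p$ on the left). This produces factorizations $p = \alpha_i \beta_i$ for $i \in [1,k]$ such that $a = \alpha_1 \cdots \alpha_k$, $b = \beta_1 \cdots \beta_k$, and $(\beta_i,\alpha_j) \in I$ whenever $i < j$. In the free-monoid case Levi forces all but one index $j$ to give a trivial split ($\alpha_i \in \{1,p\}$), and with the unique nontrivial split $p = rs$ the computation $y' = s\, p^{k-1} r = (sr)^k$ recovers the classical conclusion with $p' = sr$ primitive. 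In the trace-monoid case, several splits may be nontrivial, but the Levi commutations $(\beta_i,\alpha_j) \in I$ for $i < j$ are strong enough to let one rearrange $y' = \beta_1 \cdots \beta_k \, \alpha_1 \cdots \alpha_k$ into $k$ regrouped factors, and the primitivity of $p$ forces these factors to represent a common trace $p' \in M$ that is transposed to $p$.

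The main obstacle is exactly this alignment of the several possibly nontrivial Levi splits in the trace setting. A clean route that bypasses direct bookkeeping is via the injective projection $\Pi$ of \cref{lem:projection_eq_gp} into a product of free monoids: in each coordinate $A_i^*$ the classical free-monoid result for conjugate powers of primitive words applies directly to $\pi_i(u) = \pi_i(p)^k$ and $\pi_i(v) = \pi_i(q)^\ell$, and the coordinate-wise data can then be reassembled into a single trace $p' \in M$ by means of \cref{lem:power_gp}. Primitivity of $p'$ is inherited from that of $p$, since conjugation preserves primitivity (any factorization $p' = r^t$ with $t > 1$ would, by conjugating back, give a proper power decomposition of $p$).
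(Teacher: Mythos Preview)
The paper does not prove this lemma; it simply cites Duboc~\cite{duboc1986some}. So the comparison is between your attempt and (what one can infer of) Duboc's argument.

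Your projection approach is the right idea, but the proof has a genuine gap. After projecting, you get that $\pi_i(u)=\pi_i(p)^k$ and $\pi_i(v)=\pi_i(q)^\ell$ are conjugate words in each free monoid $A_i^*$. Two problems arise. First, the ``classical free-monoid result for conjugate powers of primitive words'' does not apply directly, because $\pi_i(p)$ and $\pi_i(q)$ need not be primitive even though $p$ and $q$ are. You have to write $\pi_i(p)=\tilde p_i^{\,s_i}$ with $\tilde p_i$ primitive, observe that primitivity of $p$ (via \cref{lem:power_gp}) gives $\gcd_i s_i=1$, and then argue from $k s_i=\ell r_i$ that $k=\ell$. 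This is fixable, but it is not ``direct''.

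Second --- and this is the real gap --- once you know $k=\ell$ and that $\pi_i(p)$ is conjugate to $\pi_i(q)$ for every $i$, you still need to conclude that $p$ and $q$ are conjugate in $M$. Your invocation of \cref{lem:power_gp} does not do this: that lemma reassembles a $k$-th root from componentwise $k$-th powers, it says nothing about reassembling a conjugator. The implication ``componentwise conjugacy under all $\pi_i$ $\Rightarrow$ conjugacy in $M$'' is itself a nontrivial statement (and is in fact part of the substance of Duboc's Section~3). Your Levi-based paragraph does not fill this hole either: the claim that the $k$ regrouped factors of $ba$ ``represent a common trace $p'$ transposed to $p$'' is asserted but not argued, and in fact the resulting $p'$ need not be a \emph{single} transposition of $p$ (take $p=cba$ with $(a,c)\in I$; the relevant $p'$ is $abc$, which is conjugate to $p$ but only via a chain of two transpositions). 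Finally, your last sentence is circular: you argue primitivity of $p'$ by ``conjugating back'' to $p$, which presupposes the very conjugacy you have not established.
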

Note that \cref{lemm:conjugacy_primitive} implies that
if $u$ is conjugate to a primitive trace, then $u$ must be primitive as well.

\subsection{Trace monoids defined by finite graphs}\label{sec:big_trace_monoid_prelims}

\newcommand{\cLelA}{\zeta}
\newcommand{\cLelB}{\xi}
\newcommand{\cLelC}{\nu}
\newcommand{\cLelD}{\kappa}

As a first step towards graph products let us consider trace monoids of a special form:  Let $\cL$ be a finite set of size  $\sigma = \lvert\cL\rvert$ and \(I \subseteq \cL \times \cL\) be irreflexive and symmetric (\ie $(\cL,I)$ is a finite undirected simple graph). Moreover, assume that for each $\cLelA \in \cL$ we are given a (possibly infinite) alphabet $\Gamma_\cLelA$ such that $\Gamma_\cLelA \cap \Gamma_\cLelB = \emptyset$ for $\cLelA \neq \cLelB$.
By setting $\Gamma = \bigcup_{\cLelA\in \cL}\Gamma_\cLelA$ and $I_\Gamma = \set{(a,b)}{(\cLelA,\cLelB) \in I, a\in \Gamma_\cLelA, b\in \Gamma_\cLelB}$, we obtain a trace monoid $M = M(\Gamma, I_\Gamma)$. Henceforth, we simply write $I$ for $I_\Gamma$.  For $a \in \Gamma$ we define \(\alphabet(a) = \cLelA \) if \(a \in \Gamma_\cLelA\).
  For $\u = a_1\cdots a_k \in \Gamma^*$ we define $\alphabet(\u) = \{\alphabet(a_1), \dots, \alphabet(a_k)\}$.

The following lemma  characterizes the shape of a prefix, suffix or factor of a power in the above trace monoid $M$.

	\begin{lemma}
		\label{lem:factor_shape}
		Let $p \in M$ be connected and $k \in \mathbb{N}$.
		Then we have:
		\begin{enumerate}%[label=(\roman*)]
			\item If $p^k =_M uw$ for traces $u,w \in M(\Gamma, I)$, then
            there exist $s < \sigma$, $\ell,m \in \mathbb{N}$ and factorizations $p = u_i w_i$ for $i \in [1,s]$ such that
            \begin{itemize}
                \item $k = \ell+s+m$,
                \item $u_i \neq 1 \neq w_i$ for all $i \in [1,s]$ and
                $(w_i,u_j) \in I$ for $i < j$,
                \item $u =_M p^{\ell}u_1\cdots u_s$ and $w =_M w_1\cdots w_s p^m$.
            \end{itemize}
			\item Given a factor $v$ of $p^k$ at least one of the following is true.
			\begin{itemize}
				\item $v = u_1\cdots u_a v_1 \cdots v_b w_1 \cdots w_c$ where $a, b, c \in \mathbb{N}$, $a + b + c \le 2\sigma - 2$, $u_i$ is a proper suffix of $p$ for $i \in [1,a]$, $v_i$ is a proper factor of $p$ for $i \in [1,b]$ and $w_i$ is a proper prefix of $p$ for $i \in [1,c]$.
				\item $v = u_1\cdots u_a p^b w_1 \cdots w_c$ where $a, b, c \in \mathbb{N}$, $a, c < \sigma$, $u_i$ is a proper suffix of $p$ for $i \in [1,a]$ and $w_i$ is a proper prefix of $p$ for $i \in [1,c]$.
			\end{itemize}
		\end{enumerate}
	\end{lemma}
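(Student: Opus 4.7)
The plan for Part~1 is to apply \hyperref[lemma-levi]{Levi's Lemma} to the identity $p\cdot p\cdots p=u\cdot w$ with $k$ copies of $p$ on the left and two factors on the right. This yields $u=x_1\cdots x_k$, $w=y_1\cdots y_k$ with $p=x_iy_i$ for every $i\in[1,k]$ and $(y_i,x_j)\in I$ for $i<j$. The key observation, using that $p$ is connected, is that whenever $(z,p)\in I$ holds for a subtrace $z$ of $p$, we must have $\alphabet(z)\cap\alphabet(p)=\emptyset$ (since shared letters cannot commute), and hence $z=1$. Applying this to $(y_i,x_j)=(y_i,p)\in I$ whenever $x_j=p$ with $j>i$ forces $y_i=1$, so $x_i=p$; symmetrically, if $y_j=p$ then $y_i=p$ for all $i>j$. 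Therefore the indices with $x_i=p$ form an initial segment $[1,\ell]$ and the indices with $y_i=p$ form a final segment $[k-m+1,k]$, and every middle index is a proper split. Setting $u_i=x_{\ell+i}$ and $w_i=y_{\ell+i}$ yields $u=p^\ell u_1\cdots u_s$, $w=w_1\cdots w_sp^m$ with $k=\ell+s+m$ and $(w_i,u_j)\in I$ for $i<j$ inherited from Levi.

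To bound $s<\sigma$, I track the images in $\cL$: $A_j=\alphabet(w_j)$ and $B_j=\alphabet(u_j)$. Since $p=u_jw_j$ we have $A_j\cup B_j=\alphabet(p)$, and the commutation $(w_j,u_{j'})\in I$ for $j<j'$ translates at the $\cL$-level to: every element of $A_j$ commutes with every element of $B_{j'}$. In particular $A_j\cap B_{j'}=\emptyset$, which combined with $A_{j'}\cup B_{j'}=\alphabet(p)$ yields $A_j\subseteq A_{j'}$. I claim the inclusion is strict: if $A_j=A_{j+1}=A$ then $B_{j+1}=\alphabet(p)\setminus A$, and the commutation between $A$ and $\alphabet(p)\setminus A$ leaves no dependence edges across the bipartition, contradicting connectedness of $p$. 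Hence $s\le|A_s|\le\sigma$, and the extremal case $|A_s|=\sigma$ is ruled out similarly: it would force $B_s$ to be a single vertex commuting with all of $A_{s-1}$, making it isolated in the induced dependence graph on $\alphabet(p)$, again contradicting connectedness.

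For Part~2 I write $p^k=\alpha v\beta$ and apply Part~1 twice: to $p^k=\alpha\cdot(v\beta)$ this gives $\alpha=p^\ell u_1\cdots u_{s_1}$ with each $u_i$ a proper prefix of $p$ and $s_1<\sigma$; to $p^k=(\alpha v)\cdot\beta$ it gives $\beta=w'_1\cdots w'_{s_2}p^m$ with each $w'_j$ a proper suffix of $p$ and $s_2<\sigma$. I then apply the three-factor form of \hyperref[lemma-levi]{Levi's Lemma} to $p^k=\alpha\cdot v\cdot\beta$, yielding decompositions $p=\alpha_i v_i\beta_i$ for each $i\in[1,k]$. The same ``connectedness forces $z=1$ whenever $(z,p)\in I$'' observation shows that indices with $\alpha_i=p$ form a prefix, indices with $\beta_i=p$ form a suffix, and indices with $v_i=p$ lie contiguously in between. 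If some $v_i=p$, the run of full copies gives an interior block $p^b$ with at most $s_1<\sigma$ proper-suffix pieces at the start and $s_2<\sigma$ proper-prefix pieces at the end, matching Case~2. Otherwise every non-trivial $v_i$ is a proper suffix (left end), proper factor (when both $\alpha_i$ and $\beta_i$ are non-trivial), or proper prefix (right end) of $p$, and the total count is bounded by $s_1+s_2\le 2\sigma-2$, matching Case~1. The main technical obstacle is the bound $s<\sigma$ in Part~1: the strict inclusion in the $A$-chain follows transparently from connectedness, but ruling out the extremal case $|A_s|=\sigma$ requires the separate isolated-vertex argument.
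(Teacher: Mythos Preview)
Your approach is essentially the paper's: for Part~1 both arguments apply Levi's Lemma and exploit a strictly monotone chain of alphabet sets along the proper splits (you track $A_j=\alphabet(w_j)$ increasing, the paper tracks $\alphabet(u_j)$ decreasing), and for Part~2 you use a single three-factor Levi decomposition of $p^k=\alpha\cdot v\cdot\beta$ where the paper iterates the two-factor version---an organizational rather than substantive difference. Two small points worth tightening: in Part~2, apply the three-factor Levi \emph{first} and then read off the Part-1 structure from its $\alpha$- and $\beta$-columns (so that $s_1,s_2$ refer to that same decomposition rather than to separate Levi applications); and in your extremal-case argument phrase it as ``assume $s=\sigma$'' (which forces the chain $A_1\subsetneq\cdots\subsetneq A_s$ to be tight, hence $|A_{s-1}|=\sigma-1$ and $|B_s|=1$) rather than ``$|A_s|=\sigma$'', since the latter alone does not force $B_s$ to be a singleton.
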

\noindent
	\Cref{fig:prefix_power} illustrates case (i) of \cref{lem:factor_shape}.

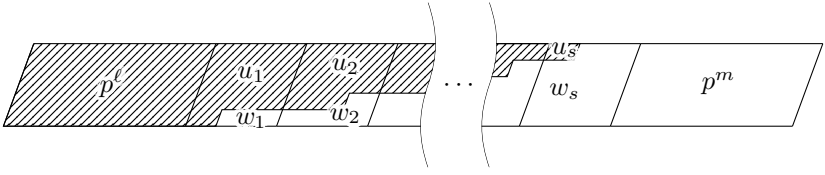
\begin{figure}

		\contourlength{0.15em}
		\centering
		\begin{tikzpicture}[yscale=0.55,xscale=0.4]
			\begin{scope}
				\clip (11,-1) to [bend left=25]  (11, 1) to [bend right=25]  (11, 3) -- (-3.1, 3) -- (-3.1, -1);

				\draw (-3, 0) -- (-2, 2);
				\draw (-3, 0) -- (3, 0);
				\draw (-2, 2) -- (4, 2);

				\draw (3, 0) -- (4, 2);
				\draw (3, 0) -- (6, 0);
				\draw (4, 2) -- (7, 2);

				\draw (6, 0) -- (7, 2);
				\draw (6, 0) -- (9, 0);
				\draw (7, 2) -- (10, 2);

				\draw (9, 0) -- (10, 2);
				\draw (9, 0) -- (12, 0);
				\draw (10, 2) -- (13, 2);

				\draw[pattern=north east lines] (-2, 2) -- (-3,0) -- (4, 0) -- (4.2, 0.4) -- (8.2, 0.4) -- (8.4, 0.8) -- (12, 0.8) -- (12, 2);

				\draw  (11,-1) to [bend left=25]  (11, 1) to [bend right=25]  (11, 3);
			\end{scope}

			\node at (12, 1) {$\,\cdots$};

			\begin{scope}

				\clip (13,-1) to [bend left=25]  (13, 1) to [bend right=25]  (13, 3) -- (24.1, 3) -- (24.1, -1);

				\draw (11, 0) -- (14, 0);
				\draw (12, 2) -- (15, 2);
				\draw (14, 0) -- (15, 2);

				\draw (14, 0) -- (17, 0);
				\draw (15, 2) -- (18, 2);
				\draw (17, 0) -- (18, 2);

				\draw (17, 0) -- (23, 0);
				\draw (18, 2) -- (24, 2);
				\draw (23, 0) -- (24, 2);

				\draw[pattern=north east lines] (12, 1.2) -- (13.6, 1.2) -- (13.8, 1.6) -- (15.8, 1.6) -- (16, 2) -- (12, 2);

				\draw (13,-1) to [bend left=25]  (13, 1) to [bend right=25]  (13, 3);
			\end{scope}
			\node at (0.5, 1) {\contour{white}{$\,p^\ell$}};
			\node at (5.1, 1.2) {\contour{white}{$\,u_1$}};
			\node at (8.2, 1.4) {\contour{white}{$\,u_2$}};
			\node at (15.5, 1.8) {\contour{white}{$u_s$}};
			\node at (5.1, 0.1) {\contour{white}{$\,w_1$}};
			\node at (8.2, 0.2) {\contour{white}{$\,w_2$}};
			\node at (15.5, 0.8) {\contour{white}{$w_s$}};
			\node at (20.5, 1) {\contour{white}{$\,p^m$}};
		\end{tikzpicture}
		\caption{A factorization of $p^k$ as in case 1 of \cref{lem:factor_shape}.}
		\label{fig:prefix_power}
	\end{figure}

 \begin{proof}
 Let us start with the first statement.
We apply \hyperref[lemma-levi]{Levi's Lemma} to the identity $p^k =_M uw$ and obtain the following diagram:
\begin{center}
  \begin{tabular}{c"c|c|c|c|c|c|c|}\hline
  $w$  & $w_{1}$ & $w_{2}$ & $w_{3}$  & $w_{4}$ & $\cdots$  &  $w_{k-1}$  & $w_{k}$ \\ \hline
  $u$  & $u_{1}$ & $u_{2}$ & $u_{3}$  & $u_{4}$ & $\cdots$  &  $u_{k-1}$  & $u_{k}$ \\ \thickhline
          & $p$          & $p$          & $p$            & $p$          & $\cdots$  & $p$ & $p$
  \end{tabular}
  \end{center}
  We have $(w_i, u_{i+1}) \in I$ and hence $\alphabet(w_i) \cap \alphabet(u_{i+1}) = \emptyset$ for all $i \in [1,k-1]$.
  Since $\alphabet(p) = \alphabet(u_j) \cup \alphabet(w_j)$ for all $j \in [1,k]$,
  this implies $\alphabet(u_{i+1}) \subseteq \alphabet(u_i)$. Now assume that
  $i \in [1,k-1]$ is such that $u_i \neq 1 \neq w_i$. We have
  $(u_{i+1}, w_i) \in I$. Since
  we cannot have
  $(u_i, w_i) \in I$ ($p$ is connected), we cannot have
  $\alphabet(u_i) \subseteq \alphabet(u_{i+1})$. Therefore, $\alphabet(u_{i+1}) \subsetneq \alphabet(u_{i})$ whenever
  $u_i \neq 1 \neq w_i$. It follows that there are $\ell,m \geq 0$ and $s < \sigma$
such that $k = \ell+s+m$ and
  \begin{itemize}
      \item $u_i = p$, $w_i = 1$ for $i \in [1,x]$,
      \item $u_i \neq 1 \neq w_i$, $u_i w_i = p$ for $i \in [x+1,x+s]$, and
      \item $u_i = 1$, $w_i = p$ for $i \in [x+s+1,k]$.
  \end{itemize}
  By renaming $u_{x+i}$ and $w_{x+i}$ into $u_i$ and $w_i$, respectively, for
  $i \in [1,s]$ we obtain factorizations
  $u =_M p^\ell u_1 \cdots u_s$ and  $w =_M w_1 \cdots w_s p^m$ for some $s < \sigma$
  and traces $u_i, w_i \in M\setminus \{1\}$ with $p =_M u_i w_i$.
  This yields statement 1.

  To derive statement 2, consider the factorization $p^k =_M u (vw)$. Applying the final conclusion of the previous paragraph, we obtain factorizations
  $u =_M p^x u_1 \cdots u_s$ and $vw =_M x_1 \cdots x_s p^\ell$ where $s < \sigma$,
  the $u_i$ are proper prefixes of $p$, the $x_i$ are proper suffixes of $p$ and
  $k = x + s + \ell$.

  We then consider two cases: if $\ell = 0$, then $vw =_M x_1 \cdots x_s$.
  Applying \hyperref[lemma-levi]{Levi's Lemma} to this factorization yields the following diagram:
  \begin{center}
  \begin{tabular}{c"c|c|c|c|c|c|}\hline
  $w$  & $w_{1}$ & $w_{2}$ & $w_{3}$   & $\cdots$  &  $w_{s-1}$  & $w_{s}$ \\ \hline
  $v$  & $v_{1}$ & $v_{2}$ & $v_{3}$  & $\cdots$  &  $v_{s-1}$  & $v_{s}$ \\ \thickhline
       & $x_1$          & $x_2$    & $x_3$    & $\cdots$  & $x_{s-1}$ & $x_s$
  \end{tabular}
  \end{center}
  Hence, $v =_M v_1 v_2 \cdots v_s$, where every $v_i$ is a prefix of the proper suffix $x_i$
  of $p$. Therefore, $v_i$ is a proper factor of $p$.

  Now assume that $\ell > 0$. Applying \hyperref[lemma-levi]{Levi's Lemma} to $vw =_M x_1 \cdots x_s p^\ell$ yields
  a diagram of the following form:
  \begin{center}
  \begin{tabular}{c"c|c|c|c|c|c|c|c|c|c}\hline
  $w$  & $w_{1}$ & $w_{2}$    & $\cdots$  &  $w_{s-1}$  & $w_{s}$ & $w_{s+1}$ & $w_{s+2}$ & $\cdots$ & $w_{s+\ell-1}$ & $w_{s+\ell}$\\ \hline
  $v$  & $v_{1}$ & $v_{2}$    & $\cdots$  &  $v_{s-1}$  & $v_{s}$ & $v_{s+1}$ & $v_{s+2}$  & $\cdots$ & $v_{s+\ell-1}$ & $v_{s+\ell}$\\ \thickhline
     & $x_1$          & $x_2$       & $\cdots$  & $x_{s-1}$ & $x_s$
            & $p$ & $p$ & $\cdots$ & $p$ & $p$
  \end{tabular}
  \end{center}
  To the factorizations $p =_M v_{s+i} w_{s+i}$ ($i \in [1,\ell]$) we apply the arguments
  used for the proof of statements 1 and 2. There are $y,z \geq 0$ and $t < \sigma$
such that $\ell = y+t+z$ and
  \begin{itemize}
      \item $v_{s+i} = p$, $w_{s+i} = 1$ for $i \in [1,y]$,
      \item $v_{s+i} \neq 1 \neq w_{s+i}$, $v_{s+i} w_{s+i} =_M p$ for
           $i \in [y+1,y+t]$, and
      \item $v_{s+i} = 1$, $w_{s+i} = p$ for $i \in [y+t+1,\ell]$.
  \end{itemize}
  We obtain $v =_M v_1 \cdots v_s p^y v_{s+y+1} \cdots v_{s+y+t}$
  where every $v_{s+y+i}$ ($i \in [1,t]$) is a proper prefix of $p$.
  If $y > 0$ then $(v_{s+1}, w_1 \cdots w_s) \in I$ implies $w_1 \cdots w_s = 1$.
  Hence, every $v_i$ ($i \in [1,s]$) is proper suffix of $p$
  (by a symmetric argument, we could write $v$ also as a concatenation of
  $s < \sigma$ many proper suffixes of $p$ followed by $t < \sigma$ many
  proper factors of $p$).

  Finally, assume that $y=0$. We get
  $v =_M v_1 \cdots v_s v_{s+y+1} \cdots v_{s+y+t}$ with every
  $v_i$ ($i \in [1,s]$) a proper factor of $p$ and every
  $v_{s+y+i}$ ($i \in [1,t]$) a proper prefix of $p$.
 \end{proof}

For a trace $u\in M = M(\Gamma,I)$ and  $\cLelA \in \cL$, we write $\abs{u}_\cLelA = \abs{u}_{\Gamma_\cLelA} = \sum_{a\in \Gamma_\cLelA} \abs{u}_a$. Note that, while the sum might be infinite, only finitely many summands are non-zero.

\begin{lemma}\label{lem:lettercount}
    Let $r,s,t,u\in M$ with $rs =_M tu$ and, for all $\cLelA \in \cL$, $\abs{s}_\cLelA \geq  \abs{u}_\cLelA$
    or, equivalently, $\abs{r}_\cLelA \leq  \abs{t}_\cLelA$. Then, as elements of $M$, $u$ is a suffix of $s$ and $r$ is a prefix of $t$.
    In particular, if for all $\cLelA \in \cL$ we have $\abs{s}_\cLelA =  \abs{u}_\cLelA$, then $s =_M u$ and $r =_M t$.
\end{lemma}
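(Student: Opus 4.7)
The natural approach is to apply Levi's Lemma (\cref{lemma-levi}) to the factorization $rs =_M tu$ (so $m=n=2$) and then extract information from the resulting commutation constraint using the letter-count hypothesis.

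Concretely, the first step is to obtain from Levi's Lemma traces $w_{1,1}, w_{1,2}, w_{2,1}, w_{2,2} \in M$ satisfying $r =_M w_{1,1}w_{1,2}$, $s =_M w_{2,1}w_{2,2}$, $t =_M w_{1,1}w_{2,1}$, $u =_M w_{1,2}w_{2,2}$, and crucially $(w_{1,2}, w_{2,1}) \in I$ (the upper-left/lower-right independence coming from the single pair with $i < k$ and $j > \ell$ in the statement of Levi's Lemma).

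The second step is to use the structure of $(\Gamma, I)$ coming from the finite graph $(\cL, I)$. Since $I$ on $\cL$ is irreflexive, letters inside the same alphabet $\Gamma_\cLelA$ are pairwise dependent. Hence $(w_{1,2}, w_{2,1}) \in I$ forces the $\cL$-supports to be disjoint: whenever $\cLelA \in \cL$ satisfies $|w_{1,2}|_\cLelA > 0$, then $|w_{2,1}|_\cLelA = 0$. Combined with the hypothesis
\[
|w_{2,1}|_\cLelA \;-\; |w_{1,2}|_\cLelA \;=\; |s|_\cLelA - |u|_\cLelA \;\geq\; 0
\]
for every $\cLelA$, this immediately yields $|w_{1,2}|_\cLelA = 0$ for all $\cLelA$, i.e.\ $w_{1,2} =_M 1$.

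The third step is just to read off the conclusions: from $w_{1,2}=1$ we get $r =_M w_{1,1}$ and $u =_M w_{2,2}$, so
\[
t \;=_M\; w_{1,1}w_{2,1} \;=_M\; r\,w_{2,1}, \qquad s \;=_M\; w_{2,1}w_{2,2} \;=_M\; w_{2,1}\,u,
\]
proving that $r$ is a prefix of $t$ and $u$ is a suffix of $s$. If moreover $|s|_\cLelA = |u|_\cLelA$ for every $\cLelA$, then the same letter-counting argument yields $|w_{2,1}|_\cLelA = 0$ for all $\cLelA$, hence $w_{2,1} =_M 1$, and the identities above collapse to $r =_M t$ and $s =_M u$.

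There is no real obstacle here; the only subtle point is the switch between independence of traces (at the $\Gamma$-level) and disjointness of supports (at the $\cL$-level), which relies on the irreflexivity of $I$ on $\cL$ and the fact that $\Gamma$ is partitioned as $\Gamma = \bigcup_{\cLelA \in \cL} \Gamma_\cLelA$ with the dependence relation of \cref{sec:big_trace_monoid_prelims}. Everything else is a direct consequence of Levi's Lemma together with the nonnegativity of the letter-count differences.
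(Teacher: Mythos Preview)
Your proof is correct and follows essentially the same approach as the paper: apply Levi's Lemma to $rs =_M tu$, use the independence of the two ``cross'' pieces together with the letter-count hypothesis to conclude that one of them is trivial, and read off the prefix/suffix relations. The only minor difference is in the ``in particular'' clause: you argue directly that the remaining cross piece $w_{2,1}$ also vanishes under the equality assumption, whereas the paper instead invokes the first part twice (once for each inequality); both are equally short and valid.
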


\begin{proof}
By \hyperref[lemma-levi]{Levi's Lemma}, there are $p,q,x,y \in M$ with $(x,y) \in I$ and $r=px$, $t = py$, $s =yq$, and $u = xq$. Because of the condition $\abs{s}_\cLelA \geq  \abs{u}_\cLelA$ for all $\cLelA \in \cL$, $x$ must be the empty trace.

    The second part of the lemma follows by using the first part for the two inequalities $\abs{s}_\cLelA \geq  \abs{u}_\cLelA$ and $\abs{u}_\cLelA \geq  \abs{s}_\cLelA$.
\end{proof}

\begin{lemma}\label{lem:p_factor}
    Let $p^\sigma u=_Mvp^\sigma$ for some primitive and connected trace $p$ and let $u \in M$ be a prefix of $p^k$ for some $k\in \N$. Then we have $u=v=p^\ell$ for some $\ell \in [0,k]$.
\end{lemma}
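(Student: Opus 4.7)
The plan is to lift the lemma from its free-monoid analogue via the projections from \cref{sec:pojection-to-free-monoids}. Let $A_1,\ldots,A_n$ be the maximal cliques of the dependence graph restricted to a finite sub-alphabet containing $\alphabet(p) \cup \alphabet(u) \cup \alphabet(v)$, and let $\pi_i : M \to A_i^*$ be the induced projections onto free monoids. By injectivity of $\Pi$ (\cref{lem:projection_eq_gp}) it suffices to establish $\pi_i(u) = \pi_i(v) = \pi_i(p)^\ell$ in each $A_i^*$ for a single integer $\ell \in [0,k]$ independent of $i$.

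First I would handle each projection separately. Fix $i$ with $\pi_i(p) \neq 1$ and let $q_i$ be the primitive root of $\pi_i(p)$ in the free monoid $A_i^*$, so $\pi_i(p) = q_i^{m_i}$ for some $m_i \geq 1$. Projecting the identity $p^\sigma u = v p^\sigma$ gives $q_i^{m_i \sigma} \pi_i(u) = \pi_i(v) q_i^{m_i \sigma}$, and since $u$ is a prefix of $p^k$ the word $\pi_i(u)$ is a prefix of $q_i^{m_i k}$, so both sides are prefixes of the infinite periodic word $q_i^\omega = q_i q_i q_i \cdots$. Because $q_i$ is primitive in $A_i^*$, the factor $q_i^{m_i \sigma}$ (which is non-empty since $m_i\sigma \geq 1$) occurs in $q_i^\omega$ only at positions divisible by $|q_i|$; since it begins at position $|\pi_i(v)|$, we obtain $\pi_i(v) = q_i^{\ell_i}$ for some $\ell_i \geq 0$, and cancelling the common factor $q_i^{m_i\sigma}$ yields $\pi_i(u) = q_i^{\ell_i}$ as well.

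Next I would use connectedness to show that the ratio $\ell_i/m_i$ is the same for all relevant $i$. For any $a \in A_i \cap \alphabet(p)$ we have $|u|_a = \ell_i|q_i|_a$ and $|p|_a = m_i|q_i|_a$, so $|u|_a/|p|_a = \ell_i/m_i$, a value that therefore depends only on letters of $\alphabet(p)$ sharing a common maximal clique. Since two letters are dependent iff they share a clique and $p$ is connected, this equality propagates along paths in the dependence graph restricted to $\alphabet(p)$, giving a uniform $r := |u|_a/|p|_a \in \Q_{\geq 0}$ with $\ell_i = m_i r$ for every $i$ with $\pi_i(p) \neq 1$.

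The key step is promoting $r$ to an integer. Writing $r = a/b$ in lowest terms forces $b \mid m_i$ for every relevant $i$, so the tuple $\vec{w} := (q_i^{m_i/b})_i$ is well-defined in $\prod_i A_i^*$ and satisfies $\vec{w}^{\,b} = \Pi(p)$. If $b \geq 2$, \cref{lem:power_gp} would produce a trace $q \in M$ with $q^b = p$, contradicting primitivity of $p$; hence $b = 1$ and $r \in \N$. Plugging back, $\pi_i(u) = \pi_i(v) = q_i^{m_i r} = \pi_i(p)^r$ for every $i$, so by \cref{lem:projection_eq_gp} we get $u = v = p^r$ in $M$, and $r \leq k$ is immediate from $|u| \leq k|p|$. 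The main obstacle is exactly this rationality-to-integrality step: the prefix condition only controls each $\pi_i(u)$ individually and does not by itself force $r$ to be integral, and combining the primitivity of $p$ in $M$ with the possible non-primitivity of the individual $\pi_i(p)$ seems to require precisely the $b$-th-root existence criterion of \cref{lem:power_gp} applied to the fractional tuple $\vec w$.
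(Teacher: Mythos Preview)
Your proof is correct and takes a genuinely different route from the paper's.

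The paper argues as follows: it first cancels copies of $p$ from $u$ and $v$ until $p$ is no longer a prefix of $u$, at which point \cref{lem:factor_shape} forces $u$ to be a prefix of $p^\sigma$; writing $p^\sigma =_M uw$ and comparing $uwu =_M vuw$ via \cref{lem:lettercount} gives $u=v$ directly; finally, the commutation $p^\sigma u =_M u p^\sigma$ together with Duboc's results \cite[Proposition~3.1, Theorem~1.5]{DUBOC19851} on commuting connected traces yields a common root $t$ with $p=t^m$, $u=t^\ell$, and primitivity of $p$ forces $m=1$.

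Your approach instead pushes everything down to free monoids via $\Pi$, solves the equation there by elementary periodicity (the occurrence of a primitive $q_i$ in $q_i^\omega$ is aligned), and then reassembles. The two nontrivial ingredients you use---connectedness to propagate the ratio $\ell_i/m_i$ across overlapping cliques, and \cref{lem:power_gp} to upgrade the resulting rational exponent to an integer via primitivity---replace the paper's appeal to \cite[Proposition~3.1, Theorem~1.5]{DUBOC19851}. In effect you are reproving a special case of those results by hand. The payoff is that your argument uses only the two projection lemmas already stated in the paper (\cref{lem:projection_eq_gp} and \cref{lem:power_gp}) and standard free-monoid combinatorics, so it is more self-contained relative to what the paper actually states; the paper's argument is shorter but leans on external citations. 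Both are clean; yours also yields $u=v$ and $u=p^\ell$ simultaneously rather than in two separate steps.
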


\begin{proof}
If $u$ is the empty prefix, we are done. Hence, from now on, we can assume that $u$ is non-empty.
%    We start with the observation that also $v$ is a prefix of $p^{\sigma + k}$.
First consider the case that $p$ is  a prefix of $u$. Then, $p^{\sigma+1}$ is a prefix of $vp^\sigma$. Hence, there is a trace $q$ with $vp^\sigma =_M pq$, where
$p^\sigma$ is a factor of $q$. Then \cref{lem:lettercount} implies that
$p$ is a prefix of $v$.

If $v =_M pv'$ and $u =_M  p u'$, we obtain $p^{\sigma+1} u' =_M p v' p^{\sigma}$.
Cancelling $p$ yields $p^\sigma u' =_M v' p^\sigma$. Since $u'$ is a prefix
of $p^{k-1}$ we can replace $u$ and $v$ by $u'$ and $v'$, respectively.
    Therefore, we can assume that $p$ is not a prefix of $u$. Since $u$ is a prefix of some $p^k$, \cref{lem:factor_shape} implies that $u$ is already a prefix of $p^\sigma$.

    Let us next show that $u=v$. To do so, we write $p^\sigma =_M uw$. Then we have
    \[uwu =_M p^\sigma u =_M vp^\sigma =_M vuw.\]
    Since $\abs{wu}_a =  \abs{uw}_a$ for all $a \in \Gamma$, \cref{lem:lettercount}
    implies $u=v$.

    Now, we have $p^\sigma u=_Mup^\sigma$.
Since $p$ is connected, \cite[Proposition 3.1]{DUBOC19851} implies that there are $i,j \in \N$ with $p^{\sigma \cdot i} =_M u^j$. Then, by \cite[Theorem 1.5]{DUBOC19851} it follows that there are $t \in M$ and $\ell,m \in \N$ with $p=_Mt^m$ and $u=_Mt^\ell$. As $p$ is primitive, we have $m = 1$ and $t=p$
and hence $u =_M = p^\ell$.
Since $u$ is a prefix of $p^k$, we have $\ell \leq k$.
\end{proof}

  To prove the next lemma, we want to apply \cref{lem:projection_eq_gp}.
 To do so, we use the following projections suitable for our use case.
  Let
\begin{equation} \label{clique-covering-A}
 \mathcal{A} = \left\{ \Gamma_\cLelA \cup \Gamma_\cLelB \mid (\cLelA, \cLelB) \in D,  \cLelA \neq \cLelB\right\} \cup \{ \Gamma_\cLelA \mid \cLelA \text{ is isolated} \},
\end{equation}
   where $\cLelA$ is isolated if
  there is no $\cLelB \neq \cLelA$ with  $(\cLelA, \cLelB) \in D$ (and $D= \cL\times\cL \setminus I$). Notice that even though $\Gamma$ might be infinite, $\cA$ is finite in any case (because $\cL$ is finite).
  Let us write  $\mathcal{A}=\{A_1, \ldots, A_k\}$ and $\pi_i$ for the projection $M(\Gamma,I) \to A_i^*$.

\begin{lemma}\label{lem:lettercount_conjugacy}
    Let $u,v,p,q \in M$ and $k\in \N$ with $uqv =_M p^k$ and $\abs{p}_\cLelA =  \abs{q}_\cLelA$  for all $\cLelA \in \cL$. Then $p$ and $q$ are conjugate in $M$.
\end{lemma}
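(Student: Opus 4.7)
The plan is to exhibit an explicit conjugator $t = v$ witnessing $qt =_M tp$, which is exactly the paper's definition of conjugacy of $q$ and $p$. The candidate is suggested by checking a few small examples in a free monoid (e.g.\ $p = abc$, $u$, $v$ varying with $uqv = p^2$), where $v$ always turns out to work.

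The key algebraic trick is to compute $p^{k+1}$ in two different ways. Multiplying the hypothesis $uqv =_M p^k$ on the left and on the right by $p$ gives the chain
\[
(pu)(qv) \;=_M\; p \cdot uqv \;=_M\; p^{k+1} \;=_M\; uqv \cdot p \;=_M\; (uq)(vp),
\]
so that $(uq)(vp) =_M (pu)(qv)$ holds in $M$. The point of grouping the factors in this particular way is that the two prefixes $uq$ and $pu$ have a chance of being equal by Lemma~\ref{lem:lettercount}.

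Next I would match letter counts per class. The hypothesis $\abs{p}_\zeta = \abs{q}_\zeta$ for every $\zeta \in \mathcal{L}$ immediately yields
\[
\abs{uq}_\zeta \;=\; \abs{u}_\zeta + \abs{q}_\zeta \;=\; \abs{u}_\zeta + \abs{p}_\zeta \;=\; \abs{pu}_\zeta,
\]
and equivalently $\abs{vp}_\zeta = \abs{qv}_\zeta$, for every $\zeta \in \mathcal{L}$. This is precisely the equality hypothesis of Lemma~\ref{lem:lettercount}, applied to $(uq)(vp) =_M (pu)(qv)$ with first factors $uq$ and $pu$ and second factors $vp$ and $qv$; its conclusion gives $uq =_M pu$ and, more importantly, $qv =_M vp$. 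The latter identity is exactly the conjugacy relation of $q$ and $p$ (with conjugator $v$), so the lemma follows.

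I do not anticipate any real obstacle: once the identity $(uq)(vp) =_M (pu)(qv)$ is written down, Lemma~\ref{lem:lettercount} packages all the trace-combinatorial work, and the per-class letter-count hypothesis is precisely what is needed to trigger the equality case of that lemma. The only subtlety is to resist the temptation of trying to invoke Lemma~\ref{lem:factor_shape} (which would additionally require $p$ to be connected, an assumption we do not have here).
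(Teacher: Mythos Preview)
Your proof is correct and considerably more direct than the paper's. Both arguments rely on Lemma~\ref{lem:lettercount}, but from there they diverge. The paper first establishes $qvu =_M q^k$ by projecting onto the free monoids $A_i^*$ (Lemma~\ref{lem:projection_eq_gp}): each $\pi_i(p^k)$ has period $|\pi_i(p)| = |\pi_i(q)|$, and so does its cyclic permutation $\pi_i(qvu)$, which forces $\pi_i(qvu) = \pi_i(q)^k$. From $q^k$ and $p^k$ being transposes it then invokes Lemma~\ref{lemm:conjugacy_primitive} to descend from conjugacy of the $k$-th powers to conjugacy of $p$ and $q$.

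Your route bypasses both the projection machinery and Lemma~\ref{lemm:conjugacy_primitive} entirely: the single identity $(uq)(vp) =_M (pu)(qv)$ together with the equality case of Lemma~\ref{lem:lettercount} immediately yields the explicit conjugators $u$ and $v$. In particular, from $pu =_M uq$ you get $p$ conjugate to $q$ directly in the sense of the paper's definition, without appealing to symmetry of conjugacy. This is a genuinely simpler argument; the paper's approach, by contrast, also produces the auxiliary fact $qvu =_M q^k$, but that fact is not needed elsewhere. Your observation that connectedness of $p$ (and hence Lemma~\ref{lem:factor_shape}) is not required is also apt.
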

\begin{proof}
First, we are going to show that the transposition $qvu $ of $uqv$ is equal to $q^k$ in $M$. Consider projections $\pi_i$
onto cliques.
    By the assumption $\abs{p}_\cLelA =  \abs{q}_\cLelA$  for all $\cLelA \in \cL$, it follows that $\abs{\pi_i(p)} = \abs{\pi_i(q)}$. As $\pi_i(p^k)$ has a period $\abs{\pi_i(p)}$, so has its cyclic permutation   $\pi_i(qvu)$. As its first $\abs{\pi_i(p)}$ letters are exactly $\pi_i(q)$, it follows that $\pi_i(qvu) = \pi_i(q^k)$.  Since this holds for all $i$, it follows by \cref{lem:projection_eq_gp} that $qvu =_M q^k$.

Now, observe that $q^k =_M qvu$ and $p^k =_M uqv$ are conjugate in $M$.
    Hence, it remains to apply \cref{lemm:conjugacy_primitive} to conclude that $p$ and $q$ are conjugate: we write $p=\tilde p^i$ and $q=\tilde q^j$ for primitive traces $\tilde p, \tilde q$. Then \cref{lemm:conjugacy_primitive} tells us that $i=j$ and $\tilde p$ and $\tilde q$ are conjugate. Hence, also $p$ and $q$ are conjugate.
\end{proof}

\subsection{Groups}\label{sec:group_prelims}

 If $G$ is a group, then $u, v \in G$ are conjugate if and only if there is a $g \in G$ such that $u =_G g^{-1}vg$ (note that this agrees with the above definition for monoids).

 \subsubsection{Free groups} \label{sec-free-groups}

\newcommand{\freeRS}{S_{\mathrm{free}}}
\newcommand{\traceRS}{T}
\newcommand{\proj}{\eta}
\newcommand{\finv}[1]{\overline{#1}}

Let $X$ be a set and $\finv{X} = \set{\finv{a}}{a \in X}$ be a disjoint copy of $X$.
We extend the mapping $a \mapsto \finv{a}$
to an involution without fixed points on $\Sig = X \cup \finv{X}$ by $\finv{\finv{a}} = a$
and finally to an involution on $\Sig^*$ by $\finv{a_1 a_2 \cdots a_n} =
\finv{a_n} \cdots \finv{a_2} \;\finv{a_1}$. The only fixed point of the latter involution is the empty word $1$.
The string rewriting system
\[\freeRS= \set{a \finv{a} \to 1}{a \in \Sig}\]
is strongly confluent and terminating meaning that for every word $w \in \Sig^*$ there exists
a unique word $\hat{w} \in \IRR(\freeRS)$ with $w \rewrite{*}{\freeRS} \hat{w}$.
Words from $\IRR(\freeRS)$ are called \emph{freely reduced}.
The system $\freeRS$ defines the free group $F(X) = \Sig^*/ \freeRS$ with basis $X$.
Let $\proj: \Sig^* \to F(X)$ denote the canonical monoid homomorphism. Then we have $\proj(w)^{-1} = \proj(\finv{w})$ for all words $w \in \Sig^*$.
If $|X|=2$, then we write $F_2$ for $F(X)$. It is known that for every countable set $X$,
$F_2$ contains an isomorphic copy of $F(X)$.

\subsubsection{Finitely generated groups and the word problem} \label{f.g-groups}

A group $G$ is called {\em finitely generated} (f.g.) if there exists a finite set $X$ and
a surjective group homomorphism $h : F(X) \to G$. In this situation, the set
$\Sig = X \cup \finv{X}$ is called a finite (symmetric) generating set for $G$. Usually, we write $X^{-1}$ instead of $\finv{X}$ and $a^{-1}$ instead of $\finv{a}$ for $a \in \Sig$.
Thus, for an integer $z < 0$ and $w \in \Sig^*$ we
write $w^z$ for $(\finv{w})^{-z}$.

In many cases we can think of $\Sigma$ as a subset of $G$, but, in general,
we can also have more than one letter for the same group element.
The group identity of $G$ is denoted with $1$ as well
(this fits to our notation $1$ for the empty word which is the identity of $F(X)$).

For words $u,v \in \Sigma^*$  we usually say that $u = v$ in $G$ or $u =_G v$ in case $h(\proj(u)) = h(\proj(v))$ and we do not write $\proj$ nor $h$ from now on.
The {\em word problem} for the finitely generated group $G$, $\WP(G)$ for short, is defined as follows:
\ynproblem{ a word $w \in \Sigma^*$.}{Does $w=_G 1$ hold?}

\subsubsection{The power word problem}

A \emph{power word} (over $\Sigma$) is a tuple $(u_1,x_1,u_2,x_2,\ldots,u_n,x_n)$ where
$u_1, \dots, u_n \in \Sigma^*$ are words over the group generators
and $x_1, \dots, x_n\in \Z$ are integers that are given in binary notation. Such a power word represents the
word $u_1^{x_1} u_2^{x_2}\cdots u_n^{x_n}$. Quite often, we will identify the power word $(u_1,x_1,u_2,x_2,\ldots,u_n,x_n)$
with the word $u_1^{x_1} u_2^{x_2}\cdots u_n^{x_n}$. Moreover, if $x_i=1$, then we usually omit the exponent $1$ in a power word.
The \emph{power word problem} for the finitely generated group $G$, $\PowWP(G)$ for short, is defined as follows:
\ynproblem{ a power word $(u_1,x_1,u_2,x_2,\ldots,u_n,x_n)$.}{Does $u_1^{x_1} u_2^{x_2}\cdots u_n^{x_n}=_G 1$ hold?}

\noindent Due to the binary encoded exponents, a power word can be seen as a succinct description
of an ordinary word. Hence, a priori, the power word problem for a group $G$ could be computationally
more difficult than the word problem. An example, where this happens
(under standard assumptions from complexity theory) is the wreath product
$S_5 \wr \mathbb{Z}$ (where $S_5$ is the symmetric group on 5 elements).
The word problem for this group can be easily solved in logspace, whereas
the power word problem for $S_5 \wr \mathbb{Z}$ is $\coNP$-complete \cite{LohreyW19}.

  Let $\mathcal{C}$ be a countable class of groups, where every group has a finite description. We also assume that
  the description of $G \in \mathcal{C}$ contains a generating set for $G$.
  We write $\UPowWP(\mathcal{C})$ for the {\em uniform power word problem}:
  \ynproblem{ a group $G \in \mathcal{C}$ and a power word $(u_1,x_1,u_2,x_2,\ldots,u_n,x_n)$ over the generating set of $G$.}{Does $u_1^{x_1} u_2^{x_2}\cdots u_n^{x_n}=_G 1$ hold?}

\subsubsection{Right-angled Artin groups} \label{sec-RAAG}

Right-angled Artin groups
are defined similarly to partially commutative monoids.
Again we have a symmetric and irreflexive commutation
relation $I \subseteq X \times X$.
%We require that $(a, b) \in I$ if and only if $(a^{-1}, b) \in I$.
Then $G(X,I) = F(X)/\{ab=ba\mid (a,b)\in I\}$ is the corresponding \emph{right-angled Artin group} (RAAG), also known as a \emph{graph group} or \emph{free partially commutative group}.
The name graph group is due to the commutation relation being commonly visualized as an undirected graph.
Note that we have $M(X,I) \sse G(X,I)$.

We can view $G(X,I)$ also as follows: let $\Sigma = X \cup \finv{X}$ where $\finv{X}$ is a disjoint copy of $X$ and $\finv{\finv{a}} = a$ for $a \in \Sigma$ (like for free groups). Extend $I$ to $\Sigma \times \Sigma$ by requiring that $(a, b) \in I$ if and only if $(\finv{a}, b) \in I$ for $a,b \in \Sigma$.
Then $G(X,I)$ is the quotient of $M(\Sigma,I)$ defined by the relations $a\finv{a}=1$ for $a \in \Sigma$. A trace $w \in M(\Sigma,I)$ is called \emph{reduced} if it does not contain a factor $a\finv{a}$ for $a \in \Sigma$. For every trace $u \in M(\Sigma,I)$ there is a unique
reduced trace $v$ (the reduced normal form of $u$) with $u = v$ in
$G(X,I)$. Like for free groups, it can be computed using the confluent and terminating trace rewriting system $\{ a \finv{a} \to 1 \mid a \in \Sigma\}$.

\subsubsection{Graph products}\label{sec:graph_prod_prelims}
  Let $\left(G_\cLelA\right)_{\cLelA \in \cL}$ be a family of so-called \emph{base groups} and \(I \subseteq \cL \times \cL\) be an irreflexive and symmetric relation (the  \emph{independence relation}).
   As before, we assume that $\cL$ is always finite and
  we write $\sigma = \abs{\cL}$.
  The graph product $ \GP(\cL, I, \left(G_\cLelA\right)_{\cLelA \in \cL})$ is defined as the free product of the $G_\cLelA$ modulo the relations expressing that elements from $G_\cLelA$ and $G_\cLelB$ commute whenever $(\cLelA,\cLelB) \in I$. Below, we define this group by a group presentation.

Let $\Gamma_\cLelA = G_\cLelA \setminus \{ 1 \}$ be the set of non-trivial elements of the group $G_\cLelA$ for $\cLelA \in \cL$. We assume w.l.o.g.~that the sets $\Gamma_\cLelA$ are pairwise disjoint.
We then define $\Gamma$ and $I_{\Gamma}$ as in \cref{sec:big_trace_monoid_prelims}:
$\Gamma = \bigcup_{\cLelA \in \cL} \Gamma_\cLelA$  (note that typically, $\Gamma$ will be infinite)
and $I_\Gamma = \{ (a, b) \in \Gamma\times\Gamma \mid (\alphabet(a), \alphabet(b)) \in I\}$.
As in \cref{sec:big_trace_monoid_prelims} we write $I$ instead of $I_\Gamma$.
  For $a,b \in G_\cLelA$ we write $[ab]$ for the element of $G_\cLelA$ obtained by multiplying $ab$ in $G_{\cLelA}$  (whereas $ab$ denotes a two-letter word in $\Gamma^*$). Here, we identify $1\in G_\cLelA$ with the empty word $1$.
 The relation $I$ is extended to
  $\Gamma^*$ by $I = \{ (u,v) \in \Gamma^* \times \Gamma^* \mid \alphabet(u) \times \alphabet(v) \subseteq I \}$ (where $\alphabet(u) \sse \cL$  is defined as in \cref{sec:big_trace_monoid_prelims}).
  With these definitions we have
    \begin{align*}
    \GP(\cL, I, \left(G_\cLelA\right)_{\cLelA \in \cL}) = \genr{ \Gamma }{ ab = [ab] \text{ for } \cLelA \in \cL, a,b\in \Gamma_\cLelA, ab = ba \text{ for } (a,b)\in \IGamma }\!.
  \end{align*}

  \begin{example}
    If $I = \emptyset$, then $ \GP(\cL, I, \left(G_\cLelA\right)_{\cLelA \in \cL})$ is simply the free product $*_{\cLelA\in \cL} \, G_\cLelA$.
  \end{example}

  \begin{example}
    If all the base groups are the infinite cyclic group (\ie for each $\cLelA \in \cL$ we have $G_\cLelA = \mathbb{Z}$), then the graph product $ \GP(\cL, I, \left(G_\cLelA\right)_{\cLelA \in \cL})$ is the RAAG $G(\cL, I)$.
  \end{example}

Let $G = \GP(\cL, I, \left(G_\cLelA\right)_{\cLelA \in \cL})$ be a graph product and $M = M(\Gamma, \IGamma)$ the corresponding trace monoid
 (see Section~\ref{sec-traces}).   Notice that $M $ satisfies the setting of \cref{sec:big_trace_monoid_prelims}~-- so these results and definitions apply to the case of graph products.
  We can represent elements of $G$ by elements of $M$. More precisely, there is a  canonical surjective homomorphism $h : M \to G$.
  A \emph{reduced} representative of a group element $g \in G$ is
  a trace $w$ of minimal length such that $h(w) = g$. We also say that $w$ is
  {\em reduced}.
  Equivalently, $w \in M$ is reduced if there is no two-letter factor $ab$ of $w$ such that $\alphabet(a) = \alphabet(b)$.
   A trace $w \in M$ is called \emph{cyclically reduced} if all transpositions of $w$ are reduced.
   Equivalently, $w$ is cyclically reduced if it is reduced and it cannot be written in the form $axb$ with $a,b \in \Gamma_\cLelA$ for some $x \in M$.
   Note that this definition agrees with \cite{kausch2017parallel}, whereas in \cite{FigeliusGLZ20} a slightly different definition is used.
   We call a trace  $w \in M$ \emph{composite} if $\abs{\alphabet(w)} \geq 2$. Notice that a trace $w$, where every connected component is composite,
   %composite and connected trace $w$
   is cyclically reduced if and only if $ww$ is reduced (then, every $w^k$ with $k \geq 2$ is reduced).
   A word $w \in \Gamma^*$ is called reduced/cyclically reduced/composite if the
   trace represented by $w$ is reduced/cyclically reduced/composite.

   Note that a word $w \in \Gamma^*$ is cyclically reduced if and only if every cyclic permutation of the word $w$ is reduced as a trace (be aware of the subtle difference between a cyclic permutation of a word $w$ and a transposition of the trace represented by $w$):
   If the trace represented by $w$ is cyclically reduced, then clearly every cyclic permutation of $w$ must be reduced.
   On the other hand, assume that $w =_M a w' b$ with $a,b \in \Gamma_\cLelA$. Then we can write the word $w$ as $w = x a y b z$ such that $(a, xz) \in I$. Then $y b zx a$ is a cyclic permutation of $w$ that is not reduced.

On the free monoid $\Gamma^*$ we can
define an involution $(\cdot)^{-1}$ by $(a_1 a_2 \cdots a_n)^{-1} = a_n^{-1}
\cdots a_2^{-1} a_1^{-1}$, where $a_i^{-1}$ is the inverse of $a_i$ in the group
$G_{\alphabet(a_i)}$. Note that $u =_M v$ implies $u^{-1} =_M v^{-1}$. Therefore,
we obtain a well-defined involution $(\cdot)^{-1}$ on $M$. Moreover, $u^{-1}$ indeed represents the inverse of $u$ in the group $G$.

% as follows: if the trace $w$ is represented
%by the word $u$ then $w^{-1}$ is the trace represented by the word $u^{-1}$.

The counterpart of the rewriting system $\freeRS$ for graph products
is the trace rewriting system
\begin{equation} \label{rewrite-system-T}
\traceRS = \set{ a b \to [ab] }{ a,b \in \Gamma,  \alphabet(a) = \alphabet(b)}.
\end{equation}
Note that $G = M / \traceRS$ and that
$\IRR(\traceRS)$ is the set of reduced traces. Moreover, $T$ is terminating and confluent; the latter is shown in \cite[Lemma 6.1]{KuskeL06}. The following lemma can be found in
\cite[Lemma~24]{haubold2012compressed}.

  \begin{lemma}
    \label{lem:conn_gp_monoid}
    Let  $u, v \in \Gamma^*$. If $u =_M v$, then also $ u =_G v$.
    Moreover, if $u$ and $v$ are reduced, then $u =_M v $ if and only if  $u =_G v$.
  \end{lemma}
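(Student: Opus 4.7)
The plan is to use the canonical surjective homomorphism $h : M \to G$ mentioned just before the lemma, together with the trace rewriting system $T$ from equation~(\ref{rewrite-system-T}).

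For the first statement, I would argue that $h$ is well-defined because every defining relation of $M$ (namely $ab = ba$ for $(a,b)\in \IGamma$) is also a defining relation of $G$. Hence the identity $u =_M v$ in $M$ translates to $h(u) = h(v)$ in $G$, which by our notational convention is exactly $u =_G v$.

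For the converse direction under the reducedness hypothesis, the key observation is that $G = M/\traceRS$ by construction (cf.\ the definition of $\traceRS$): the only additional relations one imposes on $M$ to obtain $G$ are the in-group multiplications $ab = [ab]$ for $a,b \in \Gamma_\cLelA$, which are precisely the rules of $\traceRS$. Therefore $u =_G v$ is equivalent to $u \equiv_{\traceRS} v$, i.e., $u$ and $v$ lie in the same equivalence class of the congruence generated by $\traceRS$ on $M$. Now I invoke the two structural properties of $\traceRS$ stated in the paragraph preceding the lemma: $\traceRS$ is terminating (rule application strictly decreases $\abs{w}$) and confluent (by \cite[Lemma~6.1]{KuskeL06}). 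Together these imply that every $\traceRS$-equivalence class contains exactly one element of $\IRR(\traceRS)$. Since $\IRR(\traceRS)$ coincides with the set of reduced traces, and both $u$ and $v$ are assumed reduced and $\traceRS$-equivalent, they must be this common unique normal form; hence $u =_M v$.

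There is no real obstacle here beyond correctly citing the termination and confluence of $\traceRS$ and identifying $\IRR(\traceRS)$ with the reduced traces. The only subtlety worth double-checking is that the presentation of $G$ from \cref{sec:graph_prod_prelims} really does match $M/\traceRS$: the relation $ab = ba$ for $(a,b) \in \IGamma$ is already absorbed into the definition of $M$, so the remaining relations $ab = [ab]$ (with the convention that $[ab] = 1$ when $b = a^{-1}$ in $G_{\alphabet(a)}$) are exactly the rules generating $\traceRS$.
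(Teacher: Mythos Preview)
Your argument is correct. The paper does not give its own proof of this lemma; it simply cites \cite[Lemma~24]{haubold2012compressed}. Your proof is the standard one and uses precisely the ingredients the paper has set up (the identity $G = M/\traceRS$, termination and confluence of $\traceRS$, and $\IRR(\traceRS) = \{\text{reduced traces}\}$), so there is nothing to add.
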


The following commutative diagram summarizes the mappings between
the sets introduced in this section ($\hookdoubleheadrightarrow$
indicates a bijection):
\begin{align}\label{eq:irr_embedding}
\begin{split}
  & \eqmathbox[eqn1]{\Gamma^*} \eqmathbox[eqn2]{\quad\twoheadrightarrow\quad} \eqmathbox[eqn3]{M(\Gamma, I)} \eqmathbox[eqn4]{\quad\hookrightarrow\quad}\eqmathbox[eqn5]{G(\Gamma, I)}\\
  &   \eqmathbox[eqn1]{}\eqmathbox[eqn2]{}\eqmathbox[eqn3]{\rotatebox{90}{$\sse$}}\eqmathbox[eqn4]{}\eqmathbox[eqn5]{\rotatebox{90}{$\twoheadleftarrow$}} \\
  &         \eqmathbox[eqn1]{}\eqmathbox[eqn2]{}\eqmathbox[eqn3]{\IRR(\traceRS)}\eqmathbox[eqn4]{\hookdoubleheadrightarrow}\eqmathbox[eqn5]{\GP(\cL, I, (G_\cLelA)_{\cLelA \in \cL})}
  \end{split}
\end{align}
The embedding $M(\Gamma,I)\hookrightarrow G(\Gamma,I)$ is induced by the embedding  $M(\Gamma,I)\hookrightarrow M(\Gamma \cup \finv{\Gamma},I)$ composed with the projection $M(\Gamma \cup \finv{\Gamma},I) \twoheadrightarrow G(\Gamma,I)$ from \cref{sec-RAAG}.\footnote{
In the trace monoid $M(\Gamma \cup \finv{\Gamma},I)$ for every symbol
$a \in \Gamma$ there is a formal inverse $\finv{a}$ such that $(\finv{a},b) \in I$ if and only
if $(a,b) \in I$. This formal inverse $\finv{a}$ is different from the
inverse of $a$ in base group $G_{\alphabet(a)}$, but the surjection
$G(\Gamma,I) \twoheadrightarrow \GP(\cL, I, (G_\cLelA)_{\cLelA \in \cL})$ maps the formal inverse $\finv{a}$ to the inverse of $a$
in base group $G_{\alphabet(a)}$.
A trace $u \in  M(\Gamma \cup \finv{\Gamma},I)$
is reduced with respect to the RAAG $G(\Gamma,I)$ if it does not contain
a factor $a\finv{a}$ or $\finv{a}a$ with $a \in \Gamma$. In particular, every trace
from $M(\Gamma,I)$ is reduced with respect to $G(\Gamma,I)$, even if it
is non-reduced in our sense (i.e., with respect to the graph product
$\GP(\cL, I, (G_\cLelA)_{\cLelA \in \cL})$).}

An $I$-clique is a trace $a_1 a_2 \cdots a_k \in M$
such that $a_i \in \Gamma$ and $(a_i,a_j) \in I$ for all $i \neq j$.
Note that $|v| \leq \sigma$ for every $I$-clique $v$.
The following lemma is a generalization of a statement from
\cite{DiekertL08} (equation (21) in the proof of Lemma 22), where
only the case $q=1$ is considered.

\begin{lemma}
    \label{lem:reducing-pqr}
    Let  $p,q,r,s \in M$ such that $pq, qr, s \in \IRR(\traceRS)$ and $p\,q\,r \rewrite{*}{\traceRS} s$.
    Then there exist factorizations
    \[ p =_M p' t \, u, \qquad r =_M u^{-1} v\, r', \qquad s =_M p' q\, w\, r'
    \]
    with the following properties:
    \begin{itemize}
        \item $t,v,w$ are $I$-cliques with $tv \rewrite{*}{\traceRS} w$,
        \item $\alphabet(t) = \alphabet(v) = \alphabet(w)$, and
        \item $(q,tu) \in I$ (hence also $(q,v), (q,w) \in I$).
    \end{itemize}
  \end{lemma}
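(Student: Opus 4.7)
The plan is to proceed by induction on the length $n$ of the rewriting sequence $pqr \rewrite{*}{\traceRS} s$. In the base case $n = 0$, we have $s =_M pqr$, and setting $p' = p$, $r' = r$, and $t = u = v = w = 1$ satisfies all conditions trivially: empty traces are $I$-cliques of empty alphabet, $1 \rewrite{*}{\traceRS} 1$, and $(q,1) \in I$.

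For the inductive step, examine the first rewriting step $pqr \rewrite{}{\traceRS} \tilde s$, which replaces a trace factor $ab$ of $pqr$ by $c = [ab]$ with $\alphabet(a) = \alphabet(b) = \cLelA$. The key first observation is that $a$ must occur in $p$ and $b$ in $r$: every other distribution of $a, b$ among the three blocks $p, q, r$ (both in one block, or both in $pq$, or both in $qr$) would contradict the assumed reducedness of $pq$ or $qr$. Since all $\cLelA$-letters are mutually dependent in $(\Gamma, I)$ and $ab$ is a trace factor of $pqr$, every letter strictly between $a$ and $b$ in $pqr$ (in particular every letter of $q$) must commute with $\cLelA$. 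Writing $p = p_1 a p_2$ and $r = r_1 b r_2$ with $p_2, q, r_1$ commuting with $\cLelA$, the first rewrite produces $\tilde s =_M \tilde p\, q\, \tilde r$, where $\tilde p = p_1 p_2$ and $\tilde r = r_1 c r_2$ (or $r_1 r_2$ if $c = 1$). Since $p_2, q, r_1$ contain no $\cLelA$-letter, no two previously non-adjacent $\cLelA$-letters become adjacent after the rewrite, so $\tilde p q$ and $q \tilde r$ remain reduced.

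The induction hypothesis applied to the shorter reduction $\tilde p\, q\, \tilde r \rewrite{*}{\traceRS} s$ yields factorizations $\tilde p = \tilde p' \tilde t \tilde u$, $\tilde r = \tilde u^{-1} \tilde v \tilde r'$, $s = \tilde p' q \tilde w \tilde r'$ satisfying all the stated properties. One then reconstructs the desired factorization for the original $p$ and $r$ by reinserting the letter $a$ (using $(a, p_2) \in I$, so that $p =_M \tilde p \cdot a$) and, symmetrically, reversing the substitution $b \mapsto c$ on the $r$-side. The additional pair $(a, b)$ augments the chain $\tilde t \tilde v \rewrite{*}{\traceRS} \tilde w$ by the single extra rewrite $ab \to c$, producing the chain $tv \rewrite{*}{\traceRS} w$. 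The commutation $(q, tu) \in I$ is then immediate from $(q, \tilde t \tilde u) \in I$ together with $(q, a) \in I$.

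The main obstacle, and the step requiring the most care, is reconciling the reinsertion of $a$ with the $I$-clique constraint that each alphabet contributes at most one letter to $t$. Three sub-cases arise depending on whether $\cLelA$ already appears in $\alphabet(\tilde t)$ and whether $c = 1$: if $\cLelA \notin \alphabet(\tilde t)$ and $c \neq 1$, then $a$ simply augments $\tilde t$ to give $t = \tilde t \cdot a$ and symmetrically $b$ augments $\tilde v$ to give $v$; if $\cLelA \in \alphabet(\tilde t)$, the letter $a$ must be fused with the existing $\cLelA$-entry of $\tilde t$, which triggers a corresponding further reduction step inside $tv \rewrite{*}{\traceRS} w$; finally, if $c = 1$, then $a$ and $b$ cancel as $a a^{-1}$ and belong to $u$ and $u^{-1}$ rather than to $t$ and $v$. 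Verifying in each sub-case that the resulting $t, v, w$ remain $I$-cliques of equal alphabet and that $tv \rewrite{*}{\traceRS} w$ holds, then completes the induction.
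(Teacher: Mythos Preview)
Your induction scheme and the identification of the first rewriting step are the same as in the paper. The one substantive difference is the treatment of the case $c=[ab]\neq 1$: you push $c$ into $\tilde r$ and keep the middle block equal to $q$, whereas the paper pushes $c$ into the \emph{middle}, applying the induction hypothesis to $\tilde p\,(cq)\,\tilde r$ with $\tilde r=r$ minus $b$. The payoff of the paper's choice is that the inductive hypothesis then yields $(cq,\tilde t\tilde u)\in I$, so the needed independence $(a,\tilde u)\in I$ is immediate and one may simply set $t=\tilde t a$, $v=\tilde v b$, $w=\tilde w c$ with no further case split. Your route works too, but you must first argue that $c\notin\tilde u^{-1}$ (otherwise $c^{-1}$ would be maximal in $\tilde p$, contradicting that $\tilde p a=p$ is reduced), from which $(a,\tilde u)\in I$ and $(a,\tilde v)\in I$ follow; you do not make this step explicit.

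One remark on your case analysis: your sub-case 2 ($\cLelA\in\alphabet(\tilde t)$ with $c\neq 1$) is in fact vacuous. If $\tilde t$ contained a $\Gamma_\cLelA$-letter $t_\cLelA$, then, since $(t_\cLelA,\tilde u)\in I$ by the argument just mentioned and $\tilde t$ is an $I$-clique, $t_\cLelA$ would be maximal in $\tilde p=\tilde p'\tilde t\tilde u$, again contradicting reducedness of $\tilde p a$. This is fortunate, because the ``fusion'' you propose there (merging $a$ with $t_\cLelA$) would not produce a factorization $p=_M p'tu$ with $t$ a subtrace of $p$. With sub-case 2 eliminated, your sub-case 1 and sub-case 3 match exactly the paper's two cases.
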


\begin{proof}
    We prove the lemma by induction over the length of $\traceRS$-derivations (recall that $T$ is terminating).
    The case that $p\,q\,r \in \IRR(\traceRS)$ is clear (take $t = u = v = w = 1$).
    Now assume that $p\,q\,r$ is not reduced. Since $pq, qr \in \IRR(\traceRS)$,
    the trace $p\,q\,r$ must contain a factor $ab$ with $\alphabet(a) = \alphabet(b)$, where $a$ is a maximal letter of $p$, $b$ is a minimal letter of $r$ and $(a,q), (b,q) \in I$. Let us write $p =_M \tilde{p} a$ and
    $r =_M b \tilde{r}$.

    We distinguish two cases. If $[ab] = 1$, i.e., $b = a^{-1}$, then
    \[ p\,q\,r \ =_M\  \tilde{p}\, a\, q\, a^{-1} \tilde{r} \ \rewrite{}{\traceRS} \
       \tilde{p} \,q\, \tilde{r} \ \rewrite{*}{\traceRS}\ s .
    \]
    Since $(a,q)\in I$, we must have
    $\tilde{p} q,  q\tilde{r} \in \IRR(\traceRS)$.
    Hence, by induction we obtain factorizations
    \[ \tilde{p} =_M p' t\, x, \qquad \tilde{r} =_M x^{-1} v\, r', \qquad s =_M p' q\, w\, r'
    \]
    with the following properties:
    \begin{itemize}
        \item $t,v,w$ are $I$-cliques with $tv \rewrite{*}{\traceRS} w$,
        \item $\alphabet(t) = \alphabet(v) = \alphabet(w)$, and
        \item $(q,tx) \in I$.
    \end{itemize}
    If we set $u = xa$, we obtain exactly the situation from the lemma.

    Now assume that $[ab] = c \neq 1$.
    We obtain
    \[ p\,q\,r \ =_M \ \tilde{p}\, a\, q\, b\, \tilde{r} \ \rewrite{}{\traceRS} \
       \tilde{p}\, c\, q\, \tilde{r}\ \rewrite{*}{\traceRS} \ s .
    \]
    Note that $(c,q) \in I$. Since $\tilde{p}\, a\,q, \, b\,q\,\tilde{r} \in \IRR(\traceRS)$, we also have
    $\tilde{p}\,c\,q,  c\,q\,\tilde{r} \in \IRR(\traceRS)$.
    Hence, by induction we obtain factorizations
    \[ \tilde{p} =_M p' t' u, \qquad \tilde{r} =_M u^{-1} v' r', \qquad s =_M p' c\, q\, w' r'
    \]
    with the following properties:
    \begin{itemize}
        \item $t',v',w'$ are $I$-cliques with $t'v' \rewrite{*}{\traceRS} w'$,
        \item $\alphabet(t') = \alphabet(v') = \alphabet(w')$, and
        \item $(cq,t'u) \in I$.
    \end{itemize}
    We define $t = t'a$, $v = v'b$, $w = w'c$. These are $I$-cliques
    (since $(c,t') \in I$) that satisfy the conditions from the lemma.
    Moreover, $(c,u) \in I$ implies $(a,u) \in I$ and hence
    $p =_M \tilde{p}\,a =_M p' t' u \, a =_M
    p' t' a \, u =_M  p' t \, u$. Similarly, we get
    $r =_M u^{-1} v\, r'$ and $s =_M p' q\, w\, r'$ (using $(c,q) \in I$).
\end{proof}

  Since $\Gamma$ might be an infinite alphabet, for inputs of algorithms, we need to encode elements of $\Gamma$ over a finite alphabet. For $\cLelA \in  \cL$ let $\Sigma_\cLelA$ be a finite generating set for $G_\cLelA$
  such that $\Sigma_\cLelA \cap \Sigma_\cLelB = \emptyset$ for $\cLelA \neq \cLelB$.
Then $\Sigma = \bigcup_{\cLelA \in \cL} \Sigma_\cLelA$ is a generating set for $G$. Every element of $\Gamma_\cLelA$ can be represented as a word from $\Sigma_\cLelA^*$.  However, in general, representatives are not unique. Deciding whether two words $w,v \in \Sigma_\cLelA^*$ represent the same element of $\Gamma_\cLelA$ is the word problem for $G_\cLelA$. We give more details how to represent power words in \cref{sec:input-encoding-spwp}.

Let $\mathcal{C}$ be a countable class of finitely generated groups with finite descriptions. One might for instance take a subclass of finitely (or recursively) presented groups.
Then a graph product $\GP(\cL, I, (G_\cLelA)_{\cLelA \in \cL})$ with $G_\cLelA \in \mathcal{C}$ for all $\cLelA$
has a finite description as well: such a group is given by  the finite graph $(\cL, I)$ and a list of the finite descriptions of the  groups $G_\cLelA \in \mathcal{C}$
for $\cLelA \in \cL$. We denote with $\GP(\mathcal{C})$ the class of all such graph products.

\subsection{Complexity} \label{sec-complexity}

We assume that the reader is familiar with the complexity classes {\sf P} and \NP; see e.g.\ \cite{AroBar09} for details. Let $\cC$ be any complexity class and $K\sse \Delta^*$, $L \sse \Sigma^*$ languages. Then $L$ is $\cC$-many-one reducible to $K$ ($L\leq_{\mathrm{m}}^\cC  K$) if there exists a $\cC$-computable function $f: \Sigma^* \to \Delta^*$ with $x\in L $ if and only if $f(x) \in K$.

\subsubsection{Circuit complexity}
We use circuit complexity for classes below deterministic logspace (\L for short).
Instead of defining these classes directly, we introduce the slightly more general notion of \Ac0-Turing reducibility.
A language $L \subseteq \{0,1\}^*$  is \Ac0-Turing-reducible to $K \sse \{0,1\}^*$ if there is a family of constant-depth, polynomial-size Boolean circuits with oracle gates for $K$ deciding $L$. More precisely, we can define the class of language $\Ac0(K)$ which are \Ac0-Turing-reducible to $K \sse \{0,1\}^*$: a language $L \subseteq \{0,1\}^*$ belongs to $\Ac0(K)$ if there exists a family $(C_n)_{n \geq 0}$ of Boolean circuits with the following
properties:
\begin{itemize}
	\item $C_n$ has $n$ distinguished input gates $x_1, \ldots, x_n$ and a distinguished output gate $o$.
	\item $C_n$ accepts exactly the words from $L \cap \{0,1\}^n$, i.e., if the input gate $x_i$ receives the input $a_i \in \{0,1\}$ for all $i$, then
	the output gate $o$ evaluates to $1$ if and only if $a_1 a_2 \cdots a_n \in L$.
	\item Every circuit $C_n$ is built up from input gates, \emph{not}-gates, \emph{and}-gates, \emph{or}-gates, and oracle gates for $K$ (which output $1$ if and only if their input is in $K$). The incoming wires for an oracle gate for $K$ have to be ordered since the language $K$ is not necessarily closed under permutations of symbols.
	\item All gates may have unbounded fan-in, \ie there is no bound on the number of incoming wires for a gate.
	\item There is a polynomial $p(n)$ such that $C_n$ has at most $p(n)$ many gates and wires.
	\item There is a constant $d$ such that every $C_n$ has depth at most $d$ (the depth is the length of a longest path
	from an input gate $x_i$ to the output gate $o$).
\end{itemize}
This is in fact the definition of non-uniform $\Ac0(K)$. Here ``non-uniform'' means that the mapping $n \mapsto C_n$ is
not restricted in any way. In particular, it can be non-computable. For algorithmic purposes one usually adds some uniformity
requirement to the above definition. The most ``uniform'' version of $\Ac0(K)$ is \DLOGTIME-uniform $\Ac0(K)$. For this,
one encodes the gates of each circuit $C_n$ by bit strings of length $\mathcal{O}(\log n)$. Then the circuit family $(C_n)_{n \geq 0}$
is called \emph{\DLOGTIME-uniform}  if (i) there exists a deterministic Turing machine that computes for a given gate $u \in \{0,1\}^*$
of $C_n$ ($|u| \in \mathcal{O}(\log n)$) in time $\mathcal{O}(\log n)$ the type of gate $u$, where the types are $x_1, \ldots, x_n$, \emph{not}, \emph{and}, \emph{or}, oracle gate,
and (ii) there exists a deterministic Turing machine that decides for two given gates $u,v \in \{0,1\}^*$
of $C_n$ ($|u|, |v| \in \mathcal{O}(\log n)$) and a binary encoded integer $i$ with $\mathcal{O}(\log n)$ many bits
in time $\mathcal{O}(\log n)$ whether $u$ is the $i$-th input gate for $v$.
In the following, we write $\uACz(K)$ for \DLOGTIME-uniform $\Ac0(K)$.
For more details on these definitions we refer to~\cite{Vollmer99}.
If the language $L$ (or $K$) in the above definition of $\uACz(K)$ is defined over a non-binary alphabet $\Sigma$, then one first has to fix a binary
encoding of $\Sigma$ as words in $\{0,1\}^\ell$ for some large enough $\ell \in \N$.

If $\mathcal{C} = \{K_1, \ldots, K_n\}$ is a finite class of languages, then $\Ac0(\mathcal{C})$ is the same as
$\Ac0( \{ (w,i) \mid i \in [1,n], w \in K_i\} )$. If $\mathcal{C}$ is an infinite complexity class, then
$\uACz[\mathcal{C}]$ is the union of all classes $\uACz(K)$ for $K \in \mathcal{C}$.
Note that $\uACz[\mathcal{C}](K)$ is the same as
$\bigcup_{L \in \mathcal{C}} \uACz(K,L)$.

The class \Nc1 is defined as the class of languages accepted by
\DLOGTIME-uniform families of Boolean circuits having bounded fan-in, polynomial size, and logarithmic depth. As a consequence of Barrington's theorem \cite{Barrington86}, we have $\Nc1 = \uACz(\WP(A_5))$, where $A_5$ is the alternating group over 5 elements \cite[Corollary 4.54]{Vollmer99}.
	Moreover, the word problem for any finite group $G$ is in $\Nc1$. If $G$ is finite non-solvable, its word problem is $\Nc1$-complete~-- even under $\uACz$-many-one reductions. Robinson proved that the word problem for the free group $F_2$ is \Nc1-hard  \cite{Robinson93phd}, i.e., $\Nc1 \subseteq \uACz(\WP(F_2))$.
	%ASK

The class $\uTCz$ is defined as $\uACz(\text{\textsc{Majority}})$ where \textsc{Majority} is the language of all bit strings containing more $1$s than $0$s. %When talking about hardness for \uTCz or \Nc1 we use \uACz-Turing reductions unless stated otherwise.
Important problems that are complete (under \uACz-Turing reductions) for \uTCz are:
\begin{itemize}
	\item the languages $\{ w \in \{0,1\}^* \mid |w|_0 \leq |w|_1 \}$ and $\{ w \in \{0,1\}^* \mid |w|_0 = |w|_1 \}$, where $|w|_a$ denotes
	the number of occurrences of $a$ in $w$, see e.g. \cite{Vollmer99},
	\item the computation (of a certain bit) of the binary representation of the product of two or any (unbounded) number of
	binary encoded integers \cite{HeAlBa02},
	\item the computation (of a certain bit) of the binary representation of the integer quotient of two binary encoded integers \cite{HeAlBa02},
	\item the word problem for every infinite finitely generated solvable linear group \cite{KoenigL17},
	\item the conjugacy problem for the Baumslag-Solitar group  $\mathsf{BS}(1,2)$  \cite{DiekertMW14}.
\end{itemize}

\subsubsection{Counting complexity classes}
  Counting complexity classes are built on the idea of counting the number of accepting and rejecting computation paths of a Turing machine.
  For a non-deterministic Turing machine $M$, let $\operatorname{accept}_M$
  (resp., $\operatorname{reject}_M$)
  be the function that assigns to an input $x$ for $M$ the number of accepting
  (resp., rejecting) computation
 paths on input $x$.
  We define  the function $\operatorname{gap}_M : \Sigma^* \to \Z$ by $\operatorname{gap}_M(x) = \operatorname{accept}_M(x) - \operatorname{reject}_M(x)$.
  The class of functions $\GapL$ and the class of languages $\CeqL$ are defined as follows:
    {\allowdisplaybreaks\begin{align*}
                          \GapL &= \left\{ \operatorname{gap}_M \;\middle\vert\; \begin{matrix*}[l]
                               M \text{ is a non-deterministic, logarithmic space-bounded}\\
                            \text{Turing machine}
                          \end{matrix*} \right\}\\
                          \CeqL &= \left\{ L \;\middle\vert\; \text{there is } f \in \GapL \text{ with } \forall w \in \Sigma^* : w \in L \Longleftrightarrow f(w) = 0 \right\}
  \end{align*}}%
  We write $\GapL^K$ and  $\CeqL^{K}$ to denote the corresponding classes where the Turing machine $M$ is equipped with an oracle for the language $K$.
  We have the following relationships of $\CeqL$ with other complexity classes; see \eg~\cite{allender2004arithmetic}:
  \[
    \uTC^0 = \uAC^0(\WP(\mathbb{Z}))  \subseteq \uAC^0(\WP(F_2)) \subseteq \LOGSPACE \subseteq \NL \subseteq \CeqL \subseteq \uAC^0(\CeqL)
  \]

\section{Groups with an easy power word problem}

In this section we start with two easy examples of groups where the power word problem can be solved efficiently.

\begin{theorem}\label{thm:nilpotent_power_wp}
	If $G$ is a finitely generated nilpotent group, then $\PowWP(G)$ is in \uTCz.
\end{theorem}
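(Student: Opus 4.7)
The plan is to adapt the approach of \cite{MyasnikovW17}, which handles the special case where each $u_i$ is a single generator. The main tool is the Mal'cev coordinate representation, together with the fact that multiplication and powering in a torsion-free finitely generated nilpotent group are given by fixed polynomial formulas of bounded degree.

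First, I would reduce to the torsion-free case: every finitely generated nilpotent $G$ has a torsion-free nilpotent subgroup $H$ of finite index, and by the finite-index reduction for the power word problem stated as one of the main results of this paper, it suffices to consider $\PowWP(H)$ for torsion-free $H$. Fix a Mal'cev basis $b_1, \dots, b_d$ of $H$, so that every element has a unique normal form $b_1^{a_1} \cdots b_d^{a_d}$ with $(a_1, \dots, a_d) \in \Z^d$. By classical theorems of Hall and Mal'cev, there are fixed polynomials $P \colon \Z^{2d} \to \Z^d$ and $Q \colon \Z^{d+1} \to \Z^d$, depending only on $H$, such that the coordinates of $gh$ are $P$ applied to the coordinates of $g$ and $h$, and the coordinates of $g^x$ are $Q$ applied to the coordinates of $g$ and $x$. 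Given a power word $(u_1, x_1, \dots, u_n, x_n)$, the algorithm has three stages: (i) compute the Mal'cev coordinates of each word $u_i$; (ii) apply $Q$ to obtain the coordinates of $u_i^{x_i}$; (iii) iteratively apply $P$ to combine the results into the coordinates of $u_1^{x_1} \cdots u_n^{x_n}$ and check that the final vector is zero.

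The remaining task is to bound the intermediate integers and carry out all arithmetic in \uTCz. Polynomial word growth of nilpotent groups together with the bounded degree of $P$ and $Q$ implies that every intermediate coordinate has absolute value polynomial in the total input length $N$, so all integers involved have $O(\log N)$ many bits. Each evaluation of $P$ or $Q$ on such integers reduces to a constant number of iterated additions and iterated multiplications, which are in \uTCz by \cite{HeAlBa02}. The key point where one goes beyond \cite{MyasnikovW17} is stage (i): for a general word $u_i$, its $k$-th Mal'cev coordinate is a polynomial of bounded degree (depending on the nilpotency class of $H$) in the coordinates and positions of the letters of $u_i$, with polynomially many monomials, and can therefore be computed in \uTCz by iterated sum and product. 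The main obstacle is verifying this polynomial structure and the accompanying bit-size bounds rigorously; once in place, the three stages compose into a single \uTCz algorithm for $\PowWP(H)$, and hence for $\PowWP(G)$.
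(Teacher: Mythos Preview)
Your overall strategy---compute Mal'cev coordinates of each $u_i$, apply the power polynomials to obtain coordinates of $u_i^{x_i}$, then multiply using the product polynomials---is exactly what the paper does, citing \cite[Theorem~5 and Lemma~2]{MyasnikovW17} for the three stages. However, there are two issues with your write-up.

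The first is a genuine gap. You invoke the finite-index reduction (Theorem~\ref{thm:finite_index}) to pass to a torsion-free subgroup $H$, but that theorem only gives an $\Nc1$-many-one reduction. Since $\uTCz \subseteq \Nc1$ and this containment is not known to be an equality, composing an $\Nc1$ reduction with a $\uTCz$ algorithm for $\PowWP(H)$ yields only $\PowWP(G) \in \Nc1$, not the claimed $\uTCz$. The paper avoids this detour entirely: the Mal'cev machinery of \cite{MyasnikovW17} works for \emph{all} finitely generated nilpotent groups, not just torsion-free ones (some coordinates then live in finite cyclic groups rather than $\Z$, but the polynomial formulas for products and powers are unaffected). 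So the reduction to the torsion-free case is both unnecessary and, as stated, too costly.

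The second is a harmless slip. Your claim that intermediate coordinates have absolute value polynomial in $N$ and hence $O(\log N)$ bits is false: the exponents $x_i$ are binary-encoded and may be as large as $2^N$, and the coordinates of $u_i^{x_i}$ are polynomials in $x_i$, so they can be exponential in $N$. The correct bound is $2^{\mathrm{poly}(N)}$, i.e., polynomially many bits. This does not affect the conclusion, since iterated addition and iterated multiplication of polynomially many integers with polynomially many bits are still in $\uTCz$.
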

\begin{proof}
	In \cite{MyasnikovW17}, the so-called word problem with binary exponents was shown to be in \uTCz. Here the input is a power word $u_1^{x_1} \cdots u_n^{x_n}$ but all the $u_i$ are required to be one of the standard generators of the group $G$. For arbitrary power words, we can apply the same techniques as in  \cite{MyasnikovW17}: we compute Mal'cev normal forms of all $u_i$ using \cite[Theorem~5]{MyasnikovW17}, then we use the power polynomials from \cite[Lemma~2]{MyasnikovW17} to compute Mal'cev normal forms with binary exponents of all $u_i^{x_i}$. Finally, we compute the Mal'cev normal form of $u_1^{x_1} \cdots u_n^{x_n}$ again using \cite[Theorem~5]{MyasnikovW17}.
\end{proof}
Theorem~\ref{thm:nilpotent_power_wp} has been generalized in \cite{FigeliusGLZ20}, where it is shown that the power word problem for a wreath product $G \wr \mathbb{Z}$
with $G$ finitely generated nilpotent belongs to \uTCz. Other classes of groups where the power word problem belongs to $\TC^0$ are iterated wreath products of the form
$\Z^r \wr (\Z^r \wr (\Z^r \cdots ))$, free solvable groups \cite{FigeliusGLZ20} and solvable Baumslag-Solitar group $\mathsf{BS}(1,q)$ \cite{LohreyZ20}.

The Grigorchuk group (defined in \cite{Grig80} and also known as the \emph{first Grigorchuk group}) is a finitely generated subgroup of the automorphism group of an infinite
binary rooted tree. It is a torsion group (every element has order $2^k$ for some $k$) and it was the first example of a group of intermediate growth.

\begin{theorem}\label{thm:grigorchuck}
	The power word problem for the Grigorchuk group is \uACz-many-one-reducible to its word problem (under suitable assumptions on the input encoding).
\end{theorem}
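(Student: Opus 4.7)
The plan rests on two properties of the Grigorchuk group $G$: first, it is a $2$-group, so every element $g$ satisfies $g^{2^k}=1$ for some $k$; second (and crucially), there is a polynomial bound on the $2$-torsion order as a function of word length. More precisely, by the classical torsion-growth estimates for $G$ (going back to Grigorchuk and sharpened by Leonov and Rozhkov), there is a polynomial $p$ such that every element of $G$ represented by a word of length $\ell$ has order at most $p(\ell)$; equivalently, such an element is annihilated by $2^{\kappa(\ell)}$ with $\kappa(\ell)=\lceil\log_2 p(\ell)\rceil=\mathcal{O}(\log \ell)$.

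Given these facts, I would implement the reduction as follows. Let the input power word be $(u_1,x_1,\ldots,u_n,x_n)$ of total length $N$. In the first step, replace each exponent $x_i$ by $x_i':=x_i \bmod 2^{\kappa(|u_i|)}$. Since $2^{\kappa(|u_i|)}$ is a power of two of bit-length $\mathcal{O}(\log N)$, and since the four generators $a,b,c,d$ of $G$ are involutions (so a negative exponent can be absorbed by reversing $u_i$), this amounts to extracting the low $\mathcal{O}(\log N)$ bits of $|x_i|$, which is a textbook $\uACz$ operation. In the second step, expand $u_1^{x_1'}\cdots u_n^{x_n'}$ to a flat word $w$ over the standard generators. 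Each $x_i'<p(|u_i|)$ is polynomial in $|u_i|$, so $|w|\le\sum_i |u_i|\cdot p(|u_i|)=\mathrm{poly}(N)$. In the third step, hand $w$ to the oracle for $\WP(G)$; correctness is immediate from $u_i^{x_i}=_G u_i^{x_i'}$, which holds because $u_i$ has order dividing $2^{\kappa(|u_i|)}$.

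The main obstacle is the polynomial bound on orders, which is itself a nontrivial structural theorem and depends on the self-similar action of $G$ on the binary rooted tree~-- I would cite it rather than reprove it. A secondary technical subtlety sits in the expansion step: a naive implementation needs prefix sums $\sum_{k<i}|u_k|\cdot x_k'$ over polynomial-size values to route each output position to the correct letter of some $u_i$, and iterated addition of polynomially many polynomial-size numbers is $\uTCz$-complete, not $\uACz$. This is precisely what the theorem's hedge \emph{``under suitable assumptions on the input encoding''} is there to accommodate: one may, for instance, allow the reduction to output each letter of $w$ tagged with its binary-encoded index (or use a unary-padded representation of exponents), in which case no prefix-sum computation is needed and the whole reduction remains in $\uACz$. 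Combined with the fact that $\WP(G)\in\LOGSPACE$, this would also re-derive $\PowWP(G)\in\LOGSPACE$ as a consequence, contrasting sharply with the $\PSPACE$-completeness of the compressed word problem for $G$ proved in \cite{BartholdiFLW20}.
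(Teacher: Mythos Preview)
Your proposal is correct and follows essentially the same approach as the paper: use the polynomial torsion bound together with the $2$-group property to reduce exponents modulo a small power of two, then expand to an ordinary word. Two minor simplifications in the paper's version are worth noting: the paper takes a \emph{single} modulus $2^{2k+\ell}$ with $2^k\ge\max_i|u_i|$ (citing the explicit $Cm^{3/2}$ bound from \cite{BartholdiGS03}) rather than a per-word modulus $2^{\kappa(|u_i|)}$, and its ``suitable assumption on the input encoding'' is just the presence of a neutral letter, which lets one pad every block $u_i^{x_i'}$ to the same fixed length and thereby sidestep the prefix-sum issue entirely without resorting to tagged indices or unary-padded exponents.
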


\begin{proof}
	Let $G$ denote the Grigorchuk group.
	By \cite[Theorem~6.6]{BartholdiGS03}, every element of $G$ that can be represented
 by a word of length $m$ over a finite set of generators has order at most $Cm^{3/2}$ for some constant $C$. \Wlog $C = 2^\ell$ for some $\ell \in \N$.
	On input of a power word $u_1^{x_1} \cdots u_n^{x_n}$
 with all words $u_i$ of length at most $m$, we can compute the smallest $k$ with $2^k \geq m$ in \uACz.
	We have $2^k \le 2m$. Now, we know that an element of length $m$ has order bounded by $2^{2k+\ell}$. Since the order of every element of $G$ is a power of two, this means that $g^{2^{2k+\ell}} = 1$ for all $g \in G$ of length at most $m$. Thus, we can reduce all exponents modulo $2^{2k+\ell}$ (\ie we drop all but the $2k+\ell$ least significant bits). Now all exponents are at most
	$2^{2k+\ell} \leq 4Cm^2$ and the power word can be written as an ordinary word (to do this in \uACz, we need a neutral letter to pad the output to a fixed word length). Note that this can be done by a uniform circuit family.
\end{proof}
\cref{thm:grigorchuck} applies only if the generating set contains a neutral letter. Otherwise, the reduction is in \uTCz. It is well-know that the word problem for the Grigorchuk group is in \L (see \eg\ \cite{Nekrashevych05,MiasnikovV17}). Thus, also the power word problem is in \L.  On the other hand, the compressed word problem (mentioned in the introduction) for the Grigorchuk group is \PSPACE-complete \cite{BartholdiFLW20}.

\section{Power word problems in finite extensions}

Also for finite groups the power word problem is easy; it belongs to \Nc{1}. The following result generalizes this fact:

\begin{theorem}\label{thm:finite_index}
	Let $G$ be finitely generated and let $H\leq G$ have finite index. Then $\PowWP(G)$ is \Nc{1}-many-one-reducible to $\PowWP(H)$.
\end{theorem}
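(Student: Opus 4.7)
My approach is the classical coset/Schreier-rewriting used for finite-index subgroups, carefully arranged so that everything fits inside $\Nc{1}$. Let $k = [G:H]$ and fix right coset representatives $g_1, \ldots, g_k \in G$ with $g_1 = 1$. For any word $w$ over the generators of $G$, the right action of $w$ on the cosets defines a permutation $\sigma_w \in S_k$ via $H g_j \cdot w = H g_{\sigma_w(j)}$, and for each $j$ there is a unique element $h_{w,j} = g_j w g_{\sigma_w(j)}^{-1} \in H$. One has the composition laws $\sigma_{uv} = \sigma_v \circ \sigma_u$ and $h_{uv,j} = h_{u,j}\, h_{v,\sigma_u(j)}$, and $w =_G 1$ is equivalent to the conjunction of $\sigma_w(1) = 1$ and $h_{w,1} =_H 1$.

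Given a power word $u_1^{x_1} \cdots u_n^{x_n}$, I would first compute $\sigma_{u_i} \in S_k$ for each $i$ as an iterated product in the fixed finite group $S_k$ of the (constant-size) permutations assigned to the generators occurring in $u_i$; by Barrington's theorem this is in $\Nc{1}$. Set $p = k!$, so that $\sigma_{u_i}^{p} = \mathrm{id}$ for every $i$. Compute $r_i = x_i \bmod p$ and $q_i = \lfloor x_i / p\rfloor$ in $\uTCz$ (constant modulus, binary-encoded $x_i$), so that $\sigma_{u_i^{x_i}} = \sigma_{u_i}^{r_i}$. A prefix-product over $S_k$ then yields the intermediate coset indices $j_i := \sigma_{u_1^{x_1} \cdots u_{i-1}^{x_{i-1}}}(1)$ for all $i$ simultaneously; if $\sigma_{u_1^{x_1} \cdots u_n^{x_n}}(1) \neq 1$, I output a fixed power word over $H$ known to be non-trivial.

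Otherwise I assemble a power word over $H$ representing $h_{u_1^{x_1} \cdots u_n^{x_n}, 1}$. Writing $x_i = q_i p + r_i$, the fact that $u_i^{p}$ stabilizes every coset gives
\[ h_{u_i^{x_i}, j_i} \;=\; c_i^{q_i} \cdot h_{u_i^{r_i}, j_i}, \qquad \text{with } c_i = g_{j_i} u_i^{p} g_{j_i}^{-1} \in H,\]
and both $u_i^{p}$ and $u_i^{r_i}$ are words of length at most $p\cdot|u_i|$ over the generators of $G$. I rewrite each such word $v = s_1 \cdots s_m$ (starting from a given coset $j$) in terms of Schreier generators $\eta_{s,j} := g_j\, s\, g_{\sigma_s(j)}^{-1} \in H$: its $H$-image equals $\eta_{s_1, j}\,\eta_{s_2, \sigma_{s_1}(j)}\cdots \eta_{s_m, \sigma_{s_1\cdots s_{m-1}}(j)}$, where the needed running coset indices are prefix products in $S_k$, hence computable in $\Nc{1}$. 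Letting $w_{i,1}$ and $w_{i,2}$ be the resulting $H$-words for $u_i^{p}$ and $u_i^{r_i}$, the reduction outputs
\[ w_{1,1}^{q_1}\, w_{1,2}\, w_{2,1}^{q_2}\, w_{2,2}\, \cdots\, w_{n,1}^{q_n}\, w_{n,2},\]
a power word over the Schreier generating set of $H$ that represents $1$ in $H$ iff the input represents $1$ in $G$.

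The main obstacle is keeping every step inside $\Nc{1}$: the repeated use of iterated multiplication and prefix products in the fixed finite group $S_k$ (to compute $\sigma_{u_i}$, the $j_i$, and the coset traces needed for Schreier substitution) is in $\Nc{1}$ by Barrington; binary arithmetic with the constant modulus $p = k!$ (producing $r_i$ and $q_i$) is in $\uTCz$; and the Schreier substitution itself is a letter-wise rewrite driven by those prefix products, hence also in $\Nc{1}$. A cosmetic change from the Schreier generators to any other fixed finite generating set of $H$ contributes only a constant-depth, letter-wise translation and does not affect the bound.
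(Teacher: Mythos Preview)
Your proof is correct and rests on the same core mechanism as the paper's: split each exponent as $x_i = q_i p + r_i$ with $p$ the order of a fixed finite group so that $u_i^{p}$ already lies in (a conjugate of) $H$, rewrite the short remainders $u_i^{r_i}$ letter by letter, and use Barrington's theorem to keep all the finite-group prefix products inside $\Nc{1}$.

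The implementation differs in one respect worth noting. The paper first passes from $H$ to a \emph{normal} subgroup $N\le H$ of finite index and then works in the quotient $Q=G/N$: normality guarantees $u_i^{|Q|}\in N$ outright, and the subsequent rewriting into $N$-generators is Robinson's conjugate-collection process \cite{Robinson93phd}. You instead keep the original (not necessarily normal) $H$ and use the right-coset action on $S_k$ together with Schreier generators; the role of $|Q|$ is taken over by $p=k!$, and the role of conjugate collection by the Schreier substitution driven by the coset trace $j_i$. Both are standard incarnations of Reidemeister--Schreier; your route avoids the preliminary reduction to a normal subgroup at the price of tracking the coset index $j_i$ explicitly, while the paper's reduction to normal makes the rewriting step more uniform (conjugation by a coset representative always lands in $N$). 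Neither advantage is decisive, and the complexity analysis is identical in both cases.
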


\begin{proof}
	Since $H \leq G$ is of finite index, there is a normal subgroup $N \leq G$ of finite index with $N\leq  H$ (\eg $N= \bigcap_{g\in G} gHg^{-1}$). As $N \leq H$, $\PowWP(N)$ is reducible via a homomorphism (\ie in particular in \uTCz) to $\PowWP(H)$.
	Thus, we can assume that from the beginning $H$ is normal and that $Q=G/H$ is a finite quotient group.
	Notice that $H$ is finitely generated as $G$ is so; see e.g. \cite[1.6.11]{Rob96}.
	Let $R\sse G$ denote a set of representatives of $Q$ with $1 \in R$. If we choose a finite generating set $\Sigma$ for $H$, then
	$\Sigma \cup (R \setminus \{1\})$ becomes a finite generating set for $G$.

	Let $u = u_1^{x_1} \cdots u_n^{x_n}$ denote the input power word.
	As a first step, for every exponent $x_i$  we compute numbers $y_i,z_i \in \Z$ with
	$x_i = y_i\abs{Q} + z_i$ and $0 \leq z_i < \abs{Q}$ (\ie we compute the division with remainder by $\abs{Q}$).
	This is possible in \Nc{1} \cite{HeAlBa02}.
	Note that $u_i^{\abs{Q}}$ is trivial in the quotient $Q = G/H$ and, therefore, represents an element of $H$.
	Using the conjugate collection process from \cite[Theorem~5.2]{Robinson93phd} we can compute in $\Nc{1}$ a
	word $h_i \in \Sigma^*$ such that $u_i^{\abs{Q}} =_G h_i$.
	Then we replace in the input word every $u_i^{x_i}$ by $h_i^{y_i} u_i^{z_i}$ where we write $u_i^{z_i}$ as a word without exponents.
	We have obtained a word where all factors with exponents represent elements of $H$. Finally, we proceed like Robinson \cite{Robinson93phd} for the ordinary word problem
	treating words with exponents as single letters (this is possible because they are in $H$).

	To give some more details for the last step, let us denote the result of the previous step as
	$g_0 h_1^{y_1} g_1 \cdots h_n^{y_n} g_n$ with $g_i \in (\Sigma \cup R \setminus \{1\})^*$ and $ h_i \in \Sigma^*$. By \cite[Theorem~5.2]{Robinson93phd} we can rewrite in \Nc{1} $g_i$ as $g_i = \tilde h_i r_i$ with $r_i \in R$ and $\tilde h_i \in \Sigma^*$.
	Once again, we follow \cite{Robinson93phd} and write $\tilde h_0 r_0 h_1^{y_1} \tilde h_1 r_1 \cdots h_n^{y_n} \tilde h_n r_n$ as
	\[
	\tilde h_0 w_0 (a_1 h_1^{y_1} \tilde h_1a_1^{-1}) w_1  (a_2 h_2^{y_2} \tilde h_2a_2^{-1})w_2  \cdots (a_n h_n^{y_n} \tilde h_na_n^{-1})w_na_{n+1}
	\]
	where $a_i$ is the representative of $r_0 \cdots r_{i-1}$ in $R$
 ($a_0 = 1$) and $w_i = a_i r_i a_{i+1}^{-1}$. The element $(a_i h_i^{y_i} \tilde h_ia_i^{-1})$ belongs to $H$ since $H$ is normal in $G$. It is obtained from
 $h_i^{y_i} \tilde h_i$ by conjugation with $a_i$, i.e., by a homomorphism from
 a fixed finite set of homomorphisms. Thus, a power word $P_i$ over the alphabet $\Sigma$ with $P_i =_H (a_i h_i^{y_i} \tilde h_ia_i^{-1})$
 can be computed in $\uTCz$.
 Also all $w_i$ belong to $H$, since $a_i r_i$ and $a_{i+1}$ belong to the same coset of $H$. Moreover, every $w_i$
 comes from a fixed finite set (namely $R \cdot R \cdot R^{-1}$) and, thus,  can be rewritten to a word $w_i' \in \Sigma^*$. Now it remains to verify whether $a_{n+1}= 1$ (solving the word problem for $Q$, which is in $\uNC^1$). If this is not the case, we output any non-identity word in $H$, otherwise we output the power word
	$P = \tilde h_0 w_0' P_1 w_1' P_2 w_2'  \cdots P_n w_n'$. As $a_{n+1}= 1$, we have $P =_G u$.
\end{proof}

 \section{Power word problems in graph products}

The main results of this section are transfer theorems for the complexity
of the power word problem in graph products. We will prove such a transfer
theorem for the non-uniform setting (where the graph product is fixed) as well
as the uniform setting (where the graph product is part of the input).
Before, we will consider a special case, the so called
simple power word problem for graph products, in Section~\ref{sec:simple_pwp}.
In Section~\ref{ch:conjugacy-in-graph-groups-and-graph-products} we have to prove some further combinatorial results on
traces. Finally, in Section~\ref{ch:the-power-word-problem-in-graph-products} we prove the transfer theorems for
graph products.

 \subsection{The simple power word problem for graph products}\label{sec:simple_pwp}

In this section we consider a restricted version of the power word problem for graph products.
Later, we will use this restricted version in our algorithms for the unrestricted power word problem.

%\paragraph{The simple power word problem.}
Let $G = \GP(\cL, I, (G_\cLelA)_{\cLelA \in \cL})$ be a graph product
and define $\Gamma_\cLelA, \Sigma_\cLelA, \Gamma, \Sigma$ as in Section~\ref{sec:graph_prod_prelims}.
A \emph{simple power word} is a word $w = w_1^{x_1} \cdots w_n^{x_n}$, where $w_1, \dots, w_n \in \Gamma$
and $x_1, \dots, x_n \in \mathbb{Z}$ is a list of binary encoded integers.
Each $w_i$ encoded as a word over some finite alphabet $\Sigma_\cLelA$.
Note that this is more restrictive than a power word:
we only allow powers of elements from a single base group.
The \emph{simple power word problem} $\SPowWP(G)$ is to decide whether $w =_G 1$, where $w$ is a simple power word.
We also consider a uniform version of this problem. With $\USPowWP(\GP(\mathcal{C}))$ we denote the {\em uniform simple power word problem} for
graph products from the class $\GP(\mathcal{C})$ (see the last paragraph in
Section~\ref{sec:graph_prod_prelims}).
The following results on the complexity of the (uniform) simple power word problem are obtained by using the corresponding algorithm for the (uniform) word problem~\cite[Theorem 5.6.5, Theorem 5.6.14]{kausch2017parallel} and replacing the oracles for the word problems of the base groups with oracles for the power word problems in the base groups.

\begin{proposition}
	\label{lem:spowwp}
	For the (uniform) simple power word problem the following holds. %
	\begin{itemize}%
		\item Let $G = \GP(\cL, I, (G_\cLelA)_{\cLelA \in \cL})$ be a fixed graph product of f.g.~groups.
		Then $\SPowWP(G) \in \uAC^0\bigl(\{\WP(F_2)\} \cup \{ \PowWP(G_\cLelA) \mid \cLelA \in \cL \}\bigr)$.
		\item Let \(\mathcal{C}\) be a non-trivial class of f.g.~groups.
		Then $\USPowWP(\GP(\mathcal{C}))$ is in $\CeqL^{\UPowWP(\mathcal{C})}$.
	\end{itemize}
\end{proposition}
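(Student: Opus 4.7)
The plan is to adapt Kausch's algorithms for the (uniform) word problem in graph products from \cite[Theorem 5.6.5, Theorem 5.6.14]{kausch2017parallel} in a purely syntactic way: keep the outer algorithmic skeleton intact, but replace every oracle call for a base-group word problem with a call to the corresponding (uniform) power word problem. The underlying observation is that a simple power word $w = w_1^{x_1} \cdots w_n^{x_n}$ has each factor $w_i^{x_i}$ living entirely inside the base group $G_{\alphabet(w_i)}$, so that the sequence of ``meta-letters'' $(w_1^{x_1}, \ldots, w_n^{x_n})$ is structurally just a $\Gamma$-word, but with binary-encoded exponents attached.

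First I would recall the architecture of Kausch's algorithms. The combinatorial part~-- tracking partial commutation, deciding which cancellations happen, and producing the bookkeeping that identifies which consecutive meta-letters ultimately meet inside the same base group~-- is carried out by an \uACz-circuit with $\WP(F_2)$ oracle gates in the fixed case, and inside $\CeqL$ in the uniform case. The base groups themselves are accessed only through queries of the form ``is this word over $\Sigma_\cLelA$ equal to $1$ in $G_\cLelA$?''. In the simple power word setting, each such query now has as its argument a concatenation of consecutive meta-letters $w_{i_1}^{x_{i_1}} \cdots w_{i_k}^{x_{i_k}}$ with $\alphabet(w_{i_j}) = \cLelA$ for all $j$, which is exactly a power word over $\Sigma_\cLelA$. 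Replacing the $\WP(G_\cLelA)$ oracle by $\PowWP(G_\cLelA)$ (respectively $\UPowWP(\mathcal{C})$ in the uniform case) therefore lets us answer each such query without any change to the outer circuit/machine, yielding the claimed bounds $\SPowWP(G) \in \uAC^0\bigl(\{\WP(F_2)\} \cup \{\PowWP(G_\cLelA) \mid \cLelA \in \cL\}\bigr)$ and $\USPowWP(\GP(\mathcal{C})) \in \CeqL^{\UPowWP(\mathcal{C})}$.

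The main point requiring verification~-- and really the only genuine obstacle~-- is an audit of Kausch's constructions to confirm that base-group oracle calls are made only on arguments that arise as concatenations of letters from the original input (possibly after trivial combinatorial bookkeeping), with no step that would implicitly require the exponents $x_i$ to be expanded unarily. Once this syntactic check is carried out, the substitution of oracles is immediate and the complexity analyses in \cite{kausch2017parallel} transfer verbatim.
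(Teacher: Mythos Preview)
Your proposal is correct and matches the paper's approach exactly: the paper explicitly states that the results ``are obtained by using the corresponding algorithm for the (uniform) word problem~\cite[Theorem 5.6.5, Theorem 5.6.14]{kausch2017parallel} and replacing the oracles for the word problems of the base groups with oracles for the power word problems in the base groups.'' The only difference is that the paper actually carries out the audit you flag as the remaining task: in the non-uniform case it redoes Kausch's induction on $|\cL|$ (writing $G$ as an amalgamated product $P *_A (A \times B)$ and reducing to a free product $*_{k\in\N} B_k$, where the base-group queries become equality tests like $g_i g_j^{-1} =_B 1$ for $g_i = p_1^{x_1}\cdots p_i^{x_i}$, which are manifestly $\PowWP(B)$-instances), and in the uniform case it re-presents Kausch's $\GapL$ algorithm for the coefficients of $\sigma_w(\chi)$, checking that the oracle calls are of the form $a =_{G_\cLelA} \pi_\cLelA(a_k^{x_k}\cdots a_\ell^{x_\ell})$, again a power word.
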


We adapt the proof from \cite{kausch2017parallel} for the word problem to the setting of the simple power word problem. The proofs for the non-uniform and uniform case are quite different. Indeed, in the non-uniform case, we can work by induction over the size of the (in-)dependence graph, while for the uniform case we rely on an embedding into some linear space of infinite dimension.  Therefore, we split the proofs into two subsections: in \cref{sec:simple_nonuniform} we work on the non-uniform case and later, in \cref{sec:the-uniform-case}, we develop an algorithm for the uniform case.

	\subsubsection{Input encoding}
	\label{sec:input-encoding-spwp}

	Let us give some details how to encode the input for the (simple) power word problem in graph products.
    There are certainly other ways how the represent the input for our algorithms without changing the complexity; but whenever the encoding is important, we assume that is is done as described in this section.
	We will use blocks of equal size to encode the different parts of the input. This makes it possible that parts of the computation can be done in $\uAC^0$. We assume that there is a letter for $1 \in \Sigma$ representing the group identity.

    The input of the power word problem in a graph product is $p_1^{x_1}\cdots p_n^{x_n}$ where $p_i = a_{i,1}\cdots a_{i,m_i} \in \Sigma^*$ (note that each letter of $\Gamma$ can be written as a word over $\Sigma$).
    We can pad with the identity element, so that each $p_i$ has length $n$, \ie $p_i = a_{i,1}\cdots a_{i,n}$ with $a_{i,j}\in \Sigma$.

     We encode each letter $a \in \Sigma$ as a tuple $(\cLelA, a)$ where $\cLelA = \alphabet(a)$.
    In the non-uniform case, there is a constant $k$, such that $k$ bits are sufficient to encode any element of $\cL$ and any letter of any $\Sigma_\cLelA$ for any $\cLelA \in \cL$.
    In the uniform case we encode the elements of $\cL$ as well as the elements of each  $\Sigma_\cLelA$  using $n$ bits.
    The encoding of a word $p_i$ is illustrated by the following figure.

\bigskip

	\begin{center}
	\small
		\begin{tikzpicture}[scale=.8]
			\draw (0,0) rectangle (2,1) node[pos=.5] {$\alphabet(a_{i,1})$};
			\draw (2,0) rectangle (4,1) node[pos=.5] {$a_{i,1}$};
			\draw (4,0) rectangle (8,1) node[pos=.5] {$\cdots$};
			\draw (8,0) rectangle (10,1) node[pos=.5] {$\alphabet(a_{i,n})$};
			\draw (10,0) rectangle (12,1) node[pos=.5] {$a_{i,n}$};
		\end{tikzpicture}
	\end{center}

\bigskip
    Encoding a word $p_i$ requires $2nk$ bits in the non-uniform case and $2n^2$ bits in the uniform case.
    For the simple power word problem we impose the restriction $\alphabet(a_{i,j}) = \alphabet(a_{i,k})$ for all $i,j ,k \in \{1, \dots, n\}$, as mixed powers are not allowed.

     We combine the above encoding for the words $p_i$ with a binary encoding for the exponents $x_i$ to obtain the encoding of a power word.
     Each exponent is encoded using $n$ bits.
     Note that we can do this because, if an exponent is smaller, we can pad it with zeroes and,
     if an exponent is larger, we can choose a larger $n$ and pad the input word with the identity element $1$.
     This leads us to the following encoding of a power word, which in the non-uniform case uses $(2k+1)n^2$ bits.

\bigskip

	\begin{center}
	\small
		\begin{tikzpicture}[scale=.8]
			\draw (0,0) rectangle (2,1) node[pos=.5] {$p_1$};
			\draw (2,0) rectangle (3,1) node[pos=.5] {$x_1$};
			\draw (3,0) rectangle (5,1) node[pos=.5] {$\cdots$};
			\draw (5,0) rectangle (7,1) node[pos=.5] {$p_n$};
			\draw (7,0) rectangle (8,1) node[pos=.5] {$x_n$};
		\end{tikzpicture}
	\end{center}
\bigskip

In the uniform case, this encoding requires $(2n+1)n^2$ bits.
 Furthermore, we also need to encode the descriptions of the base groups and the independence graph.
 By padding the input appropriately, we may assume that there are $n$ base groups and that each can be encoded using $n$ bits.
 The independence graph can be given as adjacency matrix, using $n^2$ bits.

\subsubsection{The non-uniform case}\label{sec:simple_nonuniform}

Before solving the simple power word problem in $G$ we prove several lemmata to help us achieve this goal. \Cref{lem:free_prod_am_prod} below is
due to Kausch~\cite{kausch2017parallel}. For this lemma, we have to introduce first
some notation and the notion of a semidirect product:
Take two groups $H$ and $N$ with a left action of $H$ on $N$
(a mapping $(h, g) \mapsto h \circ g$ for $h \in H$, $g \in N$ such that
$1 \circ g = g$, $(h_1 h_2) \circ g = h_1 \circ (h_2  \circ g)$ and for each $h \in H$ the map $g \mapsto h \circ g$ is an automorphism of $G$).
 The corresponding {\em semidirect product} $N \rtimes H$  is a group with underlying set $N \times H$ and the multiplication is defined
 by $(n_1, h_1) (n_2, h_2) = (n_1 (h_1 \circ n_2), h_1 h_2)$.

If $B$ is a group  and $u$ an arbitrary object, we write $B_u = \{ (g,u) \mid g \in B \}$ for an isomorphic copy of $B$ with multiplication $(g,u) (g',u) = (gg',u)$.
In the following let $B$ be finitely generated.
We begin by looking at the free product
$G \simeq *_{k \in \mathbb{N}} B_k$ of countable many copies of $B$.
Kausch~\cite[Lemma 5.4.5]{kausch2017parallel} has shown that the word problem
for $G$ can be solved in $\uAC^0$ with oracle gates for $\WP(B)$ and $\WP(F_2)$.
We show a similar result for the simple power word problem.
Our proof is mostly identical to the one presented in~\cite{kausch2017parallel}, with only a few changes to account for the different encoding of the input.
We use the following lemma on the algebraic structure of $G$.

\begin{lemma}{\cite[Lemma 5.4.4]{kausch2017parallel}}
	\label{lem:free_prod_am_prod}
	Let $B$ be a f.g.~group and $G = *_{k\in \mathbb{N}} B_k$.
	Then, we have $G \simeq F(X) \rtimes B$, where $F(X)$ is a free group with basis
	\begin{align*}
		X = \set{ (g,k) (g,0)^{-1} }{g \in B \setminus \{1\}, k \in \mathbb{N} \setminus \{0\} }
	\end{align*}
	and $g \in B$ acts on $F(X)$ by conjugating with $(g,0)$:
	\begin{align*}
	   \left(g\;,\;\; (h,k) (h,0)^{-1}\right) \mapsto (g,0) (h,k) (h,0)^{-1}  (g,0)^{-1}.
	\end{align*}
\end{lemma}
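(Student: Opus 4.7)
The plan is to realise $G$ as a semidirect product by exhibiting an explicit retraction of $G$ onto $B$, then identifying the kernel as the free group on $X$.

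First I would define a homomorphism $\varphi : G \to B$ via the universal property of the free product: for each $k$, send the generator $(g,k) \in B_k$ to $g \in B$. The map $\iota : B \to G$, $g \mapsto (g,0)$, is a group-theoretic section, since its image is exactly $B_0 \leq G$. Thus $\varphi$ is split surjective and $G \cong \ker(\varphi) \rtimes B$, with $B$ acting on $N := \ker(\varphi)$ by conjugation in $G$ along $\iota$; this conjugation action is precisely the one described in the statement.

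Next I need $N = \langle X \rangle$ as a free group on $X$. Every element of $X$ visibly lies in $N$, so there is a natural homomorphism $F(X) \to N$. For surjectivity, take $w = (g_1,k_1)(g_2,k_2)\cdots(g_m,k_m) \in N$ in free-product normal form and rewrite each factor as $(g_i,k_i) = \bigl((g_i,k_i)(g_i,0)^{-1}\bigr)(g_i,0)$. When $k_i = 0$ the bracketed factor is trivial; when $k_i \neq 0$ it lies in $X$. Using that $N$ is normal in $G$ and that $\iota(B)$ acts on $N$ by conjugation, I can slide all $(g_i,0)$ factors to the right (replacing elements of $X$ by other elements of $\langle X \rangle$), obtaining $w = n \cdot (g,0)$ with $n \in \langle X \rangle$ and $g \in B$. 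Since $w \in N$ and $n \in N$, the residue $(g,0)$ lies in $B_0 \cap N$; but $\varphi((g,0)) = g$ forces $g = 1$, whence $w = n \in \langle X \rangle$.

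Injectivity of $F(X) \to N$ is the main obstacle. The cleanest route is Bass--Serre theory: $G$ acts on its Bass--Serre tree $T$ (with trivial edge groups and vertex stabilisers the conjugates of the $B_k$). Since $(g,k) \in N$ forces $g = \varphi((g,k)) = 1$, we have $N \cap B_k = \{1\}$ for every $k$, and the same holds for all $G$-conjugates of the $B_k$. Hence $N$ acts freely on $T$, so $N$ is a free group. It remains to check that $X$ is an actual free basis and not just a generating set. I would do this by directly expanding a putative nontrivial reduced word $\xi_1^{\epsilon_1}\cdots\xi_\ell^{\epsilon_\ell}$ in $F(X)$ into the alphabet of the free product, writing each $\xi_i^{+1} = (g_i,k_i)(g_i^{-1},0)$ and $\xi_i^{-1} = (g_i,0)(g_i^{-1},k_i)$ with $g_i \neq 1$, $k_i \neq 0$. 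A short case analysis on the four possible sign patterns $(\epsilon_i,\epsilon_{i+1})$ shows that reducedness in $F(X)$ prevents the interior $B_0$-letters from collapsing a $B_{k_i}$-syllable entirely: in every case at least one of the syllables $(\cdot,k_i)$ survives in the free-product normal form. Hence the resulting normal form has positive length and the product is nontrivial in $G$, proving that the map $F(X) \to N$ is injective and completing the decomposition $G \cong F(X) \rtimes B$.
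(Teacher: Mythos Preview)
The paper does not prove this lemma at all; it simply cites \cite[Lemma~5.4.4]{kausch2017parallel} and adds a one-line remark verifying that the conjugation action indeed lands back in $F(X)$, namely
\[
(g,0)(h,k)(h,0)^{-1}(g,0)^{-1}=(g,0)(g,k)^{-1}(gh,k)(gh,0)^{-1}.
\]
So there is no ``paper proof'' to compare against; you have supplied one where the authors chose to quote.

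Your argument is correct. Two comments on economy. First, the Bass--Serre step is redundant: once you carry out the direct case analysis showing that a reduced $F(X)$-word expands to a nontrivial free-product normal form, you have injectivity of $F(X)\to N$ outright, and together with your surjectivity argument this already gives $N\cong F(X)$. Knowing abstractly that $N$ is free adds nothing, since (for infinite rank) a generating set of the correct cardinality need not be a basis. Second, there is a slicker route that bypasses both Bass--Serre theory and the syllable case analysis: define the $B$-action on the \emph{abstract} free group $F(X)$ by the formula $g\ast x_{h,k}=x_{g,k}^{-1}x_{gh,k}$ (with the convention $x_{1,k}=1$), form $F(X)\rtimes B$ with this action, and write down the homomorphism $\Theta:G\to F(X)\rtimes B$ sending $(g,0)\mapsto(1,g)$ and $(g,k)\mapsto(x_{g,k},g)$ for $k\neq 0$. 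One checks on generators that $\Theta$ and the obvious map $F(X)\rtimes B\to G$ are mutually inverse; the displayed identity above is exactly what makes the semidirect-product relation match up.
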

Note that
\[(g,0) (h,k) (h,0)^{-1}  (g,0)^{-1} = (g,0) (g,k)^{-1} (gh,k) (gh, 0)^{-1} \in F(X).\]
The choice of $0 \in \mathbb{N}$ in Lemma~\ref{lem:free_prod_am_prod} as the distinguished element from $\mathbb{N}$ is arbitrary.

With Lemma~\ref{lem:free_prod_am_prod}, we can solve the simple power word problem for $G$.

\begin{lemma}
	\label{lem:spowwp_free_prod}
	Let $B$ and $G$ be as in \cref{lem:free_prod_am_prod}. Given a power word
	\[ w = (w_1,k_1)^{x_1} \cdots (w_n,k_n)^{x_n}\in (B \times \mathbb{N} \times \mathbb{Z})^*, \] where the exponents $x_i \in \mathbb{Z}$ are encoded as binary numbers,
	one can decide in $\uAC^0(\{\PowWP(B), \WP(F_2) \})$ whether  $w =_G 1$.
\end{lemma}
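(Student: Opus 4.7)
\medskip

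\noindent\emph{Proof plan.}
The plan is to push the problem through the isomorphism $G \simeq F(X) \rtimes B$ from \cref{lem:free_prod_am_prod} and then to check the two components separately. Write $\gamma_{g,k} = (g,k)(g,0)^{-1}$ for the generators of $F(X)$, with the convention $\gamma_{g,k}=1$ whenever $g=1$ in $B$ or $k=0$. Under the isomorphism, $(w,k) \in G$ is sent to $(\gamma_{w,k}, w) \in F(X)\rtimes B$. Since $(w_i,k_i)$ lies in the single factor $B_{k_i}$ of the free product, we have the trivial identity $(w_i,k_i)^{x_i} = (w_i^{x_i}, k_i)$ in $G$, so each power on the input is replaced by a letter whose first coordinate is the power word $w_i^{x_i}$ in $B$; computing this replacement is in $\uACz$.

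Next, I would multiply the resulting $n$-letter word in $F(X)\rtimes B$ using the semidirect product formula $(a_1,b_1)(a_2,b_2)=(a_1(b_1\circ a_2),b_1b_2)$. The $B$-coordinate is simply the power word $\beta_n = w_1^{x_1}w_2^{x_2}\cdots w_n^{x_n}$, which can be tested against $1$ using the $\PowWP(B)$ oracle. For the $F(X)$-coordinate, using the explicit action formula $g \circ \gamma_{h,k} = \gamma_{g,k}^{-1}\gamma_{gh,k}$ (derived from the conjugation definition in \cref{lem:free_prod_am_prod}), the inductive step gives
\[
\beta_{j-1}\circ\gamma_{w_j^{x_j},k_j} = \gamma_{\beta_{j-1},k_j}^{-1}\,\gamma_{\beta_j,k_j},
\]
where $\beta_j = w_1^{x_1}\cdots w_j^{x_j}$. (The same formula also yields the telescoping identity $(\gamma_{w,k},w)^x=(\gamma_{w^x,k},w^x)$ which justifies the step above.) Thus the $F(X)$-coordinate is the explicit word
\[
\alpha_n \;=\; \prod_{j=1}^{n}\gamma_{\beta_{j-1},k_j}^{-1}\,\gamma_{\beta_j,k_j}
\]
of length $2n$ in the free generators of $F(X)$, and we must decide whether $\alpha_n=1$ in $F(X)$.

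Finally I would reduce this last question to $\WP(F_2)$. Two occurring generators $\gamma_{\beta_i,k_j}$ and $\gamma_{\beta_{i'},k_{j'}}$ are equal in $F(X)$ iff $k_j=k_{j'}$ and $\beta_i=\beta_{i'}$ in $B$; the second condition reduces (by moving one side across) to a single power word identity in $B$ and is therefore decidable by the $\PowWP(B)$ oracle. Likewise, $\gamma_{\beta_i,k_j}=1$ iff $k_j=0$ or $\beta_i=1$ in $B$, again answered by the oracle. Using these $\mathcal{O}(n^2)$ oracle queries in parallel, assign to each of the at most $2n$ distinct non-trivial generator occurrences a label $\ell\in[1,2n]$ (e.g.\ the position of its first occurrence), then replace each occurrence by $a^\ell b a^{-\ell}\in F_2$. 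Since $\{a^\ell b a^{-\ell}\}_{\ell\ge 1}$ freely generates a free subgroup of $F_2$, the resulting word of length $\mathcal{O}(n^2)$ in $F_2$ is trivial iff $\alpha_n=1$ in $F(X)$; a final call to $\WP(F_2)$ decides this.

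The main obstacle is purely bookkeeping: the reduction to $\WP(F_2)$ must choose labels for the generator occurrences without knowing canonical forms in $B$. Handling this in $\uACz$ is standard once the pairwise $\PowWP(B)$ comparisons are performed in parallel, but it is the only place where the uniformity of the circuit family has to be verified with care, and it is the reason the oracle $\PowWP(B)$ (rather than just $\WP(B)$) is invoked.
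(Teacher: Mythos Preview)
Your proposal is correct and follows essentially the same approach as the paper. Both use the isomorphism $G\simeq F(X)\rtimes B$, check the $B$-coordinate $\beta_n=w_1^{x_1}\cdots w_n^{x_n}$ via $\PowWP(B)$, express the $F(X)$-coordinate as a word in the generators $\gamma_{g,k}=(g,k)(g,0)^{-1}$ indexed by the partial products $\beta_j$ (your $\beta_j$ is the paper's $g_j$), use $\PowWP(B)$ to decide equality among these generators, and then encode the resulting finite-rank free group into $F_2$ via $\ell\mapsto a^{\ell}ba^{-\ell}$. The only cosmetic difference is that you derive the $F(X)$-word by unrolling the semidirect product multiplication formula and the action $g\circ\gamma_{h,k}=\gamma_{g,k}^{-1}\gamma_{gh,k}$, whereas the paper manipulates directly in the free product before inserting the factors $(g_i,0)^{-1}(g_i,0)$; the resulting words in $F(X)$ coincide (after removing the trivial boundary factors $\gamma_{\beta_0,k_1}$ and $\gamma_{\beta_n,k_n}$).
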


\begin{proof}
	By \cref{lem:free_prod_am_prod} we have $G \simeq F(X) \rtimes B$.
	The set $X$ is given by
	\begin{align*}
		X = \{ (g,k)(g,0)^{-1} \; \mid \; g \in B \setminus \{1\},\, k \in \mathbb{N} \setminus \{0\} \} \text{.}
	\end{align*}
	Let $\varphi: G \to B$ be the homomorphism defined by $\varphi(b,k) = b$.
	We can assume $\varphi(w) = w_1^{x_1}\cdots w_n^{x_n} =_B 1$ as otherwise $w \neq_G 1$.
	Now our aim is to write $w$ as a member of the kernel of $\varphi$, which is $F(X)$.

	Let $g_i = w_1^{x_1}\cdots w_i^{x_i} \in (B \times \mathbb{Z})^*$.
	Observe that we can construct the $g_i$ in $\uAC^0$.
	We have
	\begin{align*}
		w =_G (g_1,k_1) \prod_{i=2}^{n} (g_{i-1}, k_i)^{-1} (g_i,k_i) \text{.}
	\end{align*}
	Using the fact that $g_n = \varphi(w) =_B 1$ (and hence $(g_n,k_n) =_G 1$), we can rewrite $w$ as part of the kernel over the basis $X$:
	\begin{align*}
		w  &=_G \prod_{i=1}^{n-1} (g_i,k_i) (g_i,k_{i+1})^{-1}\\
		&=_G \prod_{i=1}^{n-1} (g_i,k_i) (g_i,0)^{-1} (g_i,0) (g_i,k_{i+1})^{-1}\\
		&=_G \prod_{i=1}^{n-1} (g_i,k_i)  (g_i,0)^{-1} \bigl((g_i,k_{i+1})(g_i,0)^{-1}\bigr)^{-1} \text{.}
	\end{align*}
	Next we define a finite subset $Y \subseteq X$ such that $w \in F(Y) \le F(X)$.
	To achieve this we set
	\begin{align*}
		Y = \{ (g_i,k_i)(g_i,0)^{-1},\, (g_i,k_{i+1})(g_i,0)^{-1}\; \mid\; i \in [1, n-1] \}\text{.}
	\end{align*}
	From this definition it follows that $|Y| \le 2(n-1)$.
	Two elements $(g_i,k) (g_i,0)^{-1}$ and $(g_j,\ell)(g_j,0)^{-1}$ from $Y$ are equal if and only if $k = \ell$ and $g_i g_j^{-1} =_B 1$.
    Note that $g_i g_j^{-1} =_B 1$ is an instance of $\PowWP(B)$. Hence, using
    an oracle for $\PowWP(B)$ one can  decide whether two elements of $Y$ represent the same generator of $F(Y)$.

	As a last step we simplify the basis $Y$ by mapping it to the integer interval $[1, 2(n-1)]$.
	We use the following map $\psi: Y \to [1, 2(n-1)]$:
 \begin{eqnarray*} (g_i,v_i)(g_i,0)^{-1} & \mapsto &
 \min \{ j \le i \mid (g_i,k_i) =_G (g_j,k_j) \} \\
    (g_i,k_{i+1})(g_i,0)^{-1} & \mapsto &
    \left\{ \begin{matrix*}[l]
		 \min \{ j \leq n-1 \mid (g_i,k_{i+1}) =_G (g_j,k_j) \}\, \text{ if such a $j$ exists,}\\[2mm]
	      \min \{ j \leq n-1 \mid (g_i,k_{i+1}) =_G (g_j,k_{j+1}) \}\!+n-1\,  \text{ otherwise.}
		\end{matrix*} \right.
 \end{eqnarray*}
	The map $\psi$ can be computed in $\uAC^0$ with oracle gates for $\PowWP(B)$ and defines an isomorphism between $F(Y)$ and $F([1, 2(n-1)])$.
	It is well known that the free group $F(\mathbb{N})$ can be embedded into $F_2$ by the mapping $k \mapsto a^{-k} b a^k$. Since this
	mapping can be computed in $\uTC^0 \subseteq \uAC^0(\WP(F_2))$, we can finally check $w =_{F(Y)} 1$ in $\uAC^0(\{\PowWP(B), \WP(F_2) \})$.
\end{proof}
For the following lemma we need the notion of an amalgamated product.
For groups $A$, $P$ and $Q$ and injective homomorphisms $\phi: A \to P$ and $\psi: A \to Q$ the amalgamated product
$P *_{A} Q$ is the free product $P * Q$ modulo the relations $\{ \phi(a) = \psi(a) \mid a \in A \}$.
In the following, $A$ is a subgroup of $P$ and $Q$ and $\phi$ and $\psi$ are the identity.
The following lemma is due to Kausch~\cite{kausch2017parallel}.\footnote{In \cite{kausch2017parallel} the additional condition $\pi(b) = 1$ for all $b \in B$ is missing. This condition is needed in the proof of the lemma.}

\begin{lemma}{\cite[Lemma 5.5.2]{kausch2017parallel}}
	\label{lem:am_prod_kernel}
	Let $G = P *_A (B \times A)$ and consider the surjective homomorphism $\pi : G \to P$ with $\pi(g) = g$ for $g \in P$ and $\pi(b) = 1$ for all $b \in B$.
	Then we have $G \simeq (\ker \pi) \rtimes P$ and
	\begin{align*}
		\ker \pi \ \simeq \ *_{v \in P / A} B_v\text{,}
	\end{align*}
	where the isomorphism $\varphi : *_{v \in P / A} B_v \to \ker \pi$ maps $(b,v)$ to $v b v^{-1}$.
\end{lemma}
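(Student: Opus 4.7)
The plan is to proceed in three stages: verify that $\pi$ is well defined and gives a semidirect-product splitting, then construct $\varphi$ and check it is a well-defined surjective homomorphism, and finally establish injectivity using the normal form theorem for amalgamated products.

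First I would check that $\pi$ is a well-defined homomorphism on the amalgamated product. The defining relations of $G = P *_A (B\times A)$ identify each $a \in A$ (viewed inside $P$) with the element $(1,a) \in B\times A$. On $P$, the prescription $\pi(g) = g$ gives the identity on $A$; on $B\times A$, the prescription $\pi(b) = 1$ for $b \in B$ together with $\pi(a) = a$ for $a \in A$ gives $\pi(b,a) = a$, so the two actions on $A$ agree and $\pi$ descends to $G$. Since the inclusion $\iota : P \hookrightarrow G$ satisfies $\pi \circ \iota = \mathrm{id}_P$, the short exact sequence $1 \to \ker \pi \to G \xrightarrow{\pi} P \to 1$ splits, and $G \simeq (\ker \pi) \rtimes P$ follows from the standard splitting lemma.

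Next I would verify that $\varphi$ is well defined. For $b \in B$ and $v \in P$, $\pi(vbv^{-1}) = v\cdot 1 \cdot v^{-1} = 1$, so the image lies in $\ker \pi$. Moreover, if $v' = va$ with $a \in A$, then $v'b(v')^{-1} = v\,aba^{-1}\,v^{-1} = vbv^{-1}$ because $b$ and $a$ commute inside $B\times A$; so $\varphi$ depends only on the coset $vA \in P/A$. On each factor $B_v$ the map $b \mapsto vbv^{-1}$ is a group homomorphism, and by the universal property of the free product this extends to a well-defined homomorphism $\varphi : *_{v \in P/A} B_v \to \ker \pi$. For surjectivity, take $h \in \ker \pi$ and write it in amalgamated-product normal form $h = g_0 b_1 g_1 b_2 \cdots b_n g_n$ with $g_i \in P$, $b_i \in B\setminus\{1\}$, and $g_i \notin A$ for $1 \leq i \leq n-1$ (this is possible because any $(b,a) \in B\times A$ equals $b\cdot a$, and $a$ can be absorbed into the neighbouring $P$-factor). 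Applying $\pi$ gives $g_0g_1\cdots g_n = 1$ in $P$, so setting $v_i = g_0 g_1 \cdots g_{i-1}$ and inserting $v_i v_i^{-1}$ between successive conjugates yields the identity
\[ h = (v_1 b_1 v_1^{-1})(v_2 b_2 v_2^{-1})\cdots (v_n b_n v_n^{-1}), \]
which lies in the image of $\varphi$.

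The main obstacle will be injectivity. Given a nontrivial reduced word $(b_1,v_1)(b_2,v_2)\cdots(b_n,v_n)$ in the free product, with $b_i \neq 1$ and $v_iA \neq v_{i+1}A$ for $1 \leq i < n$, I must show that $v_1 b_1 v_1^{-1} v_2 b_2 v_2^{-1} \cdots v_n b_n v_n^{-1} \neq 1$ in $G$. Rewriting this product as $v_1 b_1 (v_1^{-1} v_2) b_2 (v_2^{-1} v_3) \cdots b_n v_n^{-1}$ places it essentially into amalgamated-product normal form: each intermediate factor $v_i^{-1} v_{i+1}$ lies in $P\setminus A$ (since the cosets differ), so no collapse across the amalgamation is possible, and each $b_i \neq 1$ guarantees the factors from $B\times A$ are nontrivial. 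By uniqueness of normal forms in $P *_A (B\times A)$, the expression is nontrivial in $G$, so $\varphi$ is injective. Combining surjectivity and injectivity yields the claimed isomorphism $\ker \pi \simeq *_{v \in P/A} B_v$.
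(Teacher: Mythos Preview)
The paper does not prove this lemma; it is quoted from Kausch's thesis \cite[Lemma 5.5.2]{kausch2017parallel} and used as a black box (the paper only adds a footnote noting that the condition $\pi(b)=1$ for $b\in B$ is needed). Your argument is the standard one and is correct: the splitting of $\pi$ via the inclusion $P\hookrightarrow G$ gives the semidirect product, coset-independence of $vbv^{-1}$ is exactly commutativity of $A$ and $B$ in $B\times A$, and surjectivity and injectivity both come down to the normal form in the amalgamated product.

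One small point worth making explicit in the injectivity step: in the alternating expression $v_1\, b_1\,(v_1^{-1}v_2)\,b_2\cdots b_n\,v_n^{-1}$, the outer factors $v_1$ and $v_n^{-1}$ may lie in $A$ (your reduced-word hypothesis only controls consecutive cosets, not the first or last one). This is harmless---if $v_1\in A$ it commutes past $b_1$ and merges with $v_1^{-1}v_2$ to give $v_2\in P\setminus A$, and symmetrically at the right end---but strictly speaking the sequence is only ``essentially'' reduced after this absorption. For $n=1$ you need the separate observation that $v_1 b_1 v_1^{-1}\neq 1$ because $b_1\neq 1$ and $B$ embeds in $G$. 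With these endpoint cases handled, the normal form theorem applies cleanly.
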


We want to solve the simple power word problem by induction. For the inductive step, we actually will need to solve the following slightly more general problem:

\begin{definition}%[generalized simple power word problem]
	Let $G$ be a graph product and $H \le G$ a fixed subgroup.
	We denote by $\GSPowWP(G, H)$ the \emph{generalized simple power word problem}:
	\ynproblem{A list of elements $a_1, \dots, a_n \in \Gamma$ and a list of binary encoded integers $x_1, \dots, x_n \in \mathbb{Z}$.}{Does $a_1^{x_1} \cdots a_n^{x_n} \in H$ hold?}
\end{definition}

\begin{lemma}
	\label{lem:gp_gspowwp}
	Let $G = \GP(\cL, I, (G_\cLelA)_{\cLelA \in \cL})$ be a graph product of f.g.~groups.
	For a subset $S \subseteq \cL$ we define the induced subgroup $G_S = \GP(S, I_S, (G_\cLelA)_{\cLelA \in S})$, where $I_S = I \cap (S \times S)$.
	We have
	\begin{align*}
		\GSPowWP(G_S, G) \in \uAC^0(\SPowWP(G)),
	\end{align*}
	that is the generalized simple power word problem $\GSPowWP(G_S, G)$ can be decided in $\uAC^0$ with an oracle for the simple power word problem in $G$.
\end{lemma}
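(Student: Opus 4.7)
The plan is to reduce membership in $G_S$ to a single query of $\SPowWP(G)$ using the canonical retraction $\pi_S\colon G \to G_S \hookrightarrow G$. My first step is to define $\pi_S$ on generators by $\pi_S(a) = a$ when $\alphabet(a) \in S$ and $\pi_S(a) = 1$ otherwise, and then check that this assignment respects the defining relations of the graph product. The base-group relations $ab = [ab]$ all live inside a single $G_\zeta$, so they are either kept verbatim (when $\zeta \in S$) or collapse to $1 = 1$ (when $\zeta \notin S$); each commutation relation $ab = ba$ with $(\alphabet(a),\alphabet(b)) \in I$ either survives intact or has at least one side replaced by $1$, and hence holds trivially. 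Thus $\pi_S$ is a well-defined group homomorphism, and since it acts as the identity on $G_S$, an element $g \in G$ lies in $G_S$ if and only if $g =_G \pi_S(g)$.

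The second step is to convert this equivalence into a single oracle call. Given the input $a_1^{x_1} \cdots a_n^{x_n}$, set $b_i = a_i$ when $\alphabet(a_i) \in S$ and $b_i = 1$ otherwise. This test examines only the alphabet tag stored in each encoded letter (see \cref{sec:input-encoding-spwp}), so the replacement is plainly computable in $\uAC^0$. Then $\pi_S(a_1^{x_1} \cdots a_n^{x_n}) = b_1^{x_1} \cdots b_n^{x_n}$, and membership is equivalent to
\[ a_1^{x_1} \cdots a_n^{x_n}\, b_n^{-x_n} \cdots b_1^{-x_1} \,=_G\, 1, \]
which is again a simple power word over $\Gamma$. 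Feeding this single word to the $\SPowWP(G)$ oracle finishes the reduction.

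I do not anticipate any real obstacle here: the whole argument rests on the existence of the retraction $\pi_S$, which is immediate from the presentation of a graph product, and on the very mild constant-depth work of reading off each letter's alphabet tag and assembling the concatenation. In particular, no combinatorial analysis of reduced traces and no torsion hypothesis on the base groups is required at this stage.
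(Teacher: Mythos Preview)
Your proposal is correct and matches the paper's own proof essentially line for line: both use the canonical retraction $\pi_S$ (the paper calls it $\pi$) that kills letters whose alphabet lies outside $S$, observe that $w \in G_S$ iff $w =_G \pi_S(w)$, and reduce this to the single $\SPowWP(G)$ instance $w^{-1}\pi_S(w) =_G 1$ (you wrote the equivalent $w\,\pi_S(w)^{-1} =_G 1$). Your extra paragraph verifying that $\pi_S$ respects the graph-product relations is a nice bit of explicitness that the paper omits, but the argument is otherwise identical.
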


\begin{proof}
	Consider the projection $\pi: \Gamma^* \mapsto \Gamma_S^*$ (where $\Gamma_S = \bigcup_{ \cLelA \in S} \Gamma_\cLelA$), with
	\begin{align*}
		\pi(a) = \left\{\begin{matrix*}[l]
			a\quad &\text{if }\alphabet(a) \in S,\\
			1 & \text{otherwise.}
		\end{matrix*}\right.
	\end{align*}
	Let $w = a_1^{x_1}\cdots a_n^{x_n}$ be the input to the generalized simple power word problem and let $\pi(w) = \pi(a_1)^{x_1}\cdots \pi(a_n)^{x_n}$
	We have $w =_G \pi(w)$ if and only if $w \in G_S$.
	This is equivalent to $w^{-1}\pi(w) =_G 1$. Moreover, the projection $\pi$ can be computed in $\uAC^0$ when elements of $\Gamma$ are represented by words
 from $\bigcup_{\cLelA\in\cL} \Sigma^*_\cLelA$  (since we assume $1 \in \Sigma$).
 \end{proof}

\begin{lemma}
	[\cref{lem:spowwp}, Part 1]
	\label{lem:gp_spowwp}
	Let $G = \GP(\cL, I, (G_\cLelA)_{\cLelA \in \cL})$ be a graph product of f.g.~groups.
	We have
	\begin{align*}
		\SPowWP(G) \in \uAC^0(\{\WP(F_2)\} \cup \{ \PowWP(G_\cLelA) \mid \cLelA \in \cL \}),
	\end{align*}
	that is the simple power word problem in $G$ can be solved in $\uAC^0$ with oracles for the power word problem in each base group $G_\cLelA$ and the word problem for the free group $F_2$.
\end{lemma}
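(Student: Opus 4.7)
The plan is to prove the lemma by induction on $|\cL|$. The base case $|\cL|=1$ is immediate: then $G = G_\cLelA$ and $\SPowWP(G) = \PowWP(G_\cLelA)$, which is furnished as an oracle.

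For the inductive step, pick any vertex $\cLelA \in \cL$, and let $L = \cL\setminus\{\cLelA\}$ and $N \subseteq L$ be the set of neighbours of $\cLelA$ in $(\cL,I)$. The standard decomposition of a graph product along the star of a vertex gives $G = G_L *_{G_N}(G_N \times G_\cLelA)$, where $G_L$ and $G_N$ are the subgraph products on $L$ and $N$, respectively. Applying \cref{lem:am_prod_kernel} with $P = G_L$, $A = G_N$, $B = G_\cLelA$ yields $G \cong (\ker \pi)\rtimes G_L$, where $\pi : G \twoheadrightarrow G_L$ is the homomorphism killing $G_\cLelA$, and $\ker \pi \cong *_{v \in G_L/G_N}(G_\cLelA)_v$.

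Given a simple power word $w = w_1^{x_1}\cdots w_n^{x_n}$, separate the $\cLelA$-positions: let $i_1<\cdots<i_k$ be the indices with $\alphabet(w_{i_j}) = \cLelA$, and factor $w = u_0\, w_{i_1}^{x_{i_1}}\, u_1\, w_{i_2}^{x_{i_2}}\, u_2 \cdots w_{i_k}^{x_{i_k}}\, u_k$, where each $u_j$ is a simple power word over the generators of $G_L$. Put $g_j = u_0 u_1 \cdots u_{j-1} \in G_L$. Inserting trivial factors $g_j g_j^{-1}$ and using that $\pi$ kills the $w_{i_j}^{x_{i_j}}$, one obtains
\[
  w =_G \Bigl(\prod_{j=1}^{k} g_j\, w_{i_j}^{x_{i_j}}\, g_j^{-1}\Bigr) \cdot \pi(w),
\]
where the product in the parentheses lies in $\ker\pi$, and each conjugate $g_j w_{i_j}^{x_{i_j}} g_j^{-1}$ corresponds, under the isomorphism of \cref{lem:am_prod_kernel}, to the pair $(w_{i_j}^{x_{i_j}}, [g_j])$ in the copy $(G_\cLelA)_{[g_j]}$ indexed by the coset $[g_j] \in G_L/G_N$. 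Consequently, $w=_G 1$ if and only if (i) $\pi(w) =_{G_L} 1$ and (ii) the above kernel product is trivial in $*_{v \in G_L/G_N}(G_\cLelA)_v$. Condition (i) is an instance of $\SPowWP(G_L)$, handled by the induction hypothesis. For (ii) we invoke \cref{lem:spowwp_free_prod} with $B = G_\cLelA$, giving a $\uAC^0$ reduction to $\PowWP(G_\cLelA)$ and $\WP(F_2)$.

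The remaining task is to supply \cref{lem:spowwp_free_prod} with consistent integer labels for the cosets $[g_j]$: two indices $j, j'$ must receive the same label exactly when $g_j g_{j'}^{-1} \in G_N$. This is precisely an instance of $\GSPowWP(G_N, G_L)$, which by \cref{lem:gp_gspowwp} lies in $\uAC^0(\SPowWP(G_L))$, so the comparisons are again available by the induction hypothesis; enumerating the at most $k \le n$ distinct cosets gives labels in $[1,n]$. Since the graph product is fixed, $|\cL|$ is a constant, the recursion has constant depth, and the cascade of $\uAC^0$ reductions composes to a single $\uAC^0$ reduction with oracles for $\WP(F_2)$ and the $\PowWP(G_\cLelA)$, as claimed. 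The main technical care required will be arranging the factorisation of $w$, the coset labelling, and the call to \cref{lem:spowwp_free_prod} so that the whole procedure fits into one constant-depth circuit under the input encoding of \cref{sec:input-encoding-spwp}~-- in particular, making sure that the cumulative products $g_j$ and the coset comparisons are implemented through oracle gates rather than any genuine computation in $G_L$.
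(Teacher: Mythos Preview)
Your proposal is correct and follows essentially the same approach as the paper: induction on $|\cL|$, the amalgamated-product decomposition $G = G_L *_{G_N}(G_N\times G_\cLelA)$, application of \cref{lem:am_prod_kernel} to pass to $\ker\pi \cong *_{v\in G_L/G_N}(G_\cLelA)_v$, coset labelling via \cref{lem:gp_gspowwp}, and finishing with \cref{lem:spowwp_free_prod}. The only difference is cosmetic --- the paper projects every letter via $\pi_P,\pi_B$ and uses cumulative products $g_i = p_1^{x_1}\cdots p_i^{x_i}$ at all positions rather than singling out the $\cLelA$-positions --- but the underlying computation is identical.
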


\begin{proof}
	We proceed by induction on the cardinality of $\cL$.
	If $|\cL| = 1$, we can solve the simple power word problem in $G$ by solving the power word problem in the base group.
	Otherwise, fix an arbitrary $\cLelB \in \cL$.
	We define $\cL' = \cL \setminus \{\cLelB\}$, $I' = I \cap (\cL' \times \cL')$, $\link(\cLelB) = \{\cLelA\in\cL \mid (\cLelB, \cLelA) \in I\}$ and
	the three groups
	\begin{eqnarray*}
        P &=& \GP(\cL', I', (G_\cLelA)_{\cLelA \in \cL'}),  \\
        A &=& \GP(\link(\cLelB), I \cap (\link(\cLelB) \times \link(\cLelB)),(G_\cLelA)_{\cLelA \in \link(\cLelB)}), \\
        B & = & G_\cLelB .
         \end{eqnarray*}
	Now we can write $G$ as an amalgamated product: $G = P *_A (A \times B)$.

	By the induction hypothesis we can solve $\SPowWP(P)$ and $\SPowWP(A)$ in $\uAC^0$ with
	oracles for $\PowWP(G_\cLelA)$ (for all $\cLelA \in \cL$) and $\WP(F_2)$.
	By \cref{lem:gp_gspowwp} we can solve $\GSPowWP(A, P)$ in $\uAC^0$ with an
	oracle for $\SPowWP(P)$.
	It remains to show how to solve the simple power word problem in the amalgamated product.

	Let the input be $w = a_1^{x_1} \cdots a_n^{x_n} \in (\Gamma \times \mathbb{Z})^*$.
	Recall that $\Gamma_\cLelA = G_\cLelA \setminus \{1\}$ and $\Gamma = \bigcup_{ \cLelA \in \cL} \Gamma_\cLelA$,
	and let $\Gamma_P = \bigcup_{ \cLelA \in \cL' } \Gamma_\cLelA$ and $\Gamma_B = \Gamma_\cLelB$.
	We define the projections $\pi_P: \Gamma^* \to \Gamma_P^*$ and $\pi_B: \Gamma^* \to \Gamma_B^*$ by
	\begin{align*}
		\pi_P(a) &= \left\{ \begin{matrix*}[l]
			a\quad & \text{if } a \in  \Gamma_P,\\
			%a & \text{if } a \in \Gamma_A,\\
			1 & \text{if } a \in \Gamma_B,
		\end{matrix*} \right.
		&
		\pi_B(a) &= \left\{ \begin{matrix*}[l]
			1\quad & \text{if } a  \in \Gamma_P,\\
			%1 & \text{if } a \in \Gamma_A,\\
			a & \text{if } a \in \Gamma_B \text{.}
		\end{matrix*} \right.
	\end{align*}
	Let $p_i = \pi_P(a_i)$ and $b_i = \pi_B(a_i)$.
	Note that $b_i = 1$ or $p_i = 1$ for all $i$ since $w$ is a simple power word.
	For the following construction we assume that $\pi_P(w) = p_1^{x_1} \cdots p_n^{x_n} =_P 1$, i.e., $w \in \ker\pi_P$,
	as otherwise $w \neq_G 1$.
	By \cref{lem:am_prod_kernel} we have
	\[ G \simeq \left(*_{v \in P / A} B_v \right) \rtimes P, \]
	where $\ker\pi_P \simeq *_{v \in P / A} B_v$.
	We want to write $w$ as part of $\ker\pi_P$.
	We define $g_i = p_1^{x_1} \cdots p_i^{x_i}$.
	Note that the $g_i$ can be computed in $\uAC^0$.
	We have
	\[ w =_G g_1 b_1^{x_1} g_1^{-1} g_2 b_2^{x_2} \cdots g_{n-1}^{-1}g_n b_n^{x_n}. \]
	Observe that $g_n = \pi_P(w) =_P 1$ and thus
	\[ w =_G g_1 b_1^{x_1} g_1^{-1} \cdots g_n b_n^{x_n} g_n^{-1} \in *_{v \in P / A} B_v, \]
	where we identify every $g_i \in P$ with a coset representative of $A$.
	We compute
	\[
		\mu_i = \min \{ j \in [1,n] \mid g_{i}A = g_{j}A \} = \min \{ j \in [1,n] \mid g_{i}g_j^{-1} \in A\}\text{.}
	\]
	The computation can be reduced to $\SPowWP(P)$ in $\uAC^0$ by \cref{lem:gp_gspowwp}.

	Now we have $w =_G 1$ if and only if $\pi_P(w) =_P 1$ and
	\[ (b_1, g_{\mu_1}A)^{x_1} \cdots (b_n,g_{\mu_n}A)^{x_n}  = 1\] in $*_{v \in  P / A} B_v$ or, equivalently,  $(b_1,\mu_1)^{x_1} \cdots (b_n,\mu_n)^{x_n}  = 1$
	in $*_{\mu \in  \mathbb{N}} B_\mu$.
	The lemma follows with \cref{lem:spowwp_free_prod}.
\end{proof}

\subsubsection{The uniform case}
\label{sec:the-uniform-case}

Let $G = \GP(\cL, I, (G_\cLelA)_{\cLelA\in\cL})$ be a graph product of~f.g.~groups.
The following embedding of $G$ into a (possibly infinite-dimensional) linear group has been presented in~\cite{kausch2017parallel}.
We write $\mathbb{Z}^{(\Gamma)}$ for the free abelian group with basis $\Gamma$.
It consists of all mappings $f : \Gamma \to \mathbb{Z}$ such that
$f(c) \neq 0$ for only finitely many $c \in \Gamma$.
We write such a mapping $f$ as a formal sum $S = \sum_{c \in \Gamma} \lambda_c \cdot c$ with $\lambda_c = f(c) \in \mathbb{Z}$ and
call $\lambda_c$ the coefficient of $c$ in $S$.
The mapping $\sigma: G \to \operatorname{GL}(\mathbb{Z}^{(\Gamma)})$
 is defined by $w \mapsto \sigma_w$, where $\sigma_w = \sigma_{a_1}\cdots \sigma_{a_n}$ for  $w = a_1 \cdots a_n$ with $a_i \in \Gamma$.
For $a \in \Gamma$ the mapping $\sigma_a : \mathbb{Z}^{(\Gamma)} \to \mathbb{Z}^{(\Gamma)}$ is defined as the linear extension of

\begin{align*}
	\sigma_a(b) = \left\{ \begin{matrix*}[l]
		-a & \text{if } a, b \in \Gamma_{\cLelA} \text{ for some } \cLelA \text{ and } ab =_{G_\cLelA} 1,\\
		[ab] - a \quad & \text{if } a, b \in \Gamma_{\cLelA} \text{ for some } \cLelA \text{ and } ab \neq_{G_\cLelA} 1,\\
		b + 2a & \text{if } a \in \Gamma_{\cLelA}, b \in \Gamma_{\cLelB} \text{ for some } \cLelA \neq \cLelB \text{ and } (\cLelA, \cLelB) \notin I,\\
		b & \text{if } a \in \Gamma_{\cLelA}, b \in \Gamma_{\cLelB} \text{ for some } \cLelA \neq \cLelB \text{ and } (\cLelA, \cLelB) \in I.
	\end{matrix*} \right.
\end{align*}

\begin{lemma}{\cite[Lemma 3.3.4]{kausch2017parallel}}
	\label{lem:trace_sigma_sum}
	Let $w \in \Gamma^*$ be reduced and $wb =_G ubv$ such that $b \in \Gamma_\cLelB$, $u,v \in \Gamma^*$, $(b,v) \in I$ and $b$ is the unique maximal letter of $ub$.
	Moreover, let
	\begin{align*}
		\sigma_w(b) = \sum_{c\in\Gamma} \lambda_c\cdot c\text{,}
	\end{align*}
	and let $u = u_{0}a_1 u_1 \cdots a_n u_n$ with $a_i \in \Gamma_\cLelA$ and $u_i \in (\Gamma \setminus \Gamma_\cLelA)^*$. Then for all $c \in \Gamma_\cLelA$ we have $\lambda_c \ge 0$ and
	\begin{align*}
		\lambda_c > 0 \ \Longleftrightarrow \ \text{for some } i \in \{1,\dots, n\} : c =_{G_\cLelA} \left\{ \begin{matrix*}[l]
			a_i\cdots a_n &\text{if }\cLelA \neq \cLelB\text{,}\\
			a_i\cdots a_n b \quad &\text{if } \cLelA = \cLelB\text{.}
		\end{matrix*} \right.
	\end{align*}
\end{lemma}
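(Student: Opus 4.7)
The plan is to prove the lemma by induction on $|w|$, using the recursion $\sigma_w(b) = \sigma_a(\sigma_{w'}(b))$ when $w = aw'$ together with the four cases in the definition of $\sigma_a$. The base case $|w| = 0$ gives $\sigma_w(b) = b$, $u = v = 1$, and $n = 0$; both directions of the equivalence then hold under the natural convention that, for $\cLelA = \cLelB$, one allows the degenerate index $i = n+1$ with empty product, so that ``$a_i \cdots a_n b = b$'' witnesses $\lambda_b = 1$.

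For the inductive step I would write $w = aw'$ with $a$ a minimal letter of $w$ (so $w'$ is still reduced) and apply the inductive hypothesis to $w'$ with the same $b$: this yields a factorization $w'b =_G u'bv'$ with $b$ uniquely maximal in $u'b$ and $(b, v') \in I$, a decomposition $u' = u'_0 a'_1 u'_1 \cdots a'_{n'} u'_{n'}$ with $a'_i \in \Gamma_\cLelA$, and an expansion $\sigma_{w'}(b) = \sum_c \lambda'_c \cdot c$ satisfying both conclusions for $w'$. The new factorization of $u$ is then obtained from $wb =_G au'bv'$ by pushing $a$ through $u'$ via commutations in $M$ and, if $a$ reaches a $\Gamma_{\alphabet(a)}$-letter it depends on, combining via $\traceRS$. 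In parallel one expands
\[
\sigma_w(b) \;=\; \sum_{c \in \Gamma} \lambda'_c \, \sigma_a(c)
\]
and reads off the new coefficients $\lambda_c$ from the four cases of $\sigma_a$, stratified by whether $\alphabet(a)$ equals, is $I$-adjacent to, or is independent of $\cLelA$ (with a parallel split on the relation of $\alphabet(a)$ to $\cLelB$ to track when $a$ may be absorbed into $v$).

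The easy cases are $(\alphabet(a), \cLelA) \in I$ and $\alphabet(a) \neq \cLelA$ with $(\alphabet(a), \cLelA) \notin I$: here $\sigma_a$ either fixes each $c \in \Gamma_\cLelA$ or sends it to $c + 2a$ with $a \notin \Gamma_\cLelA$, so the $\Gamma_\cLelA$-coefficients of $\sigma_w(b)$ are inherited from the $\lambda'_c$; moreover $a$ contributes no new $\Gamma_\cLelA$-letter to $u$, so the list $a_1, \dots, a_n$ equals $a'_1, \dots, a'_{n'}$, and both the non-negativity and the suffix characterization transfer at once.

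The hard part will be the case $\alphabet(a) = \cLelA$, which splits combinatorially into three sub-cases depending on how $a$ interacts with $u'$: either $a$ is blocked in $u'_0$ before reaching $a'_1$, becoming a fresh leading $\Gamma_\cLelA$-letter of $u$ ($n = n'+1$, $a_1 = a$, $a_{j+1} = a'_j$); or $a$ reaches $a'_1$ and merges into $[aa'_1] \neq_{G_\cLelA} 1$ ($n = n'$, $a_1 = [aa'_1]$); or $a$ reaches $a'_1$ with $aa'_1 =_{G_\cLelA} 1$, triggering a cancellation in $au'$ ($n = n'-1$, $a_j = a'_{j+1}$). Simultaneously $\sigma_a(c) \in \{-a,\,[ac]-a\}$ for $c \in \Gamma_\cLelA$ produces negative multiples of $a$ that must cancel exactly. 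Using the inductive characterization $\lambda'_c = \lvert\{\,i : c =_{G_\cLelA} a'_i \cdots a'_{n'}\,\}\rvert$ (respectively with the trailing $b$ in the $\cLelA = \cLelB$ case), the verification reduces to showing that every $-a$ contribution in $\sum_c \lambda'_c \, \sigma_a(c)$ is matched by a $[aa'_i \cdots a'_{n'}]$-contribution that reduces to $a$, while the surviving positive contributions correspond bijectively to the suffixes of the new list $a_1, \dots, a_n$ described above. Aligning this linear-algebraic cancellation with the trace-theoretic reindexing of the $a_i$'s across the three sub-cases is the essential combinatorial content of the lemma, and is where I expect the main difficulty to lie.
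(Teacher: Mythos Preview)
The paper does not prove this lemma at all: it is quoted from Kausch's thesis \cite[Lemma 3.3.4]{kausch2017parallel} and used as a black box in the $\CeqL$ algorithm for the uniform simple power word problem. So there is no proof in the present paper to compare your attempt against.

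Your plan---induction on $|w|$ via $\sigma_w(b)=\sigma_a(\sigma_{w'}(b))$ for $w=aw'$, with a case split on how $\alphabet(a)$ relates to $\cLelA$---is the natural argument and is almost certainly what Kausch does. Two comments. First, the base-case anomaly you flag (empty index range when $\cLelA=\cLelB$ and $n=0$) is real and is usually absorbed by proving the slightly stronger statement that $\lambda_c$ equals the \emph{number} of indices $i$ with $c=_{G_\cLelA}a_i\cdots a_n$ (respectively $a_i\cdots a_n b$), with the convention that $i=n{+}1$ contributes the empty product in the $\cLelA=\cLelB$ case. Second, you already invoke this exact multiplicity formula in your hard case, and you are right to: the cancellation of the $-a$ contributions coming from $\sigma_a(c)\in\{-a,[ac]-a\}$ only balances once you track counts, not just positivity. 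Carrying that strengthened invariant through the three sub-cases you list (fresh $\Gamma_\cLelA$-letter, merge $[aa'_1]\neq 1$, cancellation $aa'_1=_{G_\cLelA}1$) is exactly the bookkeeping required, and with it the argument closes.
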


\begin{algorithm}
	\small\begin{algorithmic}[5]
		\State \textbf{Input:} $a \in \Gamma_\cLelA, a_1^{x_1} a_2^{x_2} \cdots a_n^{x_n}$ with $a_i \in \Gamma_{\cLelA_i}$ and
  $b_i = [a_i^{x_i}] \in \Gamma_{\cLelA_i}$
		\State $(k, \ell, s) \gets (n + 1, n + 1, 1)$
		\For{$i$ in $[n, \dots, 1]$}
		\If{\(k = n + 1 \land \ell = n + 1\)}
		\Comment{\(\sigma_{b_i}(\chi) = 2b_i + \chi\)}
		\State Guess ``branch 1'', ``branch 2'' or ``branch 3''
		\State \textbf{if} ``branch 1'' or ``branch 2'' \textbf{then} $(k, \ell, s) \gets (i, i, s)$
		\State \textbf{if} ``branch 3'' \textbf{then} $(k, \ell, s) \gets (n + 1, n + 1, s)$
		\Else
			\If
				{\(\cLelA_i = \cLelA_k \land b_i\pi_{\cLelA_k}(b_k\cdots b_\ell) =_{G_{\cLelA_i}} 1\)}
			\Comment{\(\sigma_{b_i}(\pi_{\cLelA_k}(b_k\cdots b_\ell)) = - b_i\)}
				\State $(k, \ell, s) \gets (i, i, -s)$
		\ElsIf
		{\(\cLelA_i = \cLelA_k\)}
		\Comment{\(\sigma_{b_i}(\pi_{\cLelA_k}(b_k\cdots b_\ell)) = [b_i\pi_{\cLelA_k}(b_k\cdots b_\ell)] - b_i\)}
		\State Guess ``branch 1'' or ``branch 2''
		\State \textbf{if} ``branch 1'' \textbf{then} $(k, \ell, s) \gets (i, \ell, s)$
		\State \textbf{if} ``branch 2'' \textbf{then} $(k, \ell, s) \gets (i, i, -s)$
		\ElsIf
		{\((\cLelA_i, \cLelA_k) \notin I\)}
		\Comment{\(\sigma_{b_i}(\pi_{\cLelA_k}(b_k\cdots b_\ell)) = \pi_{\cLelA_k}(b_k\cdots b_\ell) + 2b_i\)}
		\State Guess ``branch 1'', ``branch 2'' or ``branch 3''
		\State \textbf{if} ``branch 1'' \textbf{then} $(k, \ell, s) \gets (k, \ell, s)$
		\State \textbf{if} ``branch 2'' or ``branch 3'' \textbf{then} $(k, \ell, s) \gets (i, i, s)$
		\Else
		\Comment{\(\sigma_{b_i}(\pi_{\cLelA_k}(b_k\cdots b_\ell)) = \pi_{\cLelA_k}(b_k\cdots b_\ell)\)}
		\State $(k, \ell, s) \gets (k, \ell, s)$
		\EndIf
		\EndIf
		\EndFor
		\If{$k \neq n+1 \land \cLelA_k = \cLelA \land a =_{G_\cLelA} \pi_\cLelA(b_k\cdots b_\ell)$}
		%\\ \Comment{Use oracle for \(\UPowWP(\mathcal{C})\)}
		\State \textbf{if} {$s = 1$}  \textbf{then} {accept}
		\State \textbf{if} {$s = -1$}  \textbf{then} {reject}
		\Else
		\State Guess ``branch 1'' or ``branch 2''\;
		\State \textbf{if} {``branch 1''}  \textbf{then} {accept}
		\State \textbf{if} {``branch 2''}  \textbf{then} {reject}
		\EndIf
	\end{algorithmic}
	\caption{Computing the coefficient of $a \in \Gamma_\cLelA$ in $\sigma_w(\chi)$}
	\label{alg:sigma_coefficients_GapL}
\end{algorithm}

\begin{figure}
	\begin{tikzpicture}[level/.style={sibling distance=38mm/#1}]
		\node {$\chi$}
		child { node {$b$}
			child { node {$b$}
				child { node {accept}}
				child { node {reject}}
			}
		}
		child { node {$b$}
			child { node {$b$}
				child { node {accept}}
				child { node {reject}}
			}
		}
		child { node {$\chi$}
			child { node {$a$}
				child { node {accept}}
			}
			child { node {$a$}
				child { node {accept}}
			}
			child { node {$\chi$}
				child { node {accept}}
				child { node {reject}}
			}
		};
	\end{tikzpicture}
	\caption{Computation of the coefficient $\lambda_a$ of $\sigma_{ab}(\chi)$ by \cref{alg:sigma_coefficients_GapL}. We assume $(a, b) \in I$. Each inner node is labeled with the coefficient it contributes to. The algorithm stores the coefficient using two indices $k$ and $\ell$. The nodes on the second level correspond to $\sigma_b(\chi) = 2b+\chi$, the nodes on the third level correspond to $\sigma_{ab}(\chi) = \sigma_a(2b+\chi) = 2b+2a+\chi$. If they are labeled with $a$, they have one leaf node as a child which is an accepting path (or a rejecting path if the sign is negative -- here all signs are positive). If they are not labeled with $a$, then there are two leaf node children, one is an accepting path, the other a rejecting path, so they do not affect the difference of accepting and rejecting paths. Here, the difference of accepting and rejecting paths is $2$, which is the coefficient $\lambda_a$ of $\sigma_{ab}(\chi)$.}
	\label{fig:coefficient_computation_example}
\end{figure}

Our solution to the uniform simple power word problem is based on the solution to the word problem presented in~\cite{kausch2017parallel}.
The underlying idea is to add an additional free group $\left< \chi \right>$
for a new generator $\chi$ to the graph product, which is dependent on all other groups.
Let $\pi_\cLelA$ be the projection onto $\Gamma_\cLelA$, defined by $\pi_\cLelA(a) = a$ for $a \in \Gamma_\cLelA$ and $\pi_\cLelA(a) = 1$ for $a \notin \Gamma_\cLelA$.
As a consequence of \cref{lem:trace_sigma_sum} we have $\sigma_w(\chi) = \chi$ if and only if $w =_G 1$.
Non-zero coefficients of $\sigma_w(\chi)$ are coefficients of $[u]$ for a factor $u$ of $\pi_\cLelA(w)$ for some $\cLelA \in \cL$.

\begin{lemma}
	[\cref{lem:spowwp}, Part 2]
	\label{lem:gp_spowwp_uniform}
	Let \(\mathcal{C}\) be a non-trivial class of f.g.~groups. Then $\USPowWP(\GP(\mathcal{C}))$ belongs to $\CeqL^{\UPowWP(\mathcal{C})}$.
\end{lemma}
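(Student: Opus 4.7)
My plan is to adapt the uniform word-problem algorithm of Kausch~\cite{kausch2017parallel} to the power-word setting by running his GapL-type coefficient procedure (\cref{alg:sigma_coefficients_GapL}) but using oracle gates for $\UPowWP(\mathcal{C})$ in place of $\UWP(\mathcal{C})$. Concretely, I would use the linear embedding $\sigma: G \to \operatorname{GL}(\mathbb{Z}^{(\Gamma)})$ introduced above and the dependence-adjoined generator $\chi$. By \cref{lem:trace_sigma_sum}, if we expand $\sigma_w(\chi) = \chi + \sum_{c \in \Gamma} \lambda_c \cdot c$, then $\lambda_c \geq 0$ for every $c \in \Gamma$, and
\[
  w =_G 1 \ \Longleftrightarrow\ \sigma_w(\chi) = \chi \ \Longleftrightarrow\ \sum_{c \in \Gamma} \lambda_c = 0.
\]
This non-negativity is crucial: it converts the checks ``$\lambda_c = 0$ for every $c$'' into the single equality ``$\sum_c \lambda_c = 0$'', which is exactly the shape a $\CeqL$ algorithm can verify.

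Next, I would inspect \cref{alg:sigma_coefficients_GapL}: given a candidate $a \in \Gamma$ and the simple power word $a_1^{x_1} \cdots a_n^{x_n}$, it runs a non-deterministic logspace machine whose gap equals $\lambda_a$, using equality tests of the form $b_i\, \pi_{\cLelA_k}(b_k\cdots b_\ell) =_{G_{\cLelA_i}} 1$ where $b_j = [a_j^{x_j}]$. Since each $b_j$ is just $a_j^{x_j}$ viewed in the base group, every such test is an instance of $\UPowWP(\mathcal{C})$ on the base group $G_{\cLelA_i}$, and so can be resolved by a single oracle call. This is the only place where the complexity of the base groups enters.

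To obtain a single GapL computation whose gap is $\sum_{c \in \Gamma} \lambda_c$, I would prepend a non-deterministic guess of a candidate $c$ to the machine above and then run the coefficient algorithm for that $c$. The candidate space can be restricted to a polynomially-bounded set: by \cref{lem:trace_sigma_sum}, a non-zero coefficient $\lambda_c$ forces $c \in \Gamma_\alpha$ to equal $[b_i \cdots b_j]$ (or a slight variant with the distinguished letter appended) for some pair of positions $i \leq j$ in the part of $w$ of type $\alpha$, and there are only $O(n^2 \sigma)$ such candidates, each describable by a pair of positions together with the relevant base group, i.e.\ by a logarithmic-length binary string. The combined machine therefore runs in non-deterministic logspace with an oracle for $\UPowWP(\mathcal{C})$, and its gap function equals $\sum_c \lambda_c$, which vanishes iff $w =_G 1$. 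This places $\USPowWP(\GP(\mathcal{C}))$ in $\CeqL^{\UPowWP(\mathcal{C})}$.

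The main obstacle is verifying that \cref{alg:sigma_coefficients_GapL} actually computes $\lambda_a$ as a gap, that is, that the non-deterministic branching on lines corresponding to each of the four cases of $\sigma_a$ faithfully mirrors the linear action on $\mathbb{Z}^{(\Gamma)}$ with the appropriate signs tracked in the state variable $s$. For the plain word problem this is carried out in \cite{kausch2017parallel}; in our setting the verification is identical once one observes that substituting $a_j^{x_j}$ for $b_j$ inside the equality tests keeps them within $\UPowWP(\mathcal{C})$, so no new correctness argument is required beyond this bookkeeping.
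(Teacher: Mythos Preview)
Your approach is correct and shares its core with the paper's proof: both run Kausch's $\GapL$-type coefficient procedure (\cref{alg:sigma_coefficients_GapL}) with the base-group equality tests upgraded from $\UWP(\mathcal{C})$ to $\UPowWP(\mathcal{C})$ oracle calls. The substantive difference is in the final aggregation. The paper checks each candidate coefficient $\lambda_{[\pi_\cLelA(b_k\cdots b_\ell)]}$ for zero separately and then invokes the closure of (relativised) $\CeqL$ under conjunctive truth-table reductions \cite[Proposition~17]{AllenderO96} to combine these polynomially many tests into one. You instead exploit the non-negativity $\lambda_c \geq 0$ from \cref{lem:trace_sigma_sum} and prepend a guess of the candidate, collapsing the conjunction into the single test ``sum of the $\lambda_c$ equals $0$''; this avoids the external closure result at the cost of relying on non-negativity. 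Two minor points to tighten: first, \cref{lem:trace_sigma_sum} assumes the word is reduced, so you should observe $\sigma_w = \sigma_{\nf(w)}$ and apply the lemma to $\nf(w)$ to justify $\lambda_c \geq 0$ for arbitrary inputs; second, the paper's precomputation removing indices with $a_i^{x_i} =_{G_{\cLelA_i}} 1$ (one oracle call each) is needed for \cref{alg:sigma_coefficients_GapL} to be sound and should be mentioned. Also note that distinct candidate pairs $(k,\ell)$ may name the same $c\in\Gamma$, so your combined gap is a positively weighted sum $\sum_c m_c \lambda_c$ rather than literally $\sum_c \lambda_c$; non-negativity still yields the desired equivalence, but the wording should reflect this.
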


\begin{proof}
	Let $w = a_1^{x_1}\cdots a_n^{x_n} \in G$, where $a_i \in \Gamma_{\cLelA_i}$ and $x_i \in \mathbb{Z}$.
	If $w \neq_G 1$ then there are $\cLelA \in \cL$ and $1 \le k \le \ell \le n$ such that the coefficient of
	$[\pi_\cLelA(a_k^{x_k} \cdots a_\ell^{x_\ell})]$ in $\sigma_w(\chi)$ is not zero.

	To compute the coefficients we use \cref{alg:sigma_coefficients_GapL}.
	For simplicity we assume $a_i^{x_i} \neq_{G_{\cLelA_i}} 1$ for all $i \in [1, n]$.
	This can be enforced by a precomputation using $\UPowWP(\mathcal{C})$ as an oracle.
	Let $b_i \in \Gamma_{\cLelA_i}$ with $b_i =_{G_{\cLelA_i}} a_i^{x_i}$.

         Our nondeterministic logspace algorithm will produce a computation tree such that the coefficient of $a \in \Gamma$ in $\sigma_w(\chi)$
         will the number of accepting leaves minus the number of rejecting leaves (as required by the definition of $\GapL$).
	The algorithm stores in each configuration an element $[\pi_{\cLelA_k}(b_k \cdots b_\ell] \in \Gamma$ using the two
	indices $k$ and $\ell$. We use $(k, \ell) = (n+1, n+1)$ to represent $\chi$. In addition to $k$ and $\ell$ we store a sign $s$ ($1$ or $-1$), saying
	whether the configuration gives a positive or negative contribution to the coefficient of $[\pi_{\cLelA_k}(b_k \cdots b_\ell]$.

	The root node of the computation tree corresponds to $\chi$.
	Let $w = w'a$, with $a \in \Gamma$.
	Then $\sigma_w(\chi) = \sigma_{w'}(\sigma_a(\chi))$.
	The nodes on the second level, that is the children of the root node, correspond to $\sigma_a(\chi)$.
	The last level made up of inner nodes corresponds to $\sigma_w(\chi)$.
	At that point the algorithm checks if the node corresponds to the input element $a \in \Gamma$, i.e., whether
	$a = [\pi_{\cLelA_k}(b_k\cdots b_\ell)]$ holds.
	This is done using the oracle for the uniform power word problem in $\mathcal{C}$.
	If it holds, then the computation will accept the input if the stored sign $s$ is $1$, and reject if $s=-1$.
	If $a = [\pi_{\cLelA_k}(b_k\cdots b_\ell)]$ does not hold, then the algorithm branches into two leaf nodes,
	one accepting and one rejecting, which gives a zero contribution to the coefficient of $a$.
	In this way, it is ensured that the
	coefficient of $a$ is the difference of the number of accepting paths and the number of rejecting paths.
	Therefore, the computation of a coefficient is in $\GapL^{\UPowWP(\mathcal{C})}$, and we can check in $\CeqL^{\UPowWP(\mathcal{C})}$ whether a coefficient is zero.
	An example of a computation tree is presented in \cref{fig:coefficient_computation_example}.

	Finally, we can check in  $\CeqL^{\UPowWP(\mathcal{C})}$ whether all coefficients of  elements $[\pi_{\cLelA_k}(b_k\cdots b_\ell)]$
	are zero, as $\CeqL$ is closed under conjunctive truth table reductions~\cite[Proposition 17]{AllenderO96} and the proof holds for every relativized version $\CeqL^{A}$.
\end{proof}

  \subsection{Combinatorics on Traces}
  \label{ch:conjugacy-in-graph-groups-and-graph-products}

  In this section we develop various tools concerning combinatorics on traces, which later will be used to solve the power word problem in graph products.
As a motivation and an easy example, we start with the analogous construction for free groups we presented in~\cite{LohreyW19},  before looking into the more technical case of graph products.
The first task for solving the power word problem in a free group is to compute certain unique normal forms for the words $u_i$ of an instance of the power word problem as in \eqref{eq:word-w} below.

We use the notation from \cref{sec-free-groups}. In particular, we use the rewriting system $\freeRS= \set{a \finv{a} \to 1}{a \in \Sig}$.
Fix an arbitrary order on the input alphabet $\Sigma$. This gives us a lexicographic order on $\Sigma^*$, which is denoted by $\preceq$.
Let $\Omega \sse \IRR(\freeRS) \sse \Sigma^*$
denote the set of words $w$ such that
\begin{itemize}
	\item $w$ is non-empty,
	\item $w$ is cyclically reduced (i.e, $w$ cannot be written as $a u \finv{a}$ for $a \in \Sigma$),
	\item $w$ is primitive (i.e, $w$ cannot be written as $u^n$ for $n \ge 2$),
	\item $w$ is lexicographically minimal among all cyclic permutations of $w$ and $\finv{w}$ (\ie $w \preceq uv$ for all $u,v \in \Sigma^*$ with $vu =w$ or $vu = \finv{w}$).
\end{itemize}
Notice that $\Omega$ consists of Lyndon words \cite[Chapter 5.1]{lot83} with the stronger requirement of being freely reduced, cyclically reduced and also minimal among the conjugacy class of the inverse. %\cite{Lyndon54,BerstelP07}
In \cite{LohreyW19}, the first step is to rewrite the input power word in the form
\begin{align} \label{eq:word-w}
\qquad\qquad w&=s_0u_1^{x_1}s_1 \cdots u_n^{x_n}s_n\qquad \text{with } u_i \in \Omega \text{ and } s_i \in \IRR(\freeRS).
\end{align}
This transformation can be done by a rather easy $\uACz(F_2)$ computation.
The reason to do this lies in the following crucial lemma: essentially it says that, if a long factor of $u_i^{x_i}$ cancels with some $u_j^{x_j}$, then already $u_i= u_j$. Thus, if a power word of the form \eqref{eq:word-w} represents the group identity, every $u_i$ with a large exponent must cancel with other occurrences of the very same word $u_i$. %
Thus, only the same $u_i$ can cancel implying that we can make the exponents of the different $u_i$ independently smaller.

\begin{lemma}\label{lem:short_cancellation}
	Let $p,q \in \Omega$, $x,y \in \mathbb{Z}$ and let $v$ be a factor of $p^x$ and $w$ a factor of  $q^{y}$. If $vw \rewrite{*}{\freeRS} 1$ and $\abs{v} = \abs{w} \geq \abs{p} + \abs{q} - 1$, then $p=q$.
\end{lemma}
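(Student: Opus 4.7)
The plan is to reduce the problem to a Fine--Wilf style periodicity argument on a single word. The high-level strategy: since $p$ and $q$ are cyclically reduced (being in $\Omega$), the words $p^x$ and $q^y$ are freely reduced, and hence so are $v$ and $w$; therefore the hypothesis $vw \rewrite{*}{\freeRS} 1$ combined with $|v| = |w|$ forces $w = \finv{v}$ as words (by an easy induction: at each reduction step the cancelling pair must straddle the $v|w$ boundary). So the problem reduces to studying a single word $v$ that is simultaneously a factor of $p^x$ and, by reversing, a factor of $\finv{q}^{-y}$ (i.e.\ of $q^{-y}$ read from the right). In particular $x, y \neq 0$ since $|v| \geq |p| + |q| - 1 \geq 1$.

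Next, I would exploit this double appearance to extract two periods of $v$. Being a factor of $p^x$, the word $v$ has period $|p|$; being (the reverse of) a factor of $q^y$, it also has period $|q|$ (periods are invariant under reversal). Since $|v| \geq |p| + |q| - 1 \geq |p| + |q| - \gcd(|p|,|q|)$, Fine--Wilf gives that $v$ has period $d := \gcd(|p|,|q|)$.

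Now I would use primitivity of $p$ and $q$ to collapse $d$ to $|p|$ and to $|q|$. Since $|v| \geq |p|$, its length-$|p|$ prefix $r$ is well defined, and because $v$ starts at some position of $p^x$ (respectively $\finv{p}^{-x}$), the word $r$ is a cyclic permutation of $p$ or of $\finv{p}$; in particular $r$ is primitive. But $r$ has period $d \mid |p|$, so $r = c^{|p|/d}$ for its length-$d$ prefix $c$. Primitivity forces $|p|/d = 1$, i.e.\ $d = |p|$; symmetrically $d = |q|$, so $|p| = |q|$. The same prefix $r$ is then also a cyclic permutation of $q$ or $\finv{q}$.

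Finally, $r$ lies simultaneously in the conjugacy class $\{\text{cyclic permutations of } p \text{ and } \finv{p}\}$ and in $\{\text{cyclic permutations of } q \text{ and } \finv{q}\}$; since these two sets share a common element and each is closed under cyclic permutation and the involution $\finv{(\cdot)}$, the two sets coincide. By the defining lex-minimality property of $\Omega$, both $p$ and $q$ are the unique lex-minimum of this common set, so $p = q$. I expect the main technical subtlety to be handling the signs of $x$ and $y$ cleanly (so that ``factor of $p^x$'' really yields a period and a cyclic-permutation statement regardless of sign), and making sure the Fine--Wilf bound $|p| + |q| - 1$ is actually used tightly; everything else is essentially bookkeeping.
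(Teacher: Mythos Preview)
Your proof is correct and follows essentially the same approach as the paper: both reduce to $v=\finv{w}$, apply Fine--Wilf to the common word with periods $|p|$ and $|q|$, use primitivity to force $|p|=|q|$, and then invoke the lex-minimality condition on $\Omega$ to conclude $p=q$. If anything, your treatment of the signs of $x$ and $y$ (allowing the length-$|p|$ prefix to be a cyclic permutation of $p$ or of $\finv{p}$) is slightly more careful than the paper's wording, which tacitly assumes positive exponents when asserting that $p$ itself is a factor of $v$.
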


\begin{proof}
	Since $p$ and $q$ are cyclically reduced, $v$ and $w$ are freely reduced, i.e., $v=\finv{w}$ as words. Thus, $v$ has two periods $\abs{p}$ and $\abs{q}$. Since $v$ is long enough, by the theorem of Fine and Wilf~\cite{fine1965uniqueness} it also has the period $\gcd(\abs{p},\abs{q})$. This means that also $p$ and $q$ have period $\gcd(\abs{p},\abs{q})$ (since cyclic permutations of $p$ and $q$ are factors of $v$). Assuming $\gcd(\abs{p},\abs{q}) < \abs{p}$, would mean that $p$ is a proper power contradicting the fact that $p$ is primitive. Hence, $\abs{p}=\abs{q}$. Since $\abs{v} \geq  \abs{p} + \abs{q}  - 1= 2\abs{p} -1$, $p$ is a factor of $v$, which itself is a factor of $q^{-y}$. Thus, $p$ is a cyclic permutation of $q$ or of $\finv{q}$. By the last condition on $\Omega$, this implies $p = q$.
\end{proof}

In the remainder of this section, we develop the requirements for a special normal form (like $\Omega$ above) and generalize \cref{lem:short_cancellation} to graph products.
In particular, we aim for some special kind of cyclic normal forms ensuring uniqueness within a conjugacy class (see \cref{def:omega} below).

Let us fix a graph product $G = \GP(\cL, I, (G_\cLelA)_{\cLelA\in\cL})$ with $G_\cLelA$ finitely generated by $\Sigma_\cLelA$,
define the sets $\Gamma_\cLelA = G_\cLelA \setminus \{ 1 \}$,
  $\Gamma = \bigcup_{\cLelA \in \cL} \Gamma_\cLelA$ and $\Sigma = \bigcup_{\cLelA \in \cL} \Sigma_\cLelA$ as before
and let $M = M(\Gamma,I)$ be the corresponding trace monoid.

\subsubsection{Cyclic normal forms and conjugacy}

Recall that by \cref{lemm:conjugacy_equivalent},
traces $u,v \in M$ are conjugate if and only if they are related by a sequence of transpositions.

\begin{lemma}[\mbox{\cite[Lemma 7.3.8]{kausch2017parallel}}] \label{lem:CPapprox}
  	Let $G = \GP(\cL, I, \left(G_\cLelA\right)_{\cLelA \in \cL})$ be a graph product and let $u, v \in M$ be cyclically reduced, connected and composite.
  	Then $u$ and $v$ are conjugate in $G$ if and only if $u$ and $v$ are conjugate in $M$.
\end{lemma}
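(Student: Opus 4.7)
The direction $(\Leftarrow)$ is immediate: the canonical monoid homomorphism $h \colon M \to G$ sends a witness $tu =_M vt$ to the group conjugacy $h(t)\, u = v\, h(t)$ in $G$.

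For $(\Rightarrow)$, by \cref{lemm:conjugacy_equivalent} it suffices to produce a sequence of transpositions in $M$ relating $u$ and $v$. My plan is to exploit high powers. Pick $g \in G$ with $gu =_G vg$ and let $\tau \in \IRR(\traceRS)$ be a reduced representative of $g$. For every $N \geq 1$ one has $\tau\, u^N =_G v^N\, \tau$. Since $u$ and $v$ are cyclically reduced, connected and composite, all powers $u^N, v^N$ lie in $\IRR(\traceRS)$, so the only $\traceRS$-rewrites available in the reduction of $\tau\, u^N$ (resp.\ $v^N\,\tau$) occur at the boundary with $\tau$. Writing $s \in \IRR(\traceRS)$ for the common reduced form of $\tau\,u^N$ and $v^N\,\tau$, an analysis based on \cref{lem:reducing-pqr} (applied after first splitting $\tau = \tau_1 \tau_2$ so that $\tau_2$ absorbs the immediate boundary cancellation against $u^N$, and symmetrically for the $v^N$-side) shows that the amount of material removed at each boundary is bounded by $O(\sigma)$, where $\sigma = |\cL|$, independently of $N$. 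Hence $s$ admits two descriptions
\[ s\ =_M\ \alpha \cdot u^{N-c}\cdot \beta\ =_M\ \gamma\cdot v^{N-c'}\cdot \delta \]
with $|\alpha|, |\beta|, |\gamma|, |\delta|, c, c'$ all of size $O(\sigma)$, in particular independent of $N$.

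For $N$ sufficiently large, equating the two descriptions and applying \cref{lem:factor_shape} aligns a long central factor $u^{N - O(\sigma)}$ inside $s$ with a long central factor of $v^{N - O(\sigma)}$; \cref{lem:lettercount_conjugacy} (the letter-counts of $u$ and $v$ must agree) then yields that $u$ and $v$ are conjugate in $M$. Writing $u =_M p^k$ and $v =_M q^\ell$ for primitive connected traces $p, q$, \cref{lemm:conjugacy_primitive} forces $k = \ell$ and $p, q$ conjugate in $M$, and \cref{lemm:conjugacy_equivalent} converts this monoid conjugacy back into the desired sequence of transpositions. The main obstacle I anticipate is the careful synchronization of the two boundary analyses: the $I$-clique produced by \cref{lem:reducing-pqr} on the left end of $\tau\,u^N$ must be matched against the corresponding clique on the right end of $v^N\,\tau$ inside the single reduced trace $s$. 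It is precisely here that the assumption of \emph{connectedness} of $u$ (and hence $v$) is essential, since otherwise the boundary cliques could split across independent components of $u$ in incompatible ways, and \cref{lem:factor_shape} would not force the desired alignment.
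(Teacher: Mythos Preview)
The paper does not prove this lemma; it is quoted from Kausch's thesis \cite[Lemma~7.3.8]{kausch2017parallel}, with only a remark explaining why the extra hypothesis $\alphabet(u)=\alphabet(v)$ present in the original can be dropped. So there is no in-paper proof to compare against, and your plan is a genuine (and essentially correct) attempt at supplying one.

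Your overall strategy---pass to high powers, reduce $\tau u^N$ and $v^N\tau$ to the common $s\in\IRR(\traceRS)$, observe that the boundary interaction is bounded independently of~$N$, then overlap long blocks $u^{m}$ and $v^{m'}$ inside $s$ via Levi and finish with \cref{lem:lettercount_conjugacy}---is sound. Two remarks, however:

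\textbf{(i)} The claim that the boundary effect is ``$O(\sigma)$'' is not right as stated. \cref{lem:reducing-pqr} applied to $\tau\cdot 1\cdot u^N$ gives $\tau=_M\tau' t x$, $u^N=_M x^{-1}v'r'$, so the material removed from $u^N$ has length $|x|+|v'|\le |\tau|+\sigma$, which depends on~$|\tau|$. What you actually need (and what your argument uses) is only that the bound is \emph{independent of $N$}; this suffices to force $|u|_\cLelA=|v|_\cLelA$ for all $\cLelA$ by comparing the two expressions for $|s|_\cLelA$ for $N$ large. With the letter-counts equal and $u$ a factor of a high power of $v$ (extracted from the Levi overlap), \cref{lem:lettercount_conjugacy} finishes the job directly.

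\textbf{(ii)} The ``main obstacle'' you anticipate---synchronising the two boundary $I$-cliques inside $s$---does not actually arise. Once both boundary effects are bounded, a single application of Levi to $A\,u^{m}=_M v^{m'}B$ produces a long trace $C$ that is simultaneously a suffix of $v^{m'}$ and a prefix of $u^m$; connectedness of $u$ (needed for \cref{lem:factor_shape}) then exhibits $u$ as a factor of $v^{m'}$. No explicit matching of cliques is required, and the subsequent detour through primitive roots via \cref{lemm:conjugacy_primitive} is redundant---\cref{lem:lettercount_conjugacy} already yields conjugacy of $u$ and $v$ in $M$.
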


  Note that \cite[Lemma 7.3.8]{kausch2017parallel} requires $\alphabet(u) = \alphabet(v)$.
  We do not need this requirement as on the one hand $u$ and $v$ being conjugate in $M$ clearly implies $\alphabet(u) = \alphabet(v)$, and on the other hand by \cite[Lemma 7.3.6]{kausch2017parallel} $u$ and $v$ being cyclically reduced and conjugate in $G$ implies $\alphabet(u) = \alphabet(v)$.

  By $\le_\cL$ we denote a linear order on the set $\cL$. For $a,b \in \Gamma$ we write $a <_{\cL} b$ if
  $\alphabet(a) <_{\cL} \alphabet(b)$.
  The \emph{length-lexicographic normal form} of $g \in G$ is the reduced representative $\nf_G(g) = w \in \Gamma^*$ for $g$ that is lexicographically smallest.
  Note that this normal form is on the level of $\Gamma$.
  Each letter of $\Gamma$ still might have different representations over the finite generating set $\Sigma$ as outlined above.
  If $G$ is clear from the context, we also write $\nf(g)$. Moreover, for a word $u \in \Gamma^*$ (or trace $u \in M$) we write $\nf(u)$ for $\nf(g)$, where $g$ is the group element represented by $u$.

  \begin{definition}
    \label{def:cyclic_nf}
    Let $w \in \Gamma^*$.
    We say $w$ is a \emph{cyclic normal form} if $w$ and all its cyclic permutations are length-lexicographic normal forms and $w$ is composite.
  \end{definition}

	\begin{remark}
		\label{rem:cnf_cycl_perm}
		Observe that if $w$ is a cyclic normal form, then as a trace from $M$ it is cyclically reduced and all cyclic permutations of $w$ are cyclic normal forms themselves.
	\end{remark}

  Cyclic normal forms have been introduced in~\cite{crisp2009conjugacy} for RAAGs. Moreover, by~\cite{crisp2009conjugacy}, given $w \in \Gamma^*$, which has a cyclic normal form, a cyclic normal form for $w$ can be computed in linear time.
  In \cref{th:cyclic_nf_graph_product} below, we show that cyclic normal forms also exist for certain elements in the case of graph products and that they also can be computed efficiently.

  It is easy to see that every cyclic normal form is connected (see \cref{rem:cyclicNFconnected}).
  In particular, not every element has a cyclic normal form.
Moreover, there can be more than one cyclic normal form per conjugacy class; however, by \cref{lem:cnf_conjugate} below they are all cyclic permutations of each other.

  \begin{remark}
    \label{rem:cyclicNFconnected}
    Notice that, if $w \in \Gamma^*$ is a cyclic normal form, then it is connected. Indeed, let $d \in \Gamma$ be a $\le_\cL$-largest letter occurring in $w$. After a cyclic permutation we can write $w = dw'$. Now, assume that $w =_{M} uv$ with $(u,v) \in I$.
    Without loss of generality $d$ belongs to $u$. Let $c$ denote the first letter of $v$. Since $(u,v) \in I$,
    we must have $\alphabet(c) \neq \alphabet(d)$ and therefore $c <_\cL d$.
    Since $(c,u) \in I$ we obtain $w =_M cw''$ for some $w''$.
    But then $w$ cannot start with $d$, which is contradiction.
  \end{remark}

For the following considerations, it is useful to
embed the trace monoid $M = M(\Gamma,I)$ (and, thus, $\IRR(\traceRS)$) via the trace monoid $M(\Gamma \cup \finv{\Gamma},I)$ into the right-angled Artin group $G(\Gamma, I)$ as in \eqref{eq:irr_embedding}. Note that this means that we add a formal inverse $\finv{a}$ for every $a \in \Gamma$ (which is different from the inverse $a^{-1}$ of $a$ in the group $G_{\alphabet(a)}$).
Be aware that $\Gamma$ might be infinite and that a trace $u \in M(\Gamma \cup \finv{\Gamma},I)$ is reduced with respect to $G(\Gamma, I)$ if it does not contain a
$a\finv{a}$ or $\finv{a}a$ for $a \in \Gamma$ (but it may contain a factor
$ab$ with $\alphabet(a) = \alphabet(b)$ and therefore be non-reduced with respect to the graph product $G$).

\begin{lemma} \label{lemma:conjugateM=conjugateG}
  Two traces $u,v \in M(\Gamma,I)$ are conjugate in $M(\Gamma,I)$ if and only if they are conjugate in the RAAG $G(\Gamma, I)$.
\end{lemma}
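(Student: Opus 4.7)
The plan is to handle the two implications separately. The ``only if'' direction is essentially bookkeeping: given $t \in M(\Gamma,I)$ with $ut =_M tv$, the very same identity holds inside $G(\Gamma,I)$, where $t$ is now invertible, so $v = t^{-1} u t$ in $G(\Gamma,I)$.

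For the ``if'' direction I would view $G(\Gamma,I)$ through its presentation $M(\Gamma\cup\bar\Gamma,I)$ modulo $\{a\bar a \to 1 \mid a \in \Gamma\}$ as in \cref{sec-RAAG}, so that $u,v \in M(\Gamma,I)$ sit inside $M(\Gamma\cup\bar\Gamma,I)$. The first key observation is that $u$ and $v$ are automatically cyclically reduced in the RAAG sense: they contain only letters from $\Gamma$, so neither they nor any of their transpositions can produce a factor $a\bar a$ or $\bar a a$. The second ingredient is the classical conjugacy theorem for RAAGs of Servatius (see also \cite{crisp2009conjugacy}), which states that two cyclically reduced elements of a RAAG are conjugate if and only if they are related by a finite sequence of transpositions inside the ambient trace monoid $M(\Gamma\cup\bar\Gamma,I)$. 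Applied to $u,v$ this yields a chain $u = w_0, w_1, \ldots, w_k = v$ in $M(\Gamma\cup\bar\Gamma,I)$ in which each $w_{i+1}$ is a transposition of $w_i$.

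The final step is to lift this chain back into $M(\Gamma,I)$. A transposition only permutes the letters of a trace, hence preserves its alphabet; by induction $\alphabet(w_i) = \alphabet(u) \subseteq \cL$ for all $i$, and in particular no formal inverse $\bar a$ ever appears. Thus every $w_i$ lies in $M(\Gamma,I)$, and the chain witnesses that $u$ and $v$ are related by a sequence of transpositions inside $M(\Gamma,I)$; \cref{lemm:conjugacy_equivalent} then provides a conjugator $t \in M(\Gamma,I)$.

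The only subtlety is that $\Gamma$ may be infinite, so Servatius's theorem cannot be quoted verbatim from its usual finite-alphabet formulation. This is dealt with exactly as the paper handles Levi's lemma: restrict $(\Gamma,I)$ to the finite set of letters occurring in $u$ and $v$, apply the finite-alphabet theorem there, and observe that the resulting transpositions remain valid in the original monoid. This is the only real point of care; everything else is a direct combination of the reduction-freeness of elements of $M(\Gamma,I)$ inside $G(\Gamma,I)$ with \cref{lemm:conjugacy_equivalent}.
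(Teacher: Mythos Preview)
Your proposal is correct and follows essentially the same route as the paper: both invoke the RAAG conjugacy criterion from \cite{crisp2009conjugacy} (the paper cites it as \cite[Lemma~2.9]{crisp2009conjugacy}) after observing that elements of $M(\Gamma,I)$ are automatically cyclically reduced in the RAAG sense, and then translate the resulting chain of transpositions back into conjugacy in $M(\Gamma,I)$ via \cref{lemm:conjugacy_equivalent}. You are slightly more explicit than the paper about why the transposition chain stays inside $M(\Gamma,I)$ and about the finite-alphabet reduction, but the argument is the same.
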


\begin{proof}
    This follows from \cite[Lemma 2.9]{crisp2009conjugacy}, which states that
    traces $u,v \in M(\Gamma \cup \finv{\Gamma},I)$ that are cyclically reduced
    with respect to the RAAG $G(\Gamma,I)$ (i.e., no transposition of $u$ or $v$
    contains a factor $a\finv{a}$ or $\finv{a}a$ for $a \in \Gamma$)
    are conjugate in the RAAG $G(\Gamma,I)$ if and only
    if $u$ and $v$ are related by a sequence of transpositions. Note that
    traces $u,v \in M(\Gamma,I)$ are always cyclically reduced in the RAAG-meaning.
    Hence, $u,v \in M(\Gamma,I)$ are conjugate in $G(\Gamma,I)$ if and only
    if they are related by a sequence of transpositions, i.e., if and only if
    they are conjugate in $M(\Gamma,I)$.
\end{proof}

  \begin{lemma}\label{lem:cnf_conjugate}
    Let $u, v\in \Gamma^*$ be cyclic normal forms.
    Then $u$ and $v$ are conjugate in $G$ (or, equivalently, conjugate in $M$ by \cref{lem:CPapprox} and \cref{rem:cyclicNFconnected})
    if and only if the word $u$ is a cyclic permutation of the word $v$.
  \end{lemma}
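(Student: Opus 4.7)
My plan is to handle the easy direction first: if $u$ is a cyclic permutation of $v$, write $u = w_1 w_2$ and $v = w_2 w_1$ as words in $\Gamma^*$. These equalities hold in the trace monoid $M$ as well, so $u$ and $v$ are transposed traces, hence conjugate in $M$ by \cref{lemm:conjugacy_equivalent}, and the embedding $M \hookrightarrow G$ from diagram~\eqref{eq:irr_embedding} lifts this to conjugacy in $G$.

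For the converse direction, I first note that $u$ and $v$ are composite by definition of cyclic normal form, connected by \cref{rem:cyclicNFconnected}, and cyclically reduced as traces: every cyclic word shift of $u$ is a length-lex normal form and therefore contains no factor $ab$ with $\alphabet(a) = \alphabet(b)$, and applying this to shifts that cross the word boundary rules out such factors cyclically as well. Thus \cref{lem:CPapprox} applies and $u, v$ are conjugate in $M$, and then \cref{lemma:conjugateM=conjugateG} transfers this to conjugacy in the right-angled Artin group $G(\Gamma, I)$.

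At this point I would invoke the analogous uniqueness statement for cyclic normal forms in RAAGs due to Crisp--Godelle--Wiest~\cite{crisp2009conjugacy}. After extending the order on $\Gamma$ so that $\Gamma$-letters precede $\finv{\Gamma}$-letters, I would check that $u$ and $v$, viewed inside the RAAG $G(\Gamma, I)$, remain cyclic normal forms in the RAAG sense: since they lie in $\Gamma^*$ they contain no $\finv{a}$, so they, and all their cyclic word shifts, are reduced as RAAG words; moreover, because any alternative RAAG representative that uses an $\finv{a}$ must contain a cancelling $a$ and is therefore strictly longer, $u$ and $v$ are length-lex minimal among all RAAG representatives. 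The RAAG result then says that two cyclic normal forms which are conjugate in $G(\Gamma, I)$ must be cyclic word permutations of each other, finishing the proof. The step I expect to be the main obstacle is the bookkeeping needed to transfer the length-lex cyclic normal form condition for the graph product into the corresponding condition for the RAAG on elements of $\Gamma^*$; once this reduction is safely set up, the known RAAG uniqueness result closes the argument.
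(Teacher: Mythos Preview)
Your approach is essentially the same as the paper's: both reduce to \cite[Proposition~2.21]{crisp2009conjugacy} for the RAAG $G(\Gamma,I)$ via \cref{lem:CPapprox} and \cref{lemma:conjugateM=conjugateG}. The paper resolves exactly the ``bookkeeping'' you flag as the main obstacle by observing that in a reduced word no two consecutive letters come from the same $\Gamma_\cLelA$, so the Anisimov--Knuth characterization of lex normal forms only ever compares letters with different alphabets; hence any linear extension of $\le_\cL$ within each $\Gamma_\cLelA$ yields the same lex normal form, and a graph-product cyclic normal form is automatically a RAAG cyclic normal form. Your remark about ordering $\Gamma$ before $\finv{\Gamma}$ is harmless but tangential --- since $u,v\in\Gamma^*$, the real issue is linearly ordering $\Gamma$ itself, not the formal inverses.
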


\begin{proof}
    The lemma can be shown by almost a verbatim repetition of the proof of \cite[Proposition 2.21]{crisp2009conjugacy}. However, we can also use that result as a black-box: it states that two cyclic normal forms in a RAAG are conjugate if and only if they are cyclic permutations of each other.\footnote{A word $w \in (\Gamma \cup \finv{\Gamma})^*$ is a cyclic
    normal form in the RAAG $G(\Gamma,I)$ if $w$ and all its cyclic permutations are length-lexicographic normal forms. The latter means
    that $w$ and all its cyclic permutations are reduced with respect to
    the RAAG $G(\Gamma,I)$ and lexicographic normal forms with respect to a
    fixed linear order on $\Gamma$; see \cite[Definition~2.19]{crisp2009conjugacy}.}

    We apply this result to the RAAG $G(\Gamma,I)$. In
    \cite[Proposition 2.21]{crisp2009conjugacy} it is assumed that
    $\Gamma$ is finite, whereas our $\Gamma$ is infinite. But we can
    restrict $\Gamma$ do those symbols that appear in $u$ and $v$.

    Moreover, while we are given a linear order on $\cL$, we need a linear order on $\Gamma$ to obtain the notion of a cyclic normal form in a RAAG.
    To solve this problem we fix
    on each $\Gamma_\cLelA$ for $\cLelA \in \cL$ an arbitrary linear order (for different $\cLelA,\cLelB \in \cL$ we use our order on $\cL$). This gives a linear order on $\Gamma$.  As our definition of $\IRR(\traceRS)$ implies that there are never two consecutive letters from $\Gamma_\cLelA$ for the same $\cLelA \in \cL$, the outcome for the cyclic normal form does not depend on the actual orders we chose on the $\Gamma_\cLelA.$
    Therefore, every cyclic normal form according to \cref{def:cyclic_nf} is also a cyclic normal form in the RAAG $G(\Gamma, I)$.

    Let us now take to cyclic normal forms $u,v \in \Gamma^*$
    according to \cref{def:cyclic_nf}. Then $u$ and $v$ are also
    cyclic normal forms with respect to the RAAG $G(\Gamma,I)$.
    As traces from $M = M(\Gamma,I)$, $u$ and $v$ are cyclically reduced,
    connected and composite.
    Therefore, by \cref{lem:CPapprox},
    $u$ and $v$ are conjugate in the graph product $G$ if and only
    if they are conjugate as traces from $M(\Gamma,I)$.
    By \cref{lemma:conjugateM=conjugateG} the latter holds if and only
    if $u$ and $v$ are conjugate in the RAAG $G(\Gamma,I)$. Finally,
    \cite[Proposition 2.21]{crisp2009conjugacy}, tells us that the latter
    is equivalent to $v$ being a cyclic permutation of $u$.
\end{proof}

  \begin{lemma}
    \label{lem:nf_k}
    Let $w = dw' \in \Gamma^*$ (with $d \in \Gamma$) be a cyclically reduced and composite length-lexicographic normal form such that $w$ does not contain any letter $c$ with $d <_{\cL} c$.
    Then for all $k \ge 1$ we have $\nf(w^k) = w^k$ and $w$ is a cyclic normal form.
  \end{lemma}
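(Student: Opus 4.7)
The plan proceeds in three stages. First, I would establish two structural facts: $d$ is the unique minimal letter of the trace $w$, and $w$ is connected. If some $c \neq d$ were minimal in $w$, then $(c, d) \in I$ gives $\alphabet(c) \neq \alphabet(d)$; combined with the hypothesis, this forces $\alphabet(c) <_\cL \alphabet(d)$, so $c < d$, and the trace $w$ admits a word representative starting with $c$~-- contradicting length-lex normality of $w$. Analogously, if $w$ decomposed as $w_1 w_2$ with $(w_1, w_2) \in I$ and $d \in w_1$, then any minimal letter $e$ of $w_2$ would be minimal in $w$ with $e < d$, again contradicting length-lex normality. Since $w$ is now known to be cyclically reduced, connected, and composite, the observation from the preliminaries on cyclic reducedness gives that $w^k$ is reduced as a trace for every $k \geq 1$.

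Second, to prove $w^k$ is length-lex minimal, I would assume a violation: positions $i < j$ with $w^k[j] < w^k[i]$ and $(w^k[j], w^k[i] \cdots w^k[j-1]) \in I$, and case-split on the copies of $w$ containing $i$ and $j$. If both are in the same copy, this directly contradicts length-lex normality of $w$. If the copies are non-adjacent, an entire copy of $w$ lies strictly between them, forcing $(w^k[j], w) \in I$; since $\alphabet(w^k[j]) \in \alphabet(w)$, this contradicts irreflexivity of $I$ on $\cL$. If the copies are adjacent, writing the later copy as $\gamma\, w^k[j]\, \delta$, the relation $(w^k[j], \gamma) \in I$ makes $w^k[j]$ a minimal letter of $w$; by the first stage, $w^k[j] = d$. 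Then $(d, w^k[i]) \in I$ forces $\alphabet(w^k[i]) \neq \alphabet(d)$, and the hypothesis gives $\alphabet(w^k[i]) <_\cL \alphabet(d)$, so $w^k[i] < d$, contradicting $w^k[i] > d$. Combined with reducedness, this yields $\nf(w^k) = w^k$.

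Third, for cyclic normality, every cyclic permutation $w' = sp$ of $w = ps$ (as words) occurs as a factor of $w^2 = p \cdot sp \cdot s$; since $\nf(w^2) = w^2$ is length-lex normal and any violation of length-lex normality in a factor extends directly to a violation in the enclosing word (same letters, same independence relations, only shifted positions), $w'$ is itself length-lex normal. Together with the given compositeness of $w$, this matches the definition of $w$ being a cyclic normal form. The main technical subtlety is the adjacent-copies case in stage two: one must simultaneously invoke $(w^k[j], w^k[i]) \in I$ to force distinct $\cL$-classes and the $\cL$-maximality hypothesis on $d$ to rule out the scenario where $w^k[i]$ and $d$ share an $\cL$-class with $w^k[i] > d$ in the total order on $\Gamma$ extending $\leq_\cL$.
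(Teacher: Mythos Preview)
Your proposal is correct and follows essentially the same approach as the paper's proof: establish that $d$ is the unique minimal letter (hence $w$ is connected), use the Anisimov--Knuth characterization to show $w^k$ is length-lexicographically minimal, and deduce cyclic normality from the fact that cyclic permutations of $w$ are factors of $w^2$. The paper's argument in stage two is slightly more streamlined~--- it observes directly that the offending letter $a$ in a bad factor $bua$ must lie in some copy of $w$ with a prefix $y$ (a suffix of $u$) satisfying $(a,y)\in I$, hence $a$ is minimal, hence $a=d$, hence $d<_{\cL}b$, contradiction~--- whereas you make the same deduction via an explicit three-way case split on which copies contain $i$ and $j$; the content is identical. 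Your closing worry about a total order on $\Gamma$ extending $\leq_{\cL}$ is unnecessary here: since $(d,w^k[i])\in I$ already forces $\alphabet(w^k[i])\neq\alphabet(d)$, the comparison lives purely at the level of $\cL$, and the paper's formulation uses only $<_{\cL}$ throughout.
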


  \begin{proof}
  Note that $d$
  must be the unique minimal letter in the trace represented by $w$: if $a \neq d$
  would be also minimal, then $(a,d) \in I$ (in particular, $a$ and $d$ do not belong
  to the same $\Gamma_\cLelA$) and $d <_{\cL} a$ (since $dw'$ is a
  length-lexicographic normal form),
  contradicting the assumption on $d$. In particular, $w$ must be connected as a trace.

    We show that $\nf(w^k) = w^k$ by showing that $w^k$ is a length-lexicographic normal form.
    Since $w$ is cyclically reduced (and hence in particular reduced), connected and composite, also $w^k$ is cyclically reduced and composite.

    Let us now prove that $w^k$ is a length-lexicographic normal form: Assume the converse. The characterization of lexicographically smallest words of Anisimov
    and Knuth \cite{AnisimovK79} implies that $w^k$ contains a factor $bua$ where $a <_{\cL} b$
    and $(a,bu) \in I$. Since $w$ is a length-lexicographic normal form,
    the factor $bua$ does not belong to some factor $w$ of $w^k$.
    Therefore, $w$ has a prefix $ya$, where $y$ is a suffix of $u$.
    Since $(a,y) \in I$, $a$ is a minimal letter of $w$. Since $d$ is the unique
    minimal letter of $w$, we have $d = a$, i.e., $d <_{\cL} b$, which contradicts the assumptions on $d$. Hence, $w$ is indeed a
    length-lexicographic normal form, i.e., $\nf(w^k) = w^k$.

    Finally, we show that $w$ is a cyclic normal form. Every cyclic permutation $v$
    of $w$ is a factor of $w^2$.
    Since every factor of a length-lexicographic normal form is again a length-lexicographic normal form, $v$ is a length-lexicographic normal form.
\end{proof}

	\begin{corollary}
		Let $u = du' \in \Gamma^*$ (with $d \in \Gamma$) be a cyclic normal form such that $u$ does not contain any letter $c$ with $d <_{\cL} c$.
		If $u =_M w^k$ for a trace $w$, then $\nf(w)$ is a cyclic normal form and $u = \nf(w)^k$ (as words).
		\label{cor:cyclic_normal_form_primitive}
	\end{corollary}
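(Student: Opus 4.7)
The plan is to set $v = \nf(w)$ and verify that $v$ satisfies the hypotheses of \cref{lem:nf_k}; applying that lemma will then yield both conclusions of the corollary at once.

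The first and crucial step will be to show that $w$ itself is already reduced as a trace. Since $u$ is a cyclic normal form, $u^2$ is a reduced trace, and the hypothesis $u =_M w^k$ gives $u^2 =_M w^{2k}$, so the trace $w^{2k}$ is reduced. Any reducible pair in $w$ would survive inside $w^{2k}$, so $w$ must be reduced. By \cref{lem:conn_gp_monoid}, $v =_G w$ together with both being reduced implies $v =_M w$, and hence $v^k =_M u$.

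Next I would check that $v$ satisfies the hypotheses of \cref{lem:nf_k}. From $v^k =_M u$ we obtain $\alphabet(v) = \alphabet(u)$, so $v$ is composite and contains no letter $c$ with $d <_\cL c$. The factor $vv$ of the reduced trace $v^{2k} =_M u^2$ is itself reduced, so $v$ is cyclically reduced. The proof of \cref{lem:nf_k} shows that the hypothesis on $d$ forces $d$ to be the unique minimal letter of the trace $u$, and since the minimal letters of $v^k$ coincide with those of $v$ (two letters from the same $\Gamma_\cLelA$ are never independent, so a letter appearing in a later copy of $v$ can never commute past its occurrence in the first copy), $d$ is the unique minimal letter of $v$; being a length-lexicographic normal form, $v$ must then begin with $d$. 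Applying \cref{lem:nf_k} to $v$ now yields that $v$ is a cyclic normal form and that $v^k$ is a length-lexicographic normal form. Since $u$ and $v^k$ represent the same element of $G$ and are both length-lex normal forms, uniqueness forces $u = v^k = \nf(w)^k$ as words.

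The main obstacle is the very first step, namely deducing that $w$ must already be reduced. This is the only place where the strength of $u$ being a cyclic normal form (and not merely a length-lex normal form) is exploited, via the reducedness of $u^2$; everything else is a direct verification of the hypotheses of \cref{lem:nf_k}.
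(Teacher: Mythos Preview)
Your proof is correct and follows essentially the same route as the paper: verify that $\nf(w)$ satisfies the hypotheses of \cref{lem:nf_k} (cyclically reduced, composite, length-lex normal form, starts with $d$, contains no letter $c$ with $d <_\cL c$) and then apply that lemma. The only cosmetic difference is that the paper handles $k=1$ separately and then, for $k\ge 2$, uses that $ww$ is already a factor of $u$ itself to get $w$ cyclically reduced, whereas you pass to $u^2$ so that the argument covers all $k$ uniformly; both work.
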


	\begin{proof}
        The case $k=1$ is trivial, so let us assume that $k \geq 2$.
		Let $w \in \Gamma^*$ such that $u =_M w^k$.
        By the argument from the proof of \cref{lem:nf_k}, $d$ is the unique minimal
        letter of $u$.
        Since $u$ is composite and connected and $\alphabet(u) = \alphabet(w)$, also
        $w$ is composite and connected.
		As $ww$ is a factor of $u$, $ww$ must be reduced. Hence,
        $w$ is cyclically reduced. We can also write $w$ as $w =_M dw'$
        and $d$ is also the unique minimal letter of $w$. In particular,
        $\nf(w) = d v$ for some word $v \in \Gamma^*$.
		Applying \cref{lem:nf_k} (with $w$ replaced by $\nf(w)$) we obtain that $\nf(w^k) = \nf(\nf(w)^k) = \nf(w)^k$ and $\nf(w)$ is a cyclic normal form.
	\end{proof}

  \begin{theorem}
    \label{th:cyclic_nf_graph_product}
  The following holds:
    \begin{itemize}
      \item Let $G = \GP(\cL, I, \left(G_\cLelA\right)_{\cLelA \in \cL})$ be a graph product of f.g.~groups. Then the following problem is in $\uTC^0 \subseteq \uAC^0(\WP(F_2))$:
      \compproblem{a cyclically reduced, composite  and connected $w \in \Gamma^*$}{a cyclic normal form that is conjugate in $G$ to $w$}
      \item Let  \(\mathcal{C}\) be any non-trivial class of f.g.~groups.
      Then the following problem is in is in $\uAC^0[\NL]$:
      \compproblem{$G = \GP(\cL, I, \left(G_\cLelA\right)_{\cLelA \in \cL})$ given by $(\cL,I)$ and $G_\cLelA\in \mathcal{C}$ for $\cLelA \in \cL$ and a cyclically reduced, composite  and connected $w \in \Gamma^*$}{a cyclic normal form that is conjugate in $G$ to $w$ }
    \end{itemize}
    \noindent In both cases the output word starts with a letter that is maximal \wrt $\le_\cL$.
  \end{theorem}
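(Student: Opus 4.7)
The strategy hinges on \cref{lem:nf_k}: any cyclically reduced, composite length-lex normal form whose first letter is $\le_\cL$-maximal is automatically a cyclic normal form, and by \cref{lem:cnf_conjugate} it is unique within its conjugacy class up to word-level cyclic rotation. Thus it suffices to exhibit, starting from $w$, a length-lex NF conjugate to $w$ whose first letter lies in $\Gamma_D$, where $D := \max_{\le_\cL}\alphabet(w)$.

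The plan is to \emph{unfold and extract}. First, compute $v := \nf(w)$ via Kausch's length-lex NF procedure~\cite{kausch2017parallel}: in the non-uniform case this lies in $\uTC^0 \subseteq \uAC^0(\WP(F_2))$ (noting that $w$ being cyclically reduced means no base-group merges occur and only commutation-based reordering is needed), and in the uniform case in $\uAC^0[\NL]$. Next, compute $u := \nf(v^k)$ for a sufficiently large $k$ (a constant depending on $\sigma = |\cL|$ in the non-uniform case; polynomial in the uniform case). Finally, in parallel, for every starting position $j$ in a sufficiently interior window of $u$, extract the length-$|v|$ factor $u[j, j+|v|-1]$ and test whether it begins with a letter from $\Gamma_D$. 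Output any factor passing the test.

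For correctness, let $v^*$ denote the cyclic NF conjugate to $w$, which exists and is unique up to cyclic rotation by \cref{lem:cnf_conjugate}. By \cref{cor:cyclic_normal_form_primitive}, $v^*$ is a primitive connected composite trace. One then argues that interior length-$|v|$ factors of $u = \nf(v^k)$ coincide (as traces) with cyclic rotations of $v^*$, and that as $j$ varies over the interior window, all rotations are realized. Since $\alphabet(v^*) = \alphabet(w)$, the class $D$ occurs in $v^*$, so at least one cyclic rotation of $v^*$ starts with a letter from $\Gamma_D$; the corresponding factor satisfies the selection criterion, and by \cref{lem:nf_k} it is the desired cyclic NF. Complexity-wise, all steps are dominated by a single NF computation on a word of size $O(|w|)$ (non-uniform) or polynomial (uniform), placing the algorithm in the stated bounds; the selection step is in $\uAC^0$.

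The main obstacle is verifying the \emph{periodicity claim}: that $\nf(v^k)$ has a periodic core of period $|v^*| = |v|$ for sufficiently large $k$. This requires carefully combining the primitivity of $v^*$ (\cref{cor:cyclic_normal_form_primitive}), the factor-shape description of powers (\cref{lem:factor_shape}, in particular case (ii) restricting how a length-$|v|$ factor of $v^k$ can decompose), and the cancellation control provided by connectedness of $w$. The bound $k = O(\sigma)$ arises naturally from the bound $s < \sigma$ on the number of irregular prefix/suffix pieces in \cref{lem:factor_shape}, which limits how far the boundary can reach into the interior of $v^k$. Once this periodicity is established, the algorithm's correctness and complexity follow directly.
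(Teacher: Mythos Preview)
Your plan differs from the paper's and leaves the crucial step---what you yourself call ``the main obstacle''---unproved.

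The paper's algorithm is: compute $\tilde w := \nf(w^\sigma)$; write $\tilde w = y\,d\,z$ where $d$ is the first occurrence of a $\le_\cL$-maximal letter; output the first $|w|$ letters of $\nf(dzy)$. Correctness is argued directly: one shows that $d$ is the unique minimal letter of the trace $dzy$ (via \hyperref[lemma-levi]{Levi's Lemma} and \cref{lem:factor_shape}), that $y$ is a trace-prefix of $wy$ (via \cref{lem:lettercount}), and hence $dzy =_M \hat u^\sigma$ for some cyclically reduced $\hat u$ conjugate to $w$; then \cref{lem:nf_k} gives $\nf(dzy) = \nf(\hat u)^\sigma$ with $\nf(\hat u)$ a cyclic normal form. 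No periodicity statement about the word $\nf(w^\sigma)$ is ever needed: the rotate-then-renormalise step manufactures an exact power.

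Your approach instead requires that contiguous \emph{word} factors of $\nf(w^k)$ in an interior window coincide with the cyclic rotations of the target cyclic normal form. The lemmas you invoke do not establish this: \cref{lem:factor_shape} describes \emph{trace} factors of $p^k$, not substrings of the length-lex normal form word, and \cref{cor:cyclic_normal_form_primitive} concerns the primitive root of a cyclic normal form already in hand, not the word structure of $\nf(w^k)$ for an arbitrary cyclically reduced $w$. (Your assertion that $v^*$ is primitive is also unjustified: $w$ is not assumed primitive, and $v^*$ is primitive iff $w$ is, by \cref{lemm:conjugacy_primitive}.) For a concrete illustration, take $\cL=\{a,b,c\}$ with $a<b<c$, $I=\{(a,b),(b,a)\}$, and $w=acb$; then $\nf(w^k)=a\,(cab)^{k-1}\,cb$. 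Your intuition is correct here, but proving this shape in general amounts to bounding the ``boundary defect'' in passing from the trace $w^k$ to its normal-form word, and your sketch gives no mechanism for this. The paper's rotation trick sidesteps the problem entirely with two normal-form computations and a single cyclic shift.
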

  Note that due to \cref{lem:CPapprox}, in the situation of \cref{th:cyclic_nf_graph_product}, being conjugate in $G$ is equivalent to being conjugate in the trace monoid $M = M(\Gamma,I)$ or being related by a sequence of transpositions.

  \begin{proof}[Proof of \cref{th:cyclic_nf_graph_product}]
    Let $w \in \Gamma^*$ be the input word.
    Note that $w$ is already cyclically reduced, composite and connected.
    The cyclic normal form can be computed with the following algorithm:
    \begin{enumerate}%[label=\arabic*.]
      \item Compute the length-lexicographic normal form $\tilde{w} = \nf_G(w^\sigma)$ where, as before, $\sigma = \abs{\cL}$.
      \item Let $\tilde{w} = y d z$, where $d \in \Gamma_\cLelA$ is such that $\cLelA$ is maximal \wrt $\le_\cL$, $y \in (\Gamma \setminus \Gamma_\cLelA)^*$ and $z \in \Gamma^*$.
      Compute the cyclic permutation $d z y$.
      That is, we rotate the first occurrence of $d$ to the front.
      \item Compute the length-lexicographic normal form of $d z y$.
      We have $\nf_G(d z y) = u^\sigma$, where $u$ is a cyclic normal form conjugate to $w$.
    \end{enumerate}

    \noindent First, we show that our algorithm is correct, \ie $\nf_G(d z y)$ has the form $u^\sigma$ and $u$ is a cyclic normal form conjugate to $w$.

    For this we first prove that $d$ is the unique minimal letter of the trace represented by $dzy$.
    To get a contradiction, assume that $a \neq d$ is another minimal letter. In particular, $(a,d) \in I$, which implies $a <_{\cL} d$.
    If $a$ belongs to $z$, then we can write $z = az'$ and get $ydz =_M yadz'$ contradicting the fact that $y d z$ is a length-lexicographic normal form.
    Now assume that $a$ belongs to $y$, i.e., $y =_M a y'$ and $(a, dz) \in I$. Hence, in the trace monoid $M$, $ya$ is a prefix
    of $w^{2\sigma} =_M (ydz) (ay'dz)$.
    \hyperref[lemma-levi]{Levi's Lemma} yields the following diagram (where $yav =_M w^{2\sigma}$):
    \begin{center}
  \begin{tabular}{c"c|c|c|c|c|c|c|c|}\hline
  $v$  & $v_{1}$ & $v_{2}$ & $\cdots$  &  $v_{\sigma}$  & $v_{\sigma+1}$
  & $v_{\sigma+2}$ & $\cdots$ & $v_{2\sigma}$ \\ \hline
  $ya$  & $y_{1}$ & $y_{2}$  & $\cdots$  &  $y_{\sigma}$  & $y_{\sigma+1}$
  & $y_{\sigma+2}$ & $\cdots$ & $y_{2\sigma}$\\ \thickhline
    & $w$          & $w$ & $\cdots$  & $w$ & $w$
          & $w$ & $\cdots$ & $w$
  \end{tabular}
  \end{center}
  None of the $v_i$ can be $1$, since $d$ belongs to $w$ but not to $ya$.
  By \cref{lem:factor_shape} we obtain $y_j = 1$ for all $j \geq \sigma$.
  In particular, $ya$ is already a prefix of $w^\sigma =_M ydz$. But
  $a$ does not occur in $dz$ (we have $(a, dz) \in I$).
  Hence, since $ya$ contains more
  $a$'s than $y$, this is a contradiction.

    Next, let us show that $y$ is a prefix of $wy$ in the trace monoid $M$. To see this, observe that $ydzw =_M w^{\sigma+1} =_M wydz$.
    Since $\abs{y}_a \leq \abs{wy}_a$ for all $a \in \Gamma$,
    \cref{lem:lettercount} implies that $y$ is a prefix of $wy$.

    Thus, we can find some $\hat{u} \in M$ with $y\hat{u} =_M wy$. Observe that by the very definition $\hat{u}$ is conjugate to $w$. By \cref{lemm:conjugacy_equivalent}, $\hat{u}$ can be obtained from the cyclically
    reduced $w$ by a sequence of transpositions. Since the property of being
    cyclically reduced is preserved by transpositions, it follows that also
    $\hat{u}$ is cyclically reduced.

    Since $y \hat{u}^\sigma =_M w^\sigma y =_M y dz y$ and $M$ is cancellative,
    we get $d z y =_M \hat{u}^\sigma$. In particular, $d$ is the unique minimal letter of $\hat{u}$.
    Therefore, by \cref{lem:nf_k}, $\nf_G(d z y) = \nf_G(\hat{u}^{\sigma}) = \nf_G(\hat{u})^{\sigma}$ and $u = \nf_G(\hat{u})$ is a cyclic normal form.

    Second, we look at the complexity in the non-uniform case.
    Our algorithm requires solving the normal form problem twice.
    Since the input is cyclically reduced, computing the normal form only required computing a lexicographically smallest ordering, which by~\cite[Theorem 6.3.7]{kausch2017parallel} can be done in $\uTC^0 \subseteq \uAC^0(\WP(F_2))$.
    In addition, we need to compute a cyclic permutation, which can be done in $\uAC^0$.

    Third, we look at the complexity in the uniform case.
    By \mbox{\cite[Theorem 6.3.13]{kausch2017parallel}}, solving the normal form problem can be done in $\uTCz$ with oracle gates for $\NL$ (more precisely, that theorem states that it can be decided in \NL which of two letters comes first in the normal form~-- as sorting is in $\uTCz$, this statement follows). Note that $\uTCz[\NL] = \uACz[\NL]$.
    Finally, again, the cyclic permutation can be computed in $\uAC^0$.
  \end{proof}
Finally, we need the following lemma that requires that none of the base groups $G_\cLelA$ contains elements of order two. Hence,
$a \neq a^{-1}$ holds for all $a \in \Gamma$.

\begin{lemma} \label{lemma-p-conjugate-p-1}
Assume that $a \neq a^{-1}$ for all $a \in \Gamma$.
If $p \in M \setminus \{1\}$ is reduced, then $p$ and $p^{-1}$ are not conjugate (in $M$).
\end{lemma}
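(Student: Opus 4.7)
My plan is strong induction on $|p|$. For the base case $|p|=1$, write $p=a \in \Gamma$; since any two conjugate single letters in $M$ must coincide (the equation $xa=bx$ with $a \neq b$ would give $|xa|_a = 1+|x|_a \neq |x|_a = |bx|_a$), the conjugacy of $a$ and $a^{-1}$ in $M$ would force $a=a^{-1}$, contradicting the hypothesis.

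For the inductive step, first reduce to $p$ primitive via \cref{lemm:conjugacy_primitive}: if $p=q^k$ with $k \geq 2$ and $q$ primitive, then $q$ is reduced (a forbidden factor of $q$ would persist in $p=q^k$), $|q| < |p|$, and since $q^{-1}$ is also primitive the lemma gives that $p$ is conjugate to $p^{-1} = (q^{-1})^k$ in $M$ if and only if $q$ is conjugate to $q^{-1}$ in $M$, contradicting the induction hypothesis. Now $p$ is primitive, reduced, and of length at least $2$, forcing $p$ to be composite (otherwise $p \in \Gamma_\cLelA^*$ for some $\cLelA$, and reducedness would give $|p| \leq 1$).

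In the disconnected case, I decompose $p=p_1\cdots p_s$ into pairwise $I$-independent connected components with $s \geq 2$. Since conjugacy in $M$ preserves the multiset of $M$-conjugacy classes of connected components, there is a permutation $\sigma$ of $\{1,\dots,s\}$ with $p_{\sigma(i)}$ conjugate to $p_i^{-1}$ in $M$. A fixed point of $\sigma$ yields a shorter reduced nonempty trace $p_i$ that is conjugate to its own inverse in $M$, contradicting the induction hypothesis. Otherwise $\sigma$ is a fixed-point-free involution; for any pair $\{i,j\}$ with $p_j$ conjugate to $p_i^{-1}$ in $M$, one has $\alphabet(p_j) = \alphabet(p_i^{-1}) = \alphabet(p_i)$, and combined with the $I$-independence $\alphabet(p_i) \times \alphabet(p_j) \subseteq I$ this forces $\alphabet(p_i) \times \alphabet(p_i) \subseteq I$, contradicting the irreflexivity of $I$ (as $p_i$ is nonempty).

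In the connected case, I pick $x \in M$ of minimum length satisfying $xp = p^{-1}x$ (which exists by \cref{lemm:conjugacy_equivalent}). Once $|x|=0$, we have $p=p^{-1}$ in $M$; writing $p=ap'$ for a minimal letter $a$, Levi's lemma applied to $ap'=p'^{-1}a^{-1}$ splits the single letter $a$ into two sub-cases, ruling out one by the letter count $|a^{-1}|_a = 0 \neq 1+|v|_a$ (using $a \neq a^{-1}$) and yielding in the other $p=aua^{-1}$ with $u=u^{-1}$ in $M$, $u$ reduced, and $|u|=|p|-2$; the induction hypothesis then forces $u=1$, making $p=aa^{-1}$ non-reduced and contradicting the hypothesis. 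The main technical obstacle is establishing $|x|=0$: I would apply Levi's lemma directly to $xp = p^{-1}x$ to obtain decompositions $x=\alpha\beta=\beta\delta$, $p=\gamma\delta$, $p^{-1}=\alpha\gamma$ with $(\beta,\gamma) \in I$, and exploit the connectedness and primitivity of $p$ (using \cref{lem:factor_shape} and \cref{lem:p_factor} for iterated powers $xp^n=p^{-n}x$) to show that $\beta$ must be trivial and further that any nonempty $\alpha$ would produce a shorter conjugator, contradicting the minimality of $|x|$ and collapsing the problem to the $|x|=0$ case already handled.
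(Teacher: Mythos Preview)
Your overall framework is sound: strong induction on $|p|$, the base case, and the case $p =_M p^{-1}$ are handled correctly and match the paper's first paragraph. (The reduction to primitive and the disconnected case are correct but, as it turns out, unnecessary detours.) The genuine gap is in your connected case, specifically the plan to force $|x|=0$.

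Your sketch says you will apply \cref{lem:factor_shape} and \cref{lem:p_factor} to the iterated equations $xp^n = p^{-n}x$ in order to show $\beta=1$. This does not work: both lemmas concern powers of a \emph{single} trace $p$ (a factorization of $p^k$, respectively an equation $p^\sigma u = v p^\sigma$), whereas your equation has $p^n$ on one side and $(p^{-1})^n$ on the other. There is no way to feed $xp^n = p^{-n}x$ into \cref{lem:p_factor}, and \cref{lem:factor_shape} alone gives no control over $\beta$. Moreover, even granting $\beta=1$, your stated continuation ``nonempty $\alpha$ produces a shorter conjugator'' is not what actually happens; with $\beta=1$ one has $x=\alpha=\delta$ and the argument has to proceed via $\gamma$, not via a shorter conjugator coming from $\alpha$.

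The missing idea is much simpler than iterated powers. From your Levi decomposition $p=\gamma\delta$ and $p^{-1}=\alpha\gamma$ you also have $p^{-1}=\delta^{-1}\gamma^{-1}$, hence $\alpha\gamma = \delta^{-1}\gamma^{-1}$. Since $|p|_\cLelA=|p^{-1}|_\cLelA$ forces $|\alpha|_\cLelA=|\delta|_\cLelA=|\delta^{-1}|_\cLelA$ for every $\cLelA$, \cref{lem:lettercount} gives $\alpha=\delta^{-1}$ and $\gamma=\gamma^{-1}$ directly (no connectedness, no primitivity needed). Now $\gamma$ is a prefix of $p$, hence reduced; if $0<|\gamma|<|p|$ the induction hypothesis applied to $\gamma$ gives the contradiction, if $|\gamma|=|p|$ then $p=\gamma=\gamma^{-1}=p^{-1}$ and you are in the already-handled case, and if $\gamma=1$ then $x=\alpha\beta=\delta^{-1}\beta$ together with $x=\beta\delta$ yields $\beta p=p^{-1}\beta$ with $|\beta|=|x|-|p|<|x|$, contradicting minimality. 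This is essentially the mechanism the paper uses, except that instead of starting from Levi on $xp=p^{-1}x$, the paper invokes \cite[Proposition~4.4.5]{DieRoz95} to obtain a factorization $p=q_1\cdots q_k$, $p^{-1}=q_k\cdots q_1$, and then applies the same letter-count trick to $q_1(q_2\cdots q_k)=q_1^{-1}(q_2^{-1}\cdots q_k^{-1})$ to conclude $q_1=q_1^{-1}$.
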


\begin{proof}
Assume that $p \in M \setminus \{1\}$ is conjugate to $p^{-1}$.
We show that $p$ is not reduced.
Let us first consider the case that $p =_M p^{-1}$.
We show by induction on $|p|$ that $p$ is not reduced.
Since $p \neq 1$, we can write $p =_M a s$ for $a \in \Gamma$
and $s \in M$. We obtain $a s =_M p =_M p^{-1} =_M s^{-1} a^{-1}$.
Since $a \neq a^{-1}$, we can write $s$ as $s =_M t a^{-1}$ and
obtain $a t a^{-1} =_M as =_M s^{-1} a^{-1} =_M a t^{-1} a$. Since $M$
is cancellative, we get $t =_M t^{-1}$. If $t=1$, then $p _M= aa^{-1}$ is not reduced and, if $t \neq 1$, then $t$ is not reduced by induction.

Now assume that $p \neq p^{-1}$. Since $p$ and $p^{-1}$ are
conjugate, \cite[Proposition 4.4.5]{DieRoz95} yields factorizations $p =_M q_1 q_2 \cdots q_k$ and $p^{-1} =_M q_k q_{k-1} \cdots q_1$ for traces $q_1, \ldots, q_k \in M \setminus \{1\}$.
Define $r =_M q_2 \cdots q_k$ and $s =_M q_2^{-1} \cdots q_k^{-1}$. We obtain
$q_1 r =_M q_1^{-1} s$. \hyperref[lemma-levi]{Levi's Lemma} yields factorizations
$q_1 =_M t u$ and  $q_1^{-1} =_M t v$ with $(u,v) \in I$.

We claim that $u=v=1$:
For every $\cLelA \in \cL$, the number of letters
from $\Gamma_\cLelA$ in $q_1$ and $q_1^{-1}$ is the same.
Hence, also the number of letters
from $\Gamma_\cLelA$ in $u$ and $v$ is the same. Since
$(u,v) \in I$, this is only possible if $u=v=1$.

We now get $q_1 = q_1^{-1}$. By the first paragraph of the proof,
this shows that $q_1$ (and hence $p$) is not reduced.
\end{proof}

  \subsubsection{A variant of Lyndon traces}

  A Lyndon trace $w$ from a trace monoid $M(\Sigma,I)$ is a connected primitive trace
  that is lexicographical minimal in its conjugacy class (where a trace $u$ is smaller
  than a trace $v$ if the lexicographic normal form of $u$ is length-lexicographically
  smaller than the lexicographic normal form of $v$), see e.g.~\cite[Section~4.4]{DieRoz95}. We will work with the following variant of Lyndon traces.
  As usual let $G = \GP(\cL, I, \left(G_\cLelA\right)_{\cLelA \in \cL})$ be a graph product and $M = M(\Gamma, \IGamma)$ the corresponding trace monoid.

  \begin{definition}
    \label{def:omega}
    Let $\Omega$ be the set of all $w  \in \Gamma^*$ satisfying the following properties:
    \begin{itemize}
      \item $w$ is a cyclic normal form (in particular, it is composite, cyclically reduced, and connected),
      \item $w$ represents a primitive element of $M$,
      \item $w$ is lexicographically minimal (\wrt $\le_\cL$) among its cyclic permutations and the cyclic permutations of a cyclic normal form conjugate (in $M$) to $w^{-1}$.
    \end{itemize}
  \end{definition}
  Note that the last point in \cref{def:omega} makes sense because if $w$ is composite, cyclically reduced, connected and primitive, then $w^{-1}$ as well
  as every $w'$ conjugate in $M$ to $w$ or $w^{-1}$ have the same properties (that
  a trace that is conjugate to a primitive trace is primitive too follows from
  Lemma~\ref{lemm:conjugacy_primitive}).
  Moreover, by \cref{th:cyclic_nf_graph_product} there is a
  cyclic normal form that is conjugate to $w^{-1}$ and by \cref{lem:cnf_conjugate} all cyclic normal forms conjugate to $w^{-1}$ are cyclic permutations of each other.

\begin{remark} \label{remark-Omega}
Note that if $u,v \in \Omega$ and the traces represented by $u$ and $v$
are conjugate in $M$ (or equivalently $G$), then $u = v$: By \cref{lem:cnf_conjugate}, $u$
and $v$ are cyclic permutations of each other, and the last point of \cref{def:omega} implies that $u=v$. If $u^{-1}$ and $v^{-1}$ are conjugate in $M$, then
also $u$ and $v$ are conjugate in $M$, and we obtain again $u=v$.
Finally, assume that $u$ and $v^{-1}$ are conjugate in $M$.
Fix a cyclic normal form $w$ that is conjugate to $v^{-1}$. Hence, $u$ and $w$
are conjugate in $M$ and by \cref{lem:cnf_conjugate}, $u$ is a cyclic permutation
of $w$. So, $u$ is a cyclic permutation of a cyclic normal form conjugate to $v^{-1}$,
which implies $v \leq_{\cL} u$. Since also $u^{-1}$ and $v$ are conjugate
in $M$, we also have $v \leq_{\cL} u$ and, thus, $u=v$.
\end{remark}

\begin{example}
Notice that the elements of $\Omega$ are not really Lyndon traces. This is because they are only minimal among their cyclic permutations but not minimal in the full conjugacy class. Indeed, consider the trace monoid $M(\Sigma,I)$ where $\Sigma = \{a,b,c\}$, $I = \{(a,c),(c,a)\}$, and $a < b < c$.\footnote{This trace monoid $M(\Sigma,I)$ would arise from the graph product $(\mathbb{Z}_2 \times \mathbb{Z}_2)*\mathbb{Z}_2$, where $a$, $b$, and $c$ are the generators of the three base groups.}
Then the trace $abc$ is lexicographically smallest in its conjugacy class. However, it is not a cyclic normal form, since the cyclic permutation $bca$ is not lexicographically minimal. The corresponding lexicographically smallest conjugate cyclic normal form would be $acb$.
\end{example}

\begin{remark}
 Notice that, when solving the uniform power word problem for graph products, it is important that in $\Omega$ we require lexicographical minimality only among cyclic normal forms conjugate to $w^{\pm 1}$. A straightforward generalization of the approach for free groups (see beginning of \cref{ch:conjugacy-in-graph-groups-and-graph-products}) would search for a lexicographically minimal element in the full conjugacy class of $w^{\pm 1}$. However,
this approach does not seem to be feasible because
 there might be exponentially many conjugate traces for a given trace. Here is an example:

 Let $M(\Sigma,I)$ where $\Sigma = \{a_1, \dots, a_n\}$ and $I = \set{(a_i,a_j)}{\abs{j-i} \geq 2}$ and $u=a_1\cdots a_n$. Then every permutation $a_{\pi(1)}\cdots a_{\pi(n)}$ is conjugate to $u$. In particular, there are $n!$ many conjugate traces to $u$.

 An interesting open question is the complexity of the following problem: given a trace monoid and a trace, find the lexicographically smallest conjugate trace.
 This problem can easily be seen to be in $\P^\NP$. However, it is totally unclear to us whether it can be actually solved in polynomial time~-- or whether its decision variant is \NP-complete.
\end{remark}

  The crucial property of $\Omega$ is that each $w \in \Omega$ is a unique representative for its conjugacy class and the conjugacy class of its inverse.
  Similar to the case of a free group (see \cref{lem:short_cancellation})
this fact leads us to the following theorem, which is central to solving the power word problem in graph products (see \cref{lem:gp-rewrite_bounds_exponents} below).
  As for \cref{lem:short_cancellation}, the intuition behind it is that, if there are two powers $p^x$ and $q^y$, where $p, q \in \Omega$ and $q \neq p$, then in $p^xq^y$ only a small number of letters can cancel out.
  Conversely, if a sufficiently large suffix of $p^x$ cancels with a prefix of $q^y$, then $p = q$. In the end this will allow us to decrease all the exponents of $p$ simultaneously as described in \cref{def:shortened}.

  \begin{theorem}
    \label{th:cancel_length_factor_gp}
    Let ${p}, {q} \in \Omega$, and ${x}, {y} \in \mathbb{Z}$.
    Moreover, let $u \in M$ (resp., $v \in M$)  be a factor of the trace represented by ${p}^{{x}}$ (resp., ${q}^{{y}}$) such that $uv =_G 1$.
    If $|u| = |v| > 2\sigma(|{p}| + |{q}|)$, then ${p} = {q}$.
  \end{theorem}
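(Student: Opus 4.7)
Plan.

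The goal is $p=q$, and since $p,q\in\Omega$, by \cref{remark-Omega} it suffices to show that $\tilde p$ and $\tilde q$ are conjugate in $M$, where $\tilde p\in\{p,p^{-1}\}$ is the positive root with $u$ a factor of $\tilde p^{|x|}$ and $\tilde q\in\{q,q^{-1}\}$ is defined analogously so that $u$ is a factor of $\tilde q^{|y|}$. Here I use that $u =_M v^{-1}$: since $p,q$ are cyclically reduced, the powers $p^x,q^y$ are reduced traces, hence so are their factors $u$ and $v$; combined with $uv=_G 1$ and the bijection $\IRR(T)\leftrightarrow G$ from \eqref{eq:irr_embedding}, this forces $u=_M v^{-1}$, making $u$ a factor of $\tilde q^{|y|}$. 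The traces $\tilde p,\tilde q$ inherit from $p,q$ the properties of being primitive, composite, cyclically reduced, and connected.

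Next I apply \cref{lem:factor_shape}(2) to $u$ as a factor of $\tilde p^{|x|}$ and of $\tilde q^{|y|}$. Case~1 of that lemma would bound $|u|<2\sigma|p|$, contradicting the hypothesis, so case~2 gives
\[u=_M \alpha\,\tilde p^{b}\,\beta=_M\alpha'\,\tilde q^{b'}\,\beta'\]
with $|\alpha|,|\beta|<\sigma|p|$, $|\alpha'|,|\beta'|<\sigma|q|$, and from $|u|>2\sigma(|p|+|q|)$ one gets $b|\tilde p|>2\sigma|q|$ and $b'|\tilde q|>2\sigma|p|$, in particular $b,b'\geq 1$. Since $\tilde p^b$ is a factor of $u$, which is a factor of $\tilde q^{|y|}$, a further application of \cref{lem:factor_shape}(2) yields $\tilde p^b=_M\gamma\,\tilde q^c\,\delta$ with $|\gamma|,|\delta|<\sigma|q|$ and, by the same length computation, $c\geq 1$. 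Symmetrically, $\tilde q^{b'}=_M\gamma'\,\tilde p^{c'}\,\delta'$ with $c'\geq 1$.

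The key step is to show $\tilde p$ and $\tilde q$ are conjugate in $M$ via \cref{lem:lettercount_conjugacy} applied to $\tilde p^b=_M\gamma\,\tilde q\,(\tilde q^{c-1}\delta)$; this requires the per-clique equality $|\tilde p|_\cLelA=|\tilde q|_\cLelA$ for every $\cLelA\in\cL$. A direct letter count on $\tilde p^b=\gamma\tilde q^c\delta$ first gives $\alphabet(\tilde p)=\alphabet(\tilde q)$: if $|\tilde p|_\cLelA=0$, then $c|\tilde q|_\cLelA\leq|\gamma|+|\delta|<2\sigma|q|$, and combined with the symmetric identity this forces $|\tilde q|_\cLelA=0$. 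For the exact equality I plan to combine the two symmetric factorizations: substituting $\tilde q^{b'}=\gamma'\tilde p^{c'}\delta'$ into $\tilde p^b=\gamma\tilde q^c\delta$ produces, after appropriate left- and right-multiplication by $\tilde p^\sigma$, a commutation $\tilde p^\sigma\mu=_M\mu\,\tilde p^\sigma$ where $\mu$ is a prefix of a power of $\tilde p$, so that \cref{lem:p_factor} forces $\mu$ to be a power of $\tilde p$; primitivity of both $\tilde p$ and $\tilde q$ then pins down the per-clique counts exactly.

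Once $\tilde p\sim_M\tilde q$ is established, \cref{remark-Omega}—which relies on \cref{lemma-p-conjugate-p-1} and thus on the no-order-two hypothesis to rule out the degenerate case $p=p^{-1}$—yields $p=q$. The hard part is the exact per-clique equality $|\tilde p|_\cLelA=|\tilde q|_\cLelA$: the length threshold $|u|>2\sigma(|p|+|q|)$ is only just large enough to guarantee a single full $\tilde q$ inside the $\tilde p^b$-factor of $u$, so naive letter counts give only approximate equalities; the careful use of both cross-factorizations together with \cref{lem:p_factor} is what upgrades approximate equality to genuine equality.
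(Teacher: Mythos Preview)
Your reduction is correct and matches the paper: establish $u=_M v^{-1}$, pass to $\tilde p\in\{p,p^{-1}\}$, $\tilde q\in\{q,q^{-1}\}$ so that $u$ is a common factor of large powers of both, then aim for conjugacy in $M$ and finish via \cref{remark-Omega} and \cref{lem:lettercount_conjugacy}. The paper does exactly this, and also extracts $\tilde p^2$ and $\tilde q^2$ as factors of the common trace via \cref{lem:factor_shape}.

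The gap is in your ``key step''. To apply \cref{lem:lettercount_conjugacy} you need the exact equality $|\tilde p|_\cLelA=|\tilde q|_\cLelA$ for every $\cLelA\in\cL$, and your plan to obtain this---``substituting $\tilde q^{b'}=\gamma'\tilde p^{c'}\delta'$ into $\tilde p^b=\gamma\tilde q^c\delta$'' and then invoking \cref{lem:p_factor}---does not work as stated. The exponents $b'$ and $c$ are unrelated, so there is no actual substitution; and \cref{lem:p_factor} needs an identity of the form $p^\sigma\mu=_M\mu p^\sigma$ with $\mu$ a \emph{prefix} of a power of $p$, which your cross-factorizations do not provide (your $\gamma,\delta$ are built from pieces of $\tilde q$, not prefixes of powers of $\tilde p$). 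Naive letter counts on $b|\tilde p|_\cLelA=|\gamma|_\cLelA+c|\tilde q|_\cLelA+|\delta|_\cLelA$ only give an approximate relation, not the exact one, and primitivity alone does not close the gap.

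The paper supplies the missing ingredient via \cref{lem:projections_conjugate_gp}: project to the free monoids $A_i^*$ and run Fine--Wilf there. Since $\tilde p^2$ and $\tilde q^2$ are both factors of the common trace $v$, each projection $\pi_i(v)$ is long enough relative to its periods $|\pi_i(\tilde p)|$ and $|\pi_i(\tilde q)|$ for Fine--Wilf to force the primitive roots to be conjugate words, and a connectedness argument then shows the multiplicities agree, giving $|\tilde p|_\cLelA=|\tilde q|_\cLelA$ exactly. This Fine--Wilf step is the heart of the proof; your proposal does not replace it.

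One minor point: \cref{remark-Omega} does \emph{not} rely on \cref{lemma-p-conjugate-p-1} or the no-order-two hypothesis; it handles the case ``$u$ conjugate to $v^{-1}$'' directly from the definition of $\Omega$. \Cref{th:cancel_length_factor_gp} holds without that hypothesis.
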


Note that in the above theorem $u$ and $v$ are reduced as they are factors of $p^x$ (resp.\ $q^y$).
For the proof of \cref{th:cancel_length_factor_gp} we apply the
Lemmata~\ref{lem:projection_eq_gp} and \ref{lem:power_gp}
        from \cref{sec:pojection-to-free-monoids} to the trace monoid $M(\Gamma, I)$ that corresponds to the graph product $G = \GP(\cL, I, \left(G_\cLelA\right)_{\cLelA \in \cL})$.
To do so, we use the cliques and projections as defined in \eqref{clique-covering-A} in \cref{sec:big_trace_monoid_prelims}, which we recall for convenience:
\begin{equation*} %\label{clique-covering-A}
 \{A_1, \ldots, A_k\} = \left\{ \Gamma_\cLelA \cup \Gamma_\cLelB \mid (\cLelA, \cLelB) \in D,  \cLelA \neq \cLelB\right\} \cup \{ \Gamma_\cLelA \mid \cLelA \text{ is isolated} \},
\end{equation*}
   where $\cLelA$ is isolated if  there is no $\cLelB \neq \cLelA$ with  $(\cLelA, \cLelB) \in D$.
 Now, $\pi_i: M(\Gamma, I) \to A_i^*$ denotes the canonical projection and $\Pi: M(\Gamma, I) \to A_1^*\times\cdots\times A_k^*$ with $\Pi(w) = (\pi_1(w), \dots, \pi_k(w))$ is an injective monoid morphism by \cref{lem:projection_eq_gp}.
We derive \cref{th:cancel_length_factor_gp} from the
  following lemma.

  \begin{lemma}
    \label{lem:projections_conjugate_gp}
    Let $p, q, v \in M(\Gamma, \IGamma)$, $x, y \in \mathbb{N}\setminus\{0\}$ such that $p$ and $q$ are primitive and connected, and $p^x$ and $q^y$  have the common factor $v$.
    If $p^2$ and $q^2$ are factors of $v$, then for all $i$ the projections $\pi_i(p)$ and $\pi_i(q)$ are conjugate as words.
  \end{lemma}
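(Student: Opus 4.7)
Proof plan. Fix an index $i \in [1,k]$ and abbreviate $P := \pi_i(p)$, $Q := \pi_i(q)$, and $V := \pi_i(v)$, each viewed as an element of the free monoid $A_i^*$. Since $\pi_i$ is a monoid homomorphism, the hypothesis that $v$ is a common factor of $p^x$ and $q^y$ in $M$ translates directly into $V$ being a common factor of $P^x$ and $Q^y$ in $A_i^*$, and likewise $P^2$ and $Q^2$ are factors of $V$. The degenerate case handles itself: if $P = 1$, then $V$ is a factor of $P^x = 1$, so $V = 1$, which forces $Q^2 = 1$ and hence $Q = 1$, and $P, Q$ are trivially conjugate. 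From now on I assume $P, Q \neq 1$.

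The heart of the argument is the theorem of Fine and Wilf. As a factor of the periodic word $P^x$ (with $|V| \geq |P|$), the word $V$ has period $|P|$; analogously $V$ has period $|Q|$. Since $|V| \geq \max(|P^2|, |Q^2|) = 2\max(|P|, |Q|) \geq |P| + |Q|$, Fine--Wilf yields that $V$ has period $d := \gcd(|P|, |Q|)$. Letting $w$ denote the length-$d$ prefix of $V$, the word $V$ is a prefix of $w^\infty$; because $P$ and $Q$ are factors of $V$ whose lengths are divisible by $d$, each of them must be a power of a cyclic rotation of $w$, say $P = w_P^{|P|/d}$ and $Q = w_Q^{|Q|/d}$ where $w_P, w_Q$ are cyclic rotations of $w$.

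It then remains to show $|P| = |Q|$. Once this equality is in hand, $P$ and $Q$ are powers of cyclic rotations of $w$ to the same exponent, and a direct calculation (rotating one word by a multiple of $d$) exhibits them as cyclic permutations of one another, hence as conjugate words in $A_i^*$. To derive $|P| = |Q|$, the plan is to carry out the Fine--Wilf analysis above simultaneously at every projection $\pi_j$, record the exponent tuples $(|\pi_j(p)|/d_j)_j$ and $(|\pi_j(q)|/d_j)_j$, and then invoke \cref{lem:power_gp}: $p$ primitive in $M$ is equivalent to there being no tuple $\vec{r}$ and $t \geq 2$ with $\Pi(p) = \vec{r}^t$, which translates into the condition that no integer $t \geq 2$ makes every $\pi_j(p)$ a $t$-th power (and likewise for $q$). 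Combined with the rigidity that $v$ is a common factor at the trace level (not merely projection-wise)~-- a constraint one can extract via Levi's lemma together with \cref{lem:factor_shape}~-- this forces the exponent data of $p$ and $q$ to align across all cliques, giving $|\pi_j(p)| = |\pi_j(q)|$ for every $j$.

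The main technical obstacle will be this last step: lifting the one-dimensional periodicity information Fine--Wilf provides in each clique to a global conclusion about the exponent structures of $p$ and $q$ in $M$. The decisive tools will be \cref{lem:power_gp}, for turning primitivity into a statement on projection exponents, and the trace-level factorization of $v$, for ensuring that the periodic alignments in different projections fit together compatibly rather than being chosen independently.
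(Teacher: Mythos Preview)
Your Fine--Wilf step is correct and matches the paper's approach (the paper first passes to primitive roots $\tilde p_i,\tilde q_i$ of $\pi_i(p),\pi_i(q)$ and applies Fine--Wilf there, concluding immediately that $\tilde p_i$ and $\tilde q_i$ are conjugate words; your version with $d=\gcd(|P|,|Q|)$ is an equivalent way to package the same periodicity information). The problem is the final step, where you need $|\pi_i(p)|=|\pi_i(q)|$ for every $i$.

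Your plan for this step has a genuine gap. You propose to use ``the rigidity that $v$ is a common factor at the trace level'' via Levi's lemma and \cref{lem:factor_shape}, but neither of those tools gives you control over the \emph{relative} lengths $|\pi_i(p)|$ versus $|\pi_i(q)|$; they only describe how a single trace factors through a power. More tellingly, you never use the hypothesis that $p$ is \emph{connected}, and that hypothesis is exactly what makes the step go through. The paper's argument is as follows. Write $\pi_i(p)=\tilde p_i^{\,s_i}$ and $\pi_i(q)=\tilde q_i^{\,r_i}$ with $\tilde p_i,\tilde q_i$ primitive; Fine--Wilf gives that $\tilde p_i$ and $\tilde q_i$ are conjugate words, hence have identical letter counts. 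Now suppose $s_i\neq r_i$ for some $i$, say $\lambda s_i=\mu r_i$ with $\gcd(\lambda,\mu)=1$ and $\mu>1$. Whenever two cliques $A_i,A_j$ share a vertex $\cLelA\in\alphabet(p)$, the identities
\[
s_i|\tilde p_i|_{\cLelA}=|p|_{\cLelA}=s_j|\tilde p_j|_{\cLelA},\qquad r_i|\tilde q_i|_{\cLelA}=|q|_{\cLelA}=r_j|\tilde q_j|_{\cLelA},
\]
together with $|\tilde p_\ell|_{\cLelA}=|\tilde q_\ell|_{\cLelA}$, force $\lambda s_j=\mu r_j$ as well. Connectedness of $p$ (hence of $\alphabet(p)=\alphabet(v)$ in the dependence graph) lets you propagate this along a path of overlapping cliques to \emph{all} relevant $j$. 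Then every $s_j$ is divisible by $\mu>1$, so by \cref{lem:power_gp} $p$ is a $\mu$-th power in $M$, contradicting primitivity. This connectedness-plus-letter-count propagation is the missing idea; Levi's lemma and \cref{lem:factor_shape} are not substitutes for it.
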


  \begin{proof}
  Note that $\alphabet(p) = \alphabet(v) = \alphabet(q)$.
    We define the set
    \begin{equation*}
         J_v = \{ i \mid \alphabet(A_i) \cap \alphabet(v) \neq \emptyset\} \text{.}
        %J_v = \{ i \mid \alphabet(A_i) \subseteq \alphabet(v) \} \text{.}
    \end{equation*}
    Note that $\pi_i(p) = \pi_i(q) = 1$ for all $i \in [1,k] \setminus J_v$.
    It therefore suffices to show that $\pi_i(p)$ and $\pi_i(q)$ are conjugate
    for all $i \in J_v$.

    For each $i \in J_v$ we write $\pi_i(p) = \tilde{p}_i^{s_i}$ and $\pi_i(q) = \tilde{q}_i^{r_i}$ where $\tilde{p}_i$ and $\tilde{q}_i \in A_i^*$ are primitive.
    As $v$ is a common factor of $p^x$ and $q^y$, its projection $\pi_i(v)$ is a common factor of $\pi_i(p^x) = \tilde{p}_i^{{}s_i x}$ and $\pi_i(q^y) = \tilde{q}_i^{{}r_i y}$.
    Thus, $\pi_i(v)$ has periods $|\tilde{p}_i|$ and $|\tilde{q}_i|$.
    Since $p^{2}$ is a factor of $v$, $\pi_i(p)^{2}$ is a factor of $\pi_i(v)$.
    This yields the lower bound $2|\tilde{p}_i|$, and by symmetry $2|\tilde{q}_i|$, on the length of $\pi_i(v)$.
    Combining those, we obtain
    \begin{equation*}
        |\pi_i(v)| \ge \max\{2|\tilde{p}_i|, 2|\tilde{q}_i|\} \ge |\tilde{p}_i| + |\tilde{q}_i| \ge |\tilde{p}_i| + |\tilde{q}_i| - 1\text{.}
    \end{equation*}
    By the theorem of Fine and Wilf~\cite{fine1965uniqueness}, $\gcd(|\tilde{p}_i|,|\tilde{q}_i|)$ is a  period of $\pi_i(v)$. As $\tilde{p}_i$ and  $\tilde{q}_i$ are primitive, it follows that $|\tilde{p}_i| = |\tilde{q}_i|$.
    As $p$ is a factor of $v$, in particular, $\tilde{p}_i$ is a factor of $\pi_i(v)$ and, thus, also of $\pi_i(q^y) = \tilde{q}_i^{r_i \cdot y}$.
    Hence, $\tilde{p}_i$ and $\tilde{q}_i$ are conjugate words for all $i \in J_v$.

    \newcommand{\multA}{\lambda}
    \newcommand{\multB}{\mu}

In order to show that $\pi_i(p)$ and $\pi_i(q)$ are conjugate for all $i \in J_v$, it
suffices to show that $s_i = r_i$ for all $i \in J_v$.
    Assume for a contradiction that for some $i \in J_v$ we have $s_i \neq r_i$.
    Then, there are $\multA, \multB \in \mathbb{N}\setminus\{0\}$ such that $\multA s_i = \multB r_i$.
    \Wlog let $\multB > 1$ and $\gcd\{\multA, \multB\} = 1$.
    Now $\multB$ divides $s_i$.
    Let
    \[ J = \{ j \in J_v \mid \multA s_j = \multB r_j \}.\]
 %   \begin{claim}
\medskip\noindent{\em Claim}: $J = J_v$.
%    \end{claim}
 %   \medskip\noindent
%{\em Proof of Claim1}:
\begin{proof}[Proof of the Claim:]

    Clearly $i \in J$. We want to show that $J = J_v$.
    This is the case if $\alphabet(v) = \{\cLelA\}$ for some
    isolated $\cLelA$ (then, also $J_v$ is a singleton).
    Otherwise, for all $i \in J_v$ the set $A_i$ is of the form
    $A_i = \Gamma_\cLelA \cup \Gamma_\cLelB$ with $(\cLelA,\cLelB) \in D$ and
    $\cLelA \neq \cLelB$.

    Assume now that $J \subsetneq J_v$ and
    let $j \in J_v \setminus J$.
    Let $\cLelA \in \alphabet(v) \cap \alphabet(A_i)$ and
    $\cLelB \in \alphabet(v) \cap \alphabet(A_j)$ ($\cLelA = \cLelB$ is possible).
    Since $p$ (and hence $v$) is connected, there must exist a (possibly empty) simple path
    $\cLelA = \cLelA_0, \cLelA_1, \ldots, \cLelA_{n-1}, \cLelA_n = \cLelB$
    in $(\alphabet(v), D)$. Each edge $(\cLelC,\cLelD) \in D$ along this
    path corresponds to an element $\Gamma_{\cLelC} \cup \Gamma_{\cLelD} = A_\ell$
    with $\ell \in J_v$ and $\cLelC, \cLelD \in \alphabet(v)$.
    Therefore, there are $i' \in J$ and $j' \in J_v \setminus J$
    with $\alphabet(A_{i'} \cap A_{j'}) = \{\cLelA\}$ for some $\cLelA \in \alphabet(v) = \alphabet(p) = \alphabet(q)$.

    For the further consideration, let us rename $i'$ (resp., $j'$) into $i$ (resp., $j$).
    We have $s_j |\tilde{p}_j|_\cLelA = |p|_\cLelA = s_i |\tilde{p}_i|_\cLelA$ (recall that $|p|_\cLelA = |p|_{\Gamma_\cLelA}$).
    Similarly, we have $r_j |\tilde{q}_j|_\cLelA = |q|_\cLelA = r_i|\tilde{q}_i|_\cLelA$, which is equivalent to $r_j |\tilde{p}_j|_\cLelA = r_i |\tilde{p}_i|_\cLelA$ (as $\tilde{p}_\ell$ and $\tilde{q}_\ell$ are conjugate for all $\ell \in J_v$).
    Since $i \in J$, we obtain
    \[\multA s_j |\tilde{p}_j|_\cLelA = \multA s_i |\tilde{p}_i|_\cLelA = \multB r_i|\tilde{p}_i|_\cLelA = \multB r_j |\tilde{p}_j|_\cLelA.\]
    Since  $|\tilde{p}_j|_\cLelA \neq 0$, we conclude $\multA s_j = \multB r_j$, i.e.,
    $j \in J$, which is a contradiction. Therefore, we get $J = J_v$, which concludes the proof of the claim.
\end{proof}

    Thus, every $s_i$ (for $i \in J_v$) is divisible by $\multB > 1$. By \cref{lem:power_gp} we can write $p =_M u^{\multB}$ for some trace $u$,
    contradicting $p$ being primitive. This concludes the proof of  the lemma.
  \end{proof}

  The proof idea for \Cref{th:cancel_length_factor_gp} is as follows:
  We use the length bound from \cref{lem:factor_shape} in order to show that the requirements of \cref{lem:projections_conjugate_gp} are satisfied. After applying that lemma, we show that $p$ and $q$ are conjugate using \cref{lem:projection_eq_gp}.  Then we conclude from the definition of $\Omega$ that $p=q$.

  \begin{proof}
    [Proof of \Cref{th:cancel_length_factor_gp}]
    We have $u =_G v^{-1}$ and hence $u =_M v^{-1}$ (since, $u$ and $v^{-1}$ are reduced). Thus,
    $v^{-1}$ is a factor of ${p}^x$ and therefore, $v$ is a factor of ${p}^{-x}$.
    Let $\Omega^\pm = \Omega \cup \Omega^{-1}$ be the extension of $\Omega$ that includes the inverse of each element.
    Let $\hat{x} = |{x}|$ and $ \hat{y} = |{y}|$.
    Then there are $\hat{p}, \hat{q} \in \Omega^\pm$ such that $\hat{p} \in \{{p}, {p}^{-1}\}$, $\hat{q} \in \{{q}, {q}^{-1}\}$ and $v$ is a common factor of $\hat{p}^{\hat{x}}$ and $\hat{q}^{\hat{y}}$.
    As $|v| > 2\sigma(|p| + |q|) \ge 2\sigma|p|$, by \cref{lem:factor_shape}, $v$ can be written as $v = u_1\cdots u_t \hat{p}^z v_s \cdots v_1$, where $z \ge 2$.
    Hence, $\hat{p}^2$ is a factor of $v$.
    By symmetry $\hat{q}^2$ is a factor of $v$.

    By \cref{lem:projections_conjugate_gp}, for all $i$ the projections $\pi_i(\hat{p})$ and $\pi_i(\hat{q})$ are conjugate words. In particular, for each $\cLelA\in\cL$ we have
    $\abs{\hat{p}}_\cLelA = \abs{\hat{q}}_\cLelA$. Thus, as $\hat{q}$ is a factor of $\hat{p}^{\hat{x}}$, it follows from \cref{lem:lettercount_conjugacy}  that  $\hat{p}$ is conjugate in $M$ to $\hat{q}$. Since $p,q \in \Omega$, this finally implies $p=q$; see
    \cref{remark-Omega}.
  \end{proof}

  \subsection{Main proofs for the power word problem in graph products}
  \label{ch:the-power-word-problem-in-graph-products}

  In this section we show our main results for graph products (according to the conference version \cite{StoberW22}): In order to solve the power word problem,
  we follow the outline of~\cite{LohreyW19} (which is for free groups).
  In particular, our proof also consists of three major steps:
  \begin{itemize}
    \item In a preprocessing step we replace all powers with powers of elements of $\Omega$ (\cref{sec:gp-preprocessing}).
    \item We define a symbolic rewriting system which we use to prove correctness (\cref{sec:gp-symbolic-rewriting-system}).
    \item We define the shortened word, replacing each exponent with a smaller one, bounded by a polynomial in the input (\cref{subsec:gp-the-shortened-word}).
  \end{itemize}
Finally, in \cref{sec:pwp_wrap_up}, we combine these steps for the solution of the power word problem.

 The main difference to \cite{LohreyW19} is that here we rely on \cref{th:cancel_length_factor_gp} instead of~\cite[Lemma 11]{LohreyW19} (see \cref{lem:short_cancellation}), an easy fact about words. This is because the combinatorics of traces/graph products is much more involved than of words/free groups.
 Furthermore, for free groups we did not have to bother with elements of order two, which led to the mistake in \cite{StoberW22,StoberW22arxivOld} (see \cref{rem:mistake}).
 Another major difference to the case of free groups is that we need the results for the simple power word problem considered in \cref{sec:simple_pwp}.
 Apart from that, all steps are the same (with some minor technical differences).

  \subsubsection{Preprocessing}
  \label{sec:gp-preprocessing}

  Let $G = \GP(\cL, I, \left(G_\cLelA\right)_{\cLelA \in \cL})$ be a graph product of f.g.~groups. As usual, $\sigma = |\cL|$.
  We define the alphabet $\tilde{\Gamma} = \Gamma \times \mathbb{Z}$, where $(v, z)$ represents the power $v^z$.
  Note that $\tilde{\Gamma}$ is the alphabet of the simple power word problem in $G$.
  During preprocessing, the input power word is transformed into the form
  \begin{equation} \label{eq-w-preproc}
       w = u_0 p_1^{x_1} u_1 \cdots p_n^{x_n} u_n, \qquad \text{ where $p_i \in \Omega$ and $u_i \in \tilde{\Gamma}^*$ for all $i$.}
  \end{equation}
  We denote the uniform word problem for graph products with base groups in $\mathcal{C}$ by $\UWP(\GP(\cC))$.
  For some further thoughts on how to encode the input, see \cref{sec:input-encoding-spwp}.
  The preprocessing consists of five steps:
  \begin{description}
    \item[Step 1: Cyclically reducing powers.]
    Cyclically reducing every $p_i$ can be done using the procedure from
    \cite[Lemma~7.3.4]{kausch2017parallel}. We also need to compute a trace $y_i$ such that
    $y_i^{-1} p_i y_i$ is cyclically reduced. It follows from the proof of
     \cite[Lemmata~7.3.2 and 7.3.3]{kausch2017parallel} that such $y_i$ can be obtained
    as a prefix of $p_i$. Let us quickly
     repeat the argument: assume that $p_i$ is already reduced.
    First one computes the longest prefix $t_i$ of $p_i$ such that
    $t_i^{-1}$ is also a suffix of $p_i$. Thus we can write $p_i =_M t_i p'_i t_i^{-1}$. The trace $p'_i$ is not necessarily cyclically reduced.
    But there are elements
    $a_1, \ldots, a_k, b_1, \ldots, b_k \in \Gamma$ such that
    $\alphabet(a_i) = \alphabet(b_i)$ for all $i$ and $(a_i,a_j) \in I$ for $i \neq j$
    (in particular, $k \leq \sigma$) such that $p'_i = a_1 \cdots a_n \tilde{p}_i b_1 \cdots b_n$
    and $\tilde{p}_i [a_1 b_1] \cdots [a_k b_k]$ is cyclically reduced.
    Let us define the prefix $y_i = t_i a_1 \cdots a_k$ of $p_i$.  Then we have
    $\tilde{p}_i =_G y_i^{-1} p_i y_i$ and $|y_i| \leq |p_i|$.
    We then replace the power $p_i^{x_i}$ with $y_i^{-1} \tilde{p}_i^{x_i} y_i$; moreover, $y_i^{-1}$ and $y_i$ can be merged with $u_{i-1}$ and $u_i$, respectively.
    Thus we can assume that for the next step the
    input again has the form $w = u_0 p_1^{x_1} u_1 \cdots p_n^{x_n} u_n$, but now all $p_i$ are cyclically reduced.

\smallskip
    \item[Step 2: Replacing powers with powers of connected elements.]
    We compute the connected components of $p_i$. More precisely, we compute $p_{i,1}$, \dots, $p_{i,k_i}$ such that each $p_{i, j}$ is connected, $p_i =_G p_{i,1}\cdots p_{i,k_i}$,  and $(p_{i,j}, p_{i,\ell}) \in I$ for $j \neq \ell$.
    Observe that $k_i \le |\cL|$.
    We replace the power $p_i^{x_i}$ with $p_{i,1}^{x_i}\cdots p_{i,k_i}^{x_i}$.

\smallskip
    \item[Step 3: Removing powers of a single letter.]
    We use the alphabet $\tilde{\Gamma} = \Gamma \times \mathbb{Z}$, where $(v, z)$ represents the power $v^z$.
    We replace each power $p_i^{x_i}$ where $p_i \in \Gamma_\cLelA$ for some $\cLelA \in \cL$ with the corresponding letter $p_i^{x_i} \in \tilde{\Gamma}$.
    Note that there is no real work to do in this step.
    What happens is that powers of a single letter will be ignored (\ie treated as if they were part of the $u_i$ from \eqref{eq-w-preproc}) in the remaining preprocessing steps and when computing the shortened word.
    As a consequence, in the remaining preprocessing steps and during the computation of the shortened word we may assume that we only have powers of composite words.
    At the end, powers of a single letter will be the only powers remaining in the shortened word, and therefore they are the reason for reducing to the simple power word problem.
    For the next step we still assume that the input has the shape $w = u_0 p_1^{x_1} u_1 \cdots p_n^{x_n} u_n$, however, from here on $u_i \in \tilde{\Gamma}^*$.

\smallskip
    \item[Step 4: Replace each letter with a normal form specific to the input.]
		Whereas the previous steps work on the level of the trace monoid, this step computes a normal form for the elements of $\Gamma$ itself.
    For each $i$ we write $p_i = a_{i, 1}\cdots a_{i, k_i}$, where $a_{i, j} \in {\Gamma}$.
		Recall that elements of $\Gamma$ are given as words over $\Sigma$ \ie{} the generators of the respective base groups.
    Let \[N = [a_{1, 1},a_{1,1}^{-1}, \ldots, a_{1, k_1},a_{1,k_1}^{-1},
    \ldots, a_{n,1},a_{n,1}^{-1}, \ldots, a_{n, k_n},a_{n,k_n}^{-1}]\] be the list of letters of $\Gamma$ (and their inverses) occurring in some power.
    For convenience, we write $N = [b_1, \dots, b_{m}]$, where $m = |N|$.
    We replace each $p_i$ with $\tilde{p}_i = \tilde{a}_{i, 1}\cdots \tilde{a}_{i, k_i}$, where $\tilde{a}_{i, j}$ is the first element in $N$ equivalent to $a_{i, j}$.
    Note that we need to solve the word problem in the base groups $G_\cLelA$ to compute this.
    After that transformation, any two $\tilde{a}_{i, j}$ and $\tilde{a}_{\ell, m}$ representing the same element of $\Gamma$ are equal as words over $\Sigma$ (and so bit-wise equal).
    Thus, the letters are in a normal form.
    Be aware that this normal form is dependent on the input of the power word problem in $G$, but that is not an issue for our application.
    Again, we assume the input for the next step to be $w = u_0 p_1^{x_1} u_1 \cdots p_n^{x_n} u_n$.

\smallskip
    \item[Step 5: Making each $p_i$ a primitive cyclic normal form.]
  %%%%
    The following is done for each $i \in [1, n]$.  Let us write $p^x$ for $p_i^{x_i}$.
    We apply the algorithm presented in the proof of \cref{th:cyclic_nf_graph_product} and compute
    a cyclic normal form $q$ that is conjugate to $p$ in $M$.
    We have $y p =_M q y$ for some $y$ with
    $|y| < \sigma \cdot |p|$. We replace $p^{x}$ with
    $y^{-1} q^{x} y$ and merge $y^{-1}$ with $u_{i-1}$ and $y$ with $u_i$.
    Note that also $q$ must be connected, composite and cyclically reduced
    as a trace.

    Observe that any cyclic normal form $u$ computed by the algorithm from the proof of \cref{th:cyclic_nf_graph_product} starts with a letter $d$ such that $u$ does not contain any letter $c$ with $d <_{\cL} c$.
		If such a cyclic normal form is not primitive in the trace monoid $M$, i.e.,
  $u =_M w^k$ with $k > 1$, then, by \cref{cor:cyclic_normal_form_primitive}, $u = \nf(w)^k$ (as words) and $\nf(w)$ is a cyclic normal form.
	Therefore, we compute a primitive word $r\in \Gamma^*$ such that $q = r^k$ for some $k \geq 1$ and replace $q^x$ by $r^{kx}$ (clearly, $q=r$ if $q$ is already
 primitive).
 Also $r$ must be connected, composite and cyclically reduced
    as a trace. Moreover, $r$ is a cyclic normal form as well, since each cyclic permutation of $r$ is a factor of $q = r^k$ if $k \geq 2$ and hence must be a
    length-lexicographic normal form.
    Again, we write the resulting power word as $u_0 p_1^{x_1} u_1 \cdots p_n^{x_n} u_n$ for the next step.

\smallskip
    \item[Step 6: Replace each power with a power of an element in $\Omega$.]
    Let $\Omega$ be as in \cref{def:omega}.
		The previous steps have already taken care of most properties of $\Omega$.
		In addition, Step 4 ensured that individual letters are in a normal form.
    The only requirement not yet fulfilled is that every $p_i$ must be
    minimal \wrt $\le_\cL$ among its cyclic permutations and the cyclic permutations of a cyclic normal form conjugate (in $M$) to $p_i^{-1}$.

    Using \cref{th:cyclic_nf_graph_product},
    we compute a cyclic normal form $p'_i$ that is conjugate (in $M$) to $p_i^{-1}$.
    It must be primitive too: if $p'_i =_M s^\ell$ for some $\ell \geq 1$ and $s \in M$, then $(s^{-1})^\ell$ is conjugate in $M$ to
    $p_i$, which implies by \cref{lemm:conjugacy_primitive} that $p_i =_M r^\ell$ for some $r\in M$. As $p$ is primitive, we have $\ell = 1$.
    Hence, $p'_i$ is primitive.

    Finally, we consider all cyclic permutations of $p_i$ and $p'_i$ and take
    the lexicographically smallest one; call it $\hat{p}_i$.
    Moreover, let $\iota \in \{-1,1\}$ be such that $\iota = 1$ if $\tilde{p}_i$ is conjugate to $p_i$ and $\iota = -1$ if $\tilde{p}_i$ is conjugate to $p_i^{-1}$.
    Then we can replace
    the power $p_i^{x_i}$ by $t_i^{-1} \hat{p}_i^{\iota x_i} t_i$ for an appropriate conjugator $t_i$ of length at most $|p_i|$ (we can choose $t_i$ as a prefix of $\hat{p_i}^{\iota\sgn(x_i)}$). Finally, $t_i^{-1}$ and $t_i$ can be merged
    with $u_{i-1}$ and $u_i$, respectively.
  \end{description}

  \begin{remark}\label{rem:mistake_preprocessing}
        Notice that it might happen that $p_i$ is conjugate to $p_i^{-1}$. In this case the outcome of Step 6 is not uniquely defined: $p_i^{x_i}$ could be either replaced by $\tilde{p}_i^{x_i}$ or $\tilde{p}_i^{-x_i}$ (plus some appropriate conjugators). For the preprocessing itself this ambiguity is not a problem; however, it prevents \cref{lem:gp-rewrite_bounds_exponents} below from being true. Therefore, in the later steps of our proof, we will require that  $a\neq a^{-1}$ for all $a \in \Gamma$, which, by \cref{lemma-p-conjugate-p-1}, implies that $p_i$ cannot be conjugate to $p_i^{-1}$.
  \end{remark}

  \begin{lemma}
    \label{lem:gp-preprocessing}
    The preprocessing can be reduced to the word problem; more precisely:
    \begin{itemize}
      \item Let $G = \GP(\cL, I, \left(G_\cLelA\right)_{\cLelA \in \cL})$ be a fixed graph product of f.g.~groups.
      Then computing the preprocessing is in $\uAC^0(\WP(G), \WP(F_2))$.
      \item Let \(\mathcal{C}\) be a non-trivial class of f.g.~groups.
      Given $(\cL,I)$, $G_\cLelA\in \mathcal{C}$ for $\cLelA \in \cL$ and an element $w$ of the graph product
      $G = \GP(\cL, I, \left(G_\cLelA\right)_{\cLelA \in \cL})$, the preprocessing can be done in $\uAC^0[\NL](\UWP(\GP(\cC))$.
    \end{itemize}
  \end{lemma}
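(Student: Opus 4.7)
The plan is to verify each of the six preprocessing steps individually and check that each fits into the claimed complexity class; the overall result then follows by composing the $\uACz(\cdot)$ (resp.\ $\uACz[\NL](\cdot)$) bounds. Step 3 is a pure relabeling and is trivially in $\uACz$. Step 1 amounts to finding, for each $p_i$, the longest prefix $t_i$ whose inverse is a suffix of $p_i$, followed by identifying bounded ``cyclic peelings'' of the form $a_1 \cdots a_k \tilde{p}_i b_1 \cdots b_k$. Each such decision is a constant number of word-problem queries on factors of $p_i$, and the selection of the longest admissible $t_i$ is an $\uACz$ post-processing; the whole step therefore fits into $\uACz(\WP(G))$ (resp.\ $\uACz(\UWP(\GP(\cC)))$). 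Step 2 computes connected components of the dependence graph induced by the letters of $p_i$; in the non-uniform case $\sigma$ is constant and this is in $\uACz$, while in the uniform case reachability is in $\NL$, which is absorbed by $\uACz[\NL]$. Step 4 is a bit-wise normalization of letters of $\Gamma$: for each pair of letters appearing in the input we query the word problem in the relevant base group, which is $\uACz$-reducible to $\WP(G)$ in the non-uniform setting and is subsumed by $\UWP(\GP(\cC))$ in the uniform setting.

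Steps 5 and 6 are the substantive ones and rely on \cref{th:cyclic_nf_graph_product}. For Step 5 we invoke that theorem to compute, for each $p_i$, a cyclic normal form $q$ together with a conjugating prefix $y$ of $p_i$; by \cref{th:cyclic_nf_graph_product} this is precisely in $\uTCz \subseteq \uACz(\WP(F_2))$ in the non-uniform case and in $\uACz[\NL]$ in the uniform case. To extract the primitive root, \cref{cor:cyclic_normal_form_primitive} guarantees that if $q$ (in the shape produced by the algorithm of \cref{th:cyclic_nf_graph_product}) equals a proper power, then as a word it literally equals $\nf(r)^k$ for the primitive root $r$. Hence detecting the primitive root reduces to finding the smallest period $k$ dividing $|q|$ with $q = q[1\!:\!k]^{|q|/k}$ as a word, which is an $\uACz$ computation: there are only $O(|q|)$ candidate periods, and each equality check is a coordinate-wise comparison.

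For Step 6 we once more apply \cref{th:cyclic_nf_graph_product} to compute a cyclic normal form $p'_i$ conjugate to $p_i^{-1}$; primitivity of $p'_i$ follows from primitivity of $p_i$ by \cref{lemm:conjugacy_primitive}. Picking the lexicographically smallest word among all cyclic permutations of $p_i$ and $p'_i$ is a pairwise-rotation comparison on polynomially long words and is in $\uACz$. The sign $\iota$ is then determined by recording during this selection which of the two cyclic normal forms the minimum was drawn from, and the conjugator $t_i$ can be read off as an appropriate prefix of $\hat{p}_i^{\iota\sgn(x_i)}$ of length at most $|p_i|$; merging $t_i^{\pm 1}$ into $u_{i-1}, u_i$ is an $\uACz$ rewrite.

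The main obstacle is book-keeping rather than any single deep argument: one has to make sure that (i) all intermediate quantities retain polynomial bit-length (which they do, since each step only rotates, inverts, or concatenates existing factors and inserts conjugators of length $\le |p_i|$), (ii) the batched queries to the base-group or full word problem can be made in parallel so that they collapse into $\uACz$ oracle layers rather than forcing sequential depth, and (iii) the compositions of $\uACz(\cdot)$ and $\uACz[\NL](\cdot)$ stages across the six steps remain inside the target class. Granting these routine verifications, chaining the six bounds produces $\uACz(\WP(G), \WP(F_2))$ in the non-uniform case and $\uACz[\NL](\UWP(\GP(\cC)))$ in the uniform case.
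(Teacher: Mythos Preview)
Your proposal is correct and follows essentially the same approach as the paper: both proofs verify the six preprocessing steps individually, invoke \cref{th:cyclic_nf_graph_product} for Steps~5 and~6, use word-problem oracles for Steps~1 and~4, treat connected-components via reachability (trivial in the non-uniform case since $\sigma$ is constant, $\NL$ in the uniform case) for Step~2, and observe that period detection and lexicographic minimization over cyclic permutations are in $\uACz$. Your additional remarks about polynomial bit-length and the composability of $\uACz(\cdot)$ layers are appropriate book-keeping that the paper leaves implicit.
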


\begin{proof}
    We look at the complexity of the individual steps of the preprocessing.
    For this proof we split step 5 into two parts: a) computing a cyclic normal form and b) making it primitive.
    \begin{center}
        \begin{tabular}{lcc}
            \toprule
            Step & non-uniform & uniform \\
            \midrule
            1. making $p_i$ cyclically reduced & $\uACz(\WP(G))$ & $\uACz(\UWP(\GP(\cC)))$\\
            2. making $p_i$ connected & $\uACz$ & $\uACz[\NL]$ \\
            3. powers of single letters & $\uACz$ & $\uACz$ \\
            4. normal form of letters & $\uACz(\{\WP(G_{\cLelA}) \mid \cLelA \in \cL \})$ & $\uACz(\UWP(\cC))$ \\
            5a. making $p_i$ cyclic normal forms & $\uACz(\WP(F_2))$ & $\uACz[\NL]$ \\
            5b. makig $p_i$ primitive & $\uACz$ & $\uACz$ \\
            6. bringing $p_i$ to $\Omega$ & $\uACz(\WP(F_2))$ & $\uACz[\NL]$ \\\bottomrule
        \end{tabular}
    \end{center}

\begin{description}
    \item[Step 1.] By~\cite[Lemma 7.3.4]{kausch2017parallel}, the cyclically reduced conjugate trace for a $p_i$ can be computed in $\uAC^0$ with oracle gates for the word problem in $G$ in the non-uniform case and in $\uAC^0$ with oracle gates for $\UWP(\GP(\cC))$ (the uniform word problem for graph products with base groups in $\mathcal{C}$) in the uniform case.
    Also the conjugating element (called $y_i$ in Step 1 above) can be computed
    within the same bound.
%    \markus{Letzteres steht eigentlich nicht in \cite{kausch2017parallel}, sollte aber passen.}

    \item[Step 2.] To compute the connected components of a power $p_i^{x_i}$ ($i \in [1,n]$), let us define $\cL_i = \alphabet(p_i)$ and
    the symmetric predicate $\operatorname{con}_i(\cLelA, \cLelB)$ for $\cLelA, \cLelB \in \cL$, which is true if and only if there is a path from $\cLelA$ to $\cLelB$ in
    the dependence graph $(\cL_i, (\cL_i \times \cL_i) \setminus I)$.
    If $\cLelA$ or $\cLelB$ does not belong to $\cL_i$, then $\operatorname{con}_{i}(\cLelA, \cLelB)$ is false.
    Note that if there is a path from $\cLelA$ to $\cLelB$, then there is a path of length at most $\sigma - 1$.
    Moreover, there is a path of length exactly $\sigma - 1$, because the complement of $I$ is reflexive.

    Therefore, in the non-uniform case the following formula is equivalent to $\operatorname{con}_{i}(\cLelA, \cLelB)$:
    \begin{align*}
        \exists \cLelC_1, \dots, \cLelC_{\sigma} \in \cL_i : \cLelC_1 = \cLelA \land \cLelC_{\sigma} = \cLelB \land \bigwedge_{j= 1}^{\sigma - 1} (\cLelC_j, \cLelC_{j+1}) \notin I .
    \end{align*}

    In the uniform case computing the predicate $\operatorname{con}_{i}$ requires solving the undirected path connectivity problem, which is in $\NL$.
    Furthermore, we define the predicate $\operatorname{smallest}_{i}(\cLelA)$, which for $\cLelA \in \cL$ is true if and only if $\cLelA \in \cL_i$ is the smallest member of $\cL$ in the connected component of $(\cL_i, (\cL_i \times \cL_i) \setminus I)$.
    The following formula is equivalent to $\operatorname{smallest}_{i}(\cLelA)$:
    \begin{align*}
        \cLelA \in \cL_i \land \forall \cLelB \in \cL : \operatorname{con}_{i}(\cLelA, \cLelB) \to \cLelA \leq_{\cL} \cLelB .
    \end{align*}
    We define the projection
    $\pi_{i, \cLelA} : \Gamma^* \to \Gamma^*$ for $i \in [1,n]$ and  $\cLelA \in \cL$ by
    \begin{align*}
        \pi_{i, \cLelA}(a) = \left\{ \begin{matrix*}[l]
          a \quad & \quad\text{if } \operatorname{con}_{i}(\cLelA, \alphabet(a)) \land \operatorname{smallest}_{i}(\cLelA)\text{,}\\
    1 & \quad\text{otherwise.}
        \end{matrix*}
        \right.
    \end{align*}
    Observe that $p_i =_G \prod_{\cLelA \in \cL} \pi_{i,\cLelA}(p_i)$
and each $\pi_{i, \cLelA}(p_i)$ is connected.
    \item[Step 3.] Identifying powers of single letters is obviously in $\uACz$.
    This step does not actually replace them, instead they will be ignored during the remaining preprocessing steps and when computing the shortened word.

    \item[Step 4.] Recall that $N = [b_1, \dots, b_{m}]$.
   To compute our normal form, we define the mapping $f : [1,m] \to [1,m]$ by
   \[ f(i) = \min\{ j \in [1,i] \mid \alphabet(b_j) = \alphabet(b_i) \land
      b_i =_{G_{\alphabet(b_i)}} b_j \}
   \]
    and the mapping $\text{nfletter} : \{ b_1, \dots, b_{m} \} \to \{ b_1, \dots, b_{m} \}$ by $\text{nfletter}(b_i) = b_{f(i)}$.
    Then $f$ and hence $\text{nfletter}$ can be computed in $\uACz$ with oracle gates for the word problems in the base groups (resp., the uniform word problem for the class $\mathcal{C}$ in the uniform case).

    \item[Step 5a.] By \Cref{th:cyclic_nf_graph_product}, we can compute a cyclic normal form in $\uAC^0$ with oracle gates for the word problem in $F_2$ in the non-uniform case and in $\uAC^0$ with oracle gates for $\NL$ in the uniform case.

    \item[Step 5b.] Checking for periods in words and replacing each power with a primitive factor is obviously in $\uAC^0$ (recall that we encode every element $\Gamma$ using the same number of bits).

    \item[Step 6.] A cyclic normal $p'_i$ form conjugate to $p_i^{-1}$ can be computed as in step 5a.
   Computing all cyclic permutations of $p_i$ and $p'_i$ and selecting the lexicographically smallest one is obviously in $\uAC^0$.
\end{description}

    From the complexities of the individual steps we conclude that the preprocessing can be done in $\uAC^0$ using oracle gates for the word problem in $G$ and $F_2$ in the non-uniform case and in $\uAC^0$ using oracle gates for $\UWP(\GP(\cC))$ and $\NL$ in the uniform case.
\end{proof}

  \subsubsection{A symbolic rewriting system} \label{sec:gp-symbolic-rewriting-system}
  \newcommand{\bigRS}{R}
  \newcommand{\DelEl}[3]{(#1,#2,#3)}

  We continue with a graph product of f.g.~groups $G = \GP(\cL, I, \left(G_\cLelA\right)_{\cLelA \in \cL})$.
  Recall the trace rewriting system  $\traceRS$ from~\eqref{rewrite-system-T} in \cref{sec:graph_prod_prelims}. As before, let $\sigma = |\cL|$.
From now on, we assume that $a \neq a^{-1}$ for all $a \in \Gamma$. Therefore, by \cref{lemma-p-conjugate-p-1}, if $p \in M \setminus \{1\}$ is reduced, then $p$ and $p^{-1}$ are not conjugate.

  For $x \in \mathbb{Z} \setminus \{0\}$ we denote by $\sgn x \in \{-1,1\}$ the sign of $x$. Moreover, let $\sgn 0 = 0$.
  For every $p \in \Omega$, we define the alphabet
  \begin{alignat*}{2}
    &\Delta_p &&= \left\{\DelEl{\beta}{p^x}{\alpha} \,\middle\vert\, \begin{matrix*}[l]
                                                             x \in \mathbb{N},\\
                                                             \alpha \text{ is a prefix of } p^{\sigma}\text{ and } p \text{ is no prefix of } \alpha,\\
                                                             \beta \text{ is a suffix of } p^{\sigma} \text{ and } p \text{ is no suffix of } \beta
    \end{matrix*}
    \right\}\text{.}
  \end{alignat*}
Note that a triple $\DelEl{\beta}{p^x}{\alpha}$ is viewed as single letter.
For $\DelEl{\beta}{p^x}{\alpha} \in \Delta_p$ we define $\DelEl{\beta}{p^x}{\alpha}^{-1} =
\DelEl{\alpha^{-1}}{p^{-x}}{\beta^{-1}}$. We write $\Delta_p^{-1}$ for the set of
all $\DelEl{\beta}{p^x}{\alpha}^{-1}$ with $\DelEl{\beta}{p^x}{\alpha} \in \Delta_p$.
Finally, we define the alphabet
  $\Delta = \Delta' \cup \Gamma$, where
 $\Delta' = \bigcup_{p \in \Omega} \Delta_p \cup \Delta_p^{-1}$.
 Notice that $\beta p^x\alpha$ is reduced for each $\DelEl{\beta}{p^x}{\alpha} \in \Delta'$, since every $p \in \Omega$ is cyclically reduced, connected and composite.

Note that when we talk about the length of a word $w \in \Delta^*$, every
occurrence of a
triple $\DelEl{\beta}{p^x}{\alpha} \in \Delta'$ in $w$ contributes by one to
the length of $w$. To emphasize this, we write $\abs{w}_\Delta$ for the length of
a word $w \in \Delta^*$. Moreover, $\abs{w}_\Gamma$ (resp., $\abs{w}_{\Delta'}$)
denotes the number of occurrences of symbols from $\Gamma$ (resp., $\Delta'$) in
$w$. In particular, $\abs{w}_\Delta = \abs{w}_\Gamma + \abs{w}_{\Delta'}$ holds.
Note that for $\abs{w}_\Gamma$ the symbols from $\Gamma$ that appear within
a triple $\DelEl{\beta}{p^x}{\alpha} \in \Delta'$ do not contribute.
For instance, if $w = ab (\beta, p^x, \alpha) c (\delta, q^y, \gamma)$ with $a,b,c \in \Gamma$, then $\abs{w}_\Delta = 5$, $\abs{w}_\Gamma = 3$ and
$\abs{w}_{\Delta'} = 2$.

  \begin{lemma}
    \label{lem:gp_alpha_beta_length}
    For $\DelEl{\beta}{p^x}{\alpha} \in \Delta'$ we have $|\alpha| < (\sigma - 1)|p|$ and $|\beta| < (\sigma - 1)|p|$.
  \end{lemma}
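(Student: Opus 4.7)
The plan is to deduce both bounds directly from part (1) of \cref{lem:factor_shape}, using that every $p \in \Omega$ is connected (by \cref{rem:cyclicNFconnected}).

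First I would handle the bound on $|\alpha|$. Since $\alpha$ is a prefix of $p^\sigma$, write $p^\sigma =_M \alpha w$ for some $w \in M$. Apply \cref{lem:factor_shape}(1) to this factorization to obtain $\alpha =_M p^\ell u_1 \cdots u_s$ with $s < \sigma$, $\ell + s + m = \sigma$, and each $u_i$ a non-empty proper prefix of $p$ (in particular $|u_i| \leq |p|-1$). Here is the key step: by definition of $\Delta_p$, $p$ is not a prefix of $\alpha$, and if $\ell \geq 1$ then $\alpha =_M p \cdot (p^{\ell-1} u_1 \cdots u_s)$ would exhibit $p$ as a prefix of $\alpha$; hence $\ell = 0$. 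Consequently
\[
|\alpha| \;=\; \sum_{i=1}^{s} |u_i| \;\leq\; s\,(|p|-1) \;\leq\; (\sigma-1)(|p|-1) \;<\; (\sigma-1)\,|p|.
\]

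The bound on $|\beta|$ is obtained by the same argument, exploiting that \cref{lem:factor_shape}(1) is symmetric in the two components of the factorization. Writing $p^\sigma =_M w' \beta$ and applying \cref{lem:factor_shape}(1) with $u := w'$ and $w := \beta$, we obtain $\beta =_M w_1 \cdots w_s\, p^m$ with $s < \sigma$ and each $w_i$ a non-empty proper suffix of $p$. The assumption that $p$ is not a suffix of $\beta$ forces $m = 0$, and the same estimate yields $|\beta| < (\sigma-1)|p|$.

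There is no real obstacle here: once one recognises that the defining conditions on $\alpha$ and $\beta$ in $\Delta_p$ are exactly what is needed to force $\ell = 0$ (respectively $m = 0$) in the canonical decomposition supplied by \cref{lem:factor_shape}, the length bound is a one-line count. The only minor point to verify is the hypothesis of \cref{lem:factor_shape}, namely connectedness of $p$, which is automatic since $p \in \Omega$ is a cyclic normal form.
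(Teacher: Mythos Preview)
Your proof is correct and follows essentially the same approach as the paper: apply \cref{lem:factor_shape}(1) to the prefix $\alpha$ of $p^\sigma$, use the defining condition that $p$ is not a prefix of $\alpha$ to force the exponent $\ell=0$, and then bound $|\alpha|$ by $s\cdot(|p|-1)<(\sigma-1)|p|$ since $s<\sigma$; the $\beta$ case is symmetric. The paper's proof is nearly verbatim the same, only omitting your explicit remark that connectedness of $p$ follows from $p\in\Omega$.
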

  \begin{proof}
    By \Cref{lem:factor_shape} we can write $\alpha = p^{k}u_1\cdots u_s$ with $s < \sigma$ where each $u_i$ is a proper prefix of $p$.
    As $p$ is not a prefix of $\alpha$, we have $k=0$.
    Regarding the length of $\alpha$, we obtain $|\alpha| = \sum_{i=1}^{s} |u_i| < \sum_{i=1}^{s} |p| = s|p| \le (\sigma - 1)|p|$.
    The bound on the length of $\beta$ follows by symmetry.
  \end{proof}

 \begin{lemma} \label{lemma-min-max-Delta'}
 Let $\DelEl{\beta}{p^x}{\alpha} \in \Delta'$.
 \begin{itemize}
     \item If $a$ is a minimal letter of
 $\beta p^x  \in M(\Gamma,I)$, then there are
 $\beta' \in M(\Gamma,I)$ and $d \in \{0, \sgn x\}$ with
 $\DelEl{\beta'}{p^{x-d}}{\alpha} \in \Delta'$ and
 $\beta p^x =_M a \beta' p^{x-d}$.
 \item If $a$ is a maximal letter of
 $p^x \alpha \in M(\Gamma,I)$, then there are
 $\alpha' \in M(\Gamma,I)$ and $d \in \{0, \sgn x\}$ with
 $\DelEl{\beta}{p^{x-d}}{\alpha'} \in \Delta'$ and
 $p^x \alpha =_M p^{x-d} \alpha' a$.
 \end{itemize}
 \end{lemma}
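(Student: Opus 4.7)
My plan is to prove the first statement by a case analysis on where the minimal letter $a$ sits inside the trace $\beta p^x$; the second statement will then follow by the evident symmetric argument (working with maximal letters and suffixes of $p^\sigma$ in the roles of minimal letters and prefixes). By the $\Delta_p \leftrightarrow \Delta_p^{-1}$ symmetry (an element of $\Delta_p^{-1}$ is just the formal inverse of one in $\Delta_p$), I can assume $x \geq 0$, handling $x < 0$ by replacing $p$ with $p^{-1}$ throughout. The case $x = 0$ is immediate since then $\beta p^x = \beta$ forces $a$ to be minimal in $\beta$, $d = 0$, and the verification of $(\beta', p^0, \alpha) \in \Delta'$ is routine.

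For $x \geq 1$ I distinguish two cases. In \textbf{Case A}, $a$ is a minimal letter of $\beta$: I write $\beta =_M a \beta'$, set $d = 0$, and verify $(\beta', p^x, \alpha) \in \Delta'$. That $\beta'$ is a suffix of $p^\sigma$ is immediate from $p^\sigma =_M \gamma \beta =_M (\gamma a) \beta'$, and $p$ cannot be a suffix of $\beta'$ because otherwise it would already be a suffix of $\beta$, contradicting the defining property of $\beta$. In \textbf{Case B}, $a$ is not a minimal letter of $\beta$ (this includes $\beta = 1$): then the removed $a$ must commute past $\beta$ from the first copy of $p$ in $p^x$, which forces $(a, \beta) \in I$ and $a$ to be a minimal letter of $p$. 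The two cases are in fact mutually exclusive by irreflexivity of $I$, since $(a, \beta) \in I$ prevents $a$ from occurring in $\beta$. Writing $p =_M a p'$, I set $\beta' = \beta p'$ and $d = 1$, obtaining $\beta p^x =_M \beta a p' p^{x-1} =_M a \beta p' p^{x-1}$ as required.

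The crux is verifying $(\beta p', p^{x-1}, \alpha) \in \Delta'$ in Case B. For this I invoke \cref{lem:factor_shape} on the factorization $p^\sigma =_M \gamma \beta$: since $p$ is not a suffix of $\beta$, the lemma produces $\beta =_M w_1 \cdots w_s$ with $s < \sigma$ (the trailing $p^m$ factor vanishes), so $\beta$ is already a suffix of $p^{\sigma-1}$, say $p^{\sigma-1} =_M \gamma'' \beta$. Using $(a, \beta) \in I$, I rearrange $p^\sigma = p^{\sigma-1} \cdot a p' =_M \gamma'' \beta a p' =_M (\gamma'' a) \beta p'$, which shows $\beta p'$ is indeed a suffix of $p^\sigma$. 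For the non-suffix condition, if $\beta p' =_M q p$ for some $q$, then $\beta p =_M \beta a p' =_M a \beta p' =_M a q p$ yields $\beta =_M a q$, making $a$ a minimal letter of $\beta$ and contradicting Case B. Finally, the conditions on $\alpha$ are inherited unchanged from the original triple.

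The main obstacle I anticipate is precisely fitting $\beta p'$ inside $p^\sigma$ rather than only $p^{\sigma+1}$; this is what the sharp bound $s < \sigma$ from \cref{lem:factor_shape} delivers, and it is also the underlying reason why the exponent $\sigma = |\cL|$ in the definition of $\Delta_p$ is the right choice.
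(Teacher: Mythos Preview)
Your proof is correct and follows essentially the same approach as the paper: the same case split on whether $a$ is minimal in $\beta$, and in Case~B the same use of \cref{lem:factor_shape} to lift $\beta$ to a suffix of $p^{\sigma-1}$. The only difference is in showing that $p$ is not a suffix of $\beta p'$: you argue by cancellation (if $\beta p' =_M qp$ then $\beta p =_M aqp$ forces $\beta =_M aq$, contradicting Case~B), whereas the paper uses a letter count ($(a,\beta)\in I$ implies $|\beta p'|_a = |p|_a - 1$, so $p$ cannot be a suffix)---both arguments are equally short and valid.
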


 \begin{proof}
  We only prove the first statement, the second statement can be shown in the same way. Moreover, assume that $x \ge 0$, the case $x \le 0$ is analogous.
  So, assume that $a$ is a minimal letter of the trace
 $\beta p^x$. The case that $a$ is a minimal letter of $\beta$, i.e.,
 $\beta =_M a \beta'$ for some $\beta'$, is clear. Otherwise, $x > 0$ and $a$ must be a minimal letter of $p$ with $(a,\beta) \in I$. Let $\gamma \in M(\Gamma,I)$ such that $p =_M a \gamma$. We obtain
 $\beta p^x =_M a \beta \gamma p^{x-1}$.
 It remains to show that $\DelEl{\beta \gamma}{p^{x-1}}{\alpha} \in \Delta'$, i.e., that $\beta \gamma$ is a suffix of $p^{\sigma}$ and $p$ is not a suffix
 of $\beta \gamma$.
 We have $p^\sigma = u \beta$ for some $u \in M$. Moreover, $p$ is not a suffix
 of $\beta$. The first statement of \cref{lem:factor_shape} implies that $\beta$ is a suffix of $p^{\sigma-1}$.
 Hence, $\beta \gamma$ is a suffix of $p^\sigma$.
 As $(a,\beta) \in I$, we have $\abs{\beta}_a = 0$. Therefore, $\abs{\beta\gamma}_a = \abs{\gamma}_a = \abs{p}_a-1$. Hence, $p$ cannot be a suffix of $\beta\gamma$.
 \end{proof}

  We define the projection $\pi:\Delta^* \to M(\Gamma,I)$ as the unique homomorphism with $\pi(a) = a$ for $a \in \Gamma$ and $\pi(\DelEl{\beta}{p^x}{\alpha}) = \beta p^x \alpha$ for $\DelEl{\beta}{p^x}{\alpha} \in \Delta'$.
    Moreover, for $a=\DelEl{\beta}{p^x}{\alpha}\in \Delta'$ we define $\alphabet(a) = \alphabet(p)\sse\cL$ (and $\alphabet(a)$ as defined before for $a\in \Gamma$).
Now, we can define an independence relation $I_\Delta$ on $\Delta$ by setting $(t,u) \in I_\Delta$ if and only if $\alphabet(t)\times \alphabet(u) \sse I$. Notice that, if $t, u \in \Delta$ are not of the form $\DelEl{\beta}{p^x}{\alpha}$ with $x=0$, then $(t,u) \in I_\Delta$ iff $(\pi(u),\pi(t)) \in I$; for elements of the form $\DelEl{\beta}{p^0}{\alpha}$, however, this is not an equivalence.

Let us consider the corresponding trace monoid $M(\Delta, I_\Delta)$: it contains $M(\Gamma,I)$ and $\pi$ defines a surjective homomorphism $M(\Delta, I_\Delta) \to M(\Gamma,I)$, which we denote by the same letter $\pi$.
We define a  trace rewriting system $\bigRS$ over $M(\Delta, I_\Delta)$ by the rules given in Table~\ref{tab-rules}.

\newcounter{rulecounter}
\newcommand*{\rulelabel}[1]{\refstepcounter{rulecounter}\therulecounter\label{#1}}
\newcommand*{\ruleref}[1]{\eqref{#1}}

\newcounter{condcounter}
\newcommand*{\condlabel}[1]{\refstepcounter{condcounter}\thecondcounter\label{#1}}
\newcommand*{\condref}[1]{\eqref{#1}}

\newcommand{\abExp}{f}

\begin{table}[t]
\begin{center}
\renewcommand{\arraystretch}{1.5}
\normalsize
\begin{tabular}{r|l}
rule (\rulelabel{eq:gp-rewrite_powereq}) & $\DelEl{\beta}{p^x}{\alpha} \,  \DelEl{\delta }{p^y}{\gamma} \to \DelEl{\beta}{ p^{x+y+\abExp}}{\gamma} $ \\ \hline
\multirow{2}{*}{condition (\condlabel{cond:gp-rewrite_powereq})} & \multirow{2}{*}{$\alpha \delta \rewrite{*}{\traceRS} p^\abExp \text{ for some $\abExp \in \mathbb{Z}$ } $} \\
& \\ \thickhline
rule (\rulelabel{eq:gp-rewrite_powereq_ncancel}) & $\DelEl{\beta}{p^x}{\alpha}\, \u \, \DelEl{\delta }{p^y}{\gamma} \to \DelEl{\beta}{p^{x-d}}{\alpha'} \, v\, \u \, \DelEl{\delta'}{p^{y-e} }{\gamma}$ \\ \hline
\multirow{2}{*}{condition (\condlabel{cond:gp-rewrite_powereq_ncancel})} &
$\bigl(\text{if } \exists z \in \mathbb{Z} : ~\alpha \delta \rewrite{*}{\traceRS} p^z \text{, then } (p,\pi(u)) \notin I\bigr),\ \beta p^x\alpha \pi(u) \in \IRR(\traceRS),$  \\
& $\pi(u) \delta p^y \gamma \in \IRR(\traceRS)$, $p^{x} \alpha \pi(u)\delta p^y \rewrite{+}{\traceRS} p^{x-d} \alpha' v\pi(u) \delta' p^{y-e} \in \IRR(\traceRS)$
\\ \thickhline
rule (\rulelabel{eq:gp-rewrite_powerneq}) & $\DelEl{\beta}{p^x}{\alpha} \, u \,\DelEl{\delta }{q^y}{\gamma} \to \DelEl{\beta}{p^{x-d}}{\alpha'} \, v \,u \, \DelEl{\delta'}{q^{y-e}}{\gamma}$ \\ \hline
\multirow{2}{*}{condition (\condlabel{cond:gp-rewrite_powerneq})} & $p \neq q ,\ \beta p^x\alpha \pi(u) \in \IRR(\traceRS),\ \pi(u) \delta p^y \gamma \in \IRR(\traceRS)\text{ and } $ \\
& $p^{x}\, \alpha\, \pi(u)\, \delta \, q^y \rewrite{+}{\traceRS} p^{x-d} \,\alpha'\, v \,\pi(u)\, \delta' \,q^{y-e} \in \IRR(\traceRS)$\\ \thickhline
rule (\rulelabel{eq:gp-rewrite_powertoplain}) &  $\DelEl{\beta}{p^0}{\alpha} \to \beta\alpha$  \\ \hline
\multirow{2}{*}{condition (\condlabel{cond:gp-rewrite_powertoplain})} &  \multirow{2}{*}{none
%$\beta \alpha \rewrite{*}{\traceRS} r \in \IRR(\traceRS) \text{ and } x = 0$
} \\
& \\ \thickhline
rule (\rulelabel{eq:gp-rewrite_plainpower}) &  $a  \DelEl{\beta}{p^x}{\alpha} \to a'  \DelEl{\beta'}{p^{x-d}}{\alpha}$ \\ \hline
\multirow{2}{*}{condition (\condlabel{cond:gp-rewrite_plainpower})} & \multirow{2}{*}{$a \, \beta p^x \rewrite{}{\traceRS} a' \, \beta' p^{x-d} \in \IRR(\traceRS)$} \\
& \\ \thickhline
rule (\rulelabel{eq:gp-rewrite_powerplain}) & $\DelEl{\beta}{p^x}{\alpha}  b \to \DelEl{\beta}{p^{x-d}}{\alpha'}  a'$ \\ \hline
\multirow{2}{*}{condition (\condlabel{cond:gp-rewrite_powerplain})} & \multirow{2}{*}{$ p^x \alpha \, b \rewrite{}{\traceRS} p^{x-d} \alpha' \, a' \in \IRR(\traceRS)$} \\
& \\ \thickhline
rule (\rulelabel{eq:gp-rewrite_plain}) & $a  b \to [ab]  $ \\ \hline
\multirow{2}{*}{condition (\condlabel{cond:gp-rewrite_plain})} & \multirow{2}{*}{$ \alphabet(a) = \alphabet(b)$} \\
& \\ \thickhline
\end{tabular}
\end{center}
\caption{\label{tab-rules}\normalsize The rules for the rewriting system $\bigRS$. All triples
$\DelEl{\beta}{p^x}{\alpha}$, $\DelEl{\delta }{p^y}{\gamma}$, etc.~belong
to $\Delta'$, $a, b \in \Gamma$, $a' \in \Gamma \cup \{ 1 \}$, $d \in \llbracket x \rrbracket$, $e \in \llbracket y \rrbracket$,
$\u \in M(\Delta, I_\Delta)$, and $v \in M(\Gamma, I_\Delta)$ is an $I$-clique with $(\pi(u),v) \in I$.}
\end{table}

\begin{remark} \label{remark-rule-2-3}
To understand rules \eqref{eq:gp-rewrite_powereq_ncancel} and \eqref{eq:gp-rewrite_powerneq}  one has to apply
\cref{lem:reducing-pqr} to the traces $p^x \alpha$, $\pi(u)$, and $q^y \delta$ (where $p=q$ might hold). Note that $p^x \alpha \pi(u), \pi(u)\delta q^y \in \IRR(\traceRS)$. If $p^x \alpha \pi(u) \delta q^y \notin \IRR(\traceRS)$
then \cref{lem:reducing-pqr} tells us there must exist a prefix
$s$ of $p^x\alpha$, a suffix $t$ of $\delta q^y$, and an $I$-clique
$v$ such that
\[ p^x\alpha \, \pi(u) \,\delta q^y \ \rewrite{*}{\traceRS}\
   s \, v \, \pi(u) \, t  \]
and $(\pi(u),v) \in I$. Moreover, by \cref{lemma-min-max-Delta'}
we can write $s$ and $t$ as $p^{x'} \alpha'$ and $\delta' q^{y'}$, respectively,
where $x' \in \llbracket x \rrbracket$, $y' \in \llbracket y \rrbracket$,
 and $\DelEl{\beta}{p^{x'}}{\alpha'},
 \DelEl{\delta'}{q^{y'}}{\gamma} \in \Delta'$.
\end{remark}

\begin{remark} \label{remark-rule-2-3'}
In rules  \eqref{eq:gp-rewrite_powereq_ncancel} and \eqref{eq:gp-rewrite_powerneq}
we  allow $u$ to contain a minimal letter $a$ such that $(p,a) \in I$.
Similarly, $u$ may contain a maximal letter $b$ such that $(b,p) \in I$ in
rule \eqref{eq:gp-rewrite_powereq_ncancel} and $(b, q) \in I$ in rule \eqref{eq:gp-rewrite_powerneq}. On the other hand, we could forbid this situation
and require that $(\beta, p^x,\alpha)$ is the only minimal letter of the left-hand
sides of rules \eqref{eq:gp-rewrite_powereq_ncancel} and \eqref{eq:gp-rewrite_powerneq} (and similarly for the maximal letters).
This would not change the arguments in our further considerations.
\end{remark}
The following facts about $\bigRS$ are crucial.

  \begin{lemma}
    \label{lem:gp-correctenss_T}
    For $u, v \in M(\Delta, I_\Delta)$ we have
    \begin{enumerate}%[label=(\roman*)]
      \item $\pi(\IRR(\bigRS)) \subseteq \IRR(\traceRS)$, \label{gp-correctenss_T-stmt1}
      \item $u \rewrite{*}{\bigRS} v$ implies $\pi(u) \rewrite{*}{\traceRS} \pi(v)$,\label{gp-correctenss_T-stmt2}
      \item $R$ is terminating,\label{gp-correctenss_T-terminating}
      \item $\pi(u) =_G 1$ if and only if $u \rewrite{*}{\bigRS} 1$.\label{gp-correctenss_T-stmt3}
    \end{enumerate}
  \end{lemma}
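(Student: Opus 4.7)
The plan is to prove the four statements in the order (2), (3), (4), (1), treating (1) last since it is the most delicate. Throughout I use the projection $\pi\colon M(\Delta,I_\Delta)\to M(\Gamma,I)$ and exploit the fact that every single letter of $\Delta$ has a $T$-reduced $\pi$-image: plain letters from $\Gamma$ trivially, and for a triple $(\beta,p^x,\alpha)\in\Delta'$ because $p\in\Omega$ is cyclically reduced, connected and composite, so every $p^k$ is reduced, and $\beta p^x\alpha$ is a factor of (respectively the inverse of a factor of) some $p^k$.

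For (2), I verify rule by rule that $\pi(\ell)\rewrite{*}{T}\pi(r)$: for rules \ruleref{eq:gp-rewrite_powereq}--\ruleref{eq:gp-rewrite_powerneq} and \ruleref{eq:gp-rewrite_plainpower}--\ruleref{eq:gp-rewrite_powerplain} the $T$-reduction is explicitly part of the applicability condition; rule \ruleref{eq:gp-rewrite_powertoplain} gives $\pi(\ell)=\pi(r)$ because $p^0=1$; and rule \ruleref{eq:gp-rewrite_plain} is a single $T$-step. Closing under contexts in $M(\Delta,I_\Delta)$ and under transitivity then yields (2). For (3), I introduce the lexicographic measure $\Phi(w)=(\abs{\pi(w)},\abs{w}_{\Delta'})$. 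Every $T$-step strictly shrinks trace length, so the rules whose conditions force at least one $T$-step (namely \ruleref{eq:gp-rewrite_powereq_ncancel}, \ruleref{eq:gp-rewrite_powerneq}, \ruleref{eq:gp-rewrite_plainpower}, \ruleref{eq:gp-rewrite_powerplain}, \ruleref{eq:gp-rewrite_plain}) decrease $\abs{\pi(w)}$. Rule \ruleref{eq:gp-rewrite_powertoplain} preserves $\pi$ but decreases $\abs{w}_{\Delta'}$ by one; rule \ruleref{eq:gp-rewrite_powereq} either causes a genuine $T$-reduction in $\alpha\delta\rewrite{*}{T}p^f$ (so $\abs{\pi(w)}$ drops), or else $\alpha\delta=p^f$ holds already in $M$, in which case $\abs{w}_{\Delta'}$ drops by one. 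Hence $\Phi$ strictly decreases under every $\rewrite{}{R}$-step.

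For (4), the ``if'' direction is immediate from (2) since $\pi(1)=1$. For the ``only if'' direction, assume $\pi(u)=_G 1$. By (3), reduce $u$ to some $v\in\IRR(R)$; by (2), $\pi(u)\rewrite{*}{T}\pi(v)$; by (1), $\pi(v)\in\IRR(T)$. Confluence and termination of $T$ make $\pi(v)$ the unique $T$-normal form of $\pi(u)$, and since $\pi(u)=_G 1$ this forces $\pi(v)=1$ in $M(\Gamma,I)$. Every letter of $v$ has nonempty $\pi$-image: plain letters by definition, and every triple $(\beta,p^x,\alpha)$ in $v$ must satisfy $x\neq 0$ (else rule \ruleref{eq:gp-rewrite_powertoplain} applies, contradicting $v\in\IRR(R)$), so its image contains $p^x\neq 1$. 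Cancellativity of the trace monoid then gives $v=1$.

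The main obstacle is (1). Assume $u\in\IRR(R)$ and, for contradiction, $\pi(u)=XcdY$ in $M(\Gamma,I)$ with $\alphabet(c)=\alphabet(d)$. Identify the letters $\lambda_c,\lambda_d\in\Delta$ of $u$ whose $\pi$-images contain the occurrences of $c$ and $d$. The observation above excludes $\lambda_c=\lambda_d$. For $\lambda_c\neq\lambda_d$, the factorhood of $cd$ in $\pi(u)$ combined with $\alphabet(c)=\alphabet(d)$ forces every letter of $u$ lying strictly between $\lambda_c$ and $\lambda_d$ in a suitable linearization to be $I_\Delta$-independent from at least one of the two (since its $\pi$-letters must all commute with $c$ in $I$); hence after rearranging in $M(\Delta,I_\Delta)$ we may bring $\lambda_c$ and $\lambda_d$ into an adjacency pattern that matches the left-hand side of one of the rules \ruleref{eq:gp-rewrite_powereq}--\ruleref{eq:gp-rewrite_plain}. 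A case distinction on whether $\lambda_c,\lambda_d$ are plain or triples, applied together with \cref{lemma-min-max-Delta'} (to peel off $c$ or $d$ as a maximal/minimal letter of a triple) and \cref{lem:reducing-pqr} (to justify the full reduction $p^x\alpha\pi(u')\delta q^y\rewrite{+}{T}\cdots$ demanded by rules \ruleref{eq:gp-rewrite_powereq_ncancel} and \ruleref{eq:gp-rewrite_powerneq}), then produces an $R$-rule applicable to $u$, contradicting $u\in\IRR(R)$. The delicate point is matching these syntactic patterns to the exact shape and side conditions of the rules, especially when the exponents involved have mixed signs and when the intermediate trace $\pi(u')$ is non-trivial.
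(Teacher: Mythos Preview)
Your overall strategy matches the paper's, and your arguments for statements (2), (3) and (4) are correct. One small inconsistency: you announce the order (2), (3), (4), (1) but then invoke (1) inside the proof of (4); simply reorder to (2), (3), (1), (4).

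There are, however, two genuine gaps in your sketch for (1).

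First, the claim that every letter strictly between $\lambda_c$ and $\lambda_d$ is $I_\Delta$-independent from at least one of them is false when both $\lambda_c,\lambda_d\in\Delta'$. From the factor $cd$ in $\pi(u)$ you only get $\{\alphabet(c)\}\times\alphabet(t_k)\subseteq I$, whereas $I_\Delta$-independence from $\lambda_c=(\beta,p^x,\alpha)$ requires $\alphabet(p)\times\alphabet(t_k)\subseteq I$, and $\alphabet(p)\supsetneq\{\alphabet(c)\}$ in general. Fortunately no rearranging is needed in that case: rules \ruleref{eq:gp-rewrite_powereq_ncancel} and \ruleref{eq:gp-rewrite_powerneq} allow an arbitrary trace $u$ between the two triples. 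Your independence claim \emph{is} correct whenever $\lambda_c\in\Gamma$ or $\lambda_d\in\Gamma$, which is exactly when rules \ruleref{eq:gp-rewrite_plainpower}--\ruleref{eq:gp-rewrite_plain} are relevant. The remaining subcase (both triples, same $p$, and $\alpha\delta\rewrite{*}{T}p^z$, where rule \ruleref{eq:gp-rewrite_powereq} is the target) is handled by observing that either $(p,\pi(u'))\in I$---in which case $(\lambda_c,t_k)\in I_\Delta$ for all intermediate $t_k$ after all, so rearranging works---or $(p,\pi(u'))\notin I$, in which case the first side condition of rule \ruleref{eq:gp-rewrite_powereq_ncancel} is satisfied and that rule applies instead.

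Second, you do not explain how to secure the side conditions $\beta p^x\alpha\,\pi(u')\in\IRR(T)$ and $\pi(u')\,\delta q^y\gamma\in\IRR(T)$ required by rules \ruleref{eq:gp-rewrite_powereq_ncancel} and \ruleref{eq:gp-rewrite_powerneq}. The paper resolves this by choosing, among all pairs $(i,j)$ with $c$ in $\pi(t_i)$ and $d$ in $\pi(t_j)$ witnessing a $T$-redex, one with $j-i$ minimal: minimality guarantees that $\pi(t_i\,u')$ and $\pi(u'\,t_j)$ are already $T$-reduced, since otherwise a strictly closer pair would exist.
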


  \begin{proof}
  We start with statement \ref{gp-correctenss_T-stmt1}.
    Assume we have an element $t \in \IRR(\bigRS)$ with $\pi(t) \notin \IRR(\traceRS)$. So, none of the rules of $\bigRS$ can be applied to
    $t$. For rule \eqref{eq:gp-rewrite_powertoplain} this implies that $x \neq 0$ for every
    $\DelEl{\beta}{p^x}{\alpha} \in \Delta'$ that occurs in $t$.
    Since $\pi(t) \notin \IRR(\traceRS)$,
    there is a factor $ab$ in $\pi(t)$ with $a, b \in \Gamma$ and $\alphabet(a) = \alphabet(b)$. We have
    $ab \rewrite{}{\traceRS} [ab]$ (we may have $[ab]=1$). Let $t = t_1 t_2 \cdots t_m$
    with $t_i \in \Delta$. As every $\pi(t_i) \in M(\Gamma,I)$ is reduced
    with respect to $\traceRS$, $a$ and $b$ must be located in different factors $\pi(t_i)$.
    Assume that $a$ belongs to $\pi(t_i)$ and $b$ belongs to $\pi(t_j)$ for some
    $j > i$ (note that $j< i$ is not possible since $(a,b) \in D$).
    Let $u = t_{i+1}\cdots t_{j-1}$ (which might be empty).
    It follows that $a$ is a maximal letter of  $\pi(t_i)$, $b$ is a minimal letter of $\pi(t_j)$, $(\pi(u),a)\in I$ and $(\pi(u),b)\in I$. Moreover, we can assume that
$i$ and $j$ are chosen such that $j-i$ is minimal, which implies
that $\pi(u t_j), \pi(t_i u) \in \IRR(\traceRS)$.

If $t_i$ and $t_j$ are both in $\Gamma$, then rule \eqref{eq:gp-rewrite_plain} can be applied, which is a contradiction.
If $t_i = \DelEl{\beta}{p^x}{\alpha} \in \Delta'$ and
$t_j = b \in \Gamma$, then $a$ must be a maximal letter of $p^x \alpha$
(since $x \neq 0$). Hence, by \cref{lemma-min-max-Delta'}, rule
\eqref{eq:gp-rewrite_powerplain} can be applied. Similarly,
if $t_i \in \Gamma$ and $t_j \in \Delta'$ then rule
\eqref{eq:gp-rewrite_plainpower} can be applied. In both cases
we obtain a contradiction.
Finally, if $t_i, t_j \in \Delta'$, the situation is a bit more subtle.
%%%%
Assume that $t_i = \DelEl{\beta}{p^x}{\alpha}$ and $t_j = \DelEl{\delta }{q^y}{\gamma}$ for $x \neq 0 \neq y$.
Clearly,  $a$ is a maximal letter of $p^x\alpha$, and $b$ is a
minimal letter of $\delta q^y$. Moreover, $p^x \alpha \pi(u), \pi(u) \delta q^y \in \IRR(\traceRS)$. Our consideration from \cref{remark-rule-2-3}
shows that
\[ p^x\alpha \, \pi(u) \,\delta q^y \rewrite{+}{\traceRS} p^{x'} \alpha'
    \, v \pi(u) \, \delta' q^{y'} \in \IRR(\traceRS) \]
 for some $I$-clique $v$ with $(\pi(u),v) \in I$,
 $x' \in \llbracket x \rrbracket$, $y' \in \llbracket y \rrbracket$,
 and $\alpha', \delta'$ with
 $\DelEl{\beta}{p^{x'}}{\alpha'},
 \DelEl{\delta'}{q^{y'}}{\gamma} \in \Delta'$.
If $p \neq q$ then rule \eqref{eq:gp-rewrite_powerneq} can be applied to $t$.
On the other hand, if $p = q$ then, rule \eqref{eq:gp-rewrite_powereq}
or \eqref{eq:gp-rewrite_powereq_ncancel} can be applied.
    Altogether, it follows that one of the rules of $\bigRS$ can be applied contradicting $t \in \IRR(\bigRS)$.
    Thus $\pi(\IRR(\bigRS)) \subseteq \IRR(\traceRS)$.

    For statement \ref{gp-correctenss_T-stmt2} observe that the rules of $\bigRS$ only allow such reductions that are also allowed in $T$. To see statement \ref{gp-correctenss_T-terminating}, consider a rewriting step
  $u \rewrite{}{\bigRS} v$. This means that we have $u \rewrite{*}{\traceRS} v$ and either $|u|_{\Delta'} > |v|_{\Delta'}$ (for rules  \eqref{eq:gp-rewrite_powereq} and \eqref{eq:gp-rewrite_powertoplain}) or
   $u \rewrite{+}{\traceRS} v$ and $|u|_{\Delta'} = |v|_{\Delta'}$ (for the other rules).  Hence, as $T$ is terminating, so is $R$ (indeed, in \cref{cor:gp-steps_to_termination} below, we give explicit bounds on the number of possible rewriting steps).

    Statement \ref{gp-correctenss_T-stmt3} follows from statements \ref{gp-correctenss_T-stmt1} and \ref{gp-correctenss_T-stmt2}.
    If $u \rewrite{*}{\bigRS} 1$, then $\pi(u) \rewrite{*}{\traceRS} 1$ by statement \ref{gp-correctenss_T-stmt2}, i.e., $\pi(v) =_G 1$.
    On the other hand, if $u \rewrite{*}{\bigRS} 1$ does not hold,
    then, since $\bigRS$ is terminating, there exists
    $v \in \IRR(\bigRS)$ with
$u \rewrite{*}{\bigRS} v\neq 1$. We obtain $\pi(u) \rewrite{*}{\traceRS} \pi(v) \neq 1$ by statement \ref{gp-correctenss_T-stmt2} and $\pi(v) \in \IRR(\traceRS)$ by statement \ref{gp-correctenss_T-stmt1}.
Since $\traceRS$ is terminating and confluent this implies $\pi(u) =_G \pi(v) \neq_G 1$.
  \end{proof}

  \begin{lemma}
    \label{lem:gp-rewrite_bounds_exponents}
    The following length bounds hold:
    \begin{itemize}
       \item Rule~\eqref{eq:gp-rewrite_powereq}: $\abs{\abExp} \le 2\sigma$
      \item Rule~\eqref{eq:gp-rewrite_powereq_ncancel}: $|d| \le 5\sigma$ and $|e| \le 5\sigma$
      \item Rule~\eqref{eq:gp-rewrite_powerneq}: $|d| \le 4\sigma|q|$ and $|e| \le 4\sigma|p|$
      \item Rule~\eqref{eq:gp-rewrite_powertoplain}: $|\beta\alpha| < 2(\sigma - 1)|p|$
      \item Rules~\eqref{eq:gp-rewrite_plainpower} and~\eqref{eq:gp-rewrite_powerplain}: $|d| \le 1$
    \end{itemize}
  \end{lemma}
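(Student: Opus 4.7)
The plan is to prove each bound separately. Rules \eqref{eq:gp-rewrite_powereq}, \eqref{eq:gp-rewrite_powertoplain}, \eqref{eq:gp-rewrite_plainpower}, and \eqref{eq:gp-rewrite_powerplain} are elementary. For rule \eqref{eq:gp-rewrite_powereq}, since $\traceRS$ never increases the length of a trace, $|\abExp|\cdot|p| = |p^{\abExp}| \le |\alpha\delta| < 2(\sigma-1)|p|$ by \cref{lem:gp_alpha_beta_length}, hence $|\abExp| \le 2\sigma$. The bound for rule \eqref{eq:gp-rewrite_powertoplain} follows immediately from the same lemma. A single $\traceRS$-step replaces two adjacent letters $ab$ by one letter $[ab]$, so at most one letter at the boundary of $p^x$ can be affected, yielding $|d|\le 1$ for rules \eqref{eq:gp-rewrite_plainpower} and \eqref{eq:gp-rewrite_powerplain}.

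For rules \eqref{eq:gp-rewrite_powereq_ncancel} and \eqref{eq:gp-rewrite_powerneq} I apply \cref{lem:reducing-pqr} to the derivation $p^x\alpha\cdot\pi(u)\cdot\delta q^y \rewrite{+}{\traceRS} p^{x-d}\alpha' v \pi(u)\delta' q^{y-e}$ (with $q=p$ for rule \eqref{eq:gp-rewrite_powereq_ncancel}); this is legitimate since the rules' conditions guarantee $p^x\alpha\pi(u), \pi(u)\delta q^y \in \IRR(\traceRS)$. It produces factorizations
\[ p^x\alpha =_M p^{x-d}\alpha' \cdot t \cdot u_L, \qquad \delta q^y =_M u_L^{-1} \cdot v' \cdot \delta' q^{y-e}, \]
with $t,v'$ being $I$-cliques of length ${<}\,\sigma$ and $(\pi(u), tu_L) \in I$.

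For rule \eqref{eq:gp-rewrite_powerneq} ($p \neq q$) I combine this with \cref{th:cancel_length_factor_gp}. By the shape of $\Delta'$-letters, $u_L$ is a factor of a suitable power of $p$ and $u_L^{-1}$ is a factor of a suitable power of $q$. Since $u_L\cdot u_L^{-1} =_G 1$ with $|u_L|=|u_L^{-1}|$, \cref{th:cancel_length_factor_gp} forces $|u_L| \le 2\sigma(|p|+|q|)$ (else $p=q$, contradiction). Using $|tu_L| = |d|\cdot|p| + |\alpha|-|\alpha'|$, the bounds $|\alpha|,|\alpha'| < (\sigma-1)|p|$ from \cref{lem:gp_alpha_beta_length}, and $|p|,|q|\ge 2$ (as $p,q$ are composite), a short computation yields $|d| \le 4\sigma|q|$; the bound $|e|\le 4\sigma|p|$ is symmetric. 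For rule \eqref{eq:gp-rewrite_powereq_ncancel} ($p=q$), \cref{th:cancel_length_factor_gp} is vacuous, so I use the rule's side condition. Assuming $|d|>5\sigma$ for contradiction, a per-letter count on $p^x\alpha =_M p^{x-d}\alpha'tu_L$ (bounding $|\alpha|_\cLelA,|\alpha'|_\cLelA \le (\sigma-1)|p|_\cLelA$ for every $\cLelA\in\alphabet(p)$) forces $\alphabet(p)\subseteq\alphabet(tu_L)$; together with $(\pi(u), tu_L)\in I$ this yields $(p,\pi(u))\in I$. Then $\pi(u)$ commutes with every letter of $\alphabet(p)$, so the original reduction restricts to $p^x\alpha\delta p^y \rewrite{+}{\traceRS} p^{x-d}\alpha' v\delta' p^{y-e}$ in $M$; the primitivity and connectedness of $p$, via \cref{lem:p_factor} and the cancellability of $M$, force $\alpha\delta \rewrite{*}{\traceRS} p^z$ for some $z$, contradicting the side condition of the rule.

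The main obstacle is the last step for rule \eqref{eq:gp-rewrite_powereq_ncancel}: turning the reduction $p^x\alpha\delta p^y \rewrite{+}{\traceRS} p^{x-d}\alpha' v\delta' p^{y-e}$ (obtained after commuting $\pi(u)$ away) into the conclusion that $\alpha\delta$ reduces to a pure power of $p$. This requires carefully extracting the $p$-shaped cancellation absorbed at both boundaries, using that $p^{x-d}$ and $p^{y-e}$ remain intact in $\IRR(\traceRS)$ and that \cref{lem:p_factor} rigidly controls how prefixes of powers of a primitive connected trace may commute, so as to produce the power $p^z$ needed for the contradiction.
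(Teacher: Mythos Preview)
Your treatment of rules~\eqref{eq:gp-rewrite_powereq}, \eqref{eq:gp-rewrite_powerneq}, \eqref{eq:gp-rewrite_powertoplain}, \eqref{eq:gp-rewrite_plainpower}, and~\eqref{eq:gp-rewrite_powerplain} is correct and essentially matches the paper. The per-letter count forcing $\alphabet(p)\subseteq\alphabet(tu_L)$ and hence $(p,\pi(u))\in I$ once $|d|$ is large is a nice way to collapse the paper's case $(p,\pi(u))\notin I$ into the main argument.

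The gap is in rule~\eqref{eq:gp-rewrite_powereq_ncancel} after you have established $(p,\pi(u))\in I$. Your plan is to show $\alpha\delta\rewrite{*}{\traceRS}p^z$ via \cref{lem:p_factor}, but this only goes through when $\sgn x\neq\sgn y$. In that case the cancelling suffix $u_L$ of $p^x\alpha$ and the cancelling prefix $u_L^{-1}$ of $\delta p^y$ are both factors of \emph{positive} powers of $p$, and the paper indeed uses \cref{lem:p_factor} to extract $\alpha\delta=_G p^j$. When $\sgn x=\sgn y$, however, $u_L$ is a factor of $p^N$ while $u_L^{-1}$ is a factor of $p^M$ (both with $N,M>0$), so large $|u_L|$ does \emph{not} force $\alpha\delta$ to be a power of $p$; instead it forces $p^{-1}$ to appear as a factor of a large positive power of $p$, and hence (via \cref{lem:lettercount_conjugacy}) $p$ to be conjugate to $p^{-1}$ in $M$. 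The contradiction then comes from \cref{lemma-p-conjugate-p-1}, and this is precisely the point where the hypothesis $a\neq a^{-1}$ for all $a\in\Gamma$ enters.

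Your outline never invokes this hypothesis, nor \cref{lemma-p-conjugate-p-1}, nor any sign distinction. Since \cref{rem:mistake} records that the bound for rule~\eqref{eq:gp-rewrite_powereq_ncancel} is \emph{false} without the no-order-two assumption (one can make $d,e$ arbitrarily large), any argument that does not use it is necessarily incomplete. Concretely: with $p=ab$, $a^2=b^2=1$, $\alpha=1$, $\delta=b$, and $x,y>0$, one gets unbounded cancellation in $p^x\cdot b\cdot p^y$ while $\alpha\delta=b$ is certainly not a power of $p$; so \cref{lem:p_factor} alone cannot deliver the contradiction you are aiming for in the same-sign case.
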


\begin{remark}\label{rem:mistake}
    Note that the proof of the bound for rule~\eqref{eq:gp-rewrite_powereq_ncancel} in \cref{lem:gp-rewrite_bounds_exponents} essentially relies on the assumption $a\neq a^{-1}$ for $a\in \Gamma$. Indeed, without this requirement we can construct examples where $d$ and $e$ in rule~\eqref{eq:gp-rewrite_powereq_ncancel} can be arbitrarily large.
    In the corresponding \cite[Lemma 15]{StoberW22,StoberW22arxivOld}, there is the unfortunate mistake that this condition is not required.
    Notice that the correctness of the whole shortening process described below depends on the bounds provided by \cref{lem:gp-rewrite_bounds_exponents}. Moreover, \cref{lem:gp-rewrite_bounds_exponents} is the only place in our construction for the power word problem in graph products  where we explicitly use the requirement $a\neq a^{-1}$ for $a\in \Gamma$.
\end{remark}

  \begin{proof}[Proof of \cref{lem:gp-rewrite_bounds_exponents}]
  We look at the individual statements:

\medskip\noindent
{\em Rule~\eqref{eq:gp-rewrite_powereq}}: The bound on $\abs{\abExp}$ is trivial
since $\abs{\alpha}, \abs{\beta} \leq \sigma \abs{p}$.

\medskip\noindent
{\em Rule~\eqref{eq:gp-rewrite_powereq_ncancel}}:  Let $\iota = \sgn x$ and $\kappa = \sgn y$. We distinguish two cases.
    First, assume that  $(p, \pi(\u)) \in I$. Then, due to condition \eqref{cond:gp-rewrite_powereq_ncancel},
    $\alpha\delta$ does not reduce with $T$ to a power of $p$.
    Most importantly, we have $\alpha\delta \neq_G 1$.

 We apply \cref{lem:reducing-pqr} (with $q=1$) to the reduced traces $ p^x \alpha$ and $ \delta p^y$.
    Due to the form of rule~\eqref{eq:gp-rewrite_powereq_ncancel} we obtain factorizations
    $ p^x \alpha =_M p^{x-d} \alpha' rs$ and
    $ \delta p^y =_M s^{-1}t \delta' p^{y-d}$
    such that $r$ and $t$ are $I$-cliques with $rt \rewrite{*}{\traceRS} v$ for an $I$-clique
    $v$ with $\alphabet(r) = \alphabet(t) = \alphabet (v)$ and $s$ is a suffix of $p^{x}\alpha$ and $s^{-1}$ is a prefix of $\delta p^{y} $. Moreover, we can assume that $p^\iota$ is not a prefix of $\alpha'$ and $p^\kappa$ is not a suffix of $\delta'$.

Assume that $\abs{s}\geq 3 \sigma \abs{p}$. We will deduce a contradiction.
Since $\abs{\alpha}, \abs{\delta} \leq \sigma \abs{p}$, this implies $\abs{y},\abs{x} \geq 2\sigma$ and (by \cref{lem:factor_shape}) $s$ has a suffix $p^{\iota\sigma} \alpha$.
Since $s^{-1}$ is a prefix of $\delta p^y$, it follows that $p^{\iota\sigma} \alpha$ is a suffix of $  p^{-y} \delta^{-1}$. Hence, there is a trace $q$ with
\begin{align}
    q p^{\iota\sigma} \alpha =_M p^{-y} \delta^{-1}.\label{qpalpha}
\end{align}

First, consider the case $\iota = \kappa$. As $\delta$ is a suffix of $p^{\kappa \sigma}$, there is some $q'$ with $\delta^{-1} q' =_M p^{-\kappa \sigma}$. Hence, by \eqref{qpalpha}, we have $q p^{\iota\sigma} \alpha q' =_M p^{-y-\kappa \sigma} =_M (p^{-\iota})^{|y|+\sigma}$.
As the number of letters from each $\cLelA \in \cL$ is the same in $p$ and $p^{-1}$,  \cref{lem:lettercount_conjugacy} implies that $p$ is conjugate to $p^{-1}$.
But since $p$ and $p^{-1}$ are both reduced this contradicts \cref{lemma-p-conjugate-p-1}. Be aware that here we rely upon the assumption $a \neq a^{-1}$ for all $a \in \Gamma$.

Now consider the case $\iota \neq \kappa$. For simplicity, assume $\iota=1$  and $\kappa = -1$ (the other case works exactly the same way), \ie $y \leq -2\sigma$.
Recall that $\alpha$ is a prefix $p^\sigma$.
With \eqref{qpalpha} we obtain $q p^\sigma \alpha =_M p^{-y} \delta^{-1} =_M p^{\sigma} p^{-y-\sigma} \delta^{-1}$, where $\alpha$ is a prefix of $p^{-y-\sigma}$.
It follows that $\alpha$ must be a suffix of
$p^{-y-\sigma} \delta^{-1}$. Hence, there exists
 $q''$ such that $p^{-y} \delta^{-1} =_M q p^\sigma \alpha =_M p^{\sigma} q''\alpha$ and therefore  $q p^\sigma  =_M
p^{\sigma} q''$. Note that $p^{-y} \delta^{-1}$ is a prefix of some
$p^z$ with $z \in \N$. In particular, there is some
$z \geq \sigma$ such that $p^{\sigma} q''\alpha$ is a prefix of $p^z$.
It follows that
$q''$ is a prefix of some $p^k$ with $k \in \N$.
Since $p \in \Omega$ is connected and primitive as a trace,
\cref{lem:p_factor} implies that $q=q''=p^\ell$ for some $\ell \in \N$.
We obtain
$p^{-y} \delta^{-1} =_M p^{\sigma+\ell} \alpha$ and hence
 $\alpha\delta =_G p^{j}$ for some $j\in \Z$ (indeed, as $p^{-1}$ is not a suffix of $\delta$ and $p$ is not
 a prefix  of $\alpha$, it follows that $\alpha\delta =_G 1$).
 Again, we obtained a contradiction.

Hence, in both cases it follows that $\abs{s} < 3 \sigma \abs{p}$.
   As above, we write $p^d \alpha =_M \alpha' r s$. By \Cref{lem:gp_alpha_beta_length}, we have $|\alpha'| < (\sigma - 1)|p|$. Moreover, as $r$ is an $I$-clique, $\abs{r}\leq \sigma$.
    It follows that
    \[d|p| \le |p^d| + |\alpha| =  |\alpha'| + |r| + |s| \le (\sigma - 1) |p| + \sigma + 3\sigma |p|.\]
    Hence, we obtain $d \le 5\sigma$.
    The bound on $|e|$ follows by symmetry.

    Now assume that $(p, \pi(\u)) \notin I$. Let $\alpha''$ be the suffix of $p^x\alpha$ and $\delta''$ be the prefix of $\delta p^y$  such that $\alpha'' \pi(u) \delta'' \rewrite{*}{\traceRS} v \pi(u)$ for an $I$-clique $v$. By \cref{lem:reducing-pqr}, $\alpha''$ and $\delta''$ must commute with $\pi(u)$.
    Thus, $p^{\iota}$ cannot be a factor of $\alpha''$ and hence, by \cref{lem:factor_shape}, $\alpha''$ must be a suffix of $p^{(\sigma - 1)\iota}\alpha$.
    Thus $|d| \le \sigma - 1 < 5\sigma$.
    The bound on $|e|$ follows by symmetry.

\medskip\noindent
{\em Rule~\eqref{eq:gp-rewrite_powerneq}}:
 By \cref{lem:reducing-pqr} we obtain factorizations
    $ p^x \alpha =_M p^{x-d} \alpha' rs$ and
    $\delta q^y =_M s^{-1}t \delta' p^{y-e}$
    such that $r$ and $t$ are $I$-cliques with $rt \rewrite{*}{\traceRS} v$ for the $I$-clique
    $v$ with $\alphabet(r) = \alphabet(t) = \alphabet (v)$ and $(rs, \pi(u)) \in I$.
    Thus, we can write $p^d\alpha = \alpha'rs$ and $\delta q^e = s^{-1} t \delta'$.
    Moreover, $s$ is a factor of $p^{\sgn(x)\nu}$ and $s^{-1}$ is a factor of $q^{\sgn(y)\nu}$  for some large enough $\nu \in \mathbb{N}$.
    We have $|s| \le 2\sigma(|p| + |q|)$.
    Otherwise, we would have $p = q$ by \Cref{th:cancel_length_factor_gp} contradicting $p \neq q$.
    By \Cref{lem:gp_alpha_beta_length}, $|\alpha'| < (\sigma - 1)|p|$ and $|\delta'| < (\sigma - 1)|q|$.
    It follows that
    \begin{align*}
        |p^d\alpha| &= |\alpha'rs| = |\alpha'| + |r| + |s| \\
        &< 2\sigma(|p| + |q|) + \sigma + (\sigma - 1)|p| \\
        &< 4\sigma(|p| + |q|) \\
        &\le 4\sigma|p| |q|\text{.}
    \end{align*}
    For the last inequality note that $p, q \in \Omega$ are composite.
    We get $|d|\cdot|p| \le |p^d\alpha| \le 4\sigma|p| |q|$, i.e., $|d| < 4\sigma|q|$.
    By symmetry, it follows that $|e| < 4\sigma |p|$.

\medskip\noindent
{\em Rule~\eqref{eq:gp-rewrite_powertoplain}}:
    By \Cref{lem:gp_alpha_beta_length}, $|\alpha| < (\sigma - 1) |p|$ and $|\beta| < (\sigma - 1) |p|$.

\medskip\noindent
{\em Rule~\eqref{eq:gp-rewrite_plainpower}}: There is a single letter prefix $b$ of $\beta p^x$ with $\alphabet(a) = \alphabet(b)$.
    Either $b$ is a prefix of $\beta$ in which case $d = 0$ or, if it is not, then $b$ must be a prefix of $p^{\sgn x}$ in which case $|d| = 1$.
    The same bound on rule~\eqref{eq:gp-rewrite_powerplain} follows by symmetry.
  \end{proof}
For a trace $w = w_1\cdots w_n \in M(\Delta, I_\Delta)$ with $w_i \in \Delta$, we define
  \begin{align*}
      \mu(w) &= \max\left\{|p| \mid w_i = \DelEl{\beta}{p^x}{\alpha} \in \Delta',\, i \in [1,n] \right\} .
  \end{align*}
For convenience we define $\mu(w)=2$ if $w$ does not contain any letter from $\Delta'$. The reason behind this is that $|p| \geq 2$ for all $\DelEl{\beta}{p^x}{\alpha} \in \Delta'$
as $p \in \Omega$ is required to be composite. Thus, in any case we have $\mu(w) \geq 2$.

  \begin{lemma}
    \label{cor:gp-steps_to_termination}
    If $w \rewrite{*}{\bigRS} v$, then $w \rewrite{\le k}{\bigRS} v$ with $k = 10\sigma^2 |w|_\Delta^2\mu(w)$.
  \end{lemma}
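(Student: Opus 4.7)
My plan is a potential-function argument. Define
\[
\Phi(w) \;=\; K_1\,|w|_{\Delta'} + K_2\,|w|_\Gamma + K_3 \sum_i \bigl(|\alpha_i|+|\beta_i|\bigr) + K_4 \sum_i |x_i|,
\]
where the sums range over the $\Delta'$-letters $\DelEl{\beta_i}{p_i^{x_i}}{\alpha_i}$ occurring in $w$, with constants $K_2 = K_3 = 1$, $K_4 = \mu(w)$, and $K_1 = 2\sigma K_4 + 1$. The target bound $k$ will then follow from $\Phi(w) \leq k$ together with $\Delta\Phi \leq -1$ per $R$-step.

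The first step is to check, rule by rule, that $\Phi$ strictly decreases by at least $1$ on every application of an $R$-rule, using the length bounds from \cref{lem:gp-rewrite_bounds_exponents}. The linchpin case is rule \ruleref{eq:gp-rewrite_powereq_ncancel} (and symmetrically \ruleref{eq:gp-rewrite_powerneq}): the condition $p^x\alpha\pi(u)\delta p^y \rewrite{+}{T} p^{x-d}\alpha' v \pi(u) \delta' p^{y-e} \in \IRR(T)$, combined with the fact that each $T$-step strictly shortens the trace length, forces
\[
|\alpha'|+|\delta'|+|v| \;\leq\; |\alpha|+|\delta| + |p|(|d|+|e|) - 1,
\]
so that $\Delta\Phi \leq (K_2-K_3)|v| + (K_3|p|-K_4)(|d|+|e|) - K_3 \leq -1$ by the choice of constants (using $K_2 \leq K_3$ and $K_4 \geq K_3 |p|$). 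Rules \ruleref{eq:gp-rewrite_plainpower} and \ruleref{eq:gp-rewrite_powerplain} use the analogous single-step inequality; rule \ruleref{eq:gp-rewrite_plain} is immediate from $|w|_\Gamma$ dropping by at least $1$; rule \ruleref{eq:gp-rewrite_powertoplain} from $K_1 > K_2 \cdot 2(\sigma-1)\mu$; and rule \ruleref{eq:gp-rewrite_powereq} from $K_1 > 2\sigma K_4$ (needed to absorb the possible increase of $\sum|x_i|$ by $|f| \leq 2\sigma$).

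The second step is to bound $\Phi(w) \leq 10\sigma^2 |w|_\Delta^2 \mu(w)$. Using $|\alpha_i|, |\beta_i| < (\sigma-1)|p_i| \leq \sigma\mu$, the first three summands of $\Phi(w)$ together contribute only $O(\sigma n \mu)$, comfortably inside the target bound.

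The main obstacle will be the fourth summand $K_4 \sum_i |x_i| = \mu \sum_i |x_i|$: since the exponents in $w$ are binary-encoded and may be exponentially large in the bit-size of the input, $\sum_i |x_i|$ is not a priori polynomial in $|w|_\Delta$, so the naive bound on $\Phi(w)$ falls short of $10\sigma^2 n^2 \mu$ for a general $w$. To close the gap I would exploit that the statement asks only for the \emph{existence} of a derivation of length $\leq k$, not that every derivation has that length. Concretely, I would first apply rule \ruleref{eq:gp-rewrite_powereq} greedily to merge common-base powers whenever possible (a single step collapses a $\Delta'$-pair regardless of exponent magnitude, decreasing $|w|_{\Delta'}$ in at most $|w|_\Delta$ such events), and only afterwards invoke the small-step rules to clean up the at-most $|w|_\Delta$ surviving slots. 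An inductive accounting then shows that each surviving $\Delta'$-slot incurs at most $O(\sigma \mu)$ choices for its $(\alpha_i, \beta_i)$-configuration, so the small-step work per slot is $O(\sigma n\mu)$, giving the claimed $O(\sigma^2 n^2 \mu)$ total; formalising this greedy-derivation argument is the delicate part.
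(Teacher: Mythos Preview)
Your potential-function approach has a genuine gap, and the proposed greedy fix does not close it.

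First, the greedy argument does not address the actual statement. The lemma asserts that for a \emph{given} $v$ with $w \rewrite{*}{R} v$ there is a short derivation from $w$ to that same $v$. Reordering the derivation to apply rule~\ruleref{eq:gp-rewrite_powereq} first presupposes that $R$ is confluent (or at least that the reordered derivation still reaches $v$), which is neither stated nor proved. Second, even granting confluence, the fix does not help: after exhaustively applying rule~\ruleref{eq:gp-rewrite_powereq} you may still have $\Delta'$-letters with arbitrarily large exponents (take $w = \DelEl{\beta}{p^N}{\alpha}\,\DelEl{\delta}{q^N}{\gamma}$ with $p\neq q$), so $\Phi$ remains unbounded and your per-step decrement of $1$ gives no polynomial bound. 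The informal remark that ``each surviving $\Delta'$-slot incurs at most $O(\sigma\mu)$ configurations'' is exactly the nontrivial combinatorial fact that needs proof, and it is not a consequence of the potential function.

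The paper's argument is entirely different and never touches the exponents. It bounds, rule by rule, how many times each rule can fire in \emph{any} derivation from $w$: rules~\ruleref{eq:gp-rewrite_powereq} and~\ruleref{eq:gp-rewrite_powertoplain} decrease $|\cdot|_{\Delta'}$; rules~\ruleref{eq:gp-rewrite_plainpower}--\ruleref{eq:gp-rewrite_plain} are controlled by the number of $\Gamma$-letters ever created; and the crucial point is that rules~\ruleref{eq:gp-rewrite_powereq_ncancel} and~\ruleref{eq:gp-rewrite_powerneq} can fire at most once per ``potentially cancelling'' pair of $\Delta'$-positions, because their right-hand side lands in $\pi^{-1}(\IRR(T))$ and no later rule can make that pair reducible again. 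Counting such pairs gives at most $2\sigma|w|_{\Delta'}$ applications. This yields a bound on the length of every derivation, independent of the magnitudes of the $x_i$, which is precisely what your potential function cannot deliver.
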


  \begin{proof}
    We observe the following bounds on the number of applications of the individual rules of the rewriting system (which we will prove below):
    \begin{enumerate}%[label=(\roman*)]
      \item Rules~\eqref{eq:gp-rewrite_powereq}  and~\eqref{eq:gp-rewrite_powertoplain} can be applied at most $|w|_{\Delta'}$ times in total.
      \item Rules~\eqref{eq:gp-rewrite_powereq_ncancel} and~\eqref{eq:gp-rewrite_powerneq} can be applied at most $2\sigma|w|_{\Delta'}$ times.
      \item Rule~\eqref{eq:gp-rewrite_plain} and length-reducing applications of rules~\eqref{eq:gp-rewrite_plainpower} and~\eqref{eq:gp-rewrite_powerplain} can occur at most $|w|_{\Gamma} + 2|w|_{\Delta'}(\sigma^2 + (\sigma-1)\mu(w))$ times.
      \item Length-preserving applications of rules~\eqref{eq:gp-rewrite_plainpower} and~\eqref{eq:gp-rewrite_powerplain} can occur at most $|w|_{\Gamma}|w|_{\Delta'} + 2|w|_{\Delta'}^2(\sigma^2 + (\sigma-1)\mu(w))$ times.
    \end{enumerate}
    Adding up those bounds we obtain a bound of
    \begin{align*}
        (2 \sigma + 1)|w|_{\Delta'} &+ (|w|_{\Delta'} + 1) \bigl(|w|_{\Gamma} + 2|w|_{\Delta'}(\sigma^2 + (\sigma-1)\mu(w))\bigr)\\
        &\le 10\sigma^2 |w|_\Delta^2\mu(w).
    \end{align*}
    Thus, let us prove the individual bounds 1--4.

    \begin{enumerate}%[label=(\roman*)]
      \item For an application $w \rewrite{}{\bigRS} \tilde w$ of rule~\eqref{eq:gp-rewrite_powereq}  or~\eqref{eq:gp-rewrite_powertoplain} we have $|w|_{\Delta'} > |\tilde w|_{\Delta'}$. Moreover, no rule increases $|w|_{\Delta'}$.
      \item For bounding the number of applications of  rules~\eqref{eq:gp-rewrite_powereq_ncancel} and~\eqref{eq:gp-rewrite_powerneq},
      write $w=_Mw_1 \cdots w_n$ with $w_i \in \Delta$. We say that a pair
      $(i,j)$ with $1 \leq i < j \leq n$ \emph{potentially cancels}
      if $w_i,w_j \in \Delta'$ and there is some $\cLelA \in \cL$ such that $\cLelA \in\alphabet(w_i) \cap \alphabet(w_j)$ and for all $i<k<j$ either $w_k \in \Gamma$ or $w_k \in \Delta'$ with $\cLelA\not\in\alphabet(w_k)$. Notice that the number of pairs that potentially cancels
      does not depend on the representative $w_1 \cdots w_n$ we started with (as the letters from $\Gamma_\cLelA$ are linearly ordered).
      Moreover, if a rule \eqref{eq:gp-rewrite_powereq_ncancel} or~\eqref{eq:gp-rewrite_powerneq} can be applied at positions $i < j$ in $w$, then there must be
      letters $a$ in $\pi(w_i)$ and $b$ in $\pi(w_j)$ from the same alphabet $\Gamma_\cLelA$ (in particular, $\cLelA \in \alphabet(w_i) \cap \alphabet(w_j)$) such that $(a, w_k) \in I_\Delta$ for all $i < k < j$. Therefore,
      $(i,j)$ potentially cancels~-- the converse, however, does not hold. Furthermore, for each pair that potentially cancels (at some point during the rewriting process $w \rewrite{*}{\bigRS} v$), a rule of type \eqref{eq:gp-rewrite_powereq_ncancel} or~\eqref{eq:gp-rewrite_powerneq} can be applied at most once. This is because the right-hand side of the rule is in $\pi^{-1}(\IRR(T))$ and irreducibility is not changed by the application of other rules (indeed, not by any application of rules from $T$).

      Thus, it suffices to bound the number of pairs that potentially cancel.
      Initially, for each position $i$ with $w_i\in \Delta'$ there are at most
      $\sigma$ positions $j>i$ such that $(i,j)$ potentially cancels as
      well as at most
      $\sigma$ positions $j<i$ such that $(j,i)$ potentially cancels.
      Hence, initially there are at most $\sigma|w|_{\Delta'}$ pairs that potentially cancel.
      Each removal of a letter from $\Delta'$ by rule~\eqref{eq:gp-rewrite_powertoplain} might generate up to $\sigma$ pairs that potentially cancel. All other rules do not generate pairs that potentially cancel.
        Thus, in the entire rewriting process only $2\sigma|w|_{\Delta'}$ pairs potentially cancel, which gives us an upper bound of at most $2\sigma|w|_{\Delta'}$ applications of rules \eqref{eq:gp-rewrite_powereq_ncancel} and~\eqref{eq:gp-rewrite_powerneq}.

      \item\label{point3} Initially, there are $|w|_{\Gamma}$ letters from $\Gamma$.
      Each application of rules~\eqref{eq:gp-rewrite_powereq_ncancel} or~\eqref{eq:gp-rewrite_powerneq} increases $|\cdot|_{\Gamma}$ by up to $\sigma$ (since $\abs{v} \leq \sigma$).
      Each application of rule~\eqref{eq:gp-rewrite_powertoplain} increases $|\cdot|_{\Gamma}$ by up to $2(\sigma - 1)\mu(w)$.
      Rule~\eqref{eq:gp-rewrite_plain} as well as
      length-reducing applications of rules~\eqref{eq:gp-rewrite_plainpower} and~\eqref{eq:gp-rewrite_powerplain} decrease $|\cdot|_{\Gamma}$ by at least one.
      Therefore, in total they can occur at most $|w|_{\Gamma} + \sigma \cdot 2\sigma |w|_{\Delta'} + 2(\sigma - 1)\mu(w) \cdot |w|_{\Delta'} = |w|_{\Gamma} + 2|w|_{\Delta'}(\sigma^2 + (\sigma-1)\mu(w))$  times.

      \item
      A length-preserving application of rule~\eqref{eq:gp-rewrite_plainpower} or~\eqref{eq:gp-rewrite_powerplain} involves a letter from $\Delta'$ and a letter from $\Gamma$. Moreover, for each such pair of letters, a rule \eqref{eq:gp-rewrite_plainpower} or~\eqref{eq:gp-rewrite_powerplain} can be applied at most once.
      There are $|w|_{\Gamma}$ letters from $\Gamma$ initially.
      Up to $2|w|_{\Delta'}(\sigma^2 + (\sigma-1)\mu(w))$ additional letters from $\Gamma$ are created by rules~\eqref{eq:gp-rewrite_powereq_ncancel}, \eqref{eq:gp-rewrite_powerneq} and~\eqref{eq:gp-rewrite_powertoplain} (see point \ref{point3}).
      Multiplying that with $|w|_{\Delta'}$, the number of letters from $\Delta'$, we obtain a bound of $|w|_{\Gamma}|w|_{\Delta'} + 2|w|_{\Delta'}^2(\sigma^2 + (\sigma-1)\mu(w))$ for the number of length-preserving applications of rules~\eqref{eq:gp-rewrite_plainpower} and~\eqref{eq:gp-rewrite_powerplain}. \qedhere
    \end{enumerate}
  \end{proof}

  \subsubsection{The shortened word}
  \label{subsec:gp-the-shortened-word}

  In this section we describe the shortening process.
  It is an almost the same as for free groups as we presented it in \cite{LohreyW19} (see the version on arXiv \cite{LohreyW19arxiv} for details).
  For the further consideration we fix a trace
  $u \in M(\Delta,I_\Delta)$ and some $p \in \Omega$. We consider all letters from $\Delta_p \cup \Delta_p^{-1}$ in $u$ and write $u$ as
  \begin{align}\label{p_factorization}
  u = u_0\, \DelEl{\beta_1}{p^{y_1}}{\alpha_1}\, u_1 \cdots \DelEl{\beta_m}{p^{y_m}}{\alpha_m}\, u_m
  \end{align}
  with $u_i \in M(\Delta,I_\Delta)$ not containing any letter from $\Delta_p \cup \Delta_p^{-1}$ and $\DelEl{\beta_i }{p^{y_i}}{\alpha_i} \in \Delta_p \cup \Delta_p^{-1}$.
  We define $\eta_p(u) = \sum_{j=1}^m y_j$ and $\eta_p^i(u) = \sum_{j=1}^i y_j$ for $i \in [0,m]$.
  The following lemma follows from the bounds given in \cref{lem:gp-rewrite_bounds_exponents}.

  \begin{lemma}
    \label{lem:gp-rule_bound_diff_eta}
    Let $u, v \in M(\Delta,I_\Delta)$ and $u \rewrite{}{\bigRS} v$.
    For every prefix $v'$ of $v$ there is a prefix $u'$ of $u$ such that for all $p \in \Omega$
    \begin{equation*}
        |\eta_p(u') - \eta_p(v')| \le 5\sigma\mu(u)\text{.}
    \end{equation*}
    If the applied rule is neither~\eqref{eq:gp-rewrite_powereq} nor~\eqref{eq:gp-rewrite_powertoplain}, then for all $p \in \Omega$ and $i \in [0, m]$ we have
    \begin{equation*}
        |\eta_p^i(u) - \eta_p^i(v)| \le 5\sigma\mu(u)\text{.}
    \end{equation*}
  \end{lemma}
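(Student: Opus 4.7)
My plan is to prove both statements by case analysis on which rule of $\bigRS$ is applied in the step $u \rewrite{}{\bigRS} v$, reading off each bound directly from \cref{lem:gp-rewrite_bounds_exponents}. The common setup is to write $u = u_L \cdot L \cdot u_R$ and $v = u_L \cdot R \cdot u_R$ where $L \to R$ is the rule used (modulo the trace equivalence needed to isolate the redex). Every prefix $v'$ of $v$ then falls into one of three regions: a prefix of $u_L$, a prefix of $u_L R$ ending inside $R$, or $u_L R u_R'$ for some prefix $u_R'$ of $u_R$. In the first and third cases we can match $v'$ by $u_L$ (resp.\ $u_L L u_R'$), so $\eta_p(u') = \eta_p(v')$ for every $p$ (and the same holds for partial sums, since the number of $\Delta_p\cup\Delta_p^{-1}$-letters is preserved away from the redex). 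Thus everything reduces to bounding $|\eta_p(L) - \eta_p(L')|$ and $|\eta_p(R) - \eta_p(R')|$ where $L'$ and $R'$ are matching factorizations of $L$ and $R$.

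For the first statement, I would go through the six substantive rules. For rule~\eqref{eq:gp-rewrite_powereq}, only $\eta_p$ changes and the total change is $|\abExp|\le 2\sigma$; any intermediate prefix in $R = \DelEl{\beta}{p^{x+y+\abExp}}{\gamma}$ can be matched by the prefix of $u$ that ends after $\DelEl{\delta}{p^y}{\gamma}$, giving the same bound. Rule~\eqref{eq:gp-rewrite_powertoplain} exchanges $\DelEl{\beta}{p^0}{\alpha}$ (which contributes $0$ to $\eta_p$) for $\beta\alpha \in \Gamma^*$ (which contributes nothing to any $\eta_q$), so the difference is always $0$. Rules~\eqref{eq:gp-rewrite_plainpower} and~\eqref{eq:gp-rewrite_powerplain} change a single exponent by $|d| \le 1$. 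Rule~\eqref{eq:gp-rewrite_powereq_ncancel} changes the two $p$-exponents by $d$ and $e$ with $|d|,|e|\le 5\sigma$, giving at most $10\sigma$ total and at most $5\sigma$ at any intermediate prefix. Rule~\eqref{eq:gp-rewrite_powerneq} changes $\eta_p$ by $|d| \le 4\sigma|q| \le 4\sigma\mu(u)$ and $\eta_q$ by $|e| \le 4\sigma|p| \le 4\sigma\mu(u)$. Finally, rule~\eqref{eq:gp-rewrite_plain} does not involve $\Delta'$ at all. In every case the bound $5\sigma\mu(u)$ is met, using $\mu(u)\ge 2$ where needed to absorb the additive constants.

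For the second statement, observe that rules~\eqref{eq:gp-rewrite_powereq} and~\eqref{eq:gp-rewrite_powertoplain} are precisely the two rules that change the total number $m$ of letters of $\Delta_p\cup\Delta_p^{-1}$ (the first merges two such letters into one, the second deletes a letter of $\Delta_p$). For all other rules the $p$-factorization \eqref{p_factorization} of $v$ has the same number of blocks $\DelEl{\beta_j}{p^{y_j}}{\alpha_j}$ as that of $u$, so the indexing by $i \in [0,m]$ matches up naturally between $u$ and $v$. The same local computation as in the previous paragraph, but restricted to what happens before position $i$, yields $|\eta_p^i(u) - \eta_p^i(v)| \le 5\sigma\mu(u)$: the rule either modifies none of the $p$-letters up to index $i$ (difference $0$), modifies exactly one (difference at most $\max(1, 5\sigma, 4\sigma|q|) \le 5\sigma\mu(u)$), or modifies both $p$-letters of its redex (difference at most $10\sigma \le 5\sigma\mu(u)$ for rule~\eqref{eq:gp-rewrite_powereq_ncancel}; rule~\eqref{eq:gp-rewrite_powerneq} only touches one $p$-letter and one $q$-letter).

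I do not expect genuine obstacles here: everything is bookkeeping on top of \cref{lem:gp-rewrite_bounds_exponents}. The only subtle point is in rule~\eqref{eq:gp-rewrite_powereq_ncancel}, where the auxiliary $I$-clique $v$ produced on the right-hand side lies in $\Gamma^*$ and therefore contributes nothing to $\eta_p$, so it can be ignored in the count; and in rule~\eqref{eq:gp-rewrite_powerneq}, where one must remember to take the maximum of $|p|$ and $|q|$ inside $\mu(u)$ when estimating both changes at once.
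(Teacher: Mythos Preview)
Your proposal is correct and follows essentially the same approach as the paper: a case analysis over the rules of $\bigRS$, with each bound read off from \cref{lem:gp-rewrite_bounds_exponents} and the inequality $\mu(u)\ge 2$ used to absorb the factor $10\sigma$ arising from rule~\eqref{eq:gp-rewrite_powereq_ncancel}. The paper organizes the argument in the reverse order (it proves the second statement first and then derives the first from it for rules other than~\eqref{eq:gp-rewrite_powereq} and~\eqref{eq:gp-rewrite_powertoplain}), but this is purely presentational. One small slip in your write-up: in the ``third region'' you assert $\eta_p(u') = \eta_p(v')$, which is only literally true when $\eta_p(L)=\eta_p(R)$; for rule~\eqref{eq:gp-rewrite_powereq} the difference is $|\abExp|\le 2\sigma$, as you yourself compute a few lines later.
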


\begin{proof}
   Let us start by proving the second statement.
    First, observe that in case the applied rule is neither~\eqref{eq:gp-rewrite_powereq} nor~\eqref{eq:gp-rewrite_powertoplain}, there is a one-to-one correspondence between letters from $\Delta_p \cup \Delta_p^{-1}$ in $u$ and $v$. Moreover, when applying a rule other than \eqref{eq:gp-rewrite_powereq} or~\eqref{eq:gp-rewrite_powertoplain}, at most two letters from $\Delta_p \cup \Delta_p^{-1}$ are modified; when also excluding rule \eqref{eq:gp-rewrite_powereq_ncancel} at most one letter from $\Delta_p \cup \Delta_p^{-1}$ is modified. Therefore, by \cref{lem:gp-rewrite_bounds_exponents}, in case of rule  \eqref{eq:gp-rewrite_powereq_ncancel}, we have $|\eta_p^i(u) - \eta_p^i(v)| \leq 10\sigma \leq 5\sigma\mu(u)$ (because $\mu(u) \geq 2$). In case of rules \eqref{eq:gp-rewrite_powerneq} or \eqref{eq:gp-rewrite_plainpower}--\eqref{eq:gp-rewrite_plain}, also by \cref{lem:gp-rewrite_bounds_exponents}, it follows that $|\eta_p^i(u) - \eta_p^i(v)| \leq 4\sigma\mu(u)$.

  Now, observe that the second statement implies the first one in the case that the applied rule is neither~\eqref{eq:gp-rewrite_powereq} nor~\eqref{eq:gp-rewrite_powertoplain}: if $v'$ is a prefix of $v$ containing exactly the first $i$ letters from $\Delta_p \cup \Delta_p^{-1}$, then we can choose $u'$ to be any prefix of $u$ containing exactly the first $i$ letters from $\Delta_p \cup \Delta_p^{-1}$.

Now, suppose that the applied rule is \eqref{eq:gp-rewrite_powertoplain}. Then we have $\eta_p(u) = \eta_p(v)$; moreover, for a prefix $v'$ of $v$ it is clear that we can find a corresponding prefix $u'$ of $u$ with $\eta_p(u') = \eta_p(v')$.

Finally, consider the case of rule~\eqref{eq:gp-rewrite_powereq} and denote the applied rule by $\ell \to r$. Note that $r$ is a word of length one. Thus, $r$ is either completely outside of $v'$ or completely inside of $v'$. In the first case,  we can choose $u' = v'$. In the second case, we can write $v' = srt$ and set $u' = s\ell t$. \cref{lem:gp-rewrite_bounds_exponents} yields
$|\eta_p(u') - \eta_p(v')| \le 2\sigma$.
\end{proof}

  In the following we define a set $\mathcal{C}$ of intervals to be carved out of the exponents from the input power word during the shortening process.
  \begin{definition} \label{def:compatible}
    Let $\mathcal{C} = \left\{ [l_j, r_j] \mid j \in [1,k] \right\}$ with $l_j,r_j \in  \Z$ be a set of finite, non-empty, and pairwise disjoint intervals of integers, where $k = |\mathcal{C}|$.
    We assume the intervals to be ordered, \ie $r_j < l_{j+1}$.
    We define the size of an interval $d_j = r_j - l_j + 1$ (which is the number of elements in $[l_j, r_j]$). An element $u \in M(\Delta,I_\Delta)$ (written in the form \eqref{p_factorization}) is said to be \emph{compatible} with the set of intervals $\mathcal{C}$, if for every prefix $u'$ of $u$ and all $j \in [1,k]$, we have $\eta_p(u') \notin [l_j, r_j]$.
  \end{definition}

  \begin{definition}
    \label{def:shortened}
    Let $\mathcal{C}  = \left\{ [l_j, r_j] \mid j \in [1,k] \right\}$ be compatible with $u \in M(\Delta,I_\Delta)$ written in the form \eqref{p_factorization}.
    The {\em shortened word} corresponding to $u$ is
    \begin{align*}
      \mathcal{S}_{\mathcal{C}}(u) = u_0 \DelEl{\beta_1}{p^{z_1}}{\alpha_1} u_1 \cdots \DelEl{\beta_m}{p^{z_m}}{\alpha_m}u_m\text{.}
    \end{align*}
    The new exponents $z_i$ are defined as
    \begin{align*}
      z_i = y_i - \sgn(y_i)\cdot \sum_{j \in C_i} d_j\text{,}
    \end{align*}
    where $C_i(u)$ is the set of indices of intervals to be removed from $y_i$, defined by
    \begin{align*}
      C_i(u) = \left\{ \begin{matrix*}[l]
                      \set{ j \in [1,k] }{ \eta_p^{i-1}(u) < l_j \le r_j < \eta_p^i(u) } &\quad\text{ if } y_i > 0,\\[2mm]
                      \set{  j \in [1,k] }{  \eta_p^{i}(u) < l_j \le r_j < \eta_p^{i-1}(u) } &\quad\text{ if } y_i < 0.
      \end{matrix*} \right.
    \end{align*}
  \end{definition}
  Note that Definitions~\ref{def:compatible} and \ref{def:shortened} depend on our fixed $p \in \Omega$.

  \begin{lemma}
    \label{lem:gp-shortened_word_irr}
    If $u \in \IRR(\bigRS)$, then $\mathcal{S}_{\mathcal{C}}(u) \in \IRR(\bigRS)$.
  \end{lemma}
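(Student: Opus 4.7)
First I verify that $\mathcal{S}_{\mathcal{C}}(u) \in M(\Delta, I_\Delta)$ is well defined, i.e., that each new triple $\DelEl{\beta_i}{p^{z_i}}{\alpha_i}$ lies in $\Delta'$. Compatibility forces every interval $[l_j, r_j]$ with $j \in C_i$ to lie strictly inside the integer interval $(\eta_p^{i-1}(u), \eta_p^i(u))$, so $\sum_{j \in C_i} d_j \leq |y_i| - 1$ and hence $|z_i| \geq 1$ with $\sgn(z_i) = \sgn(y_i)$. A short case analysis (any reducibility in $\beta_i p^{z_i}\alpha_i$ would lift to a reducibility in the reduced trace $\beta_i p^{y_i} \alpha_i$) then shows that $\beta_i p^{z_i} \alpha_i$ is reduced, so $\DelEl{\beta_i}{p^{z_i}}{\alpha_i} \in \Delta'$.

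The strategy is based on the following characterization of irreducibility: $w \in \IRR(R)$ if and only if (a)~$\pi(w) \in \IRR(T)$, (b)~every triple in $w$ has non-zero exponent, and (c)~no two adjacent triples in $w$ sharing a common base $q$ satisfy $\alpha\beta' \rewrite{*}{T} q^f$ for some $f$. The forward direction combines \cref{lem:gp-correctenss_T} with an inspection of the rules; for the backward direction, note that rules~\eqref{eq:gp-rewrite_powereq_ncancel}, \eqref{eq:gp-rewrite_powerneq}, \eqref{eq:gp-rewrite_plainpower}, \eqref{eq:gp-rewrite_powerplain} and \eqref{eq:gp-rewrite_plain} all presuppose a nontrivial $T$-reduction in the corresponding $\pi$-image, which~(a) rules out, while~(b) excludes \eqref{eq:gp-rewrite_powertoplain} and~(c) excludes \eqref{eq:gp-rewrite_powereq}. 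Conditions~(b) and~(c) immediately transfer from $u$ to $\mathcal{S}_{\mathcal{C}}(u)$, since the exponents remain non-zero with the same sign and the $\alpha_i$, $\beta_i$ and the adjacency pattern of triples are left unchanged by shortening. So only~(a) remains.

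For~(a), write $\pi(u) = V_0 p^{y_1} V_1 \cdots p^{y_m} V_m$ and $\pi(\mathcal{S}_{\mathcal{C}}(u)) = V_0 p^{z_1} V_1 \cdots p^{z_m} V_m$. By induction on $\sum_i (|y_i| - |z_i|)$, it suffices to prove the one-step claim: if $\pi(u) = XpY$ is reduced and the removed copy of $p$ originates from a $p^{y_i}$ with $|y_i| \geq 2$ (so $p^{y_i-1}$ is still non-empty in $Y$), then $XY$ is reduced. This one-step argument is the main obstacle and proceeds as follows. Suppose for a contradiction that $XY$ has a reducible factor $xy$ with $\alphabet(x) = \alphabet(y) = \cLelA$. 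The cases $x, y \in X$ and $x, y \in Y$ contradict reducedness of $X$ or $Y$ (factors of $\pi(u)$), so $x \in X$ and $y \in Y$. If $\cLelA \notin \alphabet(p)$, every letter of $p$ commutes with $\cLelA$ and $xy$ is already a reducible factor of $\pi(u)$, contradiction. If $\cLelA \in \alphabet(p)$, connectedness of $p$ ensures that $p$ has a letter of alphabet in $D[\cLelA]$; let $k'$ be the smallest position in $p$ whose letter has alphabet in $D[\cLelA]$. When $y$ lies outside the first copy of $p$ in $Y$, a full copy of $p$ appears between $x$ and $y$ in $XY$, and its letter at position $k'$ prevents the commutativity required for $xy$ to be a reducible factor. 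When $y$ lies at some position $j$ of that first copy of $p$, the commutativity (with $\cLelA$) of the letters strictly before $y$ inside that copy together with the minimality of $k'$ and the fact that position $j$ itself has alphabet $\cLelA \in D[\cLelA]$ force $k' = j$, so the letter at position $k'$ in the ``extra'' copy of $p$ in $\pi(u)$ also has alphabet $\cLelA$; but then in $\pi(u) = XpY$ this letter together with $x$ forms a reducible factor, again contradicting reducedness of $\pi(u)$.
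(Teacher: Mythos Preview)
Your proof is correct. The paper's own proof is far terser: it simply shows $\sgn(z_i)=\sgn(y_i)$ and $z_i\neq 0$ for all $i$ (by the same counting argument you give in your first paragraph) and declares this sufficient, leaving implicit the observation that the applicability of each rule of $\bigRS$ depends only on the $\Gamma$-letters, the $\alpha_i,\beta_i$, and the \emph{signs} of the exponents.

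Your route is genuinely different in that you make this implicit step explicit via a clean characterization of $\IRR(\bigRS)$ by the three conditions (a)--(c), and then spend the bulk of the argument on (a), proving directly that $\pi(\mathcal{S}_{\mathcal{C}}(u))\in\IRR(\traceRS)$ by an inductive ``remove one copy of $p$'' argument. This is more work than the paper does, but it buys you something: you never have to inspect rules~\eqref{eq:gp-rewrite_powereq_ncancel}, \eqref{eq:gp-rewrite_powerneq}, \eqref{eq:gp-rewrite_plainpower}, \eqref{eq:gp-rewrite_powerplain} individually to check that their side conditions are sign-only, since all of them are subsumed by~(a). By contrast, the paper's approach would require the reader to verify, rule by rule, that the minimal letters of $\beta p^x$ (and the maximal letters of $p^x\alpha$) depend only on $\sgn(x)$, which is true but not entirely trivial. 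Two minor presentational points: your one-step argument tacitly fixes a word representative when speaking of ``position $k'$'' and ``the first copy of $p$ in $Y$''---this is fine but worth saying explicitly; and your ``$|y_i|\ge 2$'' should be read as ``the current exponent has absolute value $\ge 2$'', which is guaranteed throughout the induction since the terminal exponent $z_i$ satisfies $|z_i|\ge 1$.
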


\newcommand{\startIndex}{\iota}
\newcommand{\lastIndex}{\tau}

  {\allowdisplaybreaks
    \begin{proof}
    Assume that $u \in \IRR(\bigRS)$. In particular, $y_i \neq 0$ for all $i \in [1,m]$ (otherwise we could apply rule \eqref{eq:gp-rewrite_powertoplain}).
      We prove the lemma by showing that $\sgn(y_i) = \sgn(z_i)$ and $z_i \neq 0$
      for all $i \in [1,m]$.
      As the intervals in $\mathcal{C}$ are ordered, there are $\startIndex$ and $\lastIndex$ such that $C_i(u)$ consists of all indices from $\startIndex$ to $\lastIndex$.
      In case $y_i > 0$ we have
      \begin{align*}
        z_i &= y_i - \sum_{j \in C_i(u)} d_j\\
        &= y_i - \sum_{j = \startIndex}^\lastIndex d_j\\
        &= y_i - \sum_{j = \startIndex}^\lastIndex (r_j - l_j + 1)\\
        &\geq y_i - (r_{\lastIndex} - l_{\startIndex} + 1)\tag{because $r_j < l_{j+1}$}\\
        &\geq y_i - \bigl((\eta_p^i(u) - 1) - (\eta_p^{i-1}(u) + 1) + 1\bigr)\\
        &= y_i - y_i + 1\\
        &= 1 .
      \end{align*}
      The case $y_i < 0$ follows by symmetry.
    \end{proof}
  }
  \begin{definition}
    We define the distance between some $u$ and the closest interval from $\mathcal{C}$ as
    \begin{align*}
      \dist_p(u, \mathcal{C}) = \min \set{ | \eta_p^i(u) - x | }{ i \in [0,m],\, x \in [l, r] \in \mathcal{C} }\text{.}
    \end{align*}
  \end{definition}

  From that definition the following statement follows immediately.

  \begin{lemma}
    \label{cor:gp_compatoble_dist}
    $\dist_p(u, \mathcal{C}) > 0$ if and only if $u$ is compatible with $\mathcal{C}$.
  \end{lemma}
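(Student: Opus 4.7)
The plan is to observe that both sides of the equivalence quantify the same quantity over the same set of values, just presented differently. Write $u$ in the factorization
\[ u = u_0\, \DelEl{\beta_1}{p^{y_1}}{\alpha_1}\, u_1 \cdots \DelEl{\beta_m}{p^{y_m}}{\alpha_m}\, u_m \]
as in \eqref{p_factorization}, where each $u_i$ contains no letter from $\Delta_p \cup \Delta_p^{-1}$. The crucial observation is that $\eta_p(u')$ only depends on which letters from $\Delta_p \cup \Delta_p^{-1}$ lie in the prefix $u'$. Hence, for any prefix $u'$, if $u'$ contains exactly the first $i$ letters from $\Delta_p \cup \Delta_p^{-1}$ (for some $i \in [0,m]$), then $\eta_p(u') = \sum_{j=1}^i y_j = \eta_p^i(u)$. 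Conversely, for each $i \in [0,m]$, the prefix $u_0 \DelEl{\beta_1}{p^{y_1}}{\alpha_1} u_1 \cdots \DelEl{\beta_i}{p^{y_i}}{\alpha_i}$ realizes the value $\eta_p^i(u)$. Therefore
\[ \{\eta_p(u') : u' \text{ is a prefix of } u\} \;=\; \{\eta_p^i(u) : i \in [0,m]\}. \]

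With this identification the equivalence is a direct unpacking of the definitions. Indeed, $\dist_p(u,\mathcal{C}) > 0$ is equivalent to $|\eta_p^i(u) - x| > 0$ for all $i \in [0,m]$ and all $x \in [l_j,r_j] \in \mathcal{C}$, which is to say $\eta_p^i(u) \notin [l_j,r_j]$ for all $i$ and $j$. By the equality of sets above, this is the same as $\eta_p(u') \notin [l_j,r_j]$ for every prefix $u'$ and every $j \in [1,k]$, which is precisely compatibility of $u$ with $\mathcal{C}$ in the sense of \cref{def:compatible}.

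There is no real obstacle here; the statement is a reformulation lemma whose entire content is the identification between the set of prefix values $\eta_p(u')$ and the finite list $\eta_p^0(u), \eta_p^1(u), \ldots, \eta_p^m(u)$. The only minor point to be careful about is that $\eta_p^i(u)$ is defined with the understanding $\eta_p^0(u) = 0$, which corresponds to the empty prefix $u' = 1$, and $\eta_p^m(u) = \eta_p(u)$, which corresponds to $u' = u$; so the endpoints are also accounted for.
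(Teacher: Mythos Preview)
Your proof is correct and matches the paper's approach: the paper simply states that the lemma ``follows immediately'' from the definition, and you have spelled out exactly why. The only point worth noting is that your claim ``$u'$ contains exactly the first $i$ letters from $\Delta_p \cup \Delta_p^{-1}$'' implicitly uses that any two such letters are dependent in $M(\Delta, I_\Delta)$ (since $\alphabet(p) \times \alphabet(p) \not\subseteq I$), so they appear in a fixed linear order in every representative of $u$; this is indeed true and is what makes the statement immediate.
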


  We want to show that given some requirements are fulfilled, any rewriting step that is possible on $u$ is also possible on $\mathcal{S}_{\mathcal{C}}(u)$.

  \begin{lemma}
    \label{lem:gp-dist_one_step}
    If $\dist_p(u, \mathcal{C}) > 5\sigma \mu(u)$ and $u \rewrite{}{\bigRS} v$, then $\mathcal{S}_{\mathcal{C}}(u) \rewrite{}{\bigRS} \mathcal{S}_{\mathcal{C}}(v)$.
  \end{lemma}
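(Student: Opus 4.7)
The plan is to first show that $v$ is compatible with $\mathcal{C}$ (so that $\mathcal{S}_{\mathcal{C}}(v)$ is defined), and then to verify that the same rule can be applied at the corresponding position of $\mathcal{S}_{\mathcal{C}}(u)$ to yield exactly $\mathcal{S}_{\mathcal{C}}(v)$. The whole argument is a case distinction on which rule of $\bigRS$ is applied, glued together by the uniform bound $5\sigma\mu(u)$ on how much the partial-sum function $\eta_p$ can move under a single step.

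Compatibility of $v$ is the quick part. By the first statement of \cref{lem:gp-rule_bound_diff_eta}, for every prefix $v'$ of $v$ there is a prefix $u'$ of $u$ with $|\eta_p(u')-\eta_p(v')|\le 5\sigma\mu(u)$. Since $\dist_p(u,\mathcal{C})>5\sigma\mu(u)$, the triangle inequality yields $\dist_p(v',\mathcal{C})>0$ for every prefix $v'$ of $v$, so \cref{cor:gp_compatoble_dist} gives that $v$ is compatible with $\mathcal{C}$.

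For the rewriting step, the easy rules are \eqref{eq:gp-rewrite_powereq_ncancel}, \eqref{eq:gp-rewrite_powerneq}, \eqref{eq:gp-rewrite_plainpower}, \eqref{eq:gp-rewrite_powerplain}, and \eqref{eq:gp-rewrite_plain}: here the set of $p$-letters is in bijective correspondence between $u$ and $v$, and the second statement of \cref{lem:gp-rule_bound_diff_eta} gives $|\eta_p^i(u)-\eta_p^i(v)|\le 5\sigma\mu(u)$ for every $i$. Because $\dist_p(u,\mathcal{C})>5\sigma\mu(u)$, for every $i$ the open range delimited by $\eta_p^{i-1}$ and $\eta_p^i$ contains the same intervals of $\mathcal{C}$ in $u$ as in $v$. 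Hence $C_i(u)=C_i(v)$ for all $i$, and the shortened exponent $z_i$ changes by exactly the amount $d$ that the original exponent changes under the rule. The same rule therefore applies at the corresponding position of $\mathcal{S}_{\mathcal{C}}(u)$ and produces $\mathcal{S}_{\mathcal{C}}(v)$ verbatim.

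The main obstacle is rule \eqref{eq:gp-rewrite_powereq}, which merges two adjacent $p$-letters $\DelEl{\beta}{p^{y_i}}{\alpha}$ and $\DelEl{\delta}{p^{y_{i+1}}}{\gamma}$ into $\DelEl{\beta}{p^{y_i+y_{i+1}+f}}{\gamma}$ with $|f|\le 2\sigma$ by \cref{lem:gp-rewrite_bounds_exponents}. Here I must show the identity
\[
\sgn(y_i+y_{i+1}+f)\!\!\sum_{j\in C'}\!d_j \;=\; \sgn(y_i)\!\!\sum_{j\in C_i(u)}\!d_j + \sgn(y_{i+1})\!\!\sum_{j\in C_{i+1}(u)}\!d_j,
\]
where $C'$ is the index set of intervals removed from the merged letter in $v$. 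The distance hypothesis forbids any interval of $\mathcal{C}$ from lying within $5\sigma\mu(u)$ of either $\eta_p^i(u)$ or $\eta_p^{i+1}(u)$; in particular no interval falls in the tiny band of width $|f|\le 2\sigma$ between $\eta_p^{i+1}(u)$ and $\eta_p^{i+1}(v)$, no interval straddles the common boundary $\eta_p^i(u)$, and sign changes between $y_i$, $y_{i+1}$ and $y_i+y_{i+1}+f$ can only happen over ranges that are too small to contain any interval. A short direct verification then yields $z_\text{merged}=z_i+z_{i+1}+f$, so rule~\eqref{eq:gp-rewrite_powereq} applied to $\mathcal{S}_{\mathcal{C}}(u)$ gives $\mathcal{S}_{\mathcal{C}}(v)$. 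Rule~\eqref{eq:gp-rewrite_powertoplain} is handled analogously: on a $\DelEl{\beta}{p^0}{\alpha}$ letter, compatibility forces $C_i(u)=\varnothing$ and hence $z_i=0$, while on $\DelEl{\beta}{q^0}{\alpha}$ with $q\ne p$ the shortening does not touch the involved letter and the step transfers immediately.
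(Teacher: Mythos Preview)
Your approach is essentially the same as the paper's: verify compatibility of $v$, then do a case split on the rule, using the second clause of \cref{lem:gp-rule_bound_diff_eta} to conclude $C_i(u)=C_i(v)$ for the ``easy'' rules and handling rules~\eqref{eq:gp-rewrite_powereq} and~\eqref{eq:gp-rewrite_powertoplain} separately.

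There is one genuine gap in your treatment of rules~\eqref{eq:gp-rewrite_powereq_ncancel}, \eqref{eq:gp-rewrite_powerneq}, \eqref{eq:gp-rewrite_plainpower}, \eqref{eq:gp-rewrite_powerplain}. From $C_i(u)=C_i(v)$ you correctly deduce that the shortened exponent of the $i$-th $p$-letter in $\mathcal{S}_{\mathcal{C}}(v)$ equals $z_i-d$, where $d$ is the change in $u\rewrite{}{\bigRS}v$. But the sentence ``the same rule therefore applies'' is not yet justified: these rules carry the side condition $d\in\llbracket x\rrbracket$ (and $e\in\llbracket y\rrbracket$), i.e., $|d|\le |z_i|$ with matching sign. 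If the shortening genuinely reduced $y_i$ to $z_i$, you must check that $|z_i|$ is still at least $|d|$. The paper supplies this: if $C_i(u)\neq\varnothing$, then every interval in $C_i(u)$ lies strictly inside $(\eta_p^{i-1}(u),\eta_p^i(u))$ at distance more than $\dist_p(u,\mathcal{C})$ from both endpoints, so $\sum_{j\in C_i(u)}d_j\le |y_i|-2\dist_p(u,\mathcal{C})$ and hence $|z_i|\ge 2\dist_p(u,\mathcal{C})>10\sigma\mu(u)$, which dominates every bound on $|d|,|e|$ from \cref{lem:gp-rewrite_bounds_exponents}. If $C_i(u)=\varnothing$, then $z_i=y_i$ and there is nothing to check. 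Without this step, the argument is incomplete.

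Your handling of rule~\eqref{eq:gp-rewrite_powereq} captures the right idea but skips the work: the paper makes the identity $z_i+z_{i+1}+f=\tilde z_i$ precise by splitting into the cases $\sgn(y_i)=\sgn(y_{i+1})$ (where $C_i(v)=C_i(u)\cup C_{i+1}(u)$ disjointly) and $\sgn(y_i)\neq\sgn(y_{i+1})$ (where, say, $C_{i+1}(u)\subseteq C_i(u)$ and $C_i(v)=C_i(u)\setminus C_{i+1}(u)$), together with the preliminary observation that $|f|\le 2\sigma<\dist_p(u,\mathcal{C})$ forces $\sgn(y_i+y_{i+1}+f)=\sgn(y_i+y_{i+1})$ whenever $C_i(v)\neq\varnothing$. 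Your sketch asserts these facts but does not derive them. Finally, for rule~\eqref{eq:gp-rewrite_powertoplain}, $C_i(u)=\varnothing$ follows directly from $y_i=0$ (the range $(\eta_p^{i-1},\eta_p^i)$ is empty), not from compatibility.
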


  \begin{proof}
    Observe that $u$ is compatible with $\mathcal{C}$.
    By \cref{lem:gp-rule_bound_diff_eta} we have $\dist_p(v, \mathcal{C}) > 0$ and thus $v$ is compatible with $\mathcal{C}$.
    It follows that $\mathcal{S}_{\mathcal{C}}(u)$ and $\mathcal{S}_{\mathcal{C}}(v)$ are defined.

    To prove the lemma, we compare the shortened version of $u$ and $v$ and show that a rule from $\bigRS$ can be applied.
    We distinguish which rule from $\bigRS$ has been applied in the rewrite step
    $u \rewrite{}{\bigRS} v$.

\medskip\noindent
{\em Rule~\eqref{eq:gp-rewrite_powereq_ncancel},~\eqref{eq:gp-rewrite_powerneq},~\eqref{eq:gp-rewrite_plainpower},~\eqref{eq:gp-rewrite_powerplain} or~\eqref{eq:gp-rewrite_plain}}:  If one of these rules has been applied, the shortening process has the same effect on $u$ and $v$, \ie $C_i(u) = C_i(v)$ for all $i$ (this is because by \cref{lem:gp-rule_bound_diff_eta} we have $|\eta_p^i(u) - \eta_p^i(v)| \le 5\sigma\mu(u)$ and the assumption $\dist_p(u, \mathcal{C}) > 5\sigma \mu(u)$).
      The same rule that has been applied in $u \rewrite{}{\bigRS} v$ can also be used to get $\mathcal{S}_{\mathcal{C}}(u) \rewrite{}{\bigRS} \mathcal{S}_{\mathcal{C}}(v)$:
    Consider a letter $\DelEl{\beta}{p^{y_i}}{\alpha}$ in $u$ that by the shortening process is changed to $\DelEl{\beta}{p^{z_i}}{\alpha}$ with $z_i\neq y_i$. Then $C_i(u) \neq \emptyset$ and we have
    \[\abs{z_i} = \abs{y_i} - \sum_{j\in C_i(u)} d_j \geq 2\dist_p(u, \mathcal{C}) \geq 5\sigma\mu(u). \]
    Thus, by \cref{lem:gp-rewrite_bounds_exponents} the exponents in $\mathcal{S}_{\mathcal{C}}(u)$ are large enough to apply a rule of the same type as in $u \rewrite{}{\bigRS} v$.

 \medskip\noindent
{\em Rule~\eqref{eq:gp-rewrite_powertoplain}}:
      If rule~\eqref{eq:gp-rewrite_powertoplain} is applied, then $C_\ell(v) = C_\ell(u)$ for $\ell < i$ and $C_\ell(v) = C_{\ell + 1}(u)$ for $\ell \ge i$.
      We also know $y_i = 0$, which is not altered by the shortening process, \ie $C_i(u) = \emptyset$.
      Thus, rule \eqref{eq:gp-rewrite_powertoplain} can be applied to $\mathcal{S}_{\mathcal{C}}(u)$ to obtain $\mathcal{S}_{\mathcal{C}}(v)$.

 \medskip\noindent
{\em Rule~\eqref{eq:gp-rewrite_powereq}}:
      Finally, consider the case that rule~\eqref{eq:gp-rewrite_powereq} has been applied.
      Let
      \begin{align*}
        u = u_0 \DelEl{\beta_1}{p^{y_1}}{\alpha_1} u_1 \cdots \DelEl{\beta_i}{p^{y_i}}{\alpha_i}\: u_i \:\DelEl{\beta_{i+1}}{p^{y_{i+1}}}{\alpha_{i+1}}\:  u_{i+1} \cdots \DelEl{\beta_m}{p^{y_m}}{\alpha_m} u_m\text{.}
      \end{align*}
      The result of applying the rule is
      \begin{align*}
        v = u_0 \DelEl{\beta_1}{p^{y_1}}{\alpha_1} u_1 \cdots \DelEl{\beta_i}{p^{y_i + y_{i+1} + \abExp}}{ \alpha_{i+1}} \: u_i u_{i+1} \cdots \DelEl{\beta_m}{p^{y_m}}{\alpha_m} u_m
      \end{align*}
      where $\alpha_i\beta_{i+1} =_G p^\abExp$ (notice that $(u_i,p) \in I_\Delta$, for otherwise rule~\eqref{eq:gp-rewrite_powereq} cannot be applied).
      On powers not modified by the rule the shortening process behaves the same on $u$ and $v$, \ie $C_\ell(v) = C_\ell(u)$ for $\ell < i$ and $C_\ell(v) = C_{\ell + 1}(u)$ for $\ell > i$
      (because by \cref{lem:gp-rewrite_bounds_exponents}, $|\abExp| \le 2\sigma < \dist_p(u, \mathcal{C})$).
      The result of the shortening process on $v$ is
      \begin{align*}
        \mathcal{S}_{\mathcal{C}}(v) = u_0 \DelEl{\beta_1}{p^{z_1}}{\alpha_1} u_1 \cdots \DelEl{\beta_i}{p^{\tilde{z}_i}}{\alpha_{i+1}} u_i u_{i+1} \cdots \DelEl{\beta_m}{p^{z_{m}}}{\alpha_m} u_m\text{,}
      \end{align*}
      where $\tilde{z}_i = y_i + y_{i+1} + \abExp - \sgn(y_i + y_{i+1} + f) \cdot \sum_{\ell \in C_i(v)} d_\ell$.
      Rule~\eqref{eq:gp-rewrite_powereq} can be also applied to
      $\mathcal{S}_{\mathcal{C}}(u)$ (only $\alpha_i\beta_{i+1} =_G p^\abExp$
      is needed for this) and yields
      \begin{align*}
        \hat{v} = u_0 \DelEl{\beta_1}{p^{z_1}}{\alpha_1} u_1 \cdots \DelEl{\beta_i}{p^{z_i + z_{i+1} + \abExp}}{\alpha_{i+1}} u_i u_{i+1} \cdots \DelEl{\beta_m}{p^{z_{m}}}{\alpha_m} u_m\text{.}
      \end{align*}

      \noindent
      We need to show that $\tilde{z}_i = z_i + z_{i+1} + \abExp$, \ie
      $$z_i + z_{i+1} = y_i + y_{i+1} - \sgn(y_i + y_{i+1} + \abExp) \cdot \sum_{\ell \in C_i(v)} d_\ell.$$
We start by showing that, if $C_i(v) \neq \emptyset$, then  $\sgn(y_i + y_{i+1} + \abExp) = \sgn(y_i + y_{i+1})$. Indeed, if $j\in C_i(v)$, then
for all $x \in [l_j,r_j]$ we have
\[\abs{y_i + y_{i+1} + \abExp} \geq \big|\eta_p^{i-1}(v)-x\big| =
\big|\eta_p^{i-1}(u)-x\big| \geq \dist_p(u, \mathcal{C}) .\]
The last inequality follows from $[l_j,r_j ] \in \mathcal{C}$.
Since, by \cref{lem:gp-rewrite_bounds_exponents}, $|\abExp| \le 2\sigma < \dist_p(u, \mathcal{C})$, it follows that $\sgn(y_i + y_{i+1} + \abExp) = \sgn(y_i + y_{i+1})$.

 Therefore, in any case we have
$$\sgn(y_i + y_{i+1}) \cdot \!\sum_{\ell \in C_i(v)} d_\ell = \sgn(y_i + y_{i+1} + \abExp) \cdot \!\sum_{\ell \in C_i(v)} d_\ell$$
and it remains to show $z_i + z_{i+1} = y_i + y_{i+1} - \sgn(y_i + y_{i+1}) \cdot \sum_{\ell \in C_i(v)} d_\ell$.

  Now let us distinguish two cases:
      First, consider the case that $y_i$ and $y_{i+1}$ have the same sign.
      In that case we have $ C_i(u) \cap C_{i+1}(u) = \emptyset$ and (again, because by \cref{lem:gp-rewrite_bounds_exponents}, $|\abExp| \le 2\sigma < \dist_p(u, \mathcal{C})$)
      it follows that $C_i(v) = C_i(u) \cup C_{i+1}(u)$. Thus, we obtain
      \begin{align*}
        z_i + z_{i+1} &= y_i - \sgn(y_i) \cdot \!\sum_{\ell \in C_i(u)} d_\ell + y_{i+1} - \sgn(y_{i+1}) \cdot \!\!\sum_{\ell \in C_{i+1}(u)} d_\ell\\
        &= y_i + y_{i+1} - \sgn(y_i + y_{i+1}) \cdot \!\sum_{\ell \in C_i(v)} d_\ell.
      \end{align*}
      Second, we look at the case where $y_i$ and $y_{i+1}$ have opposite sign.
      We assume $|y_i| \geq |y_{i+1}|$.
      The other case is symmetric. We have
      $C_{i+1}(u) \subseteq C_i(u)$ and $C_i(v) = C_i(u) \setminus C_{i+1}(u)$. This implies
      \begin{align*}
        z_i + z_{i+1} &= y_i - \sgn(y_i) \cdot \sum_{\ell \in C_i(u)} d_\ell + y_{i+1} - \sgn(y_{i+1}) \cdot \sum_{\ell \in C_{i+1}(u)} d_\ell\\
        &= y_i + y_{i+1} - \sgn(y_i) \cdot \left( \sum_{\ell \in C_i(u)} d_\ell - \sum_{\ell \in C_{i+1}(u)} d_\ell \right)\\
        &= y_i + y_{i+1} - \sgn(y_i + y_{i+1}) \cdot \sum_{\ell \in C_i(v)} d_\ell.%\qedhere
      \end{align*}
      Note that in the case that $y_i = -y_{i+1}$ we have $C_i(u) = C_{i+1}(u) $ and $C_i(v)= \emptyset$, so the last equality also holds in this case.
      This concludes the proof of the lemma.
      %%%%
  \end{proof}

  \begin{lemma}
    \label{lem:gp-transitions_of_shortened_word}
    If $\dist_p(u, \mathcal{C}) > 5k\sigma \mu(u)$ and $u \rewrite{\le k}{\bigRS} v$, then $\mathcal{S}_{\mathcal{C}}(u) \rewrite{\le k}{\bigRS} \mathcal{S}_{\mathcal{C}}(v)$.
  \end{lemma}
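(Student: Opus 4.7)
The proof proceeds by induction on $k$. The base case $k = 0$ is trivial since then $u = v$ and $\mathcal{S}_{\mathcal{C}}(u) = \mathcal{S}_{\mathcal{C}}(v)$. For the inductive step, the plan is to split off one rewriting step $u \rewrite{}{\bigRS} u' \rewrite{\leq k-1}{\bigRS} v$, apply \cref{lem:gp-dist_one_step} to the first step, and then apply the induction hypothesis to the remaining derivation.

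To apply \cref{lem:gp-dist_one_step} to $u \rewrite{}{\bigRS} u'$, I need $\dist_p(u, \mathcal{C}) > 5\sigma\mu(u)$, which is immediate from the assumption $\dist_p(u, \mathcal{C}) > 5k\sigma\mu(u) \geq 5\sigma\mu(u)$ (assuming $k \geq 1$). This yields $\mathcal{S}_{\mathcal{C}}(u) \rewrite{}{\bigRS} \mathcal{S}_{\mathcal{C}}(u')$.

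The main technical point will be showing that the hypothesis of the induction hypothesis, namely $\dist_p(u', \mathcal{C}) > 5(k-1)\sigma\mu(u')$, holds for $u'$. Two ingredients come into play. First, by \cref{lem:gp-rule_bound_diff_eta}, for every prefix $w'$ of $u'$ there exists a prefix $w$ of $u$ with $|\eta_p(w') - \eta_p(w)| \leq 5\sigma\mu(u)$, and every value $\eta_p^i(u')$ equals $\eta_p(w')$ for some prefix $w'$ of $u'$. Hence for every $i \in [0, m']$ (where $m'$ is the number of $p$-powers in $u'$) and every $x \in [l,r] \in \mathcal{C}$,
\[
|\eta_p^i(u') - x| \geq |\eta_p(w) - x| - 5\sigma\mu(u) \geq \dist_p(u, \mathcal{C}) - 5\sigma\mu(u) > 5(k-1)\sigma\mu(u).
\]
Second, inspection of the rewriting rules in \cref{tab-rules} shows that no rule introduces a letter from $\Delta_q \cup \Delta_q^{-1}$ for some $q \in \Omega$ that did not already appear on the left-hand side: rules \eqref{eq:gp-rewrite_powereq}, \eqref{eq:gp-rewrite_powereq_ncancel} and \eqref{eq:gp-rewrite_powerneq} only modify exponents and the $\alpha,\beta$-parts of existing triples, rule \eqref{eq:gp-rewrite_powertoplain} removes a triple entirely, rules \eqref{eq:gp-rewrite_plainpower} and \eqref{eq:gp-rewrite_powerplain} modify $\alpha$ or $\beta$ of an existing triple, and rule \eqref{eq:gp-rewrite_plain} does not touch $\Delta'$ at all. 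Consequently $\mu(u') \leq \mu(u)$, so
\[
\dist_p(u', \mathcal{C}) > 5(k-1)\sigma\mu(u) \geq 5(k-1)\sigma\mu(u').
\]

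With this in hand, the induction hypothesis applied to $u' \rewrite{\leq k-1}{\bigRS} v$ yields $\mathcal{S}_{\mathcal{C}}(u') \rewrite{\leq k-1}{\bigRS} \mathcal{S}_{\mathcal{C}}(v)$, and concatenating with the single step $\mathcal{S}_{\mathcal{C}}(u) \rewrite{}{\bigRS} \mathcal{S}_{\mathcal{C}}(u')$ from above completes the proof. The only subtlety, and the part I would double-check carefully, is the monotonicity claim $\mu(u') \leq \mu(u)$; everything else is a direct quantitative chase through \cref{lem:gp-rule_bound_diff_eta} and \cref{lem:gp-dist_one_step}.
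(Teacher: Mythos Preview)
Your proof is correct and follows essentially the same approach as the paper: induction on $k$, splitting off one step, invoking \cref{lem:gp-dist_one_step} for that step, and using \cref{lem:gp-rule_bound_diff_eta} together with $\mu(u')\le\mu(u)$ to feed the induction hypothesis. The only cosmetic differences are that the paper starts the induction at $k=1$ and states the monotonicity of $\mu$ in one line (``none of the rules of $\bigRS$ increases $\mu(\cdot)$''), whereas you start at $k=0$ and spell out the rule-by-rule check; both are fine.
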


  \begin{proof}
    We prove the lemma by induction.
    If $k=1$, then the statement follows from \cref{lem:gp-dist_one_step}.
    If $k>1$, then there is a $u' \in M(\Delta, I_\Delta)$ such that
    \begin{align*}
      u \rewrite{}{\bigRS} u' \rewrite{\le k-1}{\bigRS} v\text{.}
    \end{align*}
    By \cref{lem:gp-rule_bound_diff_eta} we have $\dist_p(u', \mathcal{C}) > 5(k - 1)\sigma \mu(u)$.
    As none of the rules of $\bigRS$ increases $\mu(\cdot)$, it follows that $\dist_p(u', \mathcal{C}) > 5(k - 1)\sigma \mu(u')$.
    Therefore, $\mathcal{S}_{\mathcal{C}}(u') \rewrite{\le k-1}{\bigRS} \mathcal{S}_{\mathcal{C}}(v)$ by induction.
    By \cref{lem:gp-dist_one_step} we have $\mathcal{S}_{\mathcal{C}}(u) \rewrite{}{\bigRS} \mathcal{S}_{\mathcal{C}}(u')$.
    Combining those statements we conclude $\mathcal{S}_{\mathcal{C}}(u) \rewrite{\le k}{\bigRS} \mathcal{S}_{\mathcal{C}}(v)$.
  \end{proof}

  We continue by defining a concrete set of intervals $\mathcal{C}_{u,p}^K$ based on the following intuitive idea.
  From \cref{lem:gp-correctenss_T} and \cref{cor:gp-steps_to_termination} we know that $\pi(u) =_G 1$ if and only if $u \rewrite{\le k}{\bigRS} 1$ with $k = 10\sigma^2|u|_\Delta^2\mu(u)$.
  By \cref{lem:gp-rule_bound_diff_eta}, each application of a rule changes $\eta_p(\cdot)$ by at most $5\sigma \mu(u)$.
  Thus, the partial sums of the exponents change by less than
  \begin{equation*}
      K = 50\sigma^3|u|_\Delta^2\mu(u)^2 + 1{.}
  \end{equation*}
    Let $\left\{ c_1, \dots c_\ell \right\} = \left\{ \eta_p^i(u) \mid i \in [0, m] \right\}$ be the ordered set of the $\eta_p^i(u)$, \ie $c_1 < \dots < c_\ell$.
  We define the set of intervals
  \begin{equation}
    \label{eq:gp-intervals_CupK}
    \mathcal{C}_{u,p}^K = \left\{ [c_i + K, c_{i+1} - K] \mid i \in [1, \ell -1], c_{i+1} - c_i \ge 2K \right\}\text{.}
  \end{equation}
Let us write $\mathcal{C}$ for $\mathcal{C}_{u, p}^K$ in the following.
Note that $\abs{\mathcal{C}} \leq m$.
The next lemma shows that the shortened word computed with the set $\mathcal{C}$ is the identity if and only if the original word is the identity.

  \begin{lemma}
    \label{lem:gp-short-word_confluent_one}
    $\pi(u) =_G 1$ if and only if $\pi(\mathcal{S}_{\mathcal{C}}(u)) =_G 1$.
  \end{lemma}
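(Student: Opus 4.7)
The plan is to use Lemma~\ref{lem:gp-transitions_of_shortened_word} to transport $\bigRS$-derivations from $u$ to $\mathcal{S}_{\mathcal{C}}(u)$ (and vice versa, via a terminating normal form), after verifying that the distance bound built into $\mathcal{C} = \mathcal{C}_{u,p}^K$ is strong enough to survive the full rewriting process.

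First, I would observe the core distance estimate. By Corollary~\ref{cor:gp-steps_to_termination}, any $\bigRS$-reduction starting from $u$ has length at most $k := 10\sigma^2|u|_\Delta^2\mu(u)$. By construction of $\mathcal{C}$ in~\eqref{eq:gp-intervals_CupK}, for every prefix $u'$ of $u$ the value $\eta_p(u')$ lies in $\{c_1,\dots,c_\ell\}$, hence
\[
\dist_p(u,\mathcal{C}) \ge K = 50\sigma^3 |u|_\Delta^2 \mu(u)^2 + 1 > 5k\sigma\mu(u),
\]
so the hypothesis of Lemma~\ref{lem:gp-transitions_of_shortened_word} is met for every reduction $u \rewrite{\le k}{\bigRS} v$. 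In particular, $u$ is compatible with $\mathcal{C}$ by \cref{cor:gp_compatoble_dist}, so $\mathcal{S}_{\mathcal{C}}(u)$ is well-defined.

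For the forward direction, assume $\pi(u)=_G 1$. By Lemma~\ref{lem:gp-correctenss_T}(\ref{gp-correctenss_T-stmt3}) we have $u \rewrite{*}{\bigRS} 1$, and by Corollary~\ref{cor:gp-steps_to_termination} this derivation has length at most $k$. Applying Lemma~\ref{lem:gp-transitions_of_shortened_word} yields $\mathcal{S}_{\mathcal{C}}(u) \rewrite{*}{\bigRS} \mathcal{S}_{\mathcal{C}}(1)$; since the empty word contains no letters from $\Delta_p \cup \Delta_p^{-1}$, we have $\mathcal{S}_{\mathcal{C}}(1)=1$, so $\pi(\mathcal{S}_{\mathcal{C}}(u)) =_G 1$ again by Lemma~\ref{lem:gp-correctenss_T}(\ref{gp-correctenss_T-stmt2}).

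For the backward direction, assume $\pi(\mathcal{S}_{\mathcal{C}}(u)) =_G 1$. Since $\bigRS$ is terminating (Lemma~\ref{lem:gp-correctenss_T}(\ref{gp-correctenss_T-terminating})), we can pick $v \in \IRR(\bigRS)$ with $u \rewrite{*}{\bigRS} v$ of length at most $k$. Lemma~\ref{lem:gp-transitions_of_shortened_word} gives $\mathcal{S}_{\mathcal{C}}(u) \rewrite{*}{\bigRS} \mathcal{S}_{\mathcal{C}}(v)$, and Lemma~\ref{lem:gp-shortened_word_irr} ensures $\mathcal{S}_{\mathcal{C}}(v) \in \IRR(\bigRS)$. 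By Lemma~\ref{lem:gp-correctenss_T}(\ref{gp-correctenss_T-stmt1}), $\pi(\mathcal{S}_{\mathcal{C}}(v)) \in \IRR(\traceRS)$, and by Lemma~\ref{lem:gp-correctenss_T}(\ref{gp-correctenss_T-stmt2}) it equals $1$ in $G$. Since $\traceRS$ is confluent and terminating with $G = M/\traceRS$, the empty trace is the unique $\traceRS$-irreducible representative of $1$, so $\pi(\mathcal{S}_{\mathcal{C}}(v))=1$ as a trace. Now observe that $\mathcal{S}_{\mathcal{C}}$ preserves the sequence of $\Delta$-letters (only exponents inside $\Delta_p \cup \Delta_p^{-1}$ change), and by the proof of Lemma~\ref{lem:gp-shortened_word_irr} these new exponents are non-zero; since each symbol $\DelEl{\beta}{p^{z}}{\alpha}$ with $z\neq 0$ projects to the non-empty reduced trace $\beta p^{z}\alpha$, the only way $\pi(\mathcal{S}_{\mathcal{C}}(v))$ can be the empty trace is if $\mathcal{S}_{\mathcal{C}}(v)$ contains no letters at all, i.e.\ $v=1$. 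Hence $u \rewrite{*}{\bigRS} 1$ and $\pi(u) =_G 1$.

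The main technical point is simply to calibrate $K$ so that $K > 5k\sigma\mu(u)$ for the termination bound $k$ — this is already baked into the definition of $K$ in~\eqref{eq:gp-intervals_CupK}. Everything else is a clean combination of the correctness lemma for $\bigRS$ (Lemma~\ref{lem:gp-correctenss_T}), the preservation of irreducibility under shortening (Lemma~\ref{lem:gp-shortened_word_irr}), and the simulation lemma (Lemma~\ref{lem:gp-transitions_of_shortened_word}).
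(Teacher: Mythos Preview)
Your proof is correct and follows essentially the same approach as the paper: both verify that $K > 5k\sigma\mu(u)$ for the termination bound $k = 10\sigma^2|u|_\Delta^2\mu(u)$, use Lemma~\ref{lem:gp-transitions_of_shortened_word} to transport derivations, and appeal to Lemma~\ref{lem:gp-shortened_word_irr} plus Lemma~\ref{lem:gp-correctenss_T} to finish. The only cosmetic difference is that the paper argues the backward direction by contrapositive and simply invokes Lemma~\ref{lem:gp-correctenss_T}(\ref{gp-correctenss_T-stmt3}) to conclude $\pi(\mathcal{S}_{\mathcal{C}}(v)) \neq_G 1$ from $\mathcal{S}_{\mathcal{C}}(v) \in \IRR(\bigRS)\setminus\{1\}$, whereas you argue it directly and unpack why $\pi(\mathcal{S}_{\mathcal{C}}(v))=1$ forces $v=1$; both are equivalent.
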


  \begin{proof}
    Let $k = 10\sigma^2|u|_\Delta^2\mu(u)$.
    From the definition of $\mathcal{C}$ we obtain $\dist_p(u, \mathcal{C}) > 5k\sigma\mu(u)$.

    First, let $\pi(u) =_G 1$.
    By \cref{lem:gp-correctenss_T} (point \ref{gp-correctenss_T-stmt3}) this is equivalent to $u \rewrite{*}{\bigRS} 1$, which by \cref{cor:gp-steps_to_termination} is equivalent to $u \rewrite{\le k}{\bigRS} 1$.
    By \cref{lem:gp-transitions_of_shortened_word} we have $\mathcal{S}_{\mathcal{C}}(u) \rewrite{*}{\bigRS} \mathcal{S}_{\mathcal{C}}(1) = 1$.
    Applying \cref{lem:gp-correctenss_T} (point \ref{gp-correctenss_T-stmt2}) we obtain $\pi(\mathcal{S}_{\mathcal{C}}(u)) \rewrite{*}{\traceRS} 1$ and, hence, $\pi(\mathcal{S}_{\mathcal{C}}(u)) =_G 1$.

    Second, assume that $\pi(u) \neq_G 1$. Since $\bigRS$ is terminating, there
    is some $v \in \IRR(\bigRS)$ with $u \rewrite{*}{\bigRS} {v}$.
    We cannot have $v = 1$ as this, by \cref{lem:gp-correctenss_T} (point \ref{gp-correctenss_T-stmt2}), would yield $\pi(u) =_G 1$.
    Hence, we have $v \neq 1$. Moreover, \cref{cor:gp-steps_to_termination} implies
    $u \rewrite{\le k}{\bigRS} {v}$ and with
    \cref{lem:gp-transitions_of_shortened_word} we get $\mathcal{S}_{\mathcal{C}}(u) \rewrite{*}{\bigRS} \mathcal{S}_{\mathcal{C}}(v)$.
    By \cref{lem:gp-shortened_word_irr} we have $\mathcal{S}_{\mathcal{C}}(v) \in \IRR(\bigRS)$.
    As the shortening process does not remove any letters but only replaces them, we have     $\mathcal{S}_{\mathcal{C}}({v}) \neq_{M(\Delta,I_\Delta)} 1$.
    It follows that    $\pi(\mathcal{S}_{\mathcal{C}}({v})) \neq_G 1$ (otherwise, \cref{lem:gp-correctenss_T} (point \ref{gp-correctenss_T-stmt3})
    implies $\mathcal{S}_{\mathcal{C}}(v)\notin \IRR(\bigRS)$).
    Finally, with \cref{lem:gp-correctenss_T} (point \ref{gp-correctenss_T-stmt2}) we get
    $\pi(\mathcal{S}_{\mathcal{C}}(u)) =_G \pi(\mathcal{S}_{\mathcal{C}}({v})) \neq_G 1$.
  \end{proof}

  The next lemma shows that when using the set $\mathcal{C}$ from \eqref{eq:gp-intervals_CupK}, the exponents of the shortened word are bounded by a polynomial.

  \begin{lemma}
    \label{lem:gp-short-exponent-bound}
    Let $\mathcal{S}_{\mathcal{C}}(u) = u_0 \DelEl{\beta_1}{p^{z_1}}{\alpha_1} u_1 \cdots \DelEl{\beta_m}{p^{z_m}}{\alpha_m} u_m$ for some $u \in M(\Delta, I_\Delta)$.
    Then $|z_i| \le 101m\sigma^3|u|_\Delta^2\mu(u)^2$ for all $i \in [1, m]$.
  \end{lemma}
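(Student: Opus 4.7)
My plan is to directly compute $|z_i|$ for each $i$ using the explicit definition of the shortening procedure and the structure of $\mathcal{C}_{u,p}^K$. The key insight is that after shortening, every gap between consecutive values $c_j$ is effectively capped at $2K-1$, so the total length of a shortened exponent is bounded by the number of gaps it spans times $2K$.

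Fix $i \in [1,m]$. If $y_i = 0$, then $C_i(u) = \emptyset$ and $z_i = 0$, so the bound holds trivially. Otherwise, by symmetry I may assume $y_i > 0$ (the case $y_i < 0$ is identical after flipping signs). Write $\eta_p^{i-1}(u) = c_a$ and $\eta_p^i(u) = c_b$, where the $c_j$ are the ordered distinct partial sums as in the definition of $\mathcal{C}_{u,p}^K$; since $y_i = c_b - c_a > 0$ we have $a < b$. I will now identify $C_i(u)$: an interval $[l_j,r_j] = [c_{i'}+K, c_{i'+1}-K] \in \mathcal{C}_{u,p}^K$ (which requires $c_{i'+1} - c_{i'} \ge 2K$) satisfies $c_a < l_j$ and $r_j < c_b$ if and only if $a \le i' \le b-1$; this is because $c_a$ is itself one of the $c_j$'s and $c_a < c_{i'}+K \le c_{i'+1}-K < c_{i'+1}$ forces $c_a \le c_{i'}$, and similarly $c_b \ge c_{i'+1}$. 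Consequently,
\[
C_i(u) = \{\, i' \in [a,b-1] \mid c_{i'+1} - c_{i'} \ge 2K \,\},
\qquad d_{i'} = c_{i'+1} - c_{i'} - 2K + 1 \text{ for } i' \in C_i(u).
\]

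Using $y_i = c_b - c_a = \sum_{i'=a}^{b-1}(c_{i'+1} - c_{i'})$, I obtain
\begin{align*}
z_i \;=\; y_i - \sum_{i' \in C_i(u)} d_{i'}
\;=\; \sum_{\substack{i' \in [a,b-1]\\ c_{i'+1}-c_{i'} < 2K}} (c_{i'+1}-c_{i'}) \;+\; \sum_{i' \in C_i(u)} (2K-1).
\end{align*}
Every term in the first sum is strictly less than $2K$ and every term in the second sum equals $2K-1 < 2K$, so $z_i < (b-a)\cdot 2K$. Since the $c_j$ are the distinct values among $\eta_p^0(u), \ldots, \eta_p^m(u)$, we have $\ell \le m+1$, hence $b - a \le m$, giving $|z_i| < 2mK$.

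Finally I substitute $K = 50\sigma^3|u|_\Delta^2 \mu(u)^2 + 1$:
\[
|z_i| \;<\; 2mK \;=\; 100m\sigma^3|u|_\Delta^2\mu(u)^2 + 2m.
\]
Since $\mu(u) \ge 2$, $\sigma \ge 1$, and $|u|_\Delta \ge 1$, we have $\sigma^3|u|_\Delta^2\mu(u)^2 \ge 4 \ge 2$, so $2m \le m\sigma^3|u|_\Delta^2\mu(u)^2$ and therefore $|z_i| \le 101m\sigma^3|u|_\Delta^2\mu(u)^2$, as claimed. The proof contains no real obstacle; the only subtle point is the bookkeeping in identifying $C_i(u)$ with the set of large gaps between consecutive $c_{i'}$ lying inside $[c_a, c_b]$, but this follows immediately from the fact that each $c_a, c_b$ is itself one of the $c_j$.
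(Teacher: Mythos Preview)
Your proof is correct and follows essentially the same approach as the paper's: both telescope $y_i$ over the consecutive gaps $c_{i'+1}-c_{i'}$ between $c_a$ and $c_b$, observe that after shortening each gap contributes at most $2K-1$, and bound the number of gaps by $m$. Your version is in fact more explicit about the indexing (separating the small-gap and large-gap contributions) than the paper's, which somewhat loosely identifies $C_i$ with the full range of gap indices.
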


  \begin{proof}
  We have
  { \allowdisplaybreaks\begin{align*}
                         |z_i| &= \Bigl\lvert y_i - \sgn(y_i)\cdot \sum_{j \in C_i} d_j \Bigr\rvert\\
                         &= \left\lvert y_i \right\rvert - \sum_{j \in C_i} d_j \\
                         &\overset{(\ref{short-exp-bound-fact1})}{=} \left\lvert y_i \right\rvert - \sum_{j \in C_i} \max\{0, c_{j+1} - c_{j} - 2K + 1\} \\
                         &\le \left\lvert y_i \right\rvert - \sum_{j \in C_i} (c_{j+1} - c_{j} - 2K + 1)\\
                         &= \left\lvert y_i \right\rvert - \sum_{j \in C_i} (c_{j+1} - c_{j})  + \sum_{j \in C_i} (2K - 1)\\
                         &= \sum_{j \in C_i} (2K - 1) \\
                         &\overset{(\ref{short-exp-bound-fact2})}{\le} m(2K - 1) \le 2mK,
  \end{align*}}
    where we used the following facts:
    \begin{enumerate}[(A)]%[label=(\roman*)]
      \item Definition of $\mathcal{C}$ in~\eqref{eq:gp-intervals_CupK}.\label{short-exp-bound-fact1}
      \item $|C_i| \le |\mathcal{C}| \le m$.\label{short-exp-bound-fact2}
    \end{enumerate}
    The lemma follows by plugging in $K = 50\sigma^3|u|_\Delta^2\mu(u)^2 + 1$.
  \end{proof}

\subsubsection{Solving the power word problem}\label{sec:pwp_wrap_up}

Now we are ready for the proofs of our main results for graph products.

  \begin{theorem}
    \label{th:gp-power-word-AC0}
    Let $G = \GP(\cL, I, \left(G_\cLelA\right)_{\cLelA \in \cL})$ be a graph product of f.g.~groups such that no $G_\cLelA$ contains any element $a$ with $a^2=_{G_\cLelA} 1$ and $a\neq_{G_\cLelA} 1$.
    Then the power word problem in $G$ can be decided in $\uAC^0$ with oracle gates for the word problem in $F_2$ and for the power word problems in the base groups $G_\cLelA$.
  \end{theorem}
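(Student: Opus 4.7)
The plan is to combine the three main ingredients developed earlier in this section: the preprocessing of Lemma \ref{lem:gp-preprocessing}, the shortening process culminating in Lemmas \ref{lem:gp-short-word_confluent_one} and \ref{lem:gp-short-exponent-bound}, and the algorithm for the simple power word problem from Proposition \ref{lem:spowwp}. First I would apply the preprocessing to rewrite the input power word in the form $u_0 p_1^{x_1} u_1 \cdots p_n^{x_n} u_n$ with each composite $p_i\in\Omega$ (powers of single letters survive into the shortened word as letters of $\tilde{\Gamma}$). The preprocessing itself is in $\uAC^0(\WP(G),\WP(F_2))$; since $\WP(G)\leq\SPowWP(G)$ trivially and $\SPowWP(G)$ is handled by Proposition \ref{lem:spowwp}, the preprocessing stays within the allotted oracles $\{\WP(F_2)\}\cup\{\PowWP(G_\cLelA)\mid \cLelA\in\cL\}$. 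The hypothesis that no base group has nontrivial elements of order two is preserved through this step and, together with Lemma \ref{lemma-p-conjugate-p-1}, guarantees that Step~6 of the preprocessing is unambiguous (see Remark \ref{rem:mistake_preprocessing}).

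Second, I would view the preprocessed word as a word $u\in\Delta^*$ by encoding each composite power $p_i^{x_i}$ as the single letter $\DelEl{1}{p_i^{x_i}}{1}\in\Delta_{p_i}$. For each distinct $p\in\Omega$ occurring among the $p_i$ (at most $n$ of these), I would form the interval set $\mathcal{C}_{u,p}^K$ of \eqref{eq:gp-intervals_CupK} and apply the shortening operator $\mathcal{S}_{\mathcal{C}_{u,p}^K}$. Since each such operator only touches exponents of $p$-powers, all the shortenings can be composed in parallel, and Lemma \ref{lem:gp-short-word_confluent_one} guarantees that the resulting word represents the identity in $G$ if and only if the original does. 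Computing the partial sums $\eta_p^i(u)$, the intervals, and the new exponents reduces to additions and comparisons of binary integers and so can be performed in $\uTC^0\subseteq\uAC^0(\WP(F_2))$.

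Third, Lemma \ref{lem:gp-short-exponent-bound} bounds each exponent $z_i$ in the fully shortened word polynomially in the input length. Hence each letter $\DelEl{\beta}{p^{z_i}}{\alpha}\in\Delta'$ can be expanded in $\uAC^0$ into the polynomial-length word $\beta p^{z_i}\alpha\in\Gamma^*$, producing a word in $\tilde{\Gamma}^*$ whose only remaining exponents are those from Step~3 of the preprocessing, i.e., powers of individual elements of $\Gamma$. This is precisely an instance of the simple power word problem, which by Proposition \ref{lem:spowwp} belongs to $\uAC^0(\{\WP(F_2)\}\cup\{\PowWP(G_\cLelA)\mid \cLelA\in\cL\})$; composing the reductions yields the claimed upper bound.

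The main conceptual obstacle, and the reason for the order-two hypothesis, lies in the proof of Lemma \ref{lem:gp-rewrite_bounds_exponents}: the bound on $d$ and $e$ in rule \eqref{eq:gp-rewrite_powereq_ncancel} is only $\mathcal{O}(\sigma)$ provided one can rule out that a reduced connected primitive trace $p$ is conjugate to $p^{-1}$, which is exactly Lemma \ref{lemma-p-conjugate-p-1} under the order-two hypothesis. Without this assumption the cancellation in rule \eqref{eq:gp-rewrite_powereq_ncancel} could be arbitrarily long, the shortened exponents would no longer be polynomially bounded, and the whole argument collapses (cf.\ Remark \ref{rem:mistake}).
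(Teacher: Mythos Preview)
Your proposal is correct and follows essentially the same three-step approach as the paper's own proof: preprocessing via Lemma~\ref{lem:gp-preprocessing}, parallel shortening via Lemmas~\ref{lem:gp-short-word_confluent_one} and~\ref{lem:gp-short-exponent-bound}, and reduction to the simple power word problem via Proposition~\ref{lem:spowwp}. The only minor difference is that you justify the preprocessing oracle bound by observing $\WP(G)\le\SPowWP(G)$ and invoking Proposition~\ref{lem:spowwp}, whereas the paper cites Kausch's results on $\WP(G)$ directly; both routes land in the same class.
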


\begin{proof}
	By \cref{lem:gp-preprocessing} the preprocessing can be done in $\uAC^0$ with oracles for the word problems in $G$ and $F_2$ (thus, by~\cite[Theorem 5.6.5, Theorem 5.6.14]{kausch2017parallel} in  $\uAC^0(\WP(F_2), (\WP(G_\cLelA))_{\cLelA \in \cL}) \subseteq \uAC^0(\WP(F_2), (\PowWP(G_\cLelA))_{\cLelA \in \cL})$).
    Let \eqref{eq-w-preproc} be the power word obtained after the preprocessing.
	The shortening procedure can be computed in parallel for each $p \in \{p_i \mid i \in [1, n]\}$.
	It requires iterated additions, which is in $\uTC^0 \subseteq \uAC^0(\WP(F_2))$.
	By \cref{lem:gp-short-exponent-bound} the exponents of the shortened word are bounded by a polynomial in the input length.
	We write the shortened word as a simple power word of polynomial length and solve the simple power word problem, which
	by \cref{lem:spowwp}, is in $\uAC^0(\WP(F_2), (\PowWP(G_\cLelA))_{\cLelA \in \cL})$.
\end{proof}

 \begin{corollary}
    \label{th:power-word-AC0}
    Let $G$ be a RAAG.
    The power word problem in $G$ is $\uAC^0$-Turing reducible to the word problem in the free group $F_2$ and, thus, in \LOGSPACE.
  \end{corollary}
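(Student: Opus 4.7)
The plan is to derive the corollary as a direct application of \cref{th:gp-power-word-AC0} together with the standard fact that $\PowWP(\mathbb{Z})$ is easy. First, I would recall from \cref{sec:graph_prod_prelims} that a RAAG $G = G(X,I)$ is precisely the graph product $\GP(\cL, I, (G_\cLelA)_{\cLelA\in\cL})$ in which every base group $G_\cLelA = \mathbb{Z}$ is infinite cyclic. Since $\mathbb{Z}$ is torsion-free, no $G_\cLelA$ contains an element $a$ with $a \neq 1$ and $a^2 = 1$, so the hypothesis of \cref{th:gp-power-word-AC0} is satisfied.

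Applying \cref{th:gp-power-word-AC0}, we obtain that $\PowWP(G)$ lies in $\uAC^0\bigl(\{\WP(F_2)\} \cup \{\PowWP(\mathbb{Z})\}\bigr)$. It then suffices to observe that $\PowWP(\mathbb{Z})$ can itself be absorbed into the oracle for $\WP(F_2)$. Indeed, given a power word $u_1^{x_1}\cdots u_n^{x_n}$ over the standard generator of $\mathbb{Z}$, each $u_i$ is a word whose value $v_i \in \mathbb{Z}$ (a signed count of generators) can be computed in $\uTC^0$, and deciding whether $\sum_i v_i x_i = 0$ amounts to iterated addition of binary integers, which is in $\uTC^0$. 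Since $\uTC^0 \subseteq \uAC^0(\WP(F_2))$ by the inclusion chain recalled in \cref{sec-complexity}, this gives $\PowWP(\mathbb{Z}) \in \uAC^0(\WP(F_2))$, and composing the reductions yields $\PowWP(G) \in \uAC^0(\WP(F_2))$.

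For the second assertion, I would use that $F_2$ is a finitely generated linear group and hence $\WP(F_2) \in \LOGSPACE$ by the Lipton--Zalcstein theorem \cite{lz77}. Since $\LOGSPACE$ is closed under $\uAC^0$-Turing reductions (as $\uAC^0 \subseteq \LOGSPACE$ and $\LOGSPACE$ is closed under logspace Turing reductions with a logspace oracle), we conclude $\PowWP(G) \in \LOGSPACE$. There is no real obstacle here; the corollary is essentially a bookkeeping step, and the only mild point is verifying that $\PowWP(\mathbb{Z})$ is subsumed by the $\WP(F_2)$ oracle rather than producing a separate oracle gate.
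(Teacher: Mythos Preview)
Your proposal is correct and is exactly the intended derivation: the paper states the corollary without proof, and your argument—specialize \cref{th:gp-power-word-AC0} to base groups $\mathbb{Z}$ (which are torsion-free), absorb $\PowWP(\mathbb{Z})$ into $\uTC^0 \subseteq \uAC^0(\WP(F_2))$, and then use $\WP(F_2) \in \LOGSPACE$—is precisely what is implicit. The only minor quibble is that computing $\sum_i v_i x_i$ involves integer multiplication as well as iterated addition, but both are in $\uTC^0$ as noted in \cref{sec-complexity}, so this does not affect the conclusion.
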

The proof of the following result is analogous to the proof of Theorem~\ref{th:gp-power-word-AC0}
using the respective statements of the lemmas for the uniform case.

 \begin{theorem}
    \label{th:gp-power-word-CeqL-uniform}
    Let \(\mathcal{C}\) be a non-trivial class of f.g.~groups such that for all $G \in \cC$ and all $a \in G \setminus\{1\}$ we have $a^2\neq_{G} 1$. Then $\UPowWP(\GP(\mathcal{C}))$ belongs to
   $\uAC^0\big[\CeqL^{\UPowWP(\mathcal{C})}\big]$.
  \end{theorem}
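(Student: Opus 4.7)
The plan is to replay the three-step pipeline used in the proof of \cref{th:gp-power-word-AC0}, but at each step invoking the uniform versions of the supporting lemmas rather than the fixed-group ones. The input now consists of a graph product description $(\cL, I, (G_\cLelA)_{\cLelA\in\cL})$ with $G_\cLelA\in\mathcal{C}$ together with a power word $u_1^{x_1}\cdots u_n^{x_n}$ over its generators.

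\emph{Step 1 (preprocessing).} First I would apply \cref{lem:gp-preprocessing}, uniform part: this rewrites the input into the form \eqref{eq-w-preproc} $w = u_0 p_1^{x_1} u_1 \cdots p_n^{x_n} u_n$ with $p_i \in \Omega$, at the cost of an $\uAC^0[\NL](\UWP(\GP(\mathcal{C})))$ computation. Since $\UWP(\GP(\mathcal{C}))$ is trivially $\uAC^0$-reducible to $\UPowWP(\GP(\mathcal{C}))$ (set all exponents to $1$), and since $\NL \subseteq \CeqL$, this preprocessing is subsumed by $\uAC^0[\CeqL^{\UPowWP(\mathcal{C})}]$. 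The assumption that no $G_\cLelA\in\mathcal{C}$ has elements of order two is used here (via \cref{lemma-p-conjugate-p-1}) to ensure that the output of Step 6 of the preprocessing is well-defined up to sign, so that the length bounds of \cref{lem:gp-rewrite_bounds_exponents} apply in what follows.

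\emph{Step 2 (shortening).} Next I would carry out the shortening procedure of \cref{subsec:gp-the-shortened-word}. For each $p \in \{p_i \mid i \in [1,n]\}$ in parallel, I compute the set of intervals $\mathcal{C}_{u,p}^K$ defined in \eqref{eq:gp-intervals_CupK} and replace each $p^{x_i}$ by the shortened exponent $p^{z_i}$ from \cref{def:shortened}. By \cref{lem:gp-short-word_confluent_one} the replacement preserves the answer to the word problem, and by \cref{lem:gp-short-exponent-bound} every $|z_i|$ is bounded polynomially in the input length. The computation itself only involves comparing partial sums of (binary-encoded) exponents and a few iterated additions, which is in $\uTC^0 \subseteq \uAC^0[\CeqL]$.

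\emph{Step 3 (reduction to the simple uniform power word problem).} After the shortening, each $p_i^{z_i}$ has $|z_i|$ bounded polynomially, so the trace represented by $p_i^{z_i}$ can be written explicitly as a simple power word (\ie{} as a product of powers of elements of $\Gamma$), yielding a simple power word of polynomial length. Finally I would invoke the uniform simple power word algorithm of \cref{lem:gp_spowwp_uniform} (Part~2 of \cref{lem:spowwp}), which decides $\USPowWP(\GP(\mathcal{C}))$ in $\CeqL^{\UPowWP(\mathcal{C})}$. Composing the three steps places the whole procedure inside $\uAC^0[\CeqL^{\UPowWP(\mathcal{C})}]$, as required.

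The only delicate point~-- and what I expect to be the main obstacle~-- is verifying that the bookkeeping of Step~2 really goes through uniformly: we need the representatives of the $p_i\in\Omega$ produced by preprocessing to be bitwise identical whenever they represent the same trace (ensured by Steps~4 and~6 of the preprocessing), so that the partial sums $\eta_p^i(u)$ and the intervals $\mathcal{C}_{u,p}^K$ can be computed by a $\uTC^0$-circuit indexed by $p$. Once this normalization is in place, the arguments of \cref{lem:gp-shortened_word_irr,lem:gp-rule_bound_diff_eta,lem:gp-dist_one_step,lem:gp-transitions_of_shortened_word,lem:gp-short-word_confluent_one,lem:gp-short-exponent-bound} are purely combinatorial and carry over verbatim to the uniform setting.
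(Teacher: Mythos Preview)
Your approach is essentially identical to the paper's own (very terse) argument, which simply says the proof is analogous to that of \cref{th:gp-power-word-AC0} using the uniform versions of the lemmas; your three-step pipeline and the lemmas you invoke are exactly the intended ones.

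There is, however, one genuine slip in Step~1. You justify that the preprocessing lies in $\uAC^0[\CeqL^{\UPowWP(\mathcal{C})}]$ by observing that $\UWP(\GP(\mathcal{C}))$ is $\uAC^0$-reducible to $\UPowWP(\GP(\mathcal{C}))$. But $\UPowWP(\GP(\mathcal{C}))$ is precisely the problem whose complexity you are trying to bound, so this reduction is circular: it does not by itself place $\UWP(\GP(\mathcal{C}))$ inside $\CeqL^{\UPowWP(\mathcal{C})}$ (the oracle in the target class is $\UPowWP(\mathcal{C})$, for the base groups, not $\UPowWP(\GP(\mathcal{C}))$). The correct justification is Kausch's result, cited in the introduction and used implicitly throughout, that $\UWP(\GP(\mathcal{C})) \in \CeqL^{\UWP(\mathcal{C})} \subseteq \CeqL^{\UPowWP(\mathcal{C})}$. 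With this replacement your argument goes through verbatim.
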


  \begin{corollary}\label{cor:UPowWPRAAG}
    Let $\mathrm{RAAG}$ denote the class of finitely generated RAAGs given by an alphabet $X$ and an independence relation $I \subseteq X \times X$. Then $\UPowWP(\mathrm{RAAG})$ is in $\uAC^0\big[\CeqL\big] \sse \uNC^{2}$.
  \end{corollary}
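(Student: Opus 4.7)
The plan is to apply \cref{th:gp-power-word-CeqL-uniform} to the class $\mathcal{C} = \{\mathbb{Z}\}$ and then observe that the oracle $\UPowWP(\mathcal{C})$ is so weak that it can be absorbed by $\CeqL$. Since a RAAG is precisely a graph product whose every base group is $\mathbb{Z}$, we have $\mathrm{RAAG} = \GP(\{\mathbb{Z}\})$ in the sense of \cref{sec:graph_prod_prelims}. The group $\mathbb{Z}$ is torsion-free, so in particular it contains no element of order two, hence the hypothesis of \cref{th:gp-power-word-CeqL-uniform} is satisfied. Applying that theorem yields
\[
\UPowWP(\mathrm{RAAG}) \ \in\ \uAC^0\!\left[\CeqL^{\UPowWP(\mathbb{Z})}\right].
\]

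Next I would argue that $\UPowWP(\mathbb{Z}) \in \uTC^0$. A power word over the standard generator set $\{1,-1\}$ of $\mathbb{Z}$ has the form $u_1^{x_1}\cdots u_n^{x_n}$, which evaluates in $\mathbb{Z}$ to $\sum_{i=1}^n n_i x_i$, where $n_i \in \mathbb{Z}$ is obtained from $u_i$ by iterated addition of $\pm 1$s (so $|n_i|\leq |u_i|$, i.e.\ polynomially bounded, encodable in $O(\log N)$ bits for input length $N$). Computing each $n_i$ is iterated addition and hence in $\uTC^0$; each product $n_i x_i$ is integer multiplication, in $\uTC^0$; and the final iterated sum and zero-test are in $\uTC^0$. (The same argument trivially handles the uniform encoding of the single base group $\mathbb{Z}$.)

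Since $\uTC^0\subseteq \L \subseteq \CeqL$, an oracle for a language in $\uTC^0$ can be simulated inside a $\CeqL$-machine without changing its count of accepting paths, giving $\CeqL^{\UPowWP(\mathbb{Z})} \subseteq \CeqL^{\uTC^0} = \CeqL$. Combined with the previous display this yields $\UPowWP(\mathrm{RAAG}) \in \uAC^0[\CeqL]$, which is the first claimed inclusion.

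Finally, for the inclusion $\uAC^0[\CeqL] \subseteq \uNC^2$, I would invoke the well-known chain $\CeqL \subseteq \mathsf{DET} \subseteq \uNC^2$ (see e.g.\ \cite{allender2004arithmetic}) together with the fact that composing a constant-depth polynomial-size circuit with $\uNC^2$ oracle gates still yields a $\uNC^2$ circuit. The only place that demands real attention is confirming the absorption $\CeqL^{\uTC^0} = \CeqL$; once that is set down cleanly, the rest is a direct chain of inclusions.
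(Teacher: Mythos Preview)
Your proposal is correct and follows precisely the intended (implicit) derivation: apply \cref{th:gp-power-word-CeqL-uniform} with $\mathcal{C}=\{\mathbb{Z}\}$, use that $\UPowWP(\mathbb{Z})\in\uTC^0\subseteq\L$ so the oracle is absorbed into $\CeqL$, and finish via $\CeqL\subseteq\uNC^2$. The paper states the corollary without proof for exactly this reason; your write-up simply spells out the routine details.
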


  \begin{remark}
     One can consider variants of the power word problem, where the exponents are not given in binary representation but in even more
    compact forms. \emph{Power circuits} as defined in \cite{MyasnikovUW12} are such a representation that allow
    non-elementary compression for some integers. Our logspace algorithm for the power word problem in a RAAG involves
    iterated addition and comparison (for equality) of exponents.
    For arbitrary power circuits, unfortunately, comparison for \emph{less than} is \P-complete and the complexity for equality checking is unknown. However, if we restrict to certain normal forms, called reduced power circuits, both iterated addition and comparison (for equality and for less than) are in \uTCz \cite{MattesW22}. Therefore, our techniques show that the power word problem for RAAGs with exponents given by reduced power circuits is also \uACz-Turing-reducible to the word problem for the free group $F_2$.
  \end{remark}

  \section{Consequences for the knapsack problem in right-angled Artin groups}

  Recall that the knapsack problem for a  finitely generated group $G$ asks,
  whether for given group elements $g_1, \ldots, g_n,g \in G$ (represented by
  words over generators) there exist $x_1, \ldots, x_n \in \N$ such that
  $g_1^{x_1} \cdots g_n^{x_n} =_G g$ holds. Using our results on power word
  problem, we can show the following result, which solves an open problem from
  \cite{LohreyZ18}.

   \begin{corollary}
    \label{cor:knapsack}
    The uniform knapsack problem for RAAGs is $\NP$-complete:
    On  input of a RAAG $G = G(X,I)$, given by the graph $(X,I)$, and $u_1, \dots, u_n, u \in (X \cup \finv{X})^*$, it can be decided in $\NP$ whether there are $x_1, \dots, x_n \in\mathbb{N}$ with  $u_1^{x_1} \cdots u_n^{x_n} =_G u$\text{.}
  \end{corollary}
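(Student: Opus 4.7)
The plan is to split the proof into the two standard halves of any \NP-completeness claim and then observe that each half can be assembled from results already cited or just proven in this paper.

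For \NP-hardness I would appeal directly to \cite{LohreyZ18}: the knapsack problem is already \NP-hard for every fixed RAAG whose commutation graph contains an induced $C_4$ or $P_4$. Since the uniform problem strictly subsumes each such fixed instance (one can hard-code the commutation graph into the reduction output), \NP-hardness transfers immediately to the uniform setting. No real work is required here; I would include one sentence naming the hard RAAG used as the reduction target.

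For membership in \NP the strategy is to guess and verify. The guess-step rests on the polynomial magnitude bound for knapsack solutions in RAAGs established in \cite{LohreyZ18}: there is a fixed polynomial $p$ such that whenever $u_1^{x_1}\cdots u_n^{x_n}=_G u$ admits a solution $(x_1,\dots,x_n)\in\mathbb{N}^n$, it admits one with $x_i\le 2^{p(N)}$, where $N$ is the total bit length of the input (the group presentation together with the words $u_1,\dots,u_n,u$). Crucially, this bound is obtained via an embedding of the RAAG into a direct product of free groups of dimension controlled by $|X|$, so the polynomial $p$ is genuinely uniform in the commutation graph $(X,I)$ and not merely in the exponents. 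An \NP-machine therefore guesses each $x_i$ in binary using $p(N)$ bits. The resulting power word
\[ w \;=\; u_1^{x_1}\cdots u_n^{x_n}\, u^{-1} \]
has description length polynomial in $N$, and it remains to check $w=_G 1$. This is precisely an instance of the uniform power word problem for RAAGs, which by \cref{cor:UPowWPRAAG} lies in $\uAC^0[\CeqL]\subseteq\uNC^{2}\subseteq\P$, so the verification step runs in deterministic polynomial time. Combining guess and verification gives an \NP-procedure.

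The main (and essentially only) obstacle is the first step: before our new algorithm, the best upper bound for verifying the guessed power word was via the uniform compressed word problem for RAAGs, whose membership in \P{} is still open \cite{lohrey2007efficient}; this is exactly the gap flagged in the introduction as the ``missing piece'' for uniform \NP-membership of knapsack. \cref{cor:UPowWPRAAG} closes that gap, turning the exponential-length witness $u_1^{x_1}\cdots u_n^{x_n}u^{-1}$ (whose explicit expansion would be too long for an \NP-certificate) into a polynomial-size power word that is verified in \P. Hence the combination ``exponent bound from \cite{LohreyZ18} + \cref{cor:UPowWPRAAG}'' yields the claim.
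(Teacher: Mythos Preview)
Your proposal is correct and follows essentially the same approach as the paper: both argue \NP-hardness by citing the fixed-RAAG hardness result from \cite{LohreyZ18}, and both obtain \NP-membership by combining the polynomial exponent bound from \cite{LohreyZ18} (guess the $x_i$ in binary) with \cref{cor:UPowWPRAAG} (verify the resulting power word in \P). Your write-up is in fact more detailed than the paper's, in particular your remark that the exponent bound is uniform in the commutation graph is a useful clarification that the paper leaves implicit.
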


  \begin{proof}
    Let $N = |X| + |u| + \sum_{i=1}^n |u_i|$ (this is roughly the input size).
    By~\cite[Theorem 3.11]{LohreyZ18}, there is a polynomial $p(N)$ such that if there is a solution, then there is a solution $x_1, \dots, x_n$ with $x_i \le 2^{p(N)}$.
    Therefore, we can guess a potential solution in polynomial time.
    From \cref{cor:UPowWPRAAG} it follows that the uniform power word problem in RAAGs belongs to $\P$.
    Hence, the uniform knapsack problem can be decided in $\NP$.
    Finally, $\NP$-completeness follows immediately from the $\NP$-completeness of the knapsack problem for a certain fixed RAAG, which has been shown in~\cite{LohreyZ18}.
  \end{proof}
  Note that this proof even shows \NP-completeness of the slightly more general problem of uniformly solving exponent equations for RAAGs as defined in~\cite{LohreyZ18}.

\section{Open Problems}
We strongly conjecture that the requirement $a^2 \neq 1$ can be dropped in all our results (as falsely claimed in \cite{StoberW22,StoberW22arxivOld}). Indeed, we believe that our methods can be extended to cope with that case. Still, this is a highly non-trivial question for further research.

Furthermore, we conjecture that the method of \cref{ch:the-power-word-problem-in-graph-products} can similarly be applied to hyperbolic groups, and hence that the
power word problem for a hyperbolic group $G$ is \uAc{0}-Turing-reducible to the word problem for $G$.
One may also try to prove transfer results for the power word problem with respect to group theoretical
constructions other than graph products, e.g., HNN extensions and amalgamated products over finite subgroups.
For a transfer result with respect to wreath products, see \cite[Proposition 19]{FigeliusGLZ20}. However, many cases are still open.

For finitely generated linear groups, the power word problem leads to the problem of computing matrix powers
with binary encoded exponents. The complexity of this problem is open;
variants of this problem have been studied in \cite{AllenderBD14,GalbyOW15}.

Another open question is what happens if we allow nested exponents.
We conjecture that in the free group for any nesting depth bounded by a constant the problem is still in
$\uACz(\WP(F_2))$. However, for unbounded nesting depth it is not clear what happens:
we only know that it is in \P since it is a special case of the compressed word problem; but it still could be in
$\uACz(\WP(F_2))$ or it could be \P-complete or somewhere in between.

\nocite{label}
\bibliography{sn-bibliography}% common bib file
%% if required, the content of .bbl file can be included here once bbl is generated

\end{document}